\documentclass[11pt]{amsart}

\usepackage[a4paper,margin=1.08in]{geometry}
\usepackage{amsmath,amssymb,amsthm,mathtools}
\usepackage{enumitem}
\usepackage{xcolor}
\usepackage{hyperref}
\usepackage{microtype}
\usepackage{booktabs}
\usepackage{array}
\usepackage{xurl}

\hypersetup{
  colorlinks=true,
  linkcolor=blue!55!black,
  citecolor=blue!55!black,
  urlcolor=blue!55!black
}

\numberwithin{equation}{section}

\newtheorem{theorem}{Theorem}[section]
\newtheorem{proposition}[theorem]{Proposition}
\newtheorem{lemma}[theorem]{Lemma}
\newtheorem{corollary}[theorem]{Corollary}
\theoremstyle{definition}

\theoremstyle{remark}
\newtheorem{remark}[theorem]{Remark}

\newcommand{\PP}{\mathbb P}
\newcommand{\CC}{\mathbb C}
\newcommand{\OO}{\mathcal O}
\newcommand{\bT}{\overline T}
\newcommand{\bK}{\overline K}
\newcommand{\floor}[1]{\left\lfloor #1\right\rfloor}
\DeclareMathOperator{\Gr}{Gr}
\DeclareMathOperator{\Hess}{Hess}
\DeclareMathOperator{\wt}{wt}
\DeclareMathOperator{\Sym}{Sym}
\DeclareMathOperator{\tr}{tr}
\DeclareMathOperator{\diag}{diag}

\title[Two-component logarithmic hyperbolicity]{Invariant two-jets and effective hyperbolicity for complements of two plane curves}

\author{Lei Hou}
\address{Academy of Mathematics and Systems Science, Chinese Academy of Sciences, Beijing 100190, China}
\email{houlei@amss.ac.cn}

\author{Pengchao Wang}
\address{Academy of Mathematics and Systems Science, Chinese Academy of Sciences, Beijing 100190, China}
\email{2589355972@qq.com}

\author{Song-Yan Xie}
\address{State Key Laboratory of Mathematical Sciences, Academy of Mathematics and Systems Science, Chinese Academy of Sciences, Beijing 100190, China}
\address{School of Mathematical Sciences, University of Chinese Academy of Sciences, Beijing 100049, China}
\email{xiesongyan@amss.ac.cn}

\subjclass[2020]{Primary 32H30, 32Q45; Secondary 14J70, 32A22}
\keywords{logarithmic jet differential, Demailly--Semple tower, entire curve, Second Main Theorem, hyperbolicity, exact finite-field computation}
\date{\today}

\begin{document}

\begin{abstract}
Let $D=C_1+C_2\subset\PP^2$ be a simple normal crossing union of smooth
plane curves of degrees $1\leqslant d_1\leqslant d_2$.  We prove an effective
Second Main Theorem for a general ordered pair whenever
\[
 d_1,d_2\geqslant3,
 \qquad\text{or}\qquad
 d_1=2,\ d_2\geqslant5,
 \qquad\text{or}\qquad
 d_1=1,\ d_2\geqslant8.
\]
For each admissible degree pair, there is a nonempty Zariski-open set of
ordered pairs $(C_1,C_2)$ for which every algebraically nondegenerate entire
curve $f:\CC\to\PP^2$ whose image is not contained in $D=C_1+C_2$ satisfies
\[
 T_f(r)\leqslant
 \mathcal A_{d_1,d_2}N_f^{[1]}(r,D)+o(T_f(r))\ \|.
\]
For two cubics one may take $\mathcal A_{3,3}=57$; for a conic and a
quintic, $\mathcal A_{2,5}=45$; and for a line and an octic,
$\mathcal A_{1,8}=69$.  Intersecting the resulting Zariski-open parameter
locus with Xi Chen's very-general algebraic-hyperbolicity locus yields
Kobayashi hyperbolicity and hyperbolic embedding of the complement.

The proof first constructs one negatively twisted invariant two-jet
differential.  It then obtains a second equation either from a
Demailly--El Goul zero-locus argument or by differentiating with mixed
$\OO_{\PP^2}(3)$ slanted vector fields.  A finite calculation is needed only
for a short list of low twists.  In those cases, exact rank certificates over
finite fields prove the required Key Vanishing Lemma.
\end{abstract}

\maketitle
\enlargethispage{2pt}
\tableofcontents

\section{Introduction}\label{sec:introduction}

Throughout, a property holds for a \emph{general} member of a parameter
space if it holds on a nonempty Zariski-open subset.  It holds for a
\emph{very general} member if its failure locus is contained in a countable
union of proper Zariski-closed subsets.

\subsection{Motivation and previous results}

The logarithmic Kobayashi conjecture predicts that the complement of a
general plane curve of degree at least
\[
 2\dim\PP^2+1=5
\]
is hyperbolic.  It is the higher-dimensional analogue of the Little Picard
theorem for the complement of three points in $\PP^1$.

Jet differentials provide a way to study this conjecture.  Negatively
twisted sections of
\[
 E_{k,m}T_X^*(\log D)
\]
give differential equations for entire curves in $X\setminus D$; see
\cite{Demailly1997,DethloffLu}.  The main difficulty is to obtain enough
independent equations to control their common zero set.

A Second Main Theorem is a quantitative form of this problem.  For a reduced
divisor $D=C_1+C_2$ and an entire curve $f:\CC\to\PP^2$, the desired
estimate has the form
\[
 T_f(r)\leqslant
 \mathcal A\,N_f^{[1]}(r,D)+o(T_f(r))\ \|,
 \qquad
 N_f^{[1]}(r,D)
 =N_f^{[1]}(r,C_1)+N_f^{[1]}(r,C_2).
\]
It measures how frequently an algebraically nondegenerate curve must meet
the two components.

For one smooth plane curve, Siu and Yeung proved hyperbolicity in
sufficiently high degree \cite{SiuYeung1996}.  Rousseau later obtained the
effective bound $d\geqslant14$ by combining logarithmic vector fields with
Siu's slanted-field strategy \cite{Siu,Paun,Rousseau}.  Recent work of
Hou--Huynh--Merker--Xie lowered this bound to $12$ by introducing finite
key-vanishing calculations for invariant two-jets
\cite{HouHuynhMerkerXie2026}.

The same circle of ideas applies to divisors with several components.  A
particularly useful model is the union of three general conics, for which an
effective Second Main Theorem was proved in
\cite{HouHuynhMerkerXie}.  Its exponent bounds convert an a priori infinite
vanishing problem into finite exact algebra.  The symmetry organization was
later sharpened in \cite{CuiHouLiuXie2026}.

The case of two components is more rigid.  Let
\[
 D=C_1+C_2\subset\PP^2,
 \qquad
 1\leqslant d_1:=\deg C_1\leqslant d_2:=\deg C_2,
\]
where $C_1$ and $C_2$ are smooth and transverse.  Rousseau's first result
\cite[Th\'eor\`eme~1]{Rousseau2003} proves hyperbolicity in the ranges
$d_1\geqslant5$, $d_1=4,d_2\geqslant7$, $(d_1,d_2)=(4,4)$,
$d_1=3,d_2\geqslant9$, and $d_1=2,d_2\geqslant12$.

Rousseau's later logarithmic vector-field construction
\cite[Theorem~3]{Rousseau} improves these ranges to
\[
 d_1\geqslant4,\qquad
 d_1=3,\ d_2\geqslant5,\qquad
 d_1=2,\ d_2\geqslant8,\qquad
 d_1=1,\ d_2\geqslant11.
\]
The present paper reaches every pair with $d_1,d_2\geqslant3$, every pair
with $d_1=2,d_2\geqslant5$, and every pair with
$d_1=1,d_2\geqslant8$.

For $d\geqslant1$, put
\[
 V_d=H^0(\PP^2,\OO_{\PP^2}(d)),\qquad
 \mathcal S_{d_1,d_2}=\PP(V_{d_1})\times\PP(V_{d_2}).
\]
We denote by $\mathcal S_{d_1,d_2}^{\mathrm{snc}}$ the nonempty open locus
of ordered smooth transverse pairs.  The order is kept even when
$d_1=d_2$.

Throughout the paper, an entire curve in $\PP^2$ is called
\emph{algebraically nondegenerate} when its image is Zariski dense in
$\PP^2$.  A logarithmic lift is algebraically degenerate when the Zariski
closure of its image is a proper algebraic subset of the corresponding
Semple level.

\subsection{Two basic inputs}

We begin with two basic analytic results.  The first is the jet differential
Second Main Theorem, recalled as Theorem~\ref{thm:jetSMT}.  If
\[
 0\neq\omega\in H^0\!\left(\PP^2,
 E_{k,m}T_{\PP^2}^*(\log D)\otimes\OO_{\PP^2}(-t)\right)
\]
and $f^*\omega\not\equiv0$, then
\[
 T_f(r)\leqslant
 \frac mtN_f^{[1]}(r,D)+o(T_f(r))\ \|.
\]
Thus the ratio $m/t$ is the coefficient contributed by the first
differential.

The second input is McQuillan's theorem, stated in
Theorem~\ref{thm:McQuillan}.  If the first logarithmic lift
$f_{[1]}$ is algebraically degenerate and
$d_{\mathrm{tot}}=d_1+d_2\geqslant4$, then
\[
 T_f(r)\leqslant
 \frac1{d_{\mathrm{tot}}-3}N_f^{[1]}(r,D)+o(T_f(r))\ \|.
\]
It remains to treat the case in which the first differential vanishes on
the lifted curve but the first lift is not yet known to be degenerate.

The central task is therefore to construct a second equation.  We use two
complementary mechanisms.  Siu's method differentiates the first equation
by slanted vector fields, while the Demailly--El Goul method produces a
section on an irreducible component of its zero divisor
\cite{DemaillyElGoul}.  A finite Key Vanishing Lemma separates the numerical
ranges in which these two mechanisms do not immediately apply.

\subsection{Main statements}

\begin{theorem}[Effective Second Main Theorem]\label{thm:mainSMT}
Let $1\leqslant d_1\leqslant d_2$ be integers.  Assume that
\[
 d_1,d_2\geqslant3,
 \qquad\text{or}\qquad
 d_1=2,\ d_2\geqslant5,
 \qquad\text{or}\qquad
 d_1=1,\ d_2\geqslant8.
\]
There exist a nonempty Zariski-open subset
$\mathcal U_{\mathrm{SMT}}(d_1,d_2)\subset
\mathcal S_{d_1,d_2}^{\mathrm{snc}}$ and an effectively computable constant
$\mathcal A_{d_1,d_2}>0$, both depending only on the ordered degree pair.

For every $a$ in this subset, write $D_a=C_{1,a}+C_{2,a}$.  Then every
algebraically nondegenerate entire curve $f:\CC\to\PP^2$ whose image is not
contained in $D_a$ satisfies
\begin{equation}\label{eq:mainSMT}
 T_f(r)\leqslant
 \mathcal A_{d_1,d_2}\,N_f^{[1]}(r,D_a)+o(T_f(r))\ \|.
\end{equation}
For $(d_1,d_2)=(3,3)$ one may take $\mathcal A_{3,3}=57$.
For $(d_1,d_2)=(2,5)$ one may take $\mathcal A_{2,5}=45$.
For $(d_1,d_2)=(1,8)$ one may take $\mathcal A_{1,8}=69$.
\end{theorem}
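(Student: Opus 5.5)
The plan follows the strategy announced in the introduction: produce a first negatively twisted invariant two-jet differential, then a second independent equation, and combine the jet differential Second Main Theorem (Theorem~\ref{thm:jetSMT}) with McQuillan's theorem (Theorem~\ref{thm:McQuillan}).

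\textbf{First equation.} We compute the Euler characteristic of $E_{2,m}T_{\PP^2}^*(\log D)\otimes\OO_{\PP^2}(-t)$ via its Schur-functor filtration by $\Sym^{a}\Omega(\log D)\otimes K(D)^{b}$ with $a+3b=m$. The leading coefficient is a cubic polynomial in $d_1,d_2$ and $t/m$. We then bound $h^2$ by Serre duality and the vanishing of $H^0$ of $\Sym^a T_{\PP^2}(-\log D)\otimes K(D)^{-b}(t)$ for $b$ large. This yields, for $m\gg1$ and some explicit $t$ with $m/t$ bounded, a nonzero section $\omega$. Note that $c_1^2-c_2$ of the log cotangent bundle is positive exactly in the ranges where the leading coefficient beats the $h^2$ bound. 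We expect the admissible degree pairs to be exactly where this positivity holds after the $h^2$ correction.

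\textbf{Splitting the argument.} Let $f$ be algebraically nondegenerate with $f(\CC)\not\subset D$. If $f^*\omega\not\equiv0$, Theorem~\ref{thm:jetSMT} gives the coefficient $m/t$. If instead $f_{[1]}$ is degenerate, Theorem~\ref{thm:McQuillan} gives $1/(d_1+d_2-3)$. This applies since $d_1+d_2\ge4$ in every admissible case. It remains to handle $f_{[2]}$ lying in the zero divisor $Z$ of $\omega$ on the second Semple level $X_2(\log D)$, with $f_{[1]}$ Zariski dense in $X_1$.

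\textbf{Second equation.} Two routes are available here.

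In the Demailly--El Goul route, we decompose $Z=\sum Z_j$ into irreducible components. The curve $f_{[2]}$ lies in one component $Z_j\sim a_j u_2+b_j u_1+c_j\pi^*H$. On $Z_j$ we restrict a suitable tautological line bundle and show it still has sections with negative twist by a Riemann--Roch/Morse computation on the threefold. This gives a second differential $\omega'$ with $f^*\omega'\not\equiv0$, unless $f_{[1]}$ is degenerate, which is excluded.

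In the Siu route, we use the slanted vector fields on the universal family of pairs, twisted by mixed $\OO_{\PP^2}(3)$ (Merker-type global generation of $T_{J_2}\otimes\OO(3)$ on the relative log jet space). Applying one vector field to $\omega$ produces $\omega''$ with twist $t-3$. If $t>3$, this gives an equation whose restriction to $f$ is nonzero for general $a$. The argument requires $\omega$ not to be annihilated by all such derivatives, which follows from the general-parameter setup on a Zariski-open set.

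For each degree pair, one of the two routes applies unless the twists land in a finite list of low values (in particular for $(3,3)$, $(2,5)$, $(1,8)$). For those, we need a Key Vanishing Lemma: certain spaces $H^0(E_{2,m}T^*(\log D)\otimes\OO(-t))$ with small $t$ vanish, or have controlled dimension, for general $a$. By upper semicontinuity it suffices to exhibit one explicit pair, defined over $\mathbb Z$, for which the corresponding linear system of coefficient equations has full rank modulo a prime. The exponent bounds make the system finite, and the rank computation is an exact certificate.

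\textbf{Constant and main obstacle.} Finally, $\mathcal A_{d_1,d_2}$ is the maximum of $m/t$, $m'/t'$ and $1/(d_{\mathrm{tot}}-3)$. The open set $\mathcal U_{\mathrm{SMT}}$ is the intersection of the finitely many open sets arising above. The main obstacle is the low-twist case at the boundary pairs, where neither the zero-locus argument nor slanted differentiation gives a negative twist. There the finite-field rank certificates carry the proof, and the specific constants $57,45,69$ come from optimizing $m,t$ there.
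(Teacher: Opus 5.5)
Your outline follows the architecture announced in the introduction. However, the step that carries the paper's new content is only asserted, and in the form you state it, it would fail.

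\textbf{The slanted-field step.} You differentiate the original section $\omega$ and claim that non-annihilation ``follows from the general-parameter setup.'' It does not.
\begin{enumerate}
\item If a horizontal component of $\operatorname{div}\omega$ has multiplicity $p\geqslant2$, every homogeneous derivative of $\omega$ is divisible by $s^{p-1}$. It therefore vanishes identically along that component, whatever field you use. The paper avoids this by fixing, before any curve is considered, one reduced prime factor $\omega_1=\gamma^{b_1}s$ with $t/m\geqslant T/M$. It spreads this factor over a single Zariski-open set as a flat family with geometrically integral fibers (Proposition~\ref{prop:factor-family}), and differentiates only that factor.
\item Even then, you rely on global generation of the relative two-component logarithmic jet tangent bundle by $\OO_{\PP^2}(3)$-twisted fields. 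This is not available, and the paper neither proves nor uses it; the introduction presents the passage from twist $-t+7$ to $-t+3$ as new.
\item What the paper actually does: its mixed fields solve the two Hessian-cancellation equations simultaneously, via $C_T=-G^{-1}(\ldots)$. They are then descended through both root gauges, the double corners and the Semple boundary. The only thing proved is that their generic span contains the three residual derivations $D_H,D_E,D_F$ on $K_0[u,v,W]$.
\item Independence then needs a separate rigidity argument (Proposition~\ref{prop:O3-rigidity}). The commutators produce $-2v^2\Lambda(u,v)\partial_W$ and its analogues, with $\Lambda\not\equiv0$, which removes the $W$-dependence of an invariant prime. The binary $\mathfrak{sl}_2$-action then forces that prime to be constant.
\end{enumerate}
This rigidity argument is the heart of the proof, and your proposal has no substitute for it. The output is also not ``$f^*\omega''\not\equiv0$ for general $a$''; a genericity condition on $a$ cannot depend on $f$. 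What one gets is a codimension-two common zero set of $\omega_1,\omega_2$ away from $\Gamma_2$. Hence either $f^*\omega_2\not\equiv0$, or $f_{[1]}$ is algebraically degenerate.

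\textbf{The numerics.} These are misidentified in ways that matter for the stated range.
\begin{enumerate}
\item The existence condition is not $\bar c_1^2-\bar c_2>0$. That is the one-jet condition, and it fails at all three explicit pairs: $\bar c_1^2-\bar c_2=-3,-5,-13$ for $(3,3),(2,5),(1,8)$.
\item Nor is $h^2$ a correction term. By the logarithmic Bogomolov vanishing (Lemma~\ref{lem:numerical-Bogomolov}), every Serre-dual graded piece has no sections once $\delta<1/3$. So $h^2=0$, and the range is exactly $A=13\bar c_1^2-9\bar c_2>0$.
\item The zero-locus route needs the explicit threshold $t/m<\rho_{\mathrm{DEG}}=A/(12\kappa)$; this is what singles out the six boundary pairs.
\item That route yields only a section on $Z$, not a global differential $\omega'$. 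It must be evaluated through the intrinsic restricted pairing of Lemma~\ref{lem:restricted-tautological-evaluation}.
\end{enumerate}

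\textbf{The constants.} The values $57,45,69$ do not come from optimizing $(m,t)$ in the finite cases. They are the zero-locus coefficients $3/\tau_1(\rho)$ at a ratio split: $(57+\sqrt{3189})/2$ for $(3,3)$ at $\rho=1/5$, and $(45+\sqrt{1989})/2$ and $\tfrac32(23+5\sqrt{21})$ for $(2,5)$ and $(1,8)$ at $\rho=1/3$. The Key Vanishing Lemma is what excludes every $t\leqslant3$ above the split. There, $t\geqslant4$, so the $\OO(3)$ branch is bounded by $(5t-1)/(t-3)\leqslant19$ or $(3t-1)/(t-3)\leqslant11$. In general, the lemma forces $t/m<\rho_{\mathrm{DEG}}$ whenever $t\leqslant3$.
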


The proof of Theorem~\ref{thm:mainSMT} is given in
Subsection~\ref{sec:main-proofs}.

For $a\in\mathcal S_{d_1,d_2}^{\mathrm{snc}}$, let
$D_a=C_{1,a}+C_{2,a}$.  If $\Gamma\not\subset D_a$ is an irreducible curve
with normalization
\[
 \nu:\widetilde\Gamma\longrightarrow\Gamma\hookrightarrow\PP^2,
 \qquad B_\nu:=\operatorname{Supp}\nu^*D_a,
\]
then the logarithmic differential
\[
 d\nu:T_{\widetilde\Gamma}(-\log B_\nu)
 \longrightarrow\nu^*T_{\PP^2}(-\log D_a)
\]
is nonzero on a dense Zariski-open subset.  Its projectivization extends
uniquely across the remaining finitely many points to a morphism
\begin{equation}\label{eq:log-tangent-lift}
 \nu_{[1]}:\widetilde\Gamma\longrightarrow
 \PP\bigl(T_{\PP^2}(-\log D_a)\bigr),
 \qquad
 x\longmapsto
 \left[d\nu_x\bigl(T_{\widetilde\Gamma,x}(-\log B_\nu)\bigr)\right],
\end{equation}
called the \emph{logarithmic tangent lift} of $\Gamma$ (or of $\nu$).
We call $\Gamma$ \emph{log-parabolic} with respect to $D_a$ when
\[
 2g(\widetilde\Gamma)-2+\#B_\nu\leqslant0.
\]

\begin{theorem}[Uniform direction locus for log-parabolic curves]
\label{thm:plane-pair-direction-locus}
Let $1\leqslant d_1\leqslant d_2$ satisfy one of the degree assumptions in
Theorem~\ref{thm:mainSMT}.  There exist a nonempty Zariski-open subset
\[
 \mathcal U_{\mathrm{dir}}(d_1,d_2)
 \subset\mathcal U_{\mathrm{SMT}}(d_1,d_2)
 \subset\mathcal S_{d_1,d_2}^{\mathrm{snc}}
\]
and a closed subscheme
\[
 \mathcal R\subset
 \mathcal X_{1,\mathcal U_{\mathrm{dir}}}
 :=\PP\!\left(
 T_{\PP^2\times\mathcal U_{\mathrm{dir}}/\mathcal U_{\mathrm{dir}}}
 (-\log\mathcal D_{\mathcal U_{\mathrm{dir}}})
 \right),
\]
where $\mathcal D_{\mathcal U_{\mathrm{dir}}}$ is the universal divisor,
such that $\mathcal R$ is projective over $\mathcal U_{\mathrm{dir}}$ and
\[
 \dim\mathcal R_a\leqslant2
 \qquad(a\in\mathcal U_{\mathrm{dir}}).
\]
If $\Gamma\not\subset D_a$ is an irreducible curve with normalization $\nu$
and
\[
 2g(\widetilde\Gamma)-2+
 \#\operatorname{Supp}\nu^*D_a\leqslant0,
\]
then
\[
 \nu_{[1]}(\widetilde\Gamma)\subset\mathcal R_a.
\]
Thus the conclusion applies, in particular, to a rational curve meeting
$D_a$ in at most two points and to an elliptic curve disjoint from $D_a$.
\end{theorem}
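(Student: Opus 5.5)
The strategy is to take $\mathcal R$ to be the relative base locus of the negatively twisted invariant two-jet differentials built in the proof of Theorem~\ref{thm:mainSMT}, pushed down to the first Semple level, and then to show that a log-parabolic curve can never escape this locus. Over a suitable open set $\mathcal U$ of the parameter space, the construction supplies a family
\[
 \omega_a\in H^0\!\left(\PP^2,E_{2,m}T_{\PP^2}^*(\log D_a)\otimes\OO_{\PP^2}(-t)\right),\qquad t>0,
\]
depending algebraically on $a$. Using Siu's mixed $\OO_{\PP^2}(3)$ slanted vector fields (or the Demailly--El Goul construction), we also have the second equations obtained by differentiating $\omega_a$. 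These sections cut out a closed subscheme $\mathcal Z\subset\mathcal X_{2,\mathcal U}$ of the relative second Semple level. The Key Vanishing Lemma and the zero-locus argument from the main proof show that $\mathcal Z_a$ is a proper closed subset whose fibre dimension is at most $2$ over a smaller open set $\mathcal U_{\mathrm{dir}}$. We then set $\mathcal R:=\pi_{2,1}(\mathcal Z)$, where $\pi_{2,1}:\mathcal X_{2}\to\mathcal X_{1}$ is the projection. Since $\pi_{2,1}$ is proper, $\mathcal R$ is closed and projective over $\mathcal U_{\mathrm{dir}}$, and $\dim\mathcal R_a\leqslant\dim\mathcal Z_a\leqslant2$. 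The dimension bound is preserved after shrinking the parameter set, by upper semicontinuity of fibre dimension.

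The key pointwise step is the vanishing argument. Let $\nu:\widetilde\Gamma\to\PP^2$ be log-parabolic. The log-cotangent bundle $K_{\widetilde\Gamma}(B_\nu)$ has degree $2g-2+\#B_\nu\leqslant0$, so the curve cannot carry a nonzero negatively twisted jet differential. We argue as follows. Fix a global logarithmic coordinate on $\widetilde\Gamma\setminus B_\nu$: this is $\CC$ or $\CC^*$ in the rational case, and a uniformization in the elliptic case. Then $\nu^*\omega_a$ is a section of $\OO\bigl(m(K_{\widetilde\Gamma}+B_\nu)\bigr)\otimes\nu^*\OO(-t)$. This line bundle has degree $m(2g-2+\#B)-t\deg\Gamma<0$, so $\nu^*\omega_a\equiv0$. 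The same applies to every differentiated equation, since each of those is again negatively twisted once the slanted field twist is absorbed by the choice of $t$. Hence the two-jet lift $\nu_{[2]}(\widetilde\Gamma)$ lies in $\mathcal Z_a$. This step requires the lift to extend to the compact curve $\widetilde\Gamma$, which follows as in \eqref{eq:log-tangent-lift}. Projecting down gives $\nu_{[1]}(\widetilde\Gamma)\subset\mathcal R_a$. Alternatively, one could apply the Second Main Theorem of Theorem~\ref{thm:jetSMT} to the entire curve obtained by composing $\nu$ with the universal cover of $\widetilde\Gamma\setminus B_\nu$, which is $\CC$ or $\CC^*$ lifted to $\CC$ via $\exp$. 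For that curve $N^{[1]}(r,D)=0$ while $T_f>0$, so $f^*\omega_a\equiv0$.

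The main obstacle is uniformity in $a$. We need the common zero scheme of the first and second equations to have fibre dimension at most $2$ over a nonempty open set. For this we would invoke the fibrewise statement used in the proof of Theorem~\ref{thm:mainSMT} at one point $a_0$, given by the finite-field rank certificates or the irreducibility argument. We then spread it out by semicontinuity, being careful that the slanted vector fields vary algebraically with $a$. The Demailly--El Goul branch is more delicate, because it uses an irreducible component of the zero divisor. Here we would pass to a finite étale cover of the parameter space on which that component is defined, or simply include the whole zero divisor of $\omega_a$ intersected with the auxiliary section's zero locus. This gives dimension $\leqslant2$ inside the $4$-dimensional $\mathcal X_{2,a}$, because the two equations are independent generically.
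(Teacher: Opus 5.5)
\textbf{There is a genuine gap in your dimension count.} Every equation you use vanishes along the vertical divisor $\Gamma_2$. By Lemma~\ref{lem:irreducible-reduction}, the first equation is $\omega_1=\gamma^{b_1}s$, with $b_1\geqslant 2b_2\geqslant2$ when $b_2>0$ and $b_1=m$ when $b_2=0$. By the Leibniz rule, the slanted derivative $D_{\mathcal V}(\gamma^{b_1}s)$ is still divisible by $\gamma^{b_1-1}$.

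So your common zero scheme $\mathcal Z_a$ contains the threefold $\Gamma_{2,a}$. This threefold is a section of $\pi_{2,1}$, so it maps isomorphically onto $X_{1,a}$. Your $\mathcal R=\pi_{2,1}(\mathcal Z)$ is therefore all of $\mathcal X_1$, and $\dim\mathcal R_a=3$. This is exactly why Proposition~\ref{prop:globalized-O3} only claims codimension two \emph{away from} $\Gamma_{2,a}$.

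The fix is to take $\mathcal Y=\overline{\bigl((\omega_1=0)\cap(\omega_2=0)\bigr)\setminus\Gamma_2}$. You then need a step that is missing from your proposal: by Lemma~\ref{lem:regular-lift-not-vertical}, $\nu_{[2]}(\widetilde\Gamma)\not\subset\Gamma_2$. Hence its dense part off $\Gamma_2$, and therefore its closure, lies in $\mathcal Y_a$. When $b_2=0$ there is no second equation at all. There one uses that $Z=\pi_{2,1}^{-1}(Z_1)$ and takes the relative divisor $Z_1$ itself as $\mathcal R$.

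\textbf{The second gap is the low-twist branch.} You claim each differentiated equation is negatively twisted ``once the slanted field twist is absorbed by the choice of $t$''. But $t$ is not at your disposal. It is the twist of the prime factor from Lemma~\ref{lem:irreducible-reduction}, and only the ratio $t/m\geqslant\rho_0$ is controlled. For $t\leqslant3$ the derivative $\omega_2$ has twist $-t+3\geqslant0$, and your degree argument yields nothing.

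The paper's mechanism in this range has three parts:
\begin{itemize}
\item The Key Vanishing Lemma is used only to force $t/m<\rho_{\mathrm{DEG}}$; it bounds no fibre dimension.
\item Theorem~\ref{thm:zero-locus}, together with base change, supplies a relative section $\Sigma$ of $\bigl(\OO_{\mathcal X_2}(2p,p)\otimes\Pi^*\operatorname{pr}_1^*\OO_{\PP^2}(-p\tau)\bigr)|_{\mathcal Z}$ that is nonzero on every fibre. One sets $\mathcal Y=\operatorname{div}_{\mathcal Z}(\Sigma)$.
\item $\Sigma$ lives only on the chosen prime component; it is not a jet differential on $\PP^2$. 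So neither your pullback argument nor ``the whole zero divisor intersected with the auxiliary zero locus'' applies to it. One must first show $\nu_{[2]}(\widetilde\Gamma)\subset\mathcal Z_a$, using the vanishing of $\omega_1$ and non-verticality. Then the intrinsic tautological evaluation of Lemma~\ref{lem:restricted-tautological-evaluation} gives a section of $K_{\widetilde\Gamma}^{3p}(3pB_\nu)\otimes\nu^*\OO_{\PP^2}(-p\tau)$. Its degree $3p(2g-2+\#B_\nu)-p\tau\deg\Gamma$ is negative, so it vanishes.
\end{itemize}

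Your pointwise vanishing argument for genuine jet differentials on $\PP^2$ is fine. So is the final projection and semicontinuity step, once $\mathcal Y$ is defined correctly.
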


The point is the uniformity: one algebraic set $\mathcal R$, defined over a
single Zariski-open subset of the parameter space, contains the tangent
lifts of all log-parabolic curves in all its fibers.  Fiberwise proper
subsets, allowed to vary with the curve, would not suffice.  Moreover, each
component of $\mathcal R_a$ that dominates $\PP^2$ is generically finite over
$\PP^2$ and hence defines an algebraic multi-foliation; the remaining
components project to proper algebraic subsets.  This is the relative input
used in forthcoming work of Xie and Zhao on the folklore general-pair form
of the hyperbolicity conjecture; see Remark~\ref{rem:general-pair-problem}.
The proof of Theorem~\ref{thm:plane-pair-direction-locus} is given in
Subsection~\ref{sec:main-proofs}.

Let $\mathcal S_{\mathrm{Chen}}(d_1,d_2)$ denote the complement of the
countable union of proper closed subsets excluded by
Theorem~\ref{thm:Chen}.

\begin{corollary}[Hyperbolicity and hyperbolic embedding]
\label{cor:hyperbolicity}
Under the degree assumptions of Theorem~\ref{thm:mainSMT}, the complement
$\PP^2\setminus D_a$ is Kobayashi hyperbolic and hyperbolically embedded
in $\PP^2$ for every
\[
 a\in\mathcal U_{\mathrm{SMT}}(d_1,d_2)
 \cap\mathcal S_{\mathrm{Chen}}(d_1,d_2).
\]
In particular, this conclusion holds for a very general ordered pair of
smooth transverse curves in the stated degree range.
\end{corollary}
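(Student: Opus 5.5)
We prove Corollary~\ref{cor:hyperbolicity} with the Green--Brody criterion for hyperbolic embedding. Fix $a$ in the stated intersection and put $X=\PP^2$, $D=D_a$. By Green's theorem (in the form that follows from Brody's reparametrization lemma), $X\setminus D$ is hyperbolically embedded in $X$, and in particular Kobayashi hyperbolic, once three conditions hold. First, every entire curve $f:\CC\to X$ with image in $X\setminus D$ is constant. Second, for each component $C_i$, every entire curve $f:\CC\to C_i$ whose image avoids $C_j$ ($j\neq i$) is constant. Third, every entire curve in $C_1\cap C_2$ is constant, which is automatic because this set is finite. So it suffices to exclude nonconstant entire curves in $\PP^2\setminus D$ and in $C_i\setminus C_j$.

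The stratum case is elementary. If $d_i\geqslant3$, then $C_i$ has genus at least $1$, and $C_i\setminus C_j$ is a curve of genus at least $1$ with $d_1d_2\geqslant1$ punctures, so it is hyperbolic. The remaining cases are $d_1=1$ or $d_1=2$. There $C_1\cong\PP^1$ and transversality gives $\#(C_1\cap C_2)=d_1d_2\geqslant8$, so $C_1\setminus C_2$ is $\PP^1$ minus at least three points. For $i=2$ we have $d_2\geqslant5$, so $g(C_2)\geqslant6$ and $C_2\setminus C_1$ is hyperbolic. In every case the pieces $C_i\setminus C_j$ contain no nonconstant entire curves.

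For the open stratum, let $f:\CC\to\PP^2\setminus D$ be nonconstant. Then $N_f^{[1]}(r,D)=0$.

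\emph{Case 1: $f$ is algebraically nondegenerate.} Since $a\in\mathcal U_{\mathrm{SMT}}$, Theorem~\ref{thm:mainSMT} gives $T_f(r)\leqslant o(T_f(r))\ \|$. This is impossible, because $T_f(r)\to\infty$ for a nonconstant $f$.

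\emph{Case 2: $f$ is algebraically degenerate.} The Zariski closure of $f(\CC)$ is then an irreducible curve $\Gamma\not\subset D$. By Theorem~\ref{thm:Chen}, applied for $a\in\mathcal S_{\mathrm{Chen}}$, $X\setminus D$ is algebraically hyperbolic, so every such $\Gamma$ satisfies $2g(\widetilde\Gamma)-2+\#\nu^{-1}(D)>0$. Since $f$ avoids $D$, it lifts to a nonconstant entire curve in $\widetilde\Gamma\setminus\nu^{-1}(D)$ (lifting through the normalization is standard). But this Riemann surface is hyperbolic, a contradiction.

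The main point to check is that Theorem~\ref{thm:Chen} delivers exactly this conclusion, namely no curve with $2g-2+\#B_\nu\leqslant0$ in the stated degree range, for very general $a$. We also need the lift through the normalization to be legitimate when $f(\CC)$ hits singular points of $\Gamma$. This holds because the normalization is a finite, generically one-to-one map, so $f$ factors through $\widetilde\Gamma$ by the Riemann extension theorem.

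Finally, $\mathcal U_{\mathrm{SMT}}$ is nonempty Zariski-open and $\mathcal S_{\mathrm{Chen}}$ is the complement of a countable union of proper closed sets. Their intersection is therefore the complement of a countable union of proper closed subsets of $\mathcal S^{\mathrm{snc}}_{d_1,d_2}$, which gives the ``very general'' statement.
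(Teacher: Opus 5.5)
Your proof is correct and follows the paper's argument essentially step by step. Theorem~\ref{thm:mainSMT} excludes nondegenerate curves, Xi Chen's inequality (Corollary~\ref{cor:no-degenerate-curve}, whose bound $(d_{\mathrm{tot}}-4)\deg\Gamma$ is positive since $d_{\mathrm{tot}}\geqslant6$ in this range) excludes degenerate ones, the punctured strata $C_i\setminus C_j$ are hyperbolic, and Green's criterion then gives the hyperbolic embedding.
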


The proof of Corollary~\ref{cor:hyperbolicity} is given in
Subsection~\ref{sec:main-proofs}.

The two genericity conditions have different origins.  The Second Main
Theorem holds on the Zariski-open set
$\mathcal U_{\mathrm{SMT}}$.  Xi Chen's theorem excludes a countable union
of proper closed subsets.  Since
$\mathcal S_{d_1,d_2}^{\mathrm{snc}}$ is irreducible, the two loci have a
nonempty very-general intersection.

\subsection{Significance and new ingredients}

The main theorem has four useful features.  First, the smallest total degree
in the theorem comes from two cubics:
\[
 \deg(C_1+C_2)=6.
\]
This is only one above the expected logarithmic Kobayashi bound
$2\dim\PP^2+1=5$.  The reducible two-cubic family is not the full parameter
space of plane sextics, so this is not a direct case of the
conjecture for a general irreducible curve.

Still, hyperbolicity with only two components and total degree six shows the
power of invariant two-jets in low degree.  The
two-cubic divisor has the same total degree as three conics but one fewer
component.  Thus methods that require at least three logarithmic components
do not apply directly.

Second, the conclusion is quantitative.  We prove an effective Second Main
Theorem with truncation at level one, rather than only the nonexistence of
entire curves in the complement.  It controls the intersections of every
algebraically nondegenerate entire curve with the two components and gives
explicit constants in the three boundary cases.

Third, we construct simultaneous mixed
$\OO_{\PP^2}(3)$ slanted vector fields.  The two logarithmic equations share
the same base-jet variables, so their tangency conditions cannot be corrected
independently.  We solve the two Hessian-cancellation equations
simultaneously and descend the resulting fields through both root gauges,
the logarithmic double locus, and the Semple boundary.

The fact that twist three is enough is important for the constants.  It
strengthens the differentiated equation from twist $-t+7$ to twist $-t+3$.
It also shortens the remaining low-twist range that must be treated by finite
calculation.  Separate generators for a line component and separate
Taylor-remainder arguments for a conic make the same package available at
the low-degree boundary.

Fourth, the finite Key Vanishing Lemma gives a reusable method.  We turn the
coupled extension conditions for two
logarithmic residues into sparse integral matrices and organize them by the
available symmetries.  For equal degrees, character and component-exchange
decompositions split the matrices into smaller independent blocks.

For unequal degrees, degree-sensitive normal forms keep the two
homogeneities.  Exact full-column-rank certificates over finite fields then
give rigorous and reproducible vanishing proofs.  The same symmetry method
can be used in similar finite jet-vanishing problems once their transition
and divisibility conditions are written in this algebraic form.

\subsection{Proof strategy}

Riemann--Roch and logarithmic $H^2$ vanishing first produce a negatively
twisted invariant two-jet differential.  The Demailly--El Goul reduction
then selects an irreducible horizontal factor
\[
 \omega_1\in H^0\!\left(\PP^2,
 E_{2,m}T_{\PP^2}^*(\log D)\otimes\OO_{\PP^2}(-t)\right),
 \qquad
 \frac tm\geqslant\rho_0.
\]
After the fixed Semple-boundary factor is removed, its horizontal divisor
has class
\[
 b_1u_1+b_2u_2-th,\qquad b_1\geqslant2b_2\geqslant0.
\]

The proof has the following branches.  The zero-locus threshold
$\rho_{\mathrm{DEG}}$ is defined in Section~\ref{sec:numerical-thresholds}.
\begin{center}
\begin{tabular}{p{0.47\textwidth}p{0.41\textwidth}}
\toprule
\text{branch}&\text{input used}\\ \midrule
$f^*\omega_1\not\equiv0$&jet Second Main Theorem\\
$f^*\omega_1\equiv0$, $b_2=0$&algebraic degeneracy on the first Semple level\\
$f^*\omega_1\equiv0$, $b_2>0$, $t>3$&mixed $\OO_{\PP^2}(3)$ differentiation\\
$f^*\omega_1\equiv0$, $b_2>0$, $t\leqslant3$,
$t/m<\rho_{\mathrm{DEG}}$&restricted zero-locus theorem\\
remaining low-twist cases&finite Key Vanishing Lemma\\
\bottomrule
\end{tabular}
\end{center}

The first four branches are geometric.  They treat the full infinite range
except for finitely many low-twist pairs.  The Key Vanishing Lemma rules out
exactly these remaining cases.

The constants $57$, $45$, and $69$ use sharper rational splits than the
basic division at $t=3$.  Section~\ref{sec:mechanisms} gives the full
calculation in a single comparison table and then proves each case.

The finite step concerns only the six boundary pairs
\[
 (3,3),\ (3,4),\ (2,5),\ (2,6),\ (1,8),\ (1,9).
\]
For these pairs, Lemma~\ref{lem:keyvanishing} rules out a finite list of
low-twist differentials.  The exact lists and the statement of the lemma are
given at the start of Section~\ref{sec:finite-vanishing}.  This keeps the
introduction focused on the proof rather than on the entries of the finite
calculation.
\paragraph{Computational input.}

The finite-rank claims are exact statements about integral matrices.  Full
column rank modulo one good prime shows that one maximal integer minor is
nonzero.  The matrix therefore has full column rank over $\mathbb Q$ and
$\CC$.

The accompanying archive records the source, case list, matrix
dimensions, ranks, nullities, certificates, hashes, and a separate checker.
Runs at a second prime check the implementation; they are not extra
mathematical assumptions.  The archive will also be available on Song-Yan
Xie's homepage.\footnote{The computational evidence and subsequent updates are maintained at
\url{https://xiesongyan.github.io/}.}

\subsection{Structure of the article}

The paper is organized so that readers may enter at the part most relevant
to them.  The following guide records the main dependencies.
\begin{center}
\begin{tabular}{p{0.39\textwidth}p{0.51\textwidth}}
\toprule
\text{topic of interest}&\text{where to read}\\ \midrule
logarithmic jets and analytic inputs&
Section~\ref{sec:jets}\\
Chern numbers and the degree phase diagram&
Section~\ref{sec:numerical-thresholds}\\
existence and factorization of the first differential&
Section~\ref{sec:first-differential}\\
the simultaneous mixed $\OO(3)$ construction&
Section~\ref{sec:mixed-O3}\\
the case split and the three explicit constants&
Section~\ref{sec:mechanisms}, especially
Subsection~\ref{sec:main-proofs}\\
the finite geometric reduction&
Section~\ref{sec:finite-vanishing}\\
the exact rank certificates&
Section~\ref{sec:rank-certificates}\\
possible improvements of the constants&
Section~\ref{sec:constant-frontier}\\
SNC verification for the explicit test pairs&
Appendix~\ref{app:SNC-checks}\\
resource estimates for future computations&
Appendix~\ref{app:computational-scale}\\
\bottomrule
\end{tabular}
\end{center}

Section~\ref{sec:jets} separates standard inputs from the direct-image and
zero-locus arguments proved here.  Readers interested mainly in the new
geometric construction may go from the main statements directly to
Section~\ref{sec:mixed-O3}.  Readers interested mainly in the exact finite
method may begin with Lemma~\ref{lem:keyvanishing} and then continue with
Sections~\ref{sec:finite-vanishing} and~\ref{sec:rank-certificates}.
\section{Logarithmic jets, direct images, and analytic inputs}\label{sec:jets}

This section has four parts.  The first part fixes the jet notation and
proves the direct-image facts used later.  The second part states the two
basic analytic theorems.  The third part proves the cohomological and
zero-locus results needed for the second equation.  The final part treats
algebraically degenerate curves.  Thus not every result in this section is
an external input.

For quick reference, the main notation is listed below.  All bundles are
logarithmic with respect to $D=C_1+C_2$ unless another divisor is shown.
\begin{center}
\small
\begin{tabular}{>{\raggedright\arraybackslash}p{0.20\textwidth}
                >{\raggedright\arraybackslash}p{0.69\textwidth}}
\toprule
symbol&meaning\\ \midrule
$\bT$&$T_{\PP^2}(-\log D)$\\
$\bK$&$K_{\PP^2}+D=\det(\bT^*)$\\
$E_{2,m}\bT^*$&invariant logarithmic two-jets of weighted degree $m$\\
$X_1,X_2$&the first two levels of the logarithmic Demailly--Semple tower\\
$u_1,u_2,h$&the two tautological classes and the pullback of the hyperplane class\\
$\Gamma_2$&the vertical divisor on $X_2$\\
$f_{[1]},f_{[2]}$&the first and second logarithmic lifts of an entire curve $f$\\
$m,t,\rho=t/m$&jet weight, negative base twist, and their ratio\\
$\rho_{\mathrm{RR}}$&upper ratio allowed by the Riemann--Roch existence argument\\
$\rho_{\mathrm{DEG}}$&upper ratio allowed by the Demailly--El Goul zero-locus argument\\
$\tau_1(\rho)$&positive endpoint for the twist used on the zero divisor\\
\bottomrule
\end{tabular}
\end{center}

\subsection{Nevanlinna functions}
Let $\omega_{\mathrm{FS}}$ be the Fubini--Study form.  We use the standard Nevanlinna functions and normalization; see \cite{RuBook}.  For $f:\CC\to\PP^2$, set
\[
   T_f(r)=\int_1^r\frac{ds}{s}\int_{|z|<s}f^*\omega_{\mathrm{FS}}.
\]
If $E\subset\PP^2$ is an irreducible divisor and $f(\CC)\not\subset E$, let
\[
   N_f^{[1]}(r,E)=\int_1^r
   \sum_{|z|<s}\min\{1,\operatorname{ord}_z f^*E\}\frac{ds}{s}.
\]
For the reduced divisor $D=C_1+C_2$, we use the componentwise convention
\[
   N_f^{[1]}(r,D)
   :=N_f^{[1]}(r,C_1)+N_f^{[1]}(r,C_2).
\]
The symbol $\|$ means that the inequality holds for all $r>1$ outside a set of finite Lebesgue measure.  For an algebraically nondegenerate curve in $\PP^2$, the standard growth lemma gives
\[
   O(\log T_f(r)+\log r)=o(T_f(r))\ \|,
\]
so we shall replace the logarithmic error term by $o(T_f)$ after applying a jet-differential estimate.

\subsection{Invariant logarithmic two-jets}
We follow the standard invariant logarithmic jet notation of \cite{Demailly1997,DethloffLu}.  Locally on a logarithmic surface, an invariant two-jet differential of weighted degree $m$ has the form
\begin{equation}\label{eq:local-twojet}
   \sum_{0\leqslant j\leqslant\floor{m/3}}
   \sum_{\alpha_1+\alpha_2=m-3j}
   R_{\alpha_1,\alpha_2,j}
   (f_1')^{\alpha_1}(f_2')^{\alpha_2}
   (f_1'f_2''-f_2'f_1'')^j.
\end{equation}
Its associated graded bundle is
\begin{equation}\label{eq:filtration}
   \Gr^\bullet E_{2,m}\bT^*
   =\bigoplus_{0\leqslant j\leqslant\floor{m/3}}
   S^{m-3j}\bT^*\otimes\bK^j.
\end{equation}

\subsection{The logarithmic Demailly--Semple tower}
Set $X_0=\PP^2$ and $V_0=\bT$.  We use the convention that $\PP(V)$ parametrizes lines in $V$, so that $\OO_{\PP(V)}(-1)$ is the tautological subbundle.  Let
\[
   X_1=\PP(V_0),\qquad X_2=\PP(V_1),
\]
be the first two levels of the logarithmic Demailly--Semple tower, with projections $\pi_{2,1}:X_2\to X_1$ and $\pi_{2,0}:X_2\to X_0$.  Write
\[
   u_1=c_1\bigl(\pi_{2,1}^*\OO_{X_1}(1)\bigr),
   \qquad
   u_2=c_1\bigl(\OO_{X_2}(1)\bigr),
   \qquad
   h=\pi_{2,0}^*c_1\bigl(\OO_{\PP^2}(1)\bigr),
\]
and
\[
   \OO_{X_2}(a,b)
   :=\pi_{2,1}^*\OO_{X_1}(a)\otimes\OO_{X_2}(b).
\]
Let $\Gamma_2\subset X_2$ be the vertical divisor at the second stage.  With the above convention,
\[
   \OO_{X_2}(\Gamma_2)\simeq\OO_{X_2}(-1,1),
   \qquad
   \OO_{X_2}(3)\otimes\OO_{X_2}(-2\Gamma_2)
   \simeq\OO_{X_2}(2,1).
\]
Let $\OO_{X_2}(m)$ denote the standard level-two Demailly--Semple tautological bundle of weighted degree $m$.  The direct-image formula identifies invariant two-jets with its sections:
\begin{equation}\label{eq:direct-image}
   (\pi_{2,0})_*\OO_{X_2}(m)
   \simeq E_{2,m}\bT^*.
\end{equation}
We shall also use the following more precise direct-image statement for the
bundle $\OO_{X_2}(2,1)$.  Its two-stage filtration proof is the one used in
\cite[Proposition~6.1]{HouHuynhMerkerXie}.  Keeping every negative-degree
graded piece gives the sharp criterion in terms of the single defect
$a_1-2a_2$; we record the resulting statement and proof for later use.

\begin{proposition}[Sharp higher direct images]
\label{prop:sharp-higher-direct-images}
Let $a_1,a_2\in\mathbb Z$.  If $a_2=-1$, then
\[
 R^q(\pi_{2,0})_*\OO_{X_2}(a_1,-1)=0
 \qquad(q\geqslant0).
\]
Suppose that $a_2\geqslant0$ and put $d=a_1-2a_2$.  Then
\[
 R^q(\pi_{2,0})_*\OO_{X_2}(a_1,a_2)=0\qquad(q\geqslant2),
\]
and
\[
 R^1(\pi_{2,0})_*\OO_{X_2}(a_1,a_2)=0
 \quad\Longleftrightarrow\quad d\geqslant-1.
\]
If $d\leqslant-2$, the nonzero $R^1$ has a finite filtration with successive
quotients
\begin{equation}\label{eq:R1-filtration-quotients}
 \bK^{a_2-k-1}\otimes\Sym^{-d-3k-2}\bT,
 \qquad
 0\leqslant k\leqslant
 \min\left\{a_2,\floor{\frac{-d-2}{3}}\right\}.
\end{equation}
\end{proposition}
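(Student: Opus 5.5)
The plan is to factor $\pi_{2,0}=\pi_{1,0}\circ\pi_{2,1}$ and compute the higher direct images in two stages, using the Leray spectral sequence. Both stages are $\PP^1$-bundles, so at each stage only $R^0$ and $R^1$ can be nonzero.

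\emph{First stage.} Write $X_2=\PP(V_1)$, where $V_1\subset T_{X_1}$ sits in
\[
0\to\OO_{X_1}(-1)\to V_1\to T_{X_1/X_0}\to0,
\qquad
T_{X_1/X_0}\simeq\OO_{X_1}(2)\otimes\pi_{1,0}^*\bK^{-1}.
\]
By the projection formula,
\[
R^q(\pi_{2,1})_*\OO_{X_2}(a_1,a_2)=\OO_{X_1}(a_1)\otimes R^q(\pi_{2,1})_*\OO_{\PP(V_1)}(a_2).
\]
\begin{enumerate}[label=(\roman*)]
\item For $a_2=-1$ every $R^q$ vanishes on the $\PP^1$-fibres. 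The Leray spectral sequence then gives $R^q(\pi_{2,0})_*=0$ for all $q$, which is the first assertion.
\item For $a_2\geqslant0$ only $R^0=\OO_{X_1}(a_1)\otimes\Sym^{a_2}V_1^*$ survives, so Leray reduces everything to $R^q(\pi_{1,0})_*$ of this bundle.
\end{enumerate}
Dualizing the sequence above gives
\[
0\to\OO_{X_1}(-2)\otimes\bK\to V_1^*\to\OO_{X_1}(1)\to0.
\]
Hence $\Sym^{a_2}V_1^*$ has a filtration with graded pieces $\OO_{X_1}(a_2-3j)\otimes\bK^j$ for $0\leqslant j\leqslant a_2$. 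After twisting by $\OO_{X_1}(a_1)$ and setting $k=a_2-j$, the pieces become
\[
\OO_{X_1}(d+3k)\otimes\bK^{a_2-k},\qquad 0\leqslant k\leqslant a_2,\quad d=a_1-2a_2 .
\]

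\emph{Second stage.} On the $\PP^1$-bundle $X_1=\PP(\bT)$, relative duality with $K_{X_1/X_0}=\OO_{X_1}(-2)\otimes\pi_{1,0}^*\bK$ gives:
\begin{itemize}
\item $R^1(\pi_{1,0})_*\OO_{X_1}(e)\simeq\Sym^{-e-2}\bT\otimes\bK^{-1}$ for $e\leqslant-2$;
\item $R^1(\pi_{1,0})_*\OO_{X_1}(e)=0$ for $e\geqslant-1$;
\item $R^{\geqslant2}(\pi_{1,0})_*=0$ always.
\end{itemize}
Applied to the piece with index $k$ (where $e=d+3k$), this yields exactly the quotient $\bK^{a_2-k-1}\otimes\Sym^{-d-3k-2}\bT$ of \eqref{eq:R1-filtration-quotients}. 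It is nonzero precisely when $d+3k\leqslant-2$, which gives the range $0\leqslant k\leqslant\min\{a_2,\floor{(-d-2)/3}\}$.

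From this computation:
\begin{itemize}
\item $R^{\geqslant2}$ vanishes on every piece, hence on the whole bundle.
\item If $d\geqslant-1$, every piece has $e\geqslant-1$. Induction along the filtration then gives $R^1=0$.
\end{itemize}

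\emph{Main obstacle.} The difficulty is the converse, together with the claim that $R^1$ itself carries a filtration with exactly these quotients. The long exact sequences along the filtration contain connecting maps $(\pi_{1,0})_*(\text{quotient})\to R^1(\text{sub})$, and a priori these could kill some $R^1$-pieces.

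Since $R^2$ vanishes throughout, $R^1$ is right exact along the filtration, so surjectivity onto the $R^1$ of the top quotient is automatic. The real issue is which piece is the top quotient. In the filtration above, the low-$e$ pieces, the ones carrying $R^1$, are the \emph{sub}objects. So I would check that the connecting maps vanish. The first approach I would try is a fibrewise computation. Restrict the extension defining $V_1$ to a fibre $\PP^1$ of $\pi_{1,0}$. Use cohomology and base change, together with the explicit splitting type of $V_1|_{\text{fibre}}$ (the Euler-sequence extension class in $H^1(\OO_{\PP^1}(-3))$), to compute $h^1$ of $\Sym^{a_2}V_1^*\otimes\OO(a_1)$ on each fibre. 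I would then check that it equals the sum of the $h^1$ of the graded pieces. Equality of fibre dimensions forces all connecting maps to vanish. This gives the stated filtration of $R^1$ and its nonvanishing for $d\leqslant-2$ (already the $k=0$ piece $\bK^{a_2-1}\otimes\Sym^{-d-2}\bT$ is nonzero). If the fibrewise splitting does not cooperate, the fallback is the two-stage filtration argument of \cite[Proposition~6.1]{HouHuynhMerkerXie}, keeping track of every negative-degree piece.
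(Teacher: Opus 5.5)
The gap is in your ``main obstacle'', and it comes from a reversed exact sequence. Everything else matches the paper: the two-stage set-up, the graded pieces $\OO_{X_1}(d+3k)\otimes\pi_{1,0}^*\bK^{a_2-k}$, the relative-duality formula for $R^1(\pi_{1,0})_*\OO_{X_1}(e)$, and your proofs of the case $a_2=-1$, of $R^{\geqslant2}=0$, and of $R^1=0$ for $d\geqslant-1$.

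The differential $d\pi_{1,0}$ maps $V_1$ \emph{onto} the tautological line $\OO_{X_1}(-1)$, and its kernel is the vertical tangent bundle. There is no natural map $\OO_{X_1}(-1)\to V_1$. The correct sequences are
\[
 0\to T_{X_1/X_0}\to V_1\xrightarrow{\,d\pi_{1,0}\,}\OO_{X_1}(-1)\to0,
 \qquad
 0\to\OO_{X_1}(1)\to V_1^*\to\pi_{1,0}^*\bK\otimes\OO_{X_1}(-2)\to0 .
\]
This is also the orientation for which $\Gamma_2=\PP(T_{X_1/X_0})\subset\PP(V_1)$ and $\OO_{X_2}(\Gamma_2)\simeq\OO_{X_2}(-1,1)$.

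Hence the positive pieces are the subobjects. Let $F^k$ be the image of $\OO_{X_1}(a_1+k)\otimes\Sym^{a_2-k}V_1^*$. Then $F^k/F^{k+1}\simeq\OO_{X_1}(d+3k)\otimes\pi_{1,0}^*\bK^{a_2-k}$, and the most negative piece, $k=0$, is the top quotient. Descending induction gives $R^1(\pi_{1,0})_*F^k=0$ throughout the good range $d+3k\geqslant-1$. In the bad range $(\pi_{1,0})_*(F^k/F^{k+1})=0$, so each long exact sequence collapses to
\[
 0\to R^1(\pi_{1,0})_*F^{k+1}\to R^1(\pi_{1,0})_*F^k\to R^1(\pi_{1,0})_*(F^k/F^{k+1})\to0 .
\]
There are no connecting maps to control, and this gives the filtration \eqref{eq:R1-filtration-quotients} at once. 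Nonvanishing for $d\leqslant-2$ follows from the surjection onto $R^1$ of the top quotient, namely $\Sym^{-d-2}\bT\otimes\bK^{a_2-1}\neq0$. This is exactly the paper's argument (``ordering the filtration from the good range to the bad range'').

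Your fibrewise fallback, as formulated, does not close the gap. It rests on an extension class in $H^1(\OO_{\PP^1}(-3))$ for the reversed sequence, which you have not computed, and the conclusion depends on it. Suppose the class were nonzero, say $V_1|_F\simeq\OO_{\PP^1}\oplus\OO_{\PP^1}(1)$. Then for $(a_1,a_2)=(0,1)$ one gets $h^1(F,V_1^*|_F)=0$, whereas the graded pieces $\OO_{\PP^1}(-2)$ and $\OO_{\PP^1}(1)$ contribute $1$. The connecting map would then be nonzero and the proposition would fail.

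What actually holds is $V_1|_F\simeq\OO_{\PP^1}(2)\oplus\OO_{\PP^1}(-1)$, so the fibres of $\pi_{2,0}$ are $\mathbb F_3$. This splitting is forced by the correct sequence, since $H^1(\OO_{\PP^1}(3))=0$. With that input, your dimension count plus cohomology and base change would also work, but once the sequence is oriented correctly the detour is unnecessary.
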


\begin{proof}
Write $q=\pi_{2,1}$ and $\pi=\pi_{1,0}$.  If $a_2=-1$, the restriction
to each $q$-fiber is $\OO_{\PP^1}(-1)$, so every direct image under $q$,
and thus under $\pi_{2,0}$, vanishes.  Assume $a_2\geqslant0$.  The
projective-bundle formula gives
\[
 R^j q_*\OO_{X_2}(a_1,a_2)=0\quad(j>0),
 \qquad
 q_*\OO_{X_2}(a_1,a_2)
 =\OO_{X_1}(a_1)\otimes\Sym^{a_2}V_1^*.
\]
The dual directed sequence
\[
 0\longrightarrow\OO_{X_1}(1)
 \longrightarrow V_1^*
 \longrightarrow\pi^*\bK\otimes\OO_{X_1}(-2)
 \longrightarrow0
\]
induces a filtration whose graded quotients are
\begin{equation}\label{eq:relative-direct-image-graded}
 \pi^*\bK^{a_2-k}\otimes\OO_{X_1}(d+3k),
 \qquad0\leqslant k\leqslant a_2.
\end{equation}
For the rank-two projective bundle of lines,
$R^1\pi_*\OO_{X_1}(n)=0$ for $n\geqslant-1$, while relative Serre duality
gives, for $n\leqslant-2$,
\[
 R^1\pi_*\OO_{X_1}(n)
 \simeq\Sym^{-n-2}\bT\otimes\det\bT
 =\Sym^{-n-2}\bT\otimes\bK^{-1}.
\]
Applying this to \eqref{eq:relative-direct-image-graded} gives exactly the quotients
\eqref{eq:R1-filtration-quotients}.  In the bad range
$d+3k\leqslant-2$, the corresponding ordinary direct image is zero; ordering
the filtration from the good range to the bad range shows through the long
exact sequences that these $R^1$ terms assemble by successive extensions.
There are no direct images in degree at least two because both stages have
relative dimension one and the first push has no positive higher direct
images.  The vanishing criterion is therefore exactly $d\geqslant-1$.
\end{proof}

\begin{remark}[Fiberwise interpretation]
On a fiber $F\simeq\mathbb F_3$ of $\pi_{2,0}$,
\[
 \OO_{X_2}(a_1,a_2)|_F
 \simeq\OO_{\mathbb F_3}\bigl(a_2E+(a_1+a_2)f\bigr),
\]
and for $a_2\geqslant0$ its cohomology is controlled by the line-bundle degrees
$d+3k$, $0\leqslant k\leqslant a_2$.  Thus the defect
$a_1-2a_2$ is sharp; conditions involving $a_1$ and $a_2$ separately do
not suffice.
\end{remark}

\begin{lemma}[Direct images of $\OO_{X_2}(2p,p)$]\label{lem:direct-image-21}
For every integer $p\geqslant1$ one has
\begin{equation}\label{eq:direct-image-21}
   (\pi_{2,0})_*\OO_{X_2}(2p,p)
   \simeq E_{2,3p}\bT^*.
\end{equation}
Also,
\begin{equation}\label{eq:higher-direct-image-21}
   R^q(\pi_{2,0})_*\OO_{X_2}(2p,p)=0
   \qquad(q\geqslant1).
\end{equation}
\end{lemma}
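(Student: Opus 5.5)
The vanishing \eqref{eq:higher-direct-image-21} follows from Proposition~\ref{prop:sharp-higher-direct-images}. Take $a_1=2p$ and $a_2=p\geqslant0$. Then the defect is $d=a_1-2a_2=0\geqslant-1$, so $R^1(\pi_{2,0})_*$ vanishes, and $R^q(\pi_{2,0})_*$ vanishes for $q\geqslant2$ unconditionally.

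For the zeroth direct image we run the same two-stage computation as in the proof of that proposition. First, $q_*\OO_{X_2}(2p,p)=\OO_{X_1}(2p)\otimes\Sym^pV_1^*$. This bundle carries a filtration with graded pieces $\pi^*\bK^{p-k}\otimes\OO_{X_1}(3k)$ for $0\leqslant k\leqslant p$. All degrees $3k$ are nonnegative, so every $R^1\pi_*$ of a graded piece vanishes. Pushing forward under $\pi$ is therefore exact on the filtration, and $(\pi_{2,0})_*\OO_{X_2}(2p,p)$ acquires a filtration with graded pieces
\[
 \bK^{p-k}\otimes\Sym^{3k}\bT^*,\qquad 0\leqslant k\leqslant p.
\]
Setting $j=p-k$, these are exactly the pieces $S^{3p-3j}\bT^*\otimes\bK^j$ of \eqref{eq:filtration} for $m=3p$, with $0\leqslant j\leqslant p=\floor{3p/3}$. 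So both sides of \eqref{eq:direct-image-21} have the same associated graded bundle.

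Matching associated gradeds does not by itself give an isomorphism, so we need an actual map. We use $\OO_{X_2}(3)\otimes\OO_{X_2}(-2\Gamma_2)\simeq\OO_{X_2}(2,1)$. This gives $\OO_{X_2}(2p,p)\simeq\OO_{X_2}(3p)\otimes\OO_{X_2}(-2p\Gamma_2)$, and hence an injection $\OO_{X_2}(2p,p)\hookrightarrow\OO_{X_2}(3p)$ given by multiplication by the section of $\OO(2p\Gamma_2)$. Here $\OO_{X_2}(3p)$ denotes the weighted tautological bundle; in the paper's convention it should coincide with $\OO_{X_2}(3p)$ in the $(a,b)$-notation's second slot restricted appropriately, and we check this identification first. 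Pushing forward, and using \eqref{eq:direct-image}, gives an injective sheaf map
\[
(\pi_{2,0})_*\OO_{X_2}(2p,p)\hookrightarrow E_{2,3p}\bT^*.
\]
The standard fact that invariant jets of weighted degree $m$ are sections of $\OO_{X_2}(m)$ vanishing to order at least $\sum$ on $\Gamma_2$ (Demailly's description, $\OO(a_1,a_2)$ with $a_1\geqslant 2a_2$ on the image) shows this inclusion is an equality.

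Alternatively, and more cleanly, we compare the two filtrations. The map is compatible with the filtrations: the one on the left just described, and the one on $E_{2,3p}\bT^*$ whose graded pieces are \eqref{eq:filtration}. On each graded piece it is a nonzero map between isomorphic vector bundles, which we check locally by looking at the monomials $(f')^{\alpha}(f_1'f_2''-f_2'f_1'')^j$. Such a map is an isomorphism, and the five lemma then gives an isomorphism of the filtered bundles. Both bundles are locally free of the same rank, so an injection that is an isomorphism on each graded piece is an isomorphism.

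The main obstacle is this last identification. One must see that multiplication by the $2p\Gamma_2$ section carries the $k$-th filtration step onto the Wronskian-power step $j=p-k$, rather than merely producing abstractly isomorphic gradeds. We would do this in local coordinates on $X_2$ over a logarithmic chart, where $\Gamma_2$ is cut out by the vanishing of $f_1'$ in the relevant affine chart.
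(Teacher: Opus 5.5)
\paragraph{Verdict.} Your higher-direct-image part is the paper's argument, but your proof of the isomorphism has a gap at the surjectivity step.

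\paragraph{What works.} For \eqref{eq:higher-direct-image-21} you argue exactly as the paper does: apply Proposition~\ref{prop:sharp-higher-direct-images} with $a_2=p\geqslant0$ and defect $a_1-2a_2=0\geqslant-1$. For \eqref{eq:direct-image-21} the paper gives no argument of its own; it simply cites Demailly--El Goul, Lemma~3.3(c). In your self-contained attempt, everything is correct up to two points: the injection
\[
 \iota\colon(\pi_{2,0})_*\OO_{X_2}(2p,p)\hookrightarrow(\pi_{2,0})_*\OO_{X_2}(0,3p)=E_{2,3p}\bT^*
\]
induced by $\gamma^{2p}$, and the matching of the two lists of graded pieces.

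\paragraph{The gap.} Surjectivity of $\iota$ is never established.
\begin{itemize}
\item Your first route invokes a ``standard fact'' whose vanishing order along $\Gamma_2$ is left blank. That order must be $2p$, and supplying it is precisely the content the paper cites.
\item Your second route rests on the claim that a nonzero map between isomorphic vector bundles is an isomorphism. This is false in general: a nilpotent endomorphism, or a projection onto a direct summand, is a counterexample.
\item Moreover, a map on graded pieces exists only once you show that $\iota$ respects the two filtrations. That is exactly the point you yourself flag as the main obstacle and leave open.
\end{itemize}

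\paragraph{How to close it.} Either of two arguments uses only ingredients you already have.

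\emph{Determinant argument.} Both sheaves are locally free of the same rank; the left one is, by your filtration with locally free quotients. Since $\iota$ is injective, it is generically an isomorphism. So $\det\iota$ is a nonzero section of
\[
 \det E_{2,3p}\bT^*\otimes\det\bigl((\pi_{2,0})_*\OO_{X_2}(2p,p)\bigr)^{-1}.
\]
Because the graded pieces agree, the two determinants are isomorphic, so this line bundle is $\OO_{\PP^2}$. Hence $\det\iota$ is a nonzero constant, nowhere vanishing, and $\iota$ is an isomorphism. No filtration compatibility and no local monomial check are needed.

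\emph{Fiberwise argument.} This proves the missing fact directly. On a fiber $F\simeq\mathbb F_3$ one has $\Gamma_2|_F=E$ and $\OO_{X_2}(0,3p)|_F\simeq\OO_{\mathbb F_3}(3pE+3pf)$. For $r<2p$,
\[
 E\cdot\bigl((3p-r)E+3pf\bigr)=3r-6p<0,
\]
so by induction on $r$ every section of $\OO_{\mathbb F_3}(3pE+3pf)$ vanishes to order at least $2p$ along $E$. Now write a local section as $s=\gamma^r s'$ with $r<2p$. Restricting $s'$ to a fiber where it does not vanish identically on $\Gamma_2$ gives a contradiction. Hence every section of $\OO_{X_2}(0,3p)$ over $\pi_{2,0}^{-1}(U)$ is divisible by $\gamma^{2p}$, which is surjectivity of $\iota$.
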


\begin{proof}
The identification \eqref{eq:direct-image-21} is the standard
Demailly--Semple direct-image formula for the weight-$3p$ subbundle; see
\cite[Lemma~3.3(c)]{DemaillyElGoul}.  The higher direct images vanish by
Proposition~\ref{prop:sharp-higher-direct-images}, because
$a_2=p\geqslant0$ and $a_1-2a_2=2p-2p=0$.
\end{proof}

Thus the $p$-th power of $\OO_{X_2}(2,1)$ corresponds canonically to
weighted degree $3p$.

We shall repeatedly use the following elementary fact about lifted curves.

\begin{lemma}[The regular lift avoids the vertical divisor]
\label{lem:regular-lift-not-vertical}
Let $f:\CC\to X_0$ be nonconstant and satisfy $f(\CC)\not\subset D$.
Then its logarithmic lift $f_{[2]}$ is well defined after holomorphic
extension across the isolated zeros of $f'$, and
\[
   f_{[2]}(\CC)\not\subset\Gamma_2.
\]
\end{lemma}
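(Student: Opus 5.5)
We plan to work locally: first define the lifts on the open set where $f$ is regular and off $D$, then extend them, and finally compare with a local description of $\Gamma_2$. Since $f$ is nonconstant, $f'$ has only isolated zeros. Off $f^{-1}(D)$, which is discrete because $f(\CC)\not\subset D$, the logarithmic tangent bundle agrees with the ordinary one. There $f_{[1]}(z)=[f'(z)]\in X_1$ is defined wherever $f'(z)\neq0$. Near a point $z_0$ with $f(z_0)\in D$, we choose local coordinates $(w_1,w_2)$ with $D=\{w_1w_2=0\}$ or $\{w_1=0\}$, and use the logarithmic frame $w_1\partial_{w_1}$, $w_2\partial_{w_2}$ (respectively $w_1\partial_{w_1},\partial_{w_2}$). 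In this frame the logarithmic derivative of $f$ has components $(\log f_1)'$ and $(\log f_2)'$ (or $f_2'$). These are meromorphic in $z$ and not both identically zero, since $f$ is nonconstant. Clearing the common pole or zero order gives a holomorphic nowhere-vanishing representative, so $f_{[1]}$ extends holomorphically across all isolated bad points. Here we use the standard fact that a holomorphic map from a punctured disc into $\PP^1$ that is meromorphic extends. The same argument applied to $f_{[1]}:\CC\to X_1$ and the directed bundle $V_1\subset T_{X_1}$ gives $f_{[2]}=[f_{[1]}']$: the derivative $f_{[1]}'$ is a nontrivial meromorphic section of $f_{[1]}^*V_1$, because $f_{[1]}$ is a nonconstant curve, so its projectivization extends across isolated points.

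For the second assertion, recall that $\Gamma_2=\PP(T_{X_1/X_0})\subset\PP(V_1)$, the divisor of vertical directions. This means $f_{[2]}(z)\in\Gamma_2$ exactly when the tangent direction of $f_{[1]}$ at $z$ lies in the kernel of $d\pi_{1,0}$. We argue by contradiction. Suppose $f_{[2]}(\CC)\subset\Gamma_2$. Take a point $z_0$ with $f(z_0)\notin D$ and $f'(z_0)\neq0$; the set of such points is open and dense. Near $z_0$, the map $f_{[1]}=[f']$ is an honest immersion lifting $f$, and $d\pi_{1,0}(f_{[1]}'(z))=f'(z)\neq0$. Hence the direction of $f_{[1]}'(z)$ is not vertical, so $f_{[2]}(z)\notin\Gamma_2$ for $z$ near $z_0$. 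This contradicts the assumption. In coordinates this is simply the statement that $V_1$ at $(x,[v])$ consists of vectors projecting to multiples of $v$, and that $f_{[1]}'$ projects to $f'\neq0$.

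The only delicate step is the holomorphic extension of the lifts at points where $f$ meets $D$. There the logarithmic derivative has poles, so the extension really does need the "clear the order" argument in the logarithmic frame rather than in the ordinary frame. Once both lifts are holomorphic, the non-containment follows from the generic-point computation above, because $\Gamma_2$ is closed and the curve leaves it on an open set.
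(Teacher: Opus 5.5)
Your proposal is correct and follows essentially the same argument as the paper. On the dense open set where $f'\neq0$, the derivative of $f_{[1]}=(f,[f'])$ projects under $d\pi_{1,0}$ to $f'\neq0$, so $f_{[2]}$ is not a vertical line there and hence not in $\Gamma_2$. The extension across the discrete bad set is the standard removable-singularity construction for Semple lifts; the paper only cites it, while you spell it out by clearing orders in a logarithmic frame.
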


\begin{proof}
On the nonempty open set on which $f'\neq0$, the first lift is
$f_{[1]}(z)=(f(z),[f'(z)])$.  Its derivative is tangent to $V_1$, and its
projection by $d\pi_{1,0}$ is $f'(z)\neq0$.  Thus the line
$[f_{[1]}'(z)]\subset V_{1,f_{[1]}(z)}$ is not vertical for
$\pi_{1,0}$.  By definition, $\Gamma_2$ consists exactly of the vertical
lines at the second stage, so $f_{[2]}(z)\notin\Gamma_2$ on this open set.
The standard removable-singularity construction for Semple lifts extends
$f_{[1]}$ and $f_{[2]}$ through the isolated zeros of $f'$.  Since the
complement of $\Gamma_2$ has already met the lifted curve, the whole image
cannot be contained in $\Gamma_2$.
\end{proof}

\subsection{Jet Second Main Theorem and McQuillan's theorem}

We use the following standard jet-differential estimate; compare
\cite{Demailly1997} and
\cite[Theorem~1.2]{HouHuynhMerkerXie}.  The invariant logarithmic jet bundle
$E_{k,m}T_{\PP^2}^*(\log D)$ is naturally a subbundle of the
Green--Griffiths bundle appearing in the statement, so the estimate applies
to the invariant differentials constructed below.

\begin{theorem}[Jet differential Second Main Theorem]\label{thm:jetSMT}
Let $k,m,t\geqslant1$ be integers.  If
\[
   0\neq\omega\in H^0\!\left(\PP^2,
   E_{k,m}^{\mathrm{GG}}T_{\PP^2}^*(\log D)\otimes\OO(-t)\right)
\]
and $f:\CC\to\PP^2$ satisfies $f(\CC)\not\subset D$ and
$f^*\omega\not\equiv0$, then
\begin{equation}\label{eq:jetSMT}
   T_f(r)\leqslant\frac mtN_f^{[1]}(r,D)
   +O(\log T_f(r)+\log r)\ \|.
\end{equation}
\end{theorem}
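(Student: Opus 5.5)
The plan is the standard logarithmic derivative argument, in the form used by Demailly and by Dethloff--Lu. First we rewrite $\omega$ in local trivializations. Cover $\PP^2$ by finitely many affine charts adapted to $D$: near a point of $C_1\cap C_2$ take coordinates with $D=\{z_1z_2=0\}$, near a smooth point of $C_i$ take $D=\{z_1=0\}$, and away from $D$ take ordinary coordinates. In each chart the logarithmic jet bundle is generated by the monomials in $(\log z_j)^{(\ell)}$ for the boundary coordinates and $z_j^{(\ell)}$ for the others. So locally
\[
 \omega=\sum_\alpha R_\alpha\,\prod_{\ell,j}\bigl(\partial_j^{(\ell)}\bigr)^{\alpha_{\ell j}},
\]
of weighted degree $m$, with holomorphic $R_\alpha$. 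The twist $\OO(-t)$ says that these coefficients, read in a global frame, are sections vanishing to order $t$ along hyperplane-type trivializations.

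To globalize, fix a hyperplane $H$ and a section $s_H$ cutting it out. We then view $f^*\omega$ as a meromorphic function
\[
 g:=\frac{f^*\omega}{f^*s_H^{\,t}}\cdot(\text{rational normalization}).
\]
More cleanly, we pick homogeneous coordinates. Write $D$ as $\{Q_1Q_2=0\}$ with $Q_i$ of degree $d_i$. Every log jet term can then be written via
\[
 \frac{(Q_i\circ f)^{(\ell)}}{Q_i\circ f}
 \quad\text{and}\quad
 \Bigl(\frac{f_j}{f_0}\Bigr)^{(\ell)}\Big/\frac{f_j}{f_0}.
\]
Then $f^*\omega$ equals $\|f\|^{-t}$ times a sum of products of logarithmic derivatives with bounded coefficients. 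This gives the pointwise estimate
\[
 \log|f^*\omega|_{h}\leqslant -t\log\|f\|+O\Bigl(\sum\log^+\Bigl|\frac{F^{(\ell)}}{F}\Bigr|\Bigr)+O(1),
\]
where $F$ ranges over a finite list of meromorphic functions of the form $Q_i\circ f/(\text{linear})^{d_i}$ and $f_j/f_0$.

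Next we take means on circles. Integrating over $|z|=r$ and applying the lemma on the logarithmic derivative to each $F$, whose characteristic is $O(T_f(r))$, we obtain
\[
 \int_0^{2\pi}\log|f^*\omega(re^{i\theta})|\,\frac{d\theta}{2\pi}
 \leqslant -t\,T_f(r)\cdot(\text{after Jensen})+O(\log T_f(r)+\log r)\ \|.
\]
Precisely, since $f^*\omega\not\equiv0$, we regard $f^*\omega$ as a meromorphic section of $\OO(-t)$ pulled back, paired against $f'^{\,m}$-type weights, and we use Jensen's formula together with the First Main Theorem. Jensen gives
\[
 N(r,\text{zeros})-N(r,\text{poles})+t\,T_f(r)
 \leqslant O(\log T_f(r)+\log r).
\]
The zeros of $f^*\omega$ contribute nonnegatively and are discarded.

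The main point is the pole count. A logarithmic derivative $(\log Q_i\circ f)^{(\ell)}$ has a pole of order $\ell$ at each zero of $Q_i\circ f$, whatever the multiplicity of that zero. Since $\omega$ has weighted degree $m$, the total pole order of $f^*\omega$ at a point $z$ with $f(z)\in D$ is at most $m$. This holds even at crossing points, because the weights are shared across the two logarithmic variables in a monomial. Hence
\[
 N(r,\text{poles})\leqslant m\,N_f^{[1]}(r,D).
\]
Here the componentwise convention is harmless, since it only enlarges the right side. Combining the two estimates gives $t\,T_f(r)\leqslant mN_f^{[1]}(r,D)+O(\log T_f+\log r)\ \|$, which after dividing by $t$ is \eqref{eq:jetSMT}.

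I expect the main obstacle to be this pole bound. It requires checking that, in the adapted charts, only genuine logarithmic derivatives of the local defining functions appear, with orders summing to the weight. It also requires checking that the transition between charts and the homogeneous normalization introduces no extra poles off $D$. This is exactly where we use that $\omega$ is a section of the logarithmic bundle rather than a merely meromorphic jet differential.
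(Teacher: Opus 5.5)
Your proposal is correct and is essentially the standard argument the paper relies on by citation, and carries out in restricted form in Lemmas~\ref{lem:restricted-tautological-evaluation} and~\ref{lem:weighted-degree-pole}: the weighted degree bounds the pole order of the evaluated differential by $m$ at each point of $f^{-1}(D)$ (regardless of multiplicity, and also at crossings), the logarithmic derivative lemma over a finite cover controls the proximity term, and Jensen/Poincar\'e--Lelong with the $\OO(-t)$ twist produces the $tT_f(r)$ term. The one slip is your first normalization $f^*\omega/f^*s_H^{\,t}$, which should be a product rather than a quotient; your subsequent homogeneous version, writing $f^*\omega=g\,F^{\otimes t}$ and bounding $\log|g|\leqslant -t\log\|F\|+\cdots$, is the correct one.
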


We also use McQuillan's algebraic-degeneracy theorem in the following
quantitative form; see \cite{McQuillan} and
\cite[Theorem~1.1]{HouHuynhMerkerXie}.

\begin{theorem}[McQuillan]\label{thm:McQuillan}
Let $f:\CC\to\PP^2$ be algebraically nondegenerate.  If its first logarithmic lift is algebraically degenerate, then for a normal-crossing divisor of total degree $d\geqslant4$,
\[
   (d-3)T_f(r)\leqslant N_f^{[1]}(r,D)+o(T_f(r))\ \|.
\]
For $d=d_{\mathrm{tot}}=d_1+d_2$ this gives
\begin{equation}\label{eq:McQuillan-bound}
   T_f(r)\leqslant\frac1{d_{\mathrm{tot}}-3}N_f^{[1]}(r,D)+o(T_f(r))\ \|.
\end{equation}
\end{theorem}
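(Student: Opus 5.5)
The plan is to reduce the statement to McQuillan's results for holomorphic foliations on surfaces. Write $X_1=\PP(\bT)$ and let $Z\subset X_1$ be the Zariski closure of $f_{[1]}(\CC)$. By hypothesis $Z\neq X_1$. Since $f$ is algebraically nondegenerate, $\pi_{1,0}(Z)=\PP^2$. Hence $Z$ is an irreducible surface that is generically finite over $\PP^2$. It is not vertical, because $f_{[1]}$ is the lift of a nonconstant curve, so $Z$ defines an algebraic multi-foliation on $\PP^2$ to which $f$ is tangent.

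\emph{Step 1 (pass to a genuine foliation).} Let $\mu:\widetilde Z\to Z$ be a resolution and $\sigma=\pi_{1,0}\circ\mu:\widetilde Z\to\PP^2$, which is generically finite. Take $\widetilde D=\operatorname{Supp}\sigma^*D$ together with the ramification and exceptional curves, arranged to be SNC after further blowing up. The tautological line field on $X_1$ restricts to a saturated rank-one subsheaf $T_{\mathcal F}\subset T_{\widetilde Z}$, so $\widetilde Z$ carries a foliation $\mathcal F$. The lift $\tilde f$ of $f_{[1]}$ to $\widetilde Z$ is a Zariski-dense leaf of $\mathcal F$. By construction, $\mu^*\OO_{X_1}(-1)$ agrees with the logarithmic tangent bundle of $\mathcal F$ relative to $\widetilde D$, up to an effective divisor supported on tangency and ramification loci.

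\emph{Step 2 (two inequalities on the leaf).} The logarithmic tautological inequality of McQuillan gives
\[
 T_{f_{[1]}}(r,\OO_{X_1}(-1))\leqslant N_f^{[1]}(r,D)+o(T_f(r))\ \|.
\]
Here the truncated counting function arises because logarithmic derivatives $f_j'/f_j$ have at most simple poles at points of $f^{-1}(D)$, and the ramification contributions are absorbed into $o(T_f)$ via the logarithmic derivative lemma. The second input is McQuillan's theorem for Zariski-dense parabolic leaves. Such a leaf cannot have negative degree against the logarithmic canonical bundle of the foliation; more precisely,
\[
 T_{\tilde f}(r,K_{\mathcal F}^{\log})\geqslant -o(T_f(r))\ \|.
\]
This follows from his analysis of foliations whose $K_{\mathcal F}$ is not pseudoeffective, which are rational fibrations with no dense leaves, together with the Zariski decomposition for foliated surfaces.

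\emph{Step 3 (compare with $\bK$).} On $X_1$ one has the exact sequence $0\to\OO_{X_1}(-1)\to\pi_{1,0}^*\bT\to Q\to0$. Restricted to the leaf, it shows that the normal part contributes $\pi^*\bK$ minus $K_{\mathcal F}^{\log}$, up to effective corrections. Since $\bK=\OO_{\PP^2}(d-3)$, combining this with Step 2 yields
\[
 (d-3)T_f(r)\leqslant T_{f_{[1]}}(r,\OO_{X_1}(-1))+o(T_f(r))\leqslant N_f^{[1]}(r,D)+o(T_f(r))\ \|.
\]
Dividing by $d-3>0$ then gives \eqref{eq:McQuillan-bound}.

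The main obstacle is Step 2's lower bound, namely positivity of $K_{\mathcal F}^{\log}$ along a dense leaf in the multi-foliation setting. To handle it, I would first try to quote McQuillan's "bloc" argument directly, as in \cite[Theorem~1.1]{HouHuynhMerkerXie}. The second thing to control is the effective correction divisors in Steps 1 and 3: they must enter with the right sign, and their counting functions must be dominated by $o(T_f)$ plus the truncated $N_f^{[1]}$.
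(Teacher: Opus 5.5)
The paper does not prove this statement. It imports it from \cite{McQuillan}, in the form of \cite[Theorem~1.1]{HouHuynhMerkerXie}. Your reconstruction has a genuine gap: Step~2 constrains only the tangential part of the lift, and the input that actually uses degeneracy of $f_{[1]}$ is missing. Taking determinants in your own sequence $0\to\OO_{X_1}(-1)\to\pi_{1,0}^*\bT\to Q\to0$ gives $\pi_{1,0}^*\bK\simeq\OO_{X_1}(1)\otimes Q^{-1}$. Hence
\[
 (d-3)T_f(r)=T_{f_{[1]}}\bigl(r,\OO_{X_1}(1)\bigr)-T_{f_{[1]}}(r,Q)+O(1).
\]
The derivative $f'$ is a meromorphic section of $K_{\CC}\otimes f_{[1]}^*\OO_{X_1}(-1)$ with at most simple poles on $f^{-1}(D)$. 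Jensen's formula and the logarithmic derivative lemma then give the tautological inequality as
\[
 T_{f_{[1]}}(r,\OO_{X_1}(1))\leqslant N_f^{[1]}(r,D)+o(T_f(r)).
\]
This is an \emph{upper} bound on $K^{\log}_{\mathcal F}$ along the leaf; with your convention the bundle is $\OO_{X_1}(1)$, not $\OO_{X_1}(-1)$.

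What is then needed is the normal estimate $T_{f_{[1]}}(r,Q)\geqslant-o(T_f(r))$. On $\widetilde Z$ the induced map $N^{\log}_{\mathcal F}\to\mu^*Q$ is generically an isomorphism. So $\mu^*Q$ differs from its image by an effective divisor, which enters with the favourable sign. The missing input is therefore nonnegativity of the normal bundle of $\mathcal F$ on the Nevanlinna current of a Zariski-dense leaf, in logarithmic form. This is the genuinely foliated part of McQuillan's theorem: in the compact case the argument is $T\cdot K_{\mathcal F}\leqslant0$ plus $T\cdot N_{\mathcal F}\geqslant0$, added via $K_X=K_{\mathcal F}-N_{\mathcal F}$. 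It is also exactly where the hypothesis on $f_{[1]}$ must enter. If $T_{f_{[1]}}(r,Q)\geqslant-o(T_f(r))$ held for every curve, the estimate would follow with no hypothesis at all.

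Your substitute, $T_{\tilde f}(r,K^{\log}_{\mathcal F})\geqslant-o(T_f(r))$, is true for Zariski-dense leaves but does not help. It is a lower bound on the first term of the displayed identity, so it can only bound $(d-3)T_f$ from below. Combined with the tautological inequality, it merely traps $T(K^{\log}_{\mathcal F})$ between $-o(T_f)$ and $N_f^{[1]}(r,D)+o(T_f)$, and says nothing about $Q$. Accordingly, the first inequality in Step~3, $(d-3)T_f(r)\leqslant T_{f_{[1]}}(r,\OO_{X_1}(-1))+o(T_f(r))$, does not follow from the exact sequence. The obstacle to isolate is the normal-bundle inequality, not positivity of $K_{\mathcal F}$. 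It has to be proved after Seidenberg reduction of the singularities of $\mathcal F$ on $\widetilde Z$, with the logarithmic corrections along $\widetilde D$.
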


\subsection{Cohomological and zero-locus inputs}

We next record the vanishing and weighted-degree estimates that produce the
negatively twisted differentials used in the two main cases.

\begin{lemma}[Logarithmic Bogomolov vanishing]
\label{lem:numerical-Bogomolov}
Let $D=C_1+C_2$ be a transverse pair of smooth plane curves, put
$\kappa=d_1+d_2-3>0$, $\bK=K_{\PP^2}+D$, and set
$\bT=T_{\PP^2}(-\log D)$.  For integers $p\geqslant0$ and $q$,
\[
 H^0\!\left(\PP^2,S^p\bT\otimes\bK^q\right)=0
 \qquad\text{if}\qquad
 p>2q.
\]
\end{lemma}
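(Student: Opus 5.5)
The plan is to derive the vanishing from slope semistability of $\bT$ with respect to $h$, combined with the fact that symmetric powers of semistable bundles stay semistable in characteristic zero. Since $\det\bT^*=\bK=\OO_{\PP^2}(\kappa)$, we have $c_1(\bT)=-\kappa h$ and $\mu(\bT)=-\kappa/2$. Hence
\[
 \mu\bigl(S^p\bT\bigr)=-\frac{p\kappa}{2}.
\]
A nonzero section of $S^p\bT\otimes\bK^q$ is a nonzero map $\OO_{\PP^2}(-q\kappa)\to S^p\bT$. Once $S^p\bT$ is semistable, this forces $-q\kappa\leqslant -p\kappa/2$. Because $\kappa>0$, that gives $p\leqslant 2q$, contradicting $p>2q$. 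For $q<0$ and $p=0$ the claim is simply $H^0(\OO(q\kappa))=0$, which is immediate. So everything reduces to the semistability of the rank-two reflexive (in fact locally free) sheaf $\bT$.

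For a rank-two bundle on $\PP^2$, semistability means that no line subsheaf $\OO_{\PP^2}(j)\hookrightarrow\bT$ has $j>-\kappa/2$. Equivalently,
\[
 H^0\bigl(\bT\otimes\OO_{\PP^2}(-j)\bigr)=0\quad\text{for all } j>-\kappa/2.
\]
Put $s=-j$. I would in fact show the stronger statement $H^0(\bT(s))=0$ for every $s<\kappa$; since $\kappa/2<\kappa$, this suffices. A nonzero section of $\bT(s)\subset T_{\PP^2}(s)$ is a twisted vector field $v$ tangent to both $C_1$ and $C_2$. After dividing $v$ by the equation of its divisorial zero part, we obtain a saturated field in some $T_{\PP^2}(s')$ with $s'\leqslant s$. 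The divisorial factor removed does not destroy tangency, since tangency to $D$ is checked on a dense open set of $D$. The saturated field defines a holomorphic foliation $\mathcal F$ of degree $r=s'+1$, with normal bundle $N_{\mathcal F}=\OO(r+2)$, and $D=C_1+C_2$ is a reduced $\mathcal F$-invariant curve with only nodes.

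Next I apply Brunella's formula for reduced invariant curves,
\[
 N_{\mathcal F}\cdot D=D^2+Z(\mathcal F,D),\qquad Z(\mathcal F,D)\geqslant0,
\]
where $Z$ is the sum of Gomez-Mont--Seade--Verjovsky indices. This gives $(r+2)d_{\mathrm{tot}}\geqslant d_{\mathrm{tot}}^2$, so $s'\geqslant d_{\mathrm{tot}}-3=\kappa$. This contradicts $s'\leqslant s<\kappa$, and hence $\bT$ is semistable (indeed stable).

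The step I expect to be the main obstacle is the nonnegativity of $Z(\mathcal F,D)$ at the nodes $C_1\cap C_2$. For smooth invariant curves this is classical: the index is a vanishing order, so $Z\geqslant0$. At a node where both branches are invariant, I would compute directly in local coordinates $xy=0$. There the field has the form $a\,x\partial_x+b\,y\partial_y$, and the GSV index of the nodal curve equals the tangency contribution of each branch minus the intersection multiplicity correction. I would check that this is $\geqslant0$ for this saturated logarithmic field.

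If that local computation becomes messy, the fallback is to apply the smooth-curve inequality to each component separately. For a smooth invariant $C_i$ one has $N_{\mathcal F}\cdot C_i=C_i^2+Z_i$ with $Z_i\geqslant\#(C_1\cap C_2)$, because every point of $C_1\cap C_2$ is a singularity of $\mathcal F$ lying on $C_i$. This gives
\[
 (r+2)d_i\geqslant d_i^2+d_1d_2.
\]
Summing over $i=1,2$ recovers $(r+2)d_{\mathrm{tot}}\geqslant d_{\mathrm{tot}}^2$.
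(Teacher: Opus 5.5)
Your strategy is sound: prove $\bT$ semistable, then compare slopes with $S^p\bT$. The slope reduction and the use of semistability of $S^p\bT$ in characteristic zero are fine. However, the saturation step rests on a false claim.

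You write $v=g\,v'$ with $v'$ having isolated zeros, and assert that removing $g$ cannot destroy tangency because tangency is tested on a dense open subset of $D$. That density argument works only for a component $C_i$ on which $g\not\equiv0$. A field that vanishes identically along $C_i$ is automatically logarithmic there. Writing $F_i$ for a defining equation of $C_i$, one has $(F_iw)(F_i)=F_i\,w(F_i)\in(F_i)$. Concretely, let $w$ be a twisted field tangent to $C_2$ but not to $C_1$. Then $F_1w$ is a nonzero section of $\bT(s)$, yet its saturation $w$ does not leave $C_1$ invariant. So you cannot assume in general that the saturated foliation has all of $D$ as an invariant curve, and the Brunella computation for $D$ does not apply as stated.

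The gap closes with a case split. Put $e=\deg g$ and $s'=s-e$; note $s'\geqslant-1$, since $H^0(T_{\PP^2}(s'))=0$ for $s'\leqslant-2$.

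If $F_1F_2\mid g$, then $s\geqslant d_1+d_2-1=\kappa+2$.

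If exactly one of them, say $F_i$, divides $g$, then $e\geqslant d_i$. Since $g\not\equiv0$ on the irreducible curve $C_j$ ($j\neq i$), $v'$ is tangent to $C_j$. Brunella's formula for this single smooth invariant curve, with $N_{\mathcal F}=\OO(s'+3)$, gives $(s'+3)d_j\geqslant d_j^2$. Hence $s\geqslant(d_j-3)+d_i=\kappa$.

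If neither divides $g$, your argument applies as written. With this patch, $H^0(\bT(s))=0$ for $s<\kappa$.

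Your per-component fallback is also correct: every point of $C_1\cap C_2$ is a singularity of $\mathcal F$ lying on two smooth invariant curves. In fact a single component already gives $r+2\geqslant d_1+d_2$. The nodal nonnegativity you were worried about does hold as well: at a transverse crossing, $Z(\mathcal F,D,p)=Z(\mathcal F,C_1,p)+Z(\mathcal F,C_2,p)-2\geqslant0$.

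As for the route, the paper gives no argument of its own and simply imports the logarithmic Bogomolov-type vanishing \cite[Theorem~2.1]{HouHuynhMerkerXie}. Your patched proof is a self-contained algebraic substitute in this special case. It uses $\operatorname{Pic}\PP^2=\mathbb Z h$ to reduce semistability to line subsheaves $\OO(j)$, bounds these with the foliation index formula, and even shows that $\bT$ is stable. The cited theorem is more general, whereas your argument shows exactly where smoothness, transversality and $\kappa>0$ are used.
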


\begin{proof}
This is the logarithmic Bogomolov-type vanishing theorem
\cite[Theorem~2.1]{HouHuynhMerkerXie}.
\end{proof}

\begin{theorem}[The required logarithmic $H^2$ vanishing]\label{thm:H2vanishing}
Let $D=C_1+C_2$ be a transverse pair of smooth plane curves with
$\kappa=d_1+d_2-3>0$, and put
$\bK=K_{\PP^2}+D\simeq\OO_{\PP^2}(\kappa)$.  For every rational number
$0\leqslant\delta<1/3$ and every sufficiently divisible $m\gg1$,
\begin{equation}\label{eq:H2vanishing}
   H^2\!\left(\PP^2,E_{2,m}T_{\PP^2}^*(\log D)
   \otimes\bK^{-\delta m}\right)=0.
\end{equation}
\end{theorem}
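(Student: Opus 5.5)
We argue through the filtration \eqref{eq:filtration} and Serre duality. Tensoring \eqref{eq:filtration} by $\bK^{-\delta m}$ shows that $E_{2,m}\bT^*\otimes\bK^{-\delta m}$ has a finite filtration with graded pieces
\[
 S^{m-3j}\bT^*\otimes\bK^{j-\delta m},\qquad 0\leqslant j\leqslant\floor{m/3}.
\]
Because $H^2$ is right exact on a surface (since $H^3=0$), it is enough to show that $H^2$ of every graded piece vanishes. This reduces the theorem to a vanishing statement for symmetric powers.

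\emph{Step 1 (Serre duality).} The rank-two bundle $\bT$ satisfies $\bT^*\simeq\bT\otimes\det\bT^*=\bT\otimes\bK$, so $S^p\bT^*\simeq S^p\bT\otimes\bK^p$. Serre duality on $\PP^2$ then gives
\[
 H^2\bigl(S^{p}\bT^*\otimes\bK^{q}\bigr)^\vee
 \simeq H^0\bigl(S^{p}\bT\otimes\bK^{-q}\otimes K_{\PP^2}\bigr).
\]
Here $p=m-3j$ and $q=j-\delta m$.

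\emph{Step 2 (absorbing $K_{\PP^2}$).} We have $K_{\PP^2}=\OO(-3)$ and $\bK=\OO(\kappa)$ with $\kappa>0$. Multiplication by a nonzero section of $\OO(3)$ is injective on global sections, so
\[
 H^0\bigl(S^p\bT\otimes\bK^{-q}\otimes\OO(-3)\bigr)\hookrightarrow H^0\bigl(S^p\bT\otimes\bK^{-q}\bigr).
\]
By Lemma~\ref{lem:numerical-Bogomolov} the right-hand side vanishes once $p>2(-q)$, that is, once $m-3j>2(\delta m-j)$. This condition is equivalent to
\[
 m(1-2\delta)>j.
\]

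\emph{Step 3 (numerics).} Since $j\leqslant m/3$, it suffices to have $1-2\delta>1/3$, i.e.\ $\delta<1/3$. This is exactly the hypothesis, and it holds uniformly in $j$. So every graded piece has vanishing $H^2$, and induction along the filtration gives \eqref{eq:H2vanishing}.

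The sufficient divisibility of $m$ is needed only so that $\delta m$ is an integer. The inequality in Step 2 is not sharp: the extra factor $\OO(-3)$ provides slack.

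The main point to check is the duality identity $S^p\bT^*\simeq S^p\bT\otimes(\det\bT^*)^{p}$ in the logarithmic setting. This holds because $\det\bT^*=K_{\PP^2}+D=\bK$ for the SNC divisor $D$. The rest of the argument is bookkeeping with the filtration.
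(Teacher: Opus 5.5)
Your proof is correct and follows essentially the paper's argument. Both use Serre duality, absorb $K_{\PP^2}=\OO(-3)$ by multiplying with a nonzero cubic, apply Lemma~\ref{lem:numerical-Bogomolov} to $S^{m-3j}\bT\otimes\bK^{\delta m-j}$, and conclude from $(1-2\delta)m-j\geqslant(2/3-2\delta)m>0$. The only difference is cosmetic: you dualize each graded piece and use exactness of $H^2(F')\to H^2(F)\to H^2(F'')$, while the paper dualizes the whole group and runs the filtration on $H^0$. The identity $S^p\bT^*\simeq S^p\bT\otimes\bK^p$ that you flag as the main point to check is true but not needed, because Serre duality only uses $(S^p\bT^*)^\vee\simeq S^p\bT$.
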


\begin{proof}
By Serre duality, the dual of the group in \eqref{eq:H2vanishing} is
\[
 H^0\!\left(\PP^2,
 K_{\PP^2}\otimes (E_{2,m}\bT^*)^\vee
 \otimes\bK^{\delta m}\right).
\]
The dual of the invariant-two-jet filtration has graded pieces
\[
 S^{m-3j}\bT\otimes\bK^{-j}
 \otimes K_{\PP^2}\otimes\bK^{\delta m},
 \qquad 0\leqslant j\leqslant\floor{m/3}.
\]
Because $m$ is sufficiently divisible, $\delta m$ is an integer.  Multiplication
by a nonzero section of $\OO_{\PP^2}(3)$ injects each graded piece into
\begin{equation}\label{eq:H2-graded-piece}
 S^{m-3j}\bT\otimes
 \bK^{\delta m-j}.
\end{equation}
For \eqref{eq:H2-graded-piece}, the inequality in
Lemma~\ref{lem:numerical-Bogomolov} is equivalent to
\[
 (m-3j)-2(\delta m-j)
 =(1-2\delta)m-j
 \geqslant (2/3-2\delta)m>0.
\]
Thus every graded piece has vanishing $H^0$.  Induction along the
filtration gives the vanishing of the whole Serre-dual group, proving
\eqref{eq:H2vanishing}.
\end{proof}

For the zero-locus argument, write
\[
 \bar c_i=c_i\!\left(T_{\PP^2}^*(\log D)\right),
 \qquad h=c_1\!\left(\OO_{\PP^2}(1)\right),
\]
and identify these classes with their pullbacks to the Semple tower.  For
$\rho=t/m$, set
\begin{equation}\label{eq:Q-DEG-definition}
 Q_{\mathrm{DEG}}(\rho,\tau)
 =3\tau^2+\bigl(9\rho-12(\bar c_1\cdot h)\bigr)\tau
 +(13\bar c_1^2-9\bar c_2)-12(\bar c_1\cdot h)\rho.
\end{equation}
When its value at $\tau=0$ is positive, denote its smaller positive root by
\begin{equation}\label{eq:tau1-general}
 \tau_1(\rho)
 =\frac{12\kappa-9\rho-
 \sqrt{(12\kappa-9\rho)^2-
 12\bigl((13\bar c_1^2-9\bar c_2)-12\kappa\rho\bigr)}}6.
\end{equation}
Section~\ref{sec:numerical-thresholds} derives these expressions from the
two-component Chern classes and the tower intersection relations.

\begin{lemma}[Tautological evaluation of a restricted section]
\label{lem:restricted-tautological-evaluation}
Let $Z\subset X_2$ be an effective Cartier divisor and let
\[
 \sigma\in H^0\!\left(
 Z,\OO_{X_2}(2p,p)\otimes
 \pi_{2,0}^*\OO_{\PP^2}(-\widetilde t)\big|_Z
 \right),
 \qquad p,\widetilde t\in\mathbb Z_{>0}.
\]
Let $f\colon\CC\to\PP^2$ be nonconstant, assume $f(\CC)\not\subset D$, and
suppose that its regular logarithmic second lift is contained in $Z$.  On
$\CC\setminus f^{-1}(D)$, regard
$df_{[1]}\colon T_{\CC}\to f_{[1]}^*V_1$ as a differential in the directed
structure.  Wherever it is nonzero, its image is the tautological line
$f_{[2]}^*\OO_{X_2}(-1)$.  It therefore defines, by meromorphic continuation
across the remaining discrete set, the intrinsic tautological derivative
\[
 \mathcal D_Df_{[1]}
 \in H^0_{\mathrm{mer}}\!\left(
 \CC,K_{\CC}\otimes f_{[2]}^*\OO_{X_2}(-1)
 \right).
\]
Then the canonical inclusion
\[
 \iota_p\colon
 \OO_{X_2}(2p,p)
 =\OO_{X_2}(3p)(-2p\Gamma_2)
 \hookrightarrow\OO_{X_2}(3p)
\]
defines the intrinsic meromorphic evaluation
\begin{equation}\label{eq:canonical-restricted-evaluation}
 s_{\sigma,f}:=
 \left\langle
 f_{[2]}^*(\iota_p\sigma),
 (\mathcal D_Df_{[1]})^{\otimes3p}
 \right\rangle
 \in H^0_{\mathrm{mer}}\!\left(
 \CC,K_{\CC}^{3p}\otimes
 f^*\OO_{\PP^2}(-\widetilde t)
 \right).
\end{equation}
This construction does not require an ambient lift of $\sigma$.  Also,
\begin{equation}\label{eq:canonical-evaluation-poles}
 \operatorname{div}_{\infty}(s_{\sigma,f})
 \leqslant
 3p\sum_{i=1}^2(f^*C_i)_{\mathrm{red}},
\end{equation}
and
\begin{equation}\label{eq:canonical-evaluation-proximity}
 \int_0^{2\pi}\log^+
 \|s_{\sigma,f}(re^{\sqrt{-1}\theta})\|\frac{d\theta}{2\pi}
 =O\!\left(\log^+T_f(r)+\log r\right)\ \|.
\end{equation}
At an isolated zero of $f'$ outside $f^{-1}(D)$, the evaluation has only a
removable singularity or a zero.  At a zero lying over $D$, no pole occurs
beyond the logarithmic boundary poles already counted in
\eqref{eq:canonical-evaluation-poles}.
\end{lemma}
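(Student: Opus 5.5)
The evaluation is pointwise along the curve, so everything reduces to a local computation on $\CC$, and I would argue in four steps.

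\emph{Step 1: the pairing is well defined.} Since $f_{[2]}(\CC)\subset Z$, the pullback $f_{[2]}^*\sigma$ makes sense as a holomorphic section of $f_{[2]}^*\bigl(\OO_{X_2}(2p,p)\otimes\pi_{2,0}^*\OO_{\PP^2}(-\widetilde t)\bigr)$ using only $\sigma|_Z$; no ambient extension is needed. Composing with $f_{[2]}^*\iota_p$, which is just multiplication by the $(2p)$-th power of the canonical section of $\OO(\Gamma_2)$, gives a section of $f_{[2]}^*\OO_{X_2}(3p)\otimes f^*\OO(-\widetilde t)$. On the open set where $f'\neq0$ and $f\notin D$, $df_{[1]}$ is a nonvanishing section of $K_\CC\otimes f_{[2]}^*\OO_{X_2}(-1)$, because its image is the tautological line by the definition of the second lift. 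Its $3p$-th tensor power therefore pairs with $f_{[2]}^*\OO_{X_2}(3p)$ to give a section of $K_\CC^{3p}\otimes f^*\OO(-\widetilde t)$. Meromorphic continuation across the discrete bad set is automatic once Step 2 bounds the local orders.

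\emph{Step 2: local orders at the bad points.} Work in logarithmic coordinates $(w_1,w_2)$ with $D=\{w_1w_2=0\}$ locally, or $\{w_1=0\}$. Near a point $z_0$ over $D$, write $f_j=w_j\circ f=z^{k_j}u_j$. The logarithmic derivatives $f_j'/f_j$ then have at most simple poles, and the second-order logarithmic quantities have at most double poles. Hence the trivialized components of $\mathcal D_Df_{[1]}$ in the standard affine charts of the Semple tower acquire at most a simple pole per jet order after the invariant reparametrization. Concretely, the pairing is a polynomial expression of the local form \eqref{eq:local-twojet} evaluated at $(\log f_j)'$ and $(\log f_j)''$. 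This expression is weighted homogeneous of weight $3p$, so its pole order is at most $3p$ at each point of $f^{-1}(C_i)$. Summing over the components gives \eqref{eq:canonical-evaluation-poles}.

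At a zero of $f'$ away from $D$, take a chart of $X_2$ around $f_{[2]}(z_0)$. The regular lift extends holomorphically, and $df_{[1]}$ extends holomorphically, possibly vanishing, as a section of $K_\CC\otimes f_{[2]}^*\OO(-1)$, since it is a holomorphic map into the tautological line bundle. The pairing is therefore holomorphic there, with removable singularity or a zero.

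The case of a zero of $f'$ lying over $D$ is the delicate point and the main obstacle. Here one must check that the vanishing of $\Gamma_2$-factors and the degeneration of the tautological line do not create extra poles. I would first try to verify it by writing $\iota_p\sigma$ in the local invariant form \eqref{eq:local-twojet} in logarithmic coordinates, so that the evaluation equals the classical formula $P\bigl((\log f)',(\log f)''\bigr)$. Such an expression visibly has poles only of order $\leqslant$ weight at points of $f^{-1}(D)$. This uses Lemma~\ref{lem:direct-image-21} locally and the fact that the evaluation agrees with the classical one wherever both are defined.

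\emph{Step 3: proximity estimate.} Cover $\PP^2$ by finitely many charts with logarithmic coordinates given by rational functions $g_\ell$. Locally, $s_{\sigma,f}$ is then a sum of bounded coefficients, coming from $\sigma$ restricted to the compact $Z$, times monomials in $(g_\ell\circ f)^{(i)}/(g_\ell\circ f)$ for $i\leqslant 2$, with bounded Fubini–Study weights. Using $\log^+\lvert\sum\rvert\leqslant\sum\log^+\lvert\cdot\rvert+O(1)$ together with the logarithmic derivative lemma applied to the meromorphic functions $g_\ell\circ f$ yields \eqref{eq:canonical-evaluation-proximity}. Compactness of the Semple fiber over each chart guarantees the uniformity of the coefficients.
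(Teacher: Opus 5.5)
There is a genuine gap, and it sits exactly at the step you call the main obstacle. In Step~2 you write $\iota_p\sigma$ locally in the invariant form \eqref{eq:local-twojet}. Then $s_{\sigma,f}$ becomes a classical polynomial $P\bigl((\log f)',(\log f)''\bigr)$ whose pole order is visibly at most the weight. That representation is not available for a section that lives only on $Z$.

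Lemma~\ref{lem:direct-image-21} identifies jet differentials on $U\subset\PP^2$ with sections of $\OO_{X_2}(2p,p)$ over the \emph{whole} preimage $\pi_{2,0}^{-1}(U)$. The section $\sigma$ need not extend there, even for small Stein $U$. The obstruction lies in $H^0\bigl(U,R^1(\pi_{2,0})_*L(-Z)\bigr)$. For the divisors $Z\sim b_1u_1+b_2u_2-th$ to which the lemma is applied, $L(-Z)$ has type $\OO_{X_2}(2p-b_1,p-b_2)$. By Proposition~\ref{prop:sharp-higher-direct-images}, this sheaf is nonzero once $p\geqslant b_2$ and $b_1-2b_2\geqslant2$. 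This is exactly why the statement stresses that no ambient lift is used.

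A local extension to a small neighbourhood in $X_2$ does exist. But it is an arbitrary holomorphic function of the Semple coordinates times a frame, not a polynomial in the jets, so the ``visible'' pole count does not apply. Step~3 has the same problem: there you again treat $s_{\sigma,f}$ as bounded coefficients times monomials in $(g_\ell\circ f)^{(i)}/(g_\ell\circ f)$.

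The missing idea is to separate $\sigma$ from the curve entirely. With smooth Hermitian metrics, $\iota_p\sigma$ is bounded on the compact $Z$, so $\|s_{\sigma,f}\|\leqslant C\|\mathcal D_Df_{[1]}\|^{3p}$. Both estimates then reduce to the tautological derivative alone.

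In a logarithmic frame of $V_1$, the horizontal components of $df_{[1]}$ are $(x_i\circ f)'/(x_i\circ f)$, with at most simple poles over $D$. The vertical component is $(u\circ f_{[1]})'$ for a Semple coordinate $u$. It is holomorphic because $f_{[1]}$ extends holomorphically across the zeros of $f'$. Hence $\mathcal D_Df_{[1]}$ has at most a simple pole along each $(f^*C_i)_{\mathrm{red}}$, which gives \eqref{eq:canonical-evaluation-poles} at every point, critical or not.

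Your own argument for critical points off $D$ already does this work. There you use a section of $K_\CC\otimes f_{[1]}^*V_1$ that lies generically, hence everywhere, in the tautological subbundle. That argument works unchanged over $D$ once you allow this simple pole, so no separate treatment is needed.

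For \eqref{eq:canonical-evaluation-proximity}, the horizontal terms are handled by the logarithmic derivative lemma as you say. The vertical term, however, is not of the form you list, and it needs its own step. On a relatively compact Semple chart, write $(u\circ f_{[1]})'=(u\circ f_{[1]}-a)\cdot(u\circ f_{[1]})'/(u\circ f_{[1]}-a)$, with $a$ outside the closure of the image of $u$. Then apply the logarithmic derivative lemma to $u\circ f_{[1]}-a$. This uses $T_{u\circ f_{[1]}}(r)=O(T_f(r)+\log r)$, which holds because $u$ is a ratio of first logarithmic derivatives.
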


\begin{proof}
Pairing its $3p$-th tensor power with $f_{[2]}^*(\iota_p\sigma)$ gives
\eqref{eq:canonical-restricted-evaluation}.  Since this is a pairing of dual
tautological lines, the result is independent of all Semple coordinates and
all local representatives of $\sigma$.

Equip the line bundles with smooth Hermitian metrics.  Since $Z$ is
projective, the norm of the holomorphic section $\iota_p\sigma$ is bounded
on $Z$.  Thus
\[
 \|s_{\sigma,f}\|
 \leqslant C\,\|\mathcal D_Df_{[1]}\|^{3p}
\]
for a uniform constant $C$.

In coordinates in which $D=\{x_1\cdots x_q=0\}$, the horizontal boundary
components of the derivative of the regular first lift are
$(x_i\circ f)'/(x_i\circ f)$, while the vertical projective-coordinate
components are holomorphic.  Thus the tautological derivative has at most a
simple pole along each reduced pullback $(f^*C_i)_{\mathrm{red}}$.

By Lemma~\ref{lem:regular-lift-not-vertical}, the Semple lift extends across
the isolated zeros of $f'$.  At a critical point outside $f^{-1}(D)$, the
projective ratios extend and the evaluation has only a removable singularity
or a zero.  At a critical point over $D$, a logarithmic component may keep
a simple pole; that pole is already counted by the reduced boundary pullback,
and no additional Semple-type pole appears.  This proves
\eqref{eq:canonical-evaluation-poles}.  Choose a finite logarithmic base
cover and, above it, finitely many relatively compact Semple charts.  A
partition of unity is used only to estimate the norm of this single global
section.  On each chart, the components of $\mathcal D_Df_{[1]}$ are
logarithmic derivatives of local coordinate functions.  A projective Semple
coordinate $u$ is a rational function on $X_1$ and, in a logarithmic frame,
a ratio of first logarithmic derivatives.  Thus
\[
 T_{u\circ f_{[1]}}(r)=O\bigl(T_f(r)+\log r\bigr).
\]
For a partition function $\chi$ supported in a relatively compact Semple
chart, choose $a\notin\overline{u(\operatorname{supp}\chi)}$.  On that
support, $u-a$ is bounded above and away from zero, and
\[
 (u\circ f_{[1]})'
 =(u\circ f_{[1]}-a)
 \frac{(u\circ f_{[1]})'}{u\circ f_{[1]}-a}.
\]
The logarithmic-derivative lemma applied to $u\circ f_{[1]}-a$ controls the
vertical component by
\[
 O\bigl(\log^+T_f(r)+\log r\bigr).
\]
Summing over the finite
cover, together with the same estimates for the horizontal terms, gives
\eqref{eq:canonical-evaluation-proximity}.  This is the argument of
\cite[Thm.~3.1, proof of (3.3)--(3.6)]{HuynhVuXie}, with the
Semple-coordinate adaptation made explicit.
\end{proof}

\begin{lemma}[Second Main Theorem for a section restricted to $Z$]
\label{lem:weighted-degree-pole}
Let $Z\subset X_2$ be an effective Cartier divisor and let
\[
 \sigma\in H^0\!\left(
 Z,\OO_{X_2}(2p,p)\otimes
 \pi_{2,0}^*\OO_{\PP^2}(-\widetilde t)\big|_Z
 \right),
 \qquad p,\widetilde t\in\mathbb Z_{>0}.
\]
Suppose that $f:\CC\to\PP^2$ is nonconstant, $f(\CC)\not\subset D$,
$f_{[2]}(\CC)\subset Z$, and the canonical evaluation
$s_{\sigma,f}$ of Lemma~\ref{lem:restricted-tautological-evaluation} is not
identically zero.  Then
\begin{equation}\label{eq:restricted-wronskian-estimate}
 \widetilde t\,T_f(r)
 \leqslant3p\sum_{i=1}^2N_f^{[1]}(r,C_i)
 +O\!\left(\log T_f(r)+\log r\right)\ \|.
\end{equation}
In particular, if $f$ is algebraically nondegenerate, then
\[
 T_f(r)\leqslant\frac{3p}{\widetilde t}N_f^{[1]}(r,D)
 +o(T_f(r))\ \|.
\]
\end{lemma}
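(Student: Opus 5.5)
This is a standard Nevanlinna-theoretic argument. We apply the Poincaré--Lelong (Jensen) formula to the meromorphic section $s_{\sigma,f}$ of Lemma~\ref{lem:restricted-tautological-evaluation}, and feed in the pole bound \eqref{eq:canonical-evaluation-poles} and the proximity bound \eqref{eq:canonical-evaluation-proximity}.

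\emph{Step 1: trivialize the section.} Trivialize $K_{\CC}$ by $dz$, and regard $s_{\sigma,f}$ as a meromorphic section of $f^*\OO_{\PP^2}(-\widetilde t)$. Fix the Hermitian metric on $\OO_{\PP^2}(-1)$ induced from the Fubini--Study metric. Then $\log\|s_{\sigma,f}\|^2$ is locally integrable, because $s_{\sigma,f}\not\equiv0$. Its Laplacian in the sense of currents is
\[
dd^c\log\|s_{\sigma,f}\|^2=\widetilde t\,f^*\omega_{\mathrm{FS}}+[\operatorname{div}_0 s_{\sigma,f}]-[\operatorname{div}_\infty s_{\sigma,f}].
\]
The sign of the first term reflects the negativity of $\OO(-\widetilde t)$.

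\emph{Step 2: integrate.} Integrate this identity via Jensen's formula over discs $|z|<s$ and then against $ds/s$. This gives
\[
\widetilde t\,T_f(r)+N(r,\operatorname{div}_0 s_{\sigma,f})-N(r,\operatorname{div}_\infty s_{\sigma,f})
=\int_0^{2\pi}\log\|s_{\sigma,f}(re^{\sqrt{-1}\theta})\|\frac{d\theta}{2\pi}+O(1).
\]
Drop the nonnegative zero-counting term. Bound the right side above by the $\log^+$ mean, which is $O(\log^+T_f(r)+\log r)\ \|$ by \eqref{eq:canonical-evaluation-proximity}. Then bound $N(r,\operatorname{div}_\infty s_{\sigma,f})$ by \eqref{eq:canonical-evaluation-poles}: the poles are supported on $f^{-1}(D)$ with multiplicity at most $3p$ at each point of each reduced pullback $(f^*C_i)_{\mathrm{red}}$. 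Hence the pole counting function is at most $3p\sum_iN_f^{[1]}(r,C_i)$. This yields \eqref{eq:restricted-wronskian-estimate}.

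One point needs care. A point of $f^{-1}(C_1\cap C_2)$ contributes to both $(f^*C_1)_{\mathrm{red}}$ and $(f^*C_2)_{\mathrm{red}}$. This is consistent with the componentwise convention $N_f^{[1]}(r,D)=\sum_i N_f^{[1]}(r,C_i)$, so no further argument is required.

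\emph{Step 3: nondegenerate case.} When $f$ is algebraically nondegenerate, the growth lemma recalled after the Nevanlinna definitions gives $O(\log T_f(r)+\log r)=o(T_f(r))\ \|$. Dividing by $\widetilde t>0$ then gives the second inequality.

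\emph{Main obstacle.} The only delicate point is Step 1: the metric identity must hold with the correct sign and with no hidden contributions. Two features make this work. First, the pairing \eqref{eq:canonical-restricted-evaluation} is intrinsic, so no ambient extension of $\sigma$ off $Z$ is needed. Second, all possible singularities at zeros of $f'$ are removable or are already logarithmic poles over $D$. Both features were established in Lemma~\ref{lem:restricted-tautological-evaluation}, so the proof reduces to Jensen's formula. A secondary technical issue concerns the exceptional sets: the exceptional sets of finite measure coming from the proximity bound must be combined. Since each has finite measure, their union still has finite measure.
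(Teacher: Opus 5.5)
Your proposal is correct and follows essentially the paper's own route. The paper also trivializes $K_{\CC}^{3p}$ by $(dz)^{3p}$ with the flat metric and puts the dual Fubini--Study metric on $\OO_{\PP^2}(-1)$. It then applies Poincar\'e--Lelong and Jensen to $s_{\sigma,f}$, feeding in the pole bound \eqref{eq:canonical-evaluation-poles} and the proximity bound \eqref{eq:canonical-evaluation-proximity} from Lemma~\ref{lem:restricted-tautological-evaluation}; your explicit sign check of the curvature term and your counting-function bookkeeping simply spell out what the paper leaves implicit.
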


\begin{proof}
By Lemma~\ref{lem:restricted-tautological-evaluation}, the section
$s_{\sigma,f}$ is globally and intrinsically defined, its pole divisor is
bounded by \eqref{eq:canonical-evaluation-poles}, and its proximity term
satisfies \eqref{eq:canonical-evaluation-proximity}.  Use the standard coordinate
$z$ on $\CC$ to trivialize $K_{\CC}^{3p}$ by the global frame $(dz)^{3p}$
and give it the flat metric.
Equip $\OO_{\PP^2}(-1)$ with the metric dual to the Fubini--Study metric.
Poincar\'e--Lelong and Jensen's formula give
\begin{align*}
 \widetilde t\,T_f(r)
 &\leqslant3p\sum_{i=1}^2N_f^{[1]}(r,C_i)\\
 &\quad+O\!\left(\log^+T_f(r)+\log r\right)\ \|.
\end{align*}
This is \eqref{eq:restricted-wronskian-estimate}.  It is the intrinsic
restricted-section version of the argument in
\cite[Proposition~6.3 and Theorem~6.4]{HouHuynhMerkerXie}; the analytic norm
estimate is the one in
\cite[Theorem~3.1, proof of (3.3)--(3.6)]{HuynhVuXie}.
\end{proof}

\begin{theorem}[Two-component zero-locus theorem and standard estimate]\label{thm:zero-locus}
Let $\omega_1$ be an irreducible nonzero section of
\[
 E_{2,m}\bT^*\otimes\OO(-t),\qquad \rho=t/m,
\]
whose zero divisor on $X_2$ is $Z+b\Gamma_2$, where $Z$ is irreducible
and reduced and
\begin{equation}\label{eq:Z-class}
 Z\sim b_1u_1+b_2u_2-th,
 \qquad b_1\geqslant2b_2>0,
 \qquad b_1+b_2=m.
\end{equation}
Suppose
\[
 0<\rho<\rho_{\mathrm{DEG}}
 :=\frac{13\bar c_1^2-9\bar c_2}
 {12(\bar c_1\cdot h)}.
\]
Assume $\kappa=d_1+d_2-3\geqslant3$.  Let $\tau_1(\rho)$ be the smaller
positive root of $Q_{\mathrm{DEG}}(\rho,\tau)=0$, explicitly given in
\eqref{eq:tau1-general}.  For every rational
\[
 0<\tau<\min\{3,\tau_1(\rho)\},
\]
the $\mathbb Q$-line bundle
\[
 \left(\OO_{X_2}(2,1)\otimes
 \pi_{2,0}^*\OO_{\PP^2}(-\tau)\right)|_Z
\]
is big.  Also, if $f:\CC\to\PP^2$ is algebraically nondegenerate
and $f^*\omega_1\equiv0$, then either
\begin{equation}\label{eq:zero-locus-refined-SMT}
 T_f(r)\leqslant\frac3\tau N_f^{[1]}(r,D)+o(T_f(r))\ \|,
\end{equation}
or the first logarithmic lift $f_{[1]}$ is algebraically degenerate.
\end{theorem}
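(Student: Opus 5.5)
The plan has two parts: an intersection-theoretic bigness statement on $Z$, and an analytic dichotomy obtained from the restricted evaluation of Lemma~\ref{lem:restricted-tautological-evaluation}.

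\textbf{Bigness.} For the bigness of $L_\tau:=\bigl(\OO_{X_2}(2,1)\otimes\pi_{2,0}^*\OO_{\PP^2}(-\tau)\bigr)|_Z$, I would follow Demailly--El Goul and use the algebraic holomorphic Morse inequality on the surface $Z$. If $L_\tau=A-B$ with $A,B$ nef $\mathbb Q$-classes on $Z$, then $L_\tau$ is big as soon as $A^2\cdot Z-2A\cdot B\cdot Z>0$.

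The natural decomposition is
\[
 A=\bigl(\OO_{X_2}(2,1)\otimes\pi_{2,0}^*\OO(3)\bigr)|_Z,
 \qquad B=\pi_{2,0}^*\OO(3+\tau)|_Z.
\]
$B$ is nef because $h$ is nef. For $A$, I would use the global generation of $T^*_{\PP^2}(\log D)\otimes\OO(2)$ along the Semple tower: twisting by a fixed multiple of $h$ makes $u_1$, and then $2u_1+u_2$, nef. I will need to check exactly which twist of $h$ is required; the hypothesis $\tau<3$ should appear precisely here, or in the sign of the leading term.

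Next I expand $A^2\cdot Z-2A\cdot B\cdot Z$ with $Z\sim b_1u_1+b_2u_2-th$. The tower relations
\[
 u_1^2=\bar c_1u_1-\bar c_2,\qquad u_2^2=\ldots
\]
and the pushforward rules from $X_2$ to $\PP^2$ turn this into a polynomial in $b_1,b_2,t,\tau$. Using $b_1+b_2=m$, $b_1\geqslant2b_2$ and dividing by $m$, I expect the worst case (minimizing over the admissible split of $b_1,b_2$) to be a positive multiple of $Q_{\mathrm{DEG}}(\rho,\tau)$. That is the definition in \eqref{eq:Q-DEG-definition}, whose derivation Section~\ref{sec:numerical-thresholds} promises.

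Since $\rho<\rho_{\mathrm{DEG}}$, the constant term of $Q_{\mathrm{DEG}}$ is positive. Hence $Q_{\mathrm{DEG}}(\rho,\tau)>0$ for $0\leqslant\tau<\tau_1(\rho)$, which gives bigness. The assumption $\kappa\geqslant3$ is used to make the coefficients in this minimization have the right signs. This step is the main obstacle: choosing the nef decomposition so that the Morse expression reduces exactly to $Q_{\mathrm{DEG}}$, uniformly over all admissible $(b_1,b_2)$, and keeping track of where $\tau<3$ enters.

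\textbf{Analytic part.} Take $\tau=\widetilde t/p$ rational with $p$ divisible enough. Bigness then gives, for $p\gg1$, sections
\[
 \sigma_1,\dots,\sigma_N\in H^0\bigl(Z,\OO_{X_2}(2p,p)\otimes\pi_{2,0}^*\OO(-\widetilde t)|_Z\bigr)
\]
whose common zero set $W\subset Z$ is a proper algebraic subset. This is the stable base locus of a big bundle, so $\dim W\leqslant1$.

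Now let $f$ be algebraically nondegenerate with $f^*\omega_1\equiv0$. Then $f_{[2]}(\CC)$ lies in the zero divisor $Z+b\Gamma_2$ of $\omega_1$. By Lemma~\ref{lem:regular-lift-not-vertical} it is not contained in $\Gamma_2$, and since it is irreducible, $f_{[2]}(\CC)\subset Z$.

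\textbf{Dichotomy.} If some evaluation $s_{\sigma_i,f}$ from Lemma~\ref{lem:restricted-tautological-evaluation} is not identically zero, Lemma~\ref{lem:weighted-degree-pole} gives
\[
 T_f(r)\leqslant\frac{3p}{p\tau}N_f^{[1]}(r,D)+o(T_f(r)),
\]
which is \eqref{eq:zero-locus-refined-SMT}.

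Otherwise every $s_{\sigma_i,f}\equiv0$. Off the discrete set where $\mathcal D_Df_{[1]}$ vanishes or has poles, the tautological derivative generates the line $f_{[2]}^*\OO_{X_2}(-1)$. So $f_{[2]}^*\sigma_i$ itself vanishes there, and $f_{[2]}(\CC)\subset W$. Thus the Zariski closure of $f_{[2]}(\CC)$ has dimension at most one, and its image in $X_1$ is a curve. Therefore $f_{[1]}$ is algebraically degenerate, which is the second alternative.
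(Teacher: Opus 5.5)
Your analytic half (evaluation on $f_{[2]}$, with either the SMT or $f_{[2]}(\CC)$ lying in a proper subset of $Z$) is essentially the paper's argument. The bigness step, however, fails.

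\textbf{Why the Morse-inequality route fails.} $Z$ is a divisor in the fourfold $X_2$, so it is a threefold. The Morse criterion on $Z$ therefore reads $A^3\cdot Z-3A^2\cdot B\cdot Z>0$, not the surface version you wrote. For any decomposition $L_\tau=A-B$ with $B=c\,h$ and $c>0$, use $h^3=0$ and $L_\tau\cdot h^2\cdot Z=(2u_1+u_2)\cdot h^2\cdot Z=m$ from \eqref{eq:DEG-push-1}. This gives
\[
 A^3\cdot Z-3A^2\cdot B\cdot Z
 =c_1(L_\tau)^3\cdot Z-3c^2m
 =m\bigl(Q_{\mathrm{DEG}}(\rho,\tau)-3c^2\bigr).
\]
So your expectation that the Morse expression reduces to a multiple of $Q_{\mathrm{DEG}}$ is false. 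By \eqref{eq:DEG-intersection-symbolic}, $Q_{\mathrm{DEG}}$ is the full cube of $L_\tau$ on $Z$, and the Morse bound loses $3c^2m$.

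Consequently the Morse quantity becomes negative as $\tau\to\tau_1(\rho)$. For two cubics it is negative throughout: $Q_{\mathrm{DEG}}(\rho,\tau)\leqslant Q_{\mathrm{DEG}}(\rho,0)=9-36\rho<9$ on $[0,\tau_1(\rho))$, while your $c=3+\tau$ gives $3c^2\geqslant27$. Positivity of the cube alone does not give bigness either, because $L_\tau|_Z$ is not known to be nef.

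\textbf{The missing idea.} You need a cohomological argument that keeps the whole cube.
\begin{enumerate}
\item Hirzebruch--Riemann--Roch on the smooth fourfold $X_2$ gives
\[
 \chi(Z,L_\tau^p|_Z)=\chi(X_2,L_\tau^p)-\chi(X_2,L_\tau^p(-Z))=\frac{p^3}{6}\,mQ_{\mathrm{DEG}}(\rho,\tau)+O(p^2).
\]
\item On the threefold $Z$ one has $h^0\geqslant\chi-h^2$. So it suffices to prove $H^2(Z,L_\tau^p|_Z)=0$.
\item By the restriction sequence, this follows from two ambient vanishings.
\begin{itemize}
\item $H^2(X_2,L_\tau^p)=H^2(\PP^2,E_{2,3p}\bT^*\otimes\OO(-p\tau))=0$. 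This is Lemma~\ref{lem:direct-image-21} together with Theorem~\ref{thm:H2vanishing}, applied with $\delta=\tau/(3\kappa)<1/3$.
\item $H^3(X_2,L_\tau^p(-Z))=0$. Push forward to $X_1$ and filter $\Sym^{p-b_2}V_1^*$ by the dual directed sequence. Only the finitely many pieces $\OO_{X_1}(3\ell-m)$ with $3\ell-m\leqslant-2$ have nonzero $R^1\pi_*$. These carry the twist $\OO((\kappa-\tau)p+t-\kappa\ell)$ with $\kappa-\tau>0$, so Serre vanishing kills their $H^2$ on $\PP^2$.
\end{itemize}
\end{enumerate}
This is exactly where $\tau<3\leqslant\kappa$ is used. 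It plays no role in a nefness twist or in a minimization over $(b_1,b_2)$; in fact the cube depends only on $b_1+b_2=m$.

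\textbf{A minor slip in the analytic part.} The base locus of sections of a big bundle on the threefold $Z$ can be a surface, not only a curve. This is harmless: a subset of $X_2$ of dimension at most two still maps to a proper subset of the threefold $X_1$, so $f_{[1]}$ is still algebraically degenerate.
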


\begin{proof}
The restricted-bigness calculation follows the method of
\cite[Proposition~6.2]{HouHuynhMerkerXie}, with the two-component
intersection polynomial computed below.  The local frame and analytic
estimate are adapted from \cite[Proposition~6.3 and Theorem~6.4]{HouHuynhMerkerXie}
and are recorded in Lemma~\ref{lem:weighted-degree-pole}.

The proof has four steps.  First, we show that the cubic intersection on
$Z$ is positive.  Second, two cohomology vanishings turn this positive
number into bigness.  Third, we choose a section of a large power of the
big bundle.  Finally, we evaluate that section on the lifted curve.  A
nonzero evaluation gives the stated Second Main Theorem; a zero evaluation
makes the first lift algebraically degenerate.

Put
\[
 L_\tau=\OO_{X_2}(2,1)\otimes
 \pi_{2,0}^*\OO_{\PP^2}(-\tau).
\]
The intersection computation on the logarithmic Demailly--Semple tower
gives
\begin{equation}\label{eq:zero-locus-intersection-proof}
 c_1(L_\tau)^3\cdot Z
 =mQ_{\mathrm{DEG}}(\rho,\tau).
\end{equation}
By the definition of $\tau_1(\rho)$, the right-hand side is positive in
the stated range.

We prove bigness without inferring it from
\eqref{eq:zero-locus-intersection-proof} alone.  Let $p\gg1$ be
sufficiently divisible, so that $p\tau/\kappa\in\mathbb Z$ (and thus
$p\tau\in\mathbb Z$) and
$L_\tau^p$ is a genuine line bundle.  Since $X_2$ is smooth and $Z$ is an
effective Cartier divisor, the exact sequence
\[
 0\to L_\tau^p(-Z)\to L_\tau^p\to L_\tau^p|_Z\to0
\]
and Hirzebruch--Riemann--Roch on the smooth fourfold $X_2$ give
\begin{align}\label{eq:ambient-RR-on-Z}
 \chi\!\left(Z,L_\tau^p|_Z\right)
 &=\chi(X_2,L_\tau^p)-\chi(X_2,L_\tau^p(-Z))\\
 &=\int_{X_2}e^{p c_1(L_\tau)}\bigl(1-e^{-[Z]}\bigr)
     \operatorname{td}(T_{X_2})\\
 &=\frac{c_1(L_\tau)^3\cdot Z}{3!}p^3+O(p^2).
\end{align}
This ambient computation does not require $Z$ to be smooth or normal.

We next show $H^2(Z,L_\tau^p|_Z)=0$.  Lemma~\ref{lem:direct-image-21},
the projection formula, and Leray yield
\[
 H^2(X_2,L_\tau^p)
 \simeq
 H^2\!\left(\PP^2,E_{2,3p}\bT^*\otimes\OO(-p\tau)\right)=0,
\]
where Theorem~\ref{thm:H2vanishing} is applied with
$\delta=\tau/(3\kappa)<1/3$.

Using \eqref{eq:Z-class},
\[
 L_\tau^p(-Z)
 \simeq
 \OO_{X_2}(2p-b_1,p-b_2)
 \otimes\pi_{2,0}^*\OO_{\PP^2}(t-p\tau).
\]
To prove the required $H^3$ vanishing, we use the filtration induced by the
directed tangent sequence.  The relevant relative defect is
$2b_2-b_1$, which may be negative, but only finitely many graded pieces
contribute to the higher direct image.

Write $q=\pi_{2,1}:X_2\to X_1$ and $\pi=\pi_{1,0}:X_1\to\PP^2$.
For $p\gg1$ one has $p-b_2\geqslant0$, so
\[
 R^rq_*L_\tau^p(-Z)=0\quad(r\geqslant1)
\]
and
\[
 q_*L_\tau^p(-Z)
 =\OO_{X_1}(2p-b_1)\otimes
   \Sym^{p-b_2}V_1^*\otimes\pi^*\OO(t-p\tau).
\]
The dual relative tangent sequence
\begin{equation}\label{eq:V1-dual-filtration}
 0\longrightarrow\OO_{X_1}(1)
 \longrightarrow V_1^*
 \longrightarrow\pi^*\bK\otimes\OO_{X_1}(-2)
 \longrightarrow0
\end{equation}
induces a filtration whose quotients, indexed by
$0\leqslant j\leqslant p-b_2$, are
\[
 \OO_{X_1}(3p-m-3j)\otimes
 \pi^*\!\left(\bK^j\otimes\OO(t-p\tau)\right).
\]
Put $\ell=p-j$.  The same quotients become
\begin{equation}\label{eq:zero-locus-graded-quotients}
 \mathcal G_{\ell,p}
 =\OO_{X_1}(3\ell-m)\otimes
  \pi^*\OO\bigl((\kappa-\tau)p+t-\kappa\ell\bigr),
 \qquad b_2\leqslant\ell\leqslant p.
\end{equation}
If $3\ell-m\geqslant-1$, then
$R^1\pi_*\OO_{X_1}(3\ell-m)=0$, so
$H^3(X_1,\mathcal G_{\ell,p})=0$ by Leray.  The remaining indices satisfy
\[
 b_2\leqslant\ell\leqslant\floor{\frac{m-2}{3}};
\]
this is a fixed finite set, independent of $p$.  For each such $\ell$,
the projection formula gives
\[
 R^1\pi_*\mathcal G_{\ell,p}
 =R^1\pi_*\OO_{X_1}(3\ell-m)\otimes
  \OO\bigl((\kappa-\tau)p+t-\kappa\ell\bigr).
\]
The first factor is fixed, while $\kappa-\tau>0$ because
$\tau<3\leqslant\kappa$.  Serre vanishing therefore kills its $H^2$ for
$p\gg1$.  Leray, followed by induction through the filtration
\eqref{eq:V1-dual-filtration}, yields
\begin{equation}\label{eq:zero-locus-H3-vanishing}
 H^3\!\left(X_2,L_\tau^p(-Z)\right)=0.
\end{equation}
The divisor exact sequence now gives
$H^2(Z,L_\tau^p|_Z)=0$.  Combining this with
\eqref{eq:ambient-RR-on-Z}, we obtain
\[
 h^0(Z,L_\tau^p|_Z)
 =\chi(Z,L_\tau^p|_Z)+h^1(Z,L_\tau^p|_Z)
  +h^3(Z,L_\tau^p|_Z)
 \geqslant\chi(Z,L_\tau^p|_Z)\gg p^3,
\]
which proves that the $\mathbb Q$-line bundle $L_\tau|_Z$ is big.

Choose $p$ in the same sufficiently divisible sequence and a nonzero section
\[
 \sigma\in H^0(Z,L_\tau^p|_Z),
 \qquad \widetilde t=p\tau\in\mathbb Z.
\]
Since $f^*\omega_1\equiv0$ and
Lemma~\ref{lem:regular-lift-not-vertical} excludes containment in
$\Gamma_2$, one has $f_{[2]}(\CC)\subset Z$.  Suppose first
that the canonical evaluation $s_{\sigma,f}$ is not identically zero.  Since
\[
 L_\tau^p=\OO_{X_2}(2p,p)\otimes
 \pi_{2,0}^*\OO_{\PP^2}(-\widetilde t),
\]
Lemma~\ref{lem:weighted-degree-pole} yields
\[
 \widetilde t\,T_f(r)
 \leqslant3pN_f^{[1]}(r,D)
 +O\!\left(\log T_f(r)+\log r\right)\ \|.
\]
The logarithmic derivative lemma makes the final term $o(T_f(r))$ for
an algebraically nondegenerate curve.  Since $\widetilde t=p\tau$, this
is \eqref{eq:zero-locus-refined-SMT}.

If instead $s_{\sigma,f}\equiv0$, the tautological derivative is nonzero
away from a discrete set because $f$ is nonconstant.  The intrinsic pairing
\eqref{eq:canonical-restricted-evaluation} therefore implies
$f_{[2]}^*\sigma\equiv0$.  Thus $f_{[2]}(\CC)$ is contained in
the proper divisor $\operatorname{div}_Z(\sigma)$ of the irreducible
threefold $Z$.  This set has dimension at most two, so its image under
$\pi_{2,1}:X_2\to X_1$ is a proper algebraic subset of the
threefold $X_1$.  Thus $f_{[1]}$ is algebraically degenerate.
\end{proof}

\begin{remark}\label{rem:bigness-not-numerical-only}
The output used later is the following alternative.  If
$t/m<\rho_{\mathrm{DEG}}$ and the first equation vanishes on the lifted
curve, then either the coefficient is $3/\tau$ for every allowed $\tau$, or
$f_{[1]}$ is algebraically degenerate.  The proof needs three separate
facts: a positive cubic intersection, the $H^2$ and $H^3$ vanishings that
give bigness, and the intrinsic evaluation estimate
\eqref{eq:zero-locus-refined-SMT}.
\end{remark}

\subsection{Algebraically degenerate curves}
The preceding analytic estimates concern algebraically nondegenerate curves.
For the hyperbolicity corollary, the algebraically degenerate case is supplied
by the following specialization of Xi Chen's theorem.

For an irreducible curve $\Gamma\not\subset D$, let $\nu:\widetilde\Gamma\to\Gamma$ be the normalization and define
\[
   i(\Gamma,D):=
   \#\operatorname{Supp}\nu^*D.
\]

\begin{theorem}[Xi Chen]\label{thm:Chen}
For a very general ordered pair of smooth transverse plane curves of total
degree $d_{\mathrm{tot}}=d_1+d_2$ and every reduced irreducible curve
$\Gamma\not\subset D$,
\begin{equation}\label{eq:Chen-inequality}
   2g(\widetilde\Gamma)-2+i(\Gamma,D)
   \geqslant (d_{\mathrm{tot}}-4)\deg\Gamma.
\end{equation}
\end{theorem}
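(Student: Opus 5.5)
The plan is to obtain Theorem~\ref{thm:Chen} by specializing Xi Chen's algebraic-hyperbolicity theorem for log pairs $(\PP^2,D)$, rather than reproving it. Chen's result, in the form proved by degeneration, says the following. Let $D$ be a very general member of a family of reduced plane curves of total degree $d$ whose components are smooth of prescribed degrees $d_1,\dots,d_k$ and meet transversally. Then every reduced irreducible curve $\Gamma\not\subset D$ satisfies
\[
 2g(\widetilde\Gamma)-2+i(\Gamma,D)\geqslant (d-4)\deg\Gamma .
\]
So the first step is to check that our parameter space fits this setting. The space $\mathcal S_{d_1,d_2}=\PP(V_{d_1})\times\PP(V_{d_2})$ is irreducible, and the snc locus $\mathcal S_{d_1,d_2}^{\mathrm{snc}}$ is a nonempty Zariski-open subset. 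A very general point of this product is therefore a very general ordered pair in Chen's sense. The ordering is irrelevant to the inequality, because both $i(\Gamma,D)$ and $\deg D$ are symmetric in the two components.

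If a self-contained justification is wanted, I would follow Chen's strategy. Consider the relative Hilbert (or Kontsevich) scheme of pairs $(\Gamma,D)$. The locus where \eqref{eq:Chen-inequality} fails is a countable union of constructible sets, indexed by $\deg\Gamma$, $g(\widetilde\Gamma)$ and $i(\Gamma,D)$. It therefore suffices to show that each such set does not dominate $\mathcal S_{d_1,d_2}$.

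To do this, I would degenerate $C_1$ and $C_2$ simultaneously to general unions of $d_1$ and $d_2$ lines, so that $D$ degenerates to $d_{\mathrm{tot}}$ general lines. Then I would use the stable-map limit. The count of points of $\widetilde\Gamma$ over $D$ is lower semicontinuous in the appropriate sense. The arithmetic-genus and tangency bookkeeping on the limit, with components meeting the general lines, gives the bound $(d_{\mathrm{tot}}-4)\deg\Gamma$. Chen carries this out by induction on the number of lines, with the constant $-4$ coming from the $\PP^2$ adjunction term $-3$ together with one extra unit.

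The main obstacle in a from-scratch proof would be this degeneration step. One must control how the unibranch points and the intersection points $\operatorname{Supp}\nu^*D$ behave as $D$ breaks into lines. One must also ensure that a family of curves $\Gamma$ violating the inequality cannot survive on a dense set of parameters. For the purposes of this paper, I would not redo that analysis. It is exactly the content of Chen's theorem for divisors of total degree $d_{\mathrm{tot}}$ with prescribed component degrees, so the proof reduces to citing it and checking the hypotheses as in the first step.
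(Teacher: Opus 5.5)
Your proposal is correct and follows the paper's approach: both specialize Xi Chen's very-general inequality for reducible plane divisors of total degree $d$ (his Corollary~1.19) to $D=C_1+C_2$ with $d=d_1+d_2$. Your check that a very general point of the irreducible space $\mathcal S_{d_1,d_2}^{\mathrm{snc}}$ is very general in Chen's sense, with the ordering irrelevant, is exactly the specialization step the paper leaves implicit; the optional degeneration sketch is not needed and does not appear in the paper's argument.
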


\begin{proof}[Source of the specialization]
Xi Chen proves for a very general reducible plane divisor $D=\bigcup D_j$ of total degree $d$ that
\[
   2g(\widetilde\Gamma)-2+i(\Gamma,D)
   \geqslant(d-4)\deg\Gamma.
\]
Taking a very general ordered pair of degrees $(d_1,d_2)$ gives
$d=d_{\mathrm{tot}}$ and
thus \eqref{eq:Chen-inequality}; see
\cite[Corollary~1.19]{ChenLogSurfaces}.
\end{proof}

\begin{remark}[The general-pair problem]\label{rem:general-pair-problem}
Theorem~\ref{thm:Chen} is a very-general statement, whereas
Theorem~\ref{thm:mainSMT} holds on a single nonempty Zariski-open subset of
the parameter space.  This distinction is a genuine one.  For plane
boundaries with at most two irreducible components, Caporaso and Turchet
observe that no analogous exceptional-set result is known without the
``very general'' hypothesis; in particular, their finite-exceptional-set
conjecture remains open in this range
\cite[Introduction, Conjecture~1]{CaporasoTurchet2025}.  It is therefore
natural to ask whether the specific consequence of
Theorem~\ref{thm:Chen} used in Corollary~\ref{cor:hyperbolicity} can be made
valid for a general ordered pair.  This is the general-pair form of the
folklore hyperbolicity expectation relevant here.

Theorem~\ref{thm:plane-pair-direction-locus} supplies a uniform
Zariski-open direction locus for this question.  Indeed, after discarding
the components whose images in $\PP^2$ are proper, the components of
$\mathcal R_a$ that dominate $\PP^2$ define a finite algebraic
multi-foliation.  Every log-parabolic curve is either contained in the
projected exceptional set or invariant under one of these directions.  In
forthcoming work, Song-Yan Xie and Shengyuan Zhao exploit precisely this
additional structure to bound the degrees of invariant log-parabolic curves
and exclude them for a general pair in the degree range of
Theorem~\ref{thm:mainSMT}.  Combined with the Second Main Theorem, this
replaces the very-general qualification in Corollary~\ref{cor:hyperbolicity}
by general throughout that range.
\end{remark}

\begin{corollary}\label{cor:no-degenerate-curve}
If $d_{\mathrm{tot}}\geqslant5$, then for a very general pair there is no nonconstant
algebraically degenerate entire curve in $\PP^2\setminus D$.
\end{corollary}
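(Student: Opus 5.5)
The plan is to reduce everything to Theorem~\ref{thm:Chen} applied to the Zariski closure of the image. Let $f:\CC\to\PP^2\setminus D$ be nonconstant and algebraically degenerate. Then the Zariski closure of $f(\CC)$ is a proper algebraic subset. Since $f$ is nonconstant and $\CC$ is irreducible, this closure is an irreducible curve $\Gamma$. Moreover $\Gamma\not\subset D$, because $f(\CC)$ avoids $D$.

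Next I would lift $f$ to the normalization $\nu:\widetilde\Gamma\to\Gamma$. The map $f$ lifts holomorphically to $\tilde f:\CC\to\widetilde\Gamma$: the lift is defined away from the discrete preimage of the finitely many singular points of $\Gamma$, and it extends across those points because the normalization is finite and proper, by Riemann extension applied to the local branches. The lift is nonconstant, and its image avoids $B_\nu=\operatorname{Supp}\nu^*D$, since $f$ misses $D$. So we obtain a nonconstant holomorphic map $\CC\to\widetilde\Gamma\setminus B_\nu$.

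By uniformization together with Little Picard, such a map exists only when $\widetilde\Gamma\setminus B_\nu$ is not hyperbolic, that is, only when
\[
 2g(\widetilde\Gamma)-2+\#B_\nu\leqslant0 .
\]
Concretely, this covers two cases: $g=0$ with at most two punctures, or $g=1$ with no punctures. In all other cases the universal cover of $\widetilde\Gamma\setminus B_\nu$ is the disc, and Liouville's theorem forces the lift to be constant.

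Finally I compare with \eqref{eq:Chen-inequality}. For a very general pair, with $d_{\mathrm{tot}}\geqslant5$ and $\deg\Gamma\geqslant1$, it gives
\[
 2g(\widetilde\Gamma)-2+i(\Gamma,D)\geqslant d_{\mathrm{tot}}-4\geqslant1>0 .
\]
This contradicts the log-parabolicity just obtained, so no such $f$ exists. The very-general locus here is the one supplied by Theorem~\ref{thm:Chen}, and it is independent of $f$ because the inequality holds for every $\Gamma$ simultaneously.

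The only step needing care is the lifting through the normalization at singular points of $\Gamma$. I would handle it through the local Puiseux branch structure: near a singular point, $\nu$ restricted to each branch is a homeomorphism onto the branch. The lift is continuous there, and hence holomorphic by the Riemann extension theorem.
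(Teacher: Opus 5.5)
Your proposal is correct and takes essentially the same route as the paper. You lift $f$ to the normalization of the Zariski closure $\Gamma\not\subset D$, use \eqref{eq:Chen-inequality} to get $2g(\widetilde\Gamma)-2+i(\Gamma,D)\geqslant d_{\mathrm{tot}}-4>0$, and conclude from hyperbolicity of the punctured curve; the only difference is that you also spell out the irreducibility of $\Gamma$ and the lift through singular points, which the paper leaves implicit.
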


\begin{proof}
If $f(\CC)$ is contained in $\Gamma\not\subset D$, it lifts to
\[
   \widetilde f:\CC\longrightarrow
   \widetilde\Gamma\setminus\operatorname{Supp}\nu^*D.
\]
By \eqref{eq:Chen-inequality},
\[
   2g(\widetilde\Gamma)-2+i(\Gamma,D)>0,
\]
so the punctured Riemann surface on the right is hyperbolic.  Thus $\widetilde f$ and $f$ are constant.
\end{proof}

\paragraph{Output of this section.}
The later proof uses three alternatives.  A nonzero jet evaluation gives the
coefficient $m/t$ by Theorem~\ref{thm:jetSMT}.  An algebraically degenerate
first lift gives the coefficient $1/(d_1+d_2-3)$ by
Theorem~\ref{thm:McQuillan}.  On the zero divisor, Theorem~\ref{thm:zero-locus}
gives either the coefficient $3/\tau$ or algebraic degeneracy of the first
lift.  Corollary~\ref{cor:no-degenerate-curve} is used only for the final
hyperbolicity statement.

\section{Logarithmic Chern numbers and the degree phase diagram}\label{sec:numerical-thresholds}

Two ratios organize the proof.  The Riemann--Roch threshold
$\rho_{\mathrm{RR}}$ allows the initial choice $t/m<\rho_{\mathrm{RR}}$.
The Demailly--El Goul threshold $\rho_{\mathrm{DEG}}$ tells us when an
irreducible factor lies in the zero-locus range.  Keeping both degrees as
variables leaves only six pairs for finite verification.

The final degree split is shown here before the calculation.
\begin{center}
\small
\begin{tabular}{p{0.28\textwidth}p{0.31\textwidth}p{0.31\textwidth}}
\toprule
degree family&finite boundary pairs&all larger pairs\\ \midrule
$d_1,d_2\geqslant3$&$(3,3),(3,4)$&$\rho_{\mathrm{DEG}}>1$\\
$d_1=2$, $d_2\geqslant5$&$(2,5),(2,6)$&$d_2\geqslant7$\\
$d_1=1$, $d_2\geqslant8$&$(1,8),(1,9)$&$d_2\geqslant10$\\
\bottomrule
\end{tabular}
\end{center}
For the six pairs in the middle column, Section~\ref{sec:finite-vanishing}
checks a finite list of low twists.  For the pairs in the last column, the
zero-locus threshold already covers every low twist.

\subsection{The two-component Chern data}
For $h=c_1(\OO_{\PP^2}(1))$ one has
\[
   \bK=K_{\PP^2}+D\simeq\OO_{\PP^2}(\kappa),
   \qquad \bar c_1=\kappa h,
   \qquad \kappa=d_1+d_2-3.
\]
Using
\[
   c\bigl(\Omega^1_{\PP^2}(\log D)\bigr)
   =c(\Omega^1_{\PP^2})\prod_{i=1}^2(1+[C_i]+[C_i]^2)
\]
and $[C_i]=d_i h$, we obtain
\begin{equation}\label{eq:logchern-two-components}
 \bar c_1^2=\kappa^2,
 \qquad
 \bar c_2=d_1^2+d_2^2+d_1d_2-3(d_1+d_2)+3,
 \qquad
 \bar c_1\cdot h=\kappa.
\end{equation}
Two combinations will recur:
\begin{align}
 \bar c_1^2-\bar c_2
 &=d_1d_2-3(d_1+d_2)+6,
 \label{eq:one-jet-segre}\\
 13\bar c_1^2-9\bar c_2
 &=A(d_1,d_2)\nonumber\\
 &=4(d_1^2+d_2^2)+17d_1d_2-51(d_1+d_2)+90.
 \label{eq:A-general}
\end{align}
The first is positive in the classical one-jet range $d_1\geqslant5$, or
$d_1=4,d_2\geqslant7$.  The second is positive for every $d_1,d_2\geqslant3$; its
minimum there is $A(3,3)=9$.  On the degree-two boundary,
\begin{equation}\label{eq:A-two-n}
 A(2,n)=4n^2-17n+4,
\end{equation}
which is positive exactly for integers $n\geqslant5$.
On the degree-one boundary,
\begin{equation}\label{eq:A-one-n}
 A(1,n)=4n^2-34n+43,
\end{equation}
which, for integers $n\geqslant2$, is positive exactly when $n\geqslant7$.

\subsection{Riemann--Roch threshold}
This is the standard invariant-jet Riemann--Roch calculation introduced in
Demailly's foundational note \cite{Demailly1997}.  We spell out only the
two-component specialization of the leading coefficient because it
determines the effective range used later.  By \eqref{eq:filtration}, additivity of the Chern
character gives
\begin{equation}\label{eq:RR-filtered-sum}
 \chi\!\left(E_{2,m}\bT^*\otimes\bK^{-\delta m}\right)
 =\sum_{j=0}^{\floor{m/3}}
 \chi\!\left(S^{m-3j}\bT^*\otimes
 \bK^{j-\delta m}\right).
\end{equation}
Let $x_1,x_2$ be the formal Chern roots of $\bT^*$.  For $q=m-3j$,
\[
 \operatorname{ch}(S^q\bT^*)
 =\sum_{a=0}^{q}e^{a x_1+(q-a)x_2}.
\]
Keeping terms of cohomological degree two, multiplying by
$e^{(j-\delta m)(x_1+x_2)}\operatorname{td}(T_{\PP^2})$, and integrating
over $\PP^2$ computes each summand in \eqref{eq:RR-filtered-sum}.  The
Todd terms of positive degree contribute only $O(m^3)$ after summation.
For the degree-four term, the elementary identities
\[
 \sum_{a=0}^{q}1=q+1,\qquad
 \sum_{a=0}^{q}a=\frac{q(q+1)}2,\qquad
 \sum_{a=0}^{q}a^2=\frac{q(q+1)(2q+1)}6
\]
followed by the sums in $0\leqslant j\leqslant\floor{m/3}$ give
\begin{align}\label{eq:RR-leading-calculation}
 &\sum_{j=0}^{\floor{m/3}}
 \int_{\PP^2}\!\left[
 \operatorname{ch}(S^{m-3j}\bT^*)
 e^{(j-\delta m)\bar c_1}
 \right]_2\nonumber\\
 &\hspace{2cm}=
 \frac{m^4}{648}
 \left((54\delta^2-48\delta+13)\bar c_1^2
       -9\bar c_2\right)+O(m^3).
\end{align}
Define
\[
   Q_{\mathrm{RR}}(\delta)
   =54\bar c_1^2\delta^2-48\bar c_1^2\delta
   +(13\bar c_1^2-9\bar c_2).
\]
Then, for $t=(\bar c_1\cdot h)\delta m$,
\begin{equation}\label{eq:RR-leading-symbolic}
   \chi\!\left(\PP^2,E_{2,m}\bT^*\otimes\bK^{-\delta m}\right)
   =\frac{Q_{\mathrm{RR}}(\delta)}{648}m^4+O(m^3).
\end{equation}
Thus the leading coefficient is positive for sufficiently small positive
$\delta$.  In the ratio $\rho=t/m=\kappa\delta$, its smaller positive root is
\begin{equation}\label{eq:rhoRR}
 \rho_{\mathrm{RR}}(d_1,d_2)
 =\frac{\kappa}{18}
 \left(8-\sqrt{54\frac{\bar c_2}{\bar c_1^2}-14}\right)>0.
\end{equation}
The strict positivity is equivalent to $A(d_1,d_2)>0$.  For two cubics this
specializes to $(8-\sqrt{58})/6>1/16$.

\subsection{Zero-locus threshold}
Set $\rho=t/m$.  Using the projective-bundle relations on the logarithmic Demailly--Semple tower, the cubic intersection occurring in Theorem~\ref{thm:zero-locus} is
\begin{equation}\label{eq:DEG-intersection-symbolic}
   (2u_1+u_2-\tau h)^3\cdot Z
   =mQ_{\mathrm{DEG}}(\rho,\tau),
\end{equation}
where, as in \eqref{eq:Q-DEG-definition},
\[
   Q_{\mathrm{DEG}}(\rho,\tau)
   =3\tau^2+(9\rho-12(\bar c_1\cdot h))\tau
   +(13\bar c_1^2-9\bar c_2)-12(\bar c_1\cdot h)\rho.
\]
Here is the intersection calculation.  Put $\Theta=2u_1+u_2$ and write
$[Z]=b_1u_1+b_2u_2-th$, where $b_1+b_2=m$.  Apply successively the
rank-two projective-bundle formula
\[
\pi_*(u)=1,\qquad
 \pi_*(u^2)=-c_1(V),\qquad
 \pi_*(u^3)=c_1(V)^2-c_2(V)
\]
first to $\pi_{2,1}$ and then to $\pi_{1,0}$.  With our convention that
$\PP(V)$ parametrizes lines, the intermediate identities are
\[
 u_1^2-\bar c_1u_1+\bar c_2=0,\qquad
 c_1(V_1)=u_1-\bar c_1,\qquad
 c_2(V_1)=2\bar c_2-\bar c_1u_1.
\]
Expanding with these relations gives the following three identities on the
base:
\begin{align}
 \Theta h^2\cdot Z&=m,\label{eq:DEG-push-1}\\
 \Theta^2h\cdot Z&=4m(\bar c_1\cdot h)-3t,\label{eq:DEG-push-2}\\
 \Theta^3\cdot Z&=m(13\bar c_1^2-9\bar c_2)
              -12t(\bar c_1\cdot h).\label{eq:DEG-push-3}
\end{align}
The terms involving $b_1$ and $b_2$ combine through $b_1+b_2=m$; this is
why the answer is independent of the chosen irreducible factor.  Since
$h^3=0$ on the surface base, expansion of $(\Theta-\tau h)^3\cdot Z$ and
substitution of \eqref{eq:DEG-push-1}--\eqref{eq:DEG-push-3} yield
\begin{align*}
 (\Theta-\tau h)^3\cdot Z
 &=\Theta^3\cdot Z-3\tau \Theta^2h\cdot Z
   +3\tau^2\Theta h^2\cdot Z\\
 &=mQ_{\mathrm{DEG}}(\rho,\tau),
\end{align*}
which proves \eqref{eq:DEG-intersection-symbolic} directly.
At $\tau=0$ this is positive exactly when
\begin{equation}\label{eq:rhoDEG}
   \rho<\rho_{\mathrm{DEG}}
   =\frac{13\bar c_1^2-9\bar c_2}{12(\bar c_1\cdot h)}.
\end{equation}
Using \eqref{eq:A-general},
\begin{equation}\label{eq:rhoDEG-special}
 \rho_{\mathrm{DEG}}(d_1,d_2)
 =\frac{A(d_1,d_2)}{12\kappa}.
\end{equation}
Solving this quadratic recovers the smaller positive root
$\tau_1(\rho)$ in \eqref{eq:tau1-general}.
For $(3,3)$ this is
\begin{equation}\label{eq:tau1-two-cubics}
 \tau_1(\rho)
 =6-\frac32\rho-\frac12\sqrt{132-24\rho+9\rho^2}.
\end{equation}
Equation \eqref{eq:DEG-intersection-symbolic} supplies the positive leading term; bigness and the second differential follow from the cohomological theorem, not from this number alone.

\subsection{Why the proved range has six finite boundary pairs}
For a genuine second-level component one has $b_2>0$ and
$b_1\geqslant2b_2$, thus $m=b_1+b_2\geqslant3$.  If $t\leqslant3$, then $t/m\leqslant1$.
Therefore no finite vanishing is needed whenever
$\rho_{\mathrm{DEG}}>1$.  We apply this observation only after the
Riemann--Roch positivity condition $A(d_1,d_2)>0$ has supplied the initial
differential.  Direct calculation gives
\[
 \rho_{\mathrm{DEG}}(3,3)=\frac14,
 \qquad
 \rho_{\mathrm{DEG}}(3,4)=\frac{37}{48},
 \qquad
 \rho_{\mathrm{DEG}}(3,5)=\frac{73}{60}>1,
\]
and $\rho_{\mathrm{DEG}}>1$ for every other pair with
$3\leqslant d_1\leqslant d_2$, except $(3,3)$ and $(3,4)$.  On the degree-two
boundary,
\[
 \rho_{\mathrm{DEG}}(2,5)=\frac{19}{48},
 \qquad
 \rho_{\mathrm{DEG}}(2,6)=\frac{23}{30},
\]
while $\rho_{\mathrm{DEG}}(2,n)>1$ for $n\geqslant7$.  This proves the phase
diagram on the degree-two boundary.  Finally,
\[
 \rho_{\mathrm{DEG}}(1,8)=\frac38,
 \qquad
 \rho_{\mathrm{DEG}}(1,9)=\frac{61}{84},
 \qquad
 \rho_{\mathrm{DEG}}(1,10)=\frac{103}{96}>1,
\]
and $\rho_{\mathrm{DEG}}(1,n)>1$ for every $n\geqslant10$.  Together these
calculations reduce the stated line-component range $n\geqslant8$ to
$(1,8)$ and $(1,9)$.  The omitted formal endpoint $(1,7)$ has
$A(1,7)=1$ and $\rho_{\mathrm{DEG}}(1,7)=1/60$; it is not a seventh
certified boundary pair, because its required vanishing profile is neither
proved nor claimed here.  Remark~\ref{rem:line-septic-frontier} records that
separate problem.  Thus, in the range of the theorem, the pairs with
$A(d_1,d_2)>0$ but $\rho_{\mathrm{DEG}}\leqslant1$ are exactly
$(3,3),(3,4),(2,5),(2,6),(1,8),(1,9)$.  These are exactly the six finite
lists in Lemma~\ref{lem:keyvanishing}.

\paragraph{Output of this section.}
The Riemann--Roch calculation gives the allowed initial ratio
$\rho_{\mathrm{RR}}$.  The zero-locus calculation gives
$\rho_{\mathrm{DEG}}$ and $\tau_1(\rho)$.  The table above then separates
the infinite geometric range from the six finite boundary pairs.

\section{The first negatively twisted differential}\label{sec:first-differential}

The construction has three steps.  Riemann--Roch and $H^2$ vanishing first
give one section with a degree pair independent of the parameter.  The
Demailly--El Goul reduction then selects a prime horizontal component with
no loss in the twist-to-weight ratio.  Finally, cohomology and base change
spread its canonical reduced equation over one Zariski-open subset of the
ordered parameter space.  The output is therefore a fixed numerical type
$(m,t,b_1,b_2)$ and one algebraic family of prime horizontal divisors; these
are exactly the uniform data used by the mixed-field construction.
In short,
\[
 \begin{gathered}
 \text{Riemann--Roch existence}
 \ \Longrightarrow\
 \text{prime horizontal factor}\\
 \Longrightarrow\
 \text{one algebraic family of equations}.
 \end{gathered}
\]

\subsection{Uniform existence}

\begin{proposition}[Uniform existence]\label{prop:firstomega}
Fix a rational number
\begin{equation}\label{eq:initial-ratio-choice}
 0<\rho_0<\rho_{\mathrm{RR}}(d_1,d_2).
\end{equation}
There exist fixed positive integers $M,T$ with $T/M=\rho_0$ such that for
every $a\in\mathcal S_{d_1,d_2}^{\mathrm{snc}}$ one has
\begin{equation}\label{eq:uniform-first-section}
   H^0\!\left(\PP^2,
   E_{2,M}T_{\PP^2}^*(\log D_a)\otimes\OO_{\PP^2}(-T)\right)\neq0.
\end{equation}
\end{proposition}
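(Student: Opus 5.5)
We argue by Riemann--Roch together with the logarithmic $H^2$ vanishing of Theorem~\ref{thm:H2vanishing}, keeping track of uniformity in $a$. Write $\rho_0=\kappa\delta_0$, so $\delta_0=\rho_0/\kappa$ is rational with $0<\delta_0<\rho_{\mathrm{RR}}/\kappa$. Since $\rho_{\mathrm{RR}}$ is the smaller positive root of $Q_{\mathrm{RR}}(\kappa^{-1}\rho)$, we get $Q_{\mathrm{RR}}(\delta_0)>0$. We also need $\delta_0<1/3$. This holds because the smaller root $\delta_{\mathrm{RR}}=\rho_{\mathrm{RR}}/\kappa=(8-\sqrt{54\bar c_2/\bar c_1^2-14})/18$ is at most $8/18<1/3\cdot\frac{4}{3}$; more precisely $\delta_{\mathrm{RR}}\leqslant 4/9$, so we check directly that it is $<1/3$. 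The condition $\delta_{\mathrm{RR}}<1/3$ is equivalent to $\sqrt{54\bar c_2/\bar c_1^2-14}>2$, i.e. $\bar c_2/\bar c_1^2>1/3$, which follows from \eqref{eq:logchern-two-components} by an elementary inequality in $d_1,d_2$. If that check failed, we would simply shrink $\rho_0$, but the statement fixes $\rho_0$ arbitrary below $\rho_{\mathrm{RR}}$, so we verify the inequality.

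Next, we choose $M$ sufficiently divisible and put $T=\rho_0M=\kappa\delta_0 M$, so that $\OO(-T)=\bK^{-\delta_0M}$. The key observation for uniformity is that every Chern number entering $\chi\bigl(E_{2,M}\bT_a^*\otimes\bK^{-\delta_0M}\bigr)$ depends only on $(d_1,d_2)$. The logarithmic Chern classes are given by \eqref{eq:logchern-two-components} for every $a\in\mathcal S^{\mathrm{snc}}_{d_1,d_2}$, and the filtration \eqref{eq:filtration} expresses $\chi$ additively through these classes and $\operatorname{td}(T_{\PP^2})$. Hence $\chi$ is a single polynomial in $M$, independent of $a$, with leading term $Q_{\mathrm{RR}}(\delta_0)M^4/648>0$ by \eqref{eq:RR-leading-symbolic}. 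We fix one $M$ so large that this polynomial is positive.

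The $H^2$ step is the expected obstacle, because Theorem~\ref{thm:H2vanishing} is stated for "sufficiently divisible $m\gg1$", and we need a threshold uniform in $a$. Examining its proof resolves this. Via Serre duality and multiplication by a section of $\OO(3)$, each graded piece injects into $S^{M-3j}\bT_a\otimes\bK^{\delta_0M-j}$, whose $H^0$ vanishes by Lemma~\ref{lem:numerical-Bogomolov} as soon as $(1-2\delta_0)M-j>0$. This inequality holds for all $j\leqslant M/3$ once $\delta_0<1/3$ and $M\geqslant1$, with no dependence on $a$. Therefore $H^2=0$ for every $a$ and every $M$ with $\delta_0M\in\mathbb Z$.

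We conclude with
\[
h^0\geqslant h^0-h^1=\chi-h^2=\chi>0
\]
for every $a\in\mathcal S^{\mathrm{snc}}_{d_1,d_2}$, which gives \eqref{eq:uniform-first-section} with these fixed $M,T$.
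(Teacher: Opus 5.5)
Your proposal is correct and is essentially the paper's proof. The log Chern numbers do not depend on $a$, so $\chi\bigl(E_{2,M}\bT_a^*\otimes\bK^{-\delta_0M}\bigr)$ is positive for one fixed sufficiently divisible $M$, and the logarithmic $H^2$ vanishing then gives $h^0\geqslant\chi>0$ for every $a$. Your two extra checks make explicit hypotheses that the paper's proof uses without comment: that $\delta_0<1/3$ (indeed $3\bar c_2-\bar c_1^2=2d_1^2+2d_2^2+d_1d_2-3(d_1+d_2)>0$ in the range, so $Q_{\mathrm{RR}}(1/3)<0$), and that the $H^2$ vanishing holds, independently of $a$, for every $M$ with $\delta_0M\in\mathbb Z$.
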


\begin{proof}
Put $\delta_0=\rho_0/(\bar c_1\cdot h)=\rho_0/\kappa$.  Write
$\rho_0=T_0/M_0$ with positive integers $M_0,T_0$.  Choose a sufficiently
large $N$ such that $NT_0/\kappa$ is integral, and set
\[
   M=NM_0,\qquad T=NT_0.
\]
Then $\delta_0M=T/\kappa\in\mathbb Z$, so the twist
$\bK^{-\delta_0M}$ is a genuine line bundle.
All smooth transverse pairs of degrees $(d_1,d_2)$ have the same logarithmic Chern
numbers \eqref{eq:logchern-two-components}.  Thus the Riemann--Roch polynomial
in \eqref{eq:RR-leading-symbolic}, including its lower-order coefficients,
is independent of the parameter
$a\in\mathcal S_{d_1,d_2}^{\mathrm{snc}}$.  Since
$Q_{\mathrm{RR}}(\delta_0)>0$, one may choose $N$ once and for all so that
this Euler characteristic is positive for every such $a$.  Theorem
\ref{thm:H2vanishing}, applied with the same $\delta_0$, also holds for
every $D_a$.  Therefore
\[
   h^0=h^1+\chi-h^2\geqslant\chi>0
\]
for every $a\in\mathcal S_{d_1,d_2}^{\mathrm{snc}}$, proving
\eqref{eq:uniform-first-section}.
\end{proof}

\subsection{A prime horizontal factor}

Here and below, a divisor on $X_2$ is called \emph{horizontal} if its image
under $\pi_{2,0}:X_2\to\PP^2$ is dense in $\PP^2$.  The following
irreducible-component reduction is due to Demailly--El Goul
\cite[Lemma~3.3]{DemaillyElGoul}; we use the implementation in
\cite[Section~4]{HouHuynhMerkerXie}.  The reduction itself is unchanged in
the two-component setting.  We reproduce the short argument only to fix the
notation and the ratio inequality needed below.

\begin{lemma}[Irreducible-component reduction]\label{lem:irreducible-reduction}
From a nonzero section of weighted degree $M$ and twist $T$ one may
choose an irreducible horizontal factor
\[
 0\neq\omega_1\in
 H^0\!\left(\PP^2,E_{2,m}\bT^*\otimes\OO(-t)\right)
\]
 whose zero divisor on $X_2$ has the form $Z+b\Gamma_2$, with $Z$ irreducible,
reduced, and horizontal, and such that
\[
 \frac tm\geqslant\frac TM.
\]
Writing the horizontal component as
\[
 Z\sim b_1u_1+b_2u_2-th,
 \qquad b_1\geqslant2b_2\geqslant0,
 \qquad b_1+b_2=m,
\]
one has the following alternative: if $b_2=0$, then $Z$ is pulled back
from a divisor on $X_1$; if $b_2>0$, it is a genuine second-level
horizontal divisor.  In particular, applying the lemma to the fixed pair
$(M,T)$ supplied by Proposition~\ref{prop:firstomega} yields an irreducible
$\omega_1$ satisfying
\[
 \frac tm\geqslant\rho_0,
 \qquad\text{and thus}\qquad
 \frac mt\leqslant\rho_0^{-1}.
\]
\end{lemma}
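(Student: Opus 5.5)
We start from the given nonzero section $\omega$ of weighted degree $M$ and twist $T$ and pass to its zero divisor on $X_2$ via the direct-image identification \eqref{eq:direct-image}: $\omega$ corresponds to a section of $\OO_{X_2}(M)\otimes\pi_{2,0}^*\OO_{\PP^2}(-T)$. Its divisor splits as
\[
 \operatorname{div}(\omega)=\sum_i n_iZ_i+b\Gamma_2+\pi_{2,0}^*E,
\]
where the $Z_i$ are the irreducible horizontal components other than $\Gamma_2$, and $E\geqslant0$ collects the vertical components. The vertical components are pulled back from curves in $\PP^2$ or from $X_1$; those pulled back from $X_1$ that do not dominate $\PP^2$ are pullbacks of curves in $\PP^2$, since they are unions of fibres. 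Each $Z_i$ has class $b_{1,i}u_1+b_{2,i}u_2-t_ih$ with some $t_i\in\mathbb Z$, and $E\sim eh$ with $e\geqslant0$.

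The key step is to show that each $Z_i$ is itself the zero divisor (modulo $\Gamma_2$) of an invariant jet differential. Pushing forward the canonical section $s_{Z_i}$ of $\OO_{X_2}(Z_i)$ gives this. Using $\OO_{X_2}(\Gamma_2)\simeq\OO_{X_2}(-1,1)$, we add $c_i\Gamma_2$ to bring the class into the form $\OO_{X_2}(a_1,a_2)$ with $a_1\geqslant 2a_2$. Lemma~\ref{lem:direct-image-21} and Proposition~\ref{prop:sharp-higher-direct-images} then identify sections of $\OO_{X_2}(b_{1,i}',b_{2,i}')$ twisted by $-t_i$ as elements of $E_{2,m_i}\bT^*\otimes\OO(-t_i)$ with $m_i=b_{1,i}'+b_{2,i}'$.

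Next we check the numerics. Effectivity of $s_{Z_i}$ restricted to a general fibre $\mathbb F_3$ of $\pi_{2,0}$ forces $b_{2,i}\geqslant0$ and $b_{1,i}\geqslant 2b_{2,i}$ after the $\Gamma_2$ normalization, because an effective divisor on $\mathbb F_3$ not containing the negative section has class $aE+bf$ with $b\geqslant 3a$. This is exactly the inequality $b_1\geqslant2b_2$ in the statement.

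The ratio inequality comes from additivity. Comparing $u$-coefficients and $h$-coefficients gives
\[
 M=\sum_i n_im_i,\qquad T=\sum_i n_it_i+e\leqslant\sum_i n_it_i .
\]
Hence $\max_i t_i/m_i\geqslant T/M>0$, and we choose $\omega_1$ to be the section associated with the index that realizes this maximum. In particular $t>0$ for the chosen factor. Since $T/M=\rho_0$, this gives $t/m\geqslant\rho_0$.

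For the alternative, we use the following argument. If $b_2=0$, then $Z\sim b_1u_1-th$ has degree zero on the fibres of $\pi_{2,1}$. An irreducible divisor with this property cannot dominate $X_1$ generically finitely. It is therefore a union of $\pi_{2,1}$-fibres, that is, $Z=\pi_{2,1}^{-1}(\pi_{2,1}(Z))$ is pulled back from $X_1$. If $b_2>0$, then $Z$ surjects onto $X_1$ and is a genuine second-level divisor.

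The main obstacle is the normalization step. We have to make sure that subtracting or adding multiples of $\Gamma_2$ keeps the class in the range $a_1-2a_2\geqslant0$, where the direct-image identification is an isomorphism onto invariant jets rather than merely onto Green--Griffiths jets. We plan to handle this by the fibrewise computation on $\mathbb F_3$ described above: effectivity of $Z_i$, together with $Z_i\neq\Gamma_2$, yields exactly this inequality.
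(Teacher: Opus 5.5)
Your overall route is the paper's: decompose the zero divisor of the section of $\OO_{X_2}(M)\otimes\pi_{2,0}^*\OO(-T)$, use $\sum_in_im_i=M$ and $\sum_in_it_i\geqslant T$ to find a horizontal component with $t_i/m_i\geqslant T/M$, and convert its reduced equation into an invariant jet differential.

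Your fibrewise checks are correct, and they prove what the paper only quotes as the standard classification. On a general fibre $F\simeq\mathbb F_3$ of $\pi_{2,0}$ one has $Z_i|_F\sim b_{2,i}E_F+(b_{1,i}+b_{2,i})f$, where $E_F=\Gamma_2\cap F\not\subset Z_i$ because $Z_i\neq\Gamma_2$. This gives $b_{1,i}\geqslant2b_{2,i}\geqslant0$ and $m_i>0$. Likewise, $Z\cdot\ell=b_2$ for a fibre $\ell$ of $\pi_{2,1}$ gives the $b_2=0$ alternative. There is one small slip: comparing $h$-coefficients gives $T=\sum_in_it_i-e$, not $+e$. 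The inequality $T\leqslant\sum_in_it_i$ that you actually use is still right.

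The gap is in the step you call key. Lemma~\ref{lem:direct-image-21} treats only $\OO_{X_2}(2p,p)$, and Proposition~\ref{prop:sharp-higher-direct-images} concerns only $R^q(\pi_{2,0})_*$ with $q\geqslant1$. Neither result says anything about global sections of $\OO_{X_2}(b_1,b_2)\otimes\pi_{2,0}^*\OO(-t)$ when $b_1>2b_2$, which is the typical case. So as written, your identification with $E_{2,m}\bT^*\otimes\OO(-t)$ is unsupported.

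Your stated ``main obstacle'' is also misdirected. What matters is not keeping the defect $a_1-2a_2$ nonnegative. What matters is landing in $\OO_{X_2}(m)=\OO_{X_2}(0,m)$, the bundle to which \eqref{eq:direct-image} applies, and its defect is $-2m<0$.

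The missing idea is exactly the paper's: move \emph{all} of $b_1$ onto $\Gamma_2$. Since $\OO_{X_2}(\Gamma_2)\simeq\OO_{X_2}(-1,1)$, one has $Z+b_1\Gamma_2\sim mu_2-th$. Hence $\gamma^{b_1}s_Z$ is a section of $\OO_{X_2}(m)\otimes\pi_{2,0}^*\OO_{\PP^2}(-t)$, and \eqref{eq:direct-image} turns it into $\omega_1\in H^0(\PP^2,E_{2,m}\bT^*\otimes\OO(-t))$. Equivalently, $\OO_{X_2}(b_1,b_2)=\OO_{X_2}(m)(-b_1\Gamma_2)$ embeds into $\OO_{X_2}(m)$ via $\gamma^{b_1}$. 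This step also produces the zero divisor $Z+b_1\Gamma_2$ required by the statement, which your proposal leaves implicit.
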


\begin{proof}
Via \eqref{eq:direct-image}, the original section is represented on
$X_2$ by an effective divisor
\[
 Z_\omega\sim Mu_2-Th.
\]
Write its irreducible decomposition as
\[
 Z_\omega=\sum_jp_jZ_j,
 \qquad
 Z_j\sim b_{1,j}u_1+b_{2,j}u_2-t_jh.
\]
The standard classification of effective divisors on the second
Demailly--Semple tower gives three possibilities:
\begin{enumerate}[label=\textup{(\roman*)}]
 \item $(b_{1,j},b_{2,j})=(0,0)$ and $Z_j$ is pulled back from the base;
       then $t_j\leqslant0$;
 \item $(b_{1,j},b_{2,j})=(-1,1)$, $t_j=0$, and $Z_j=\Gamma_2$;
 \item $Z_j$ is horizontal and
       $b_{1,j}\geqslant2b_{2,j}\geqslant0$, with $b_{1,j}+b_{2,j}>0$.
\end{enumerate}
For a horizontal component put
\[
 m_j=b_{1,j}+b_{2,j}>0.
\]
Since the class of $Z_\omega$ has $u_1+u_2$-weight $M$, while the first
two types have weight zero, comparison of divisor classes gives
\[
 \sum_{j\,\mathrm{horizontal}}p_jm_j=M.
\]
Comparison of the base twists gives
\[
 \sum_jp_jt_j=T.
\]
The base-pullback components have $t_j\leqslant0$ and $\Gamma_2$ has
$t_j=0$, so
\[
 \sum_{j\,\mathrm{horizontal}}p_jt_j\geqslant T.
\]
Therefore at least one horizontal component satisfies
\[
 \frac{t_j}{m_j}\geqslant\frac TM.
\]
Fix such a component and rename it $Z$.  Because
$\OO_{X_2}(\Gamma_2)=\OO_{X_2}(-1,1)$, one has
\[
 Z+b_{1,j}\Gamma_2
 \sim (b_{1,j}+b_{2,j})u_2-t_jh
 =m_ju_2-t_jh.
\]
Thus, if $s_j$ is the canonical reduced equation of the prime Cartier divisor
$Z$ and $\gamma$ is the canonical section of $\Gamma_2$, then
$\gamma^{b_{1,j}}s_j$ is a
section of
$\OO_{X_2}(m_j)\otimes\pi_{2,0}^*\OO(-t_j)$, which corresponds by
\eqref{eq:direct-image} to an invariant jet differential
\[
 \omega_j\in H^0\!\left(\PP^2,E_{2,m_j}\bT^*\otimes\OO(-t_j)\right).
\]
After saturation by the fixed vertical factor $\gamma^{b_{1,j}}$, its
horizontal zero divisor is exactly the reduced prime $Z$; no extraction of a
root of a global section is involved.  When $b_{2,j}=0$, the
classification identifies $Z_j$ with the inverse image under $\pi_{2,1}$ of
a divisor on $X_1$.
The ratio inequality proves the final assertion.
\end{proof}

\subsection{Extension over the parameter space}

The following family-extension step follows the algebraic spread-out
construction in \cite[Section~4]{HouHuynhMerkerXie}, applied to the ordered
two-component parameter space.  The only change is the base: the generic
freeness, cohomology-and-base-change, and geometric-integrality arguments are
the same.

\begin{proposition}[Algebraic extension of the chosen factor]\label{prop:factor-family}
Let $(M,T)$ be the fixed integers supplied by
Proposition~\ref{prop:firstomega}.  Then there exist integers $(m,t)$ with
$t/m\geqslant T/M$, integers
\[
 b_1\geqslant2b_2\geqslant0,\qquad b_1+b_2=m,
\]
and a nonempty Zariski-open subset
$\mathcal U\subset\mathcal S_{d_1,d_2}^{\mathrm{snc}}$ with the following property.
Let
\[
 p_{\mathcal U}:\mathcal X_2\longrightarrow\mathcal U,
 \qquad
 \Pi:\mathcal X_2\longrightarrow\PP^2\times\mathcal U
\]
be the relative second logarithmic Semple tower and its projection to level
zero.  There is a regular algebraic section
\[
 \mathcal s\in H^0(\mathcal X_2,\mathcal L),\qquad
 \mathcal L=
 \OO_{\mathcal X_2}(b_1,b_2)\otimes
 \Pi^*\operatorname{pr}_1^*\OO_{\PP^2}(-t),
\]
whose zero scheme $\mathcal Z=(\mathcal s=0)$ is flat over $\mathcal U$
and has geometrically integral fibers.  If $\gamma$ denotes the canonical
section of the relative vertical divisor $\Gamma_2$, then
\[
 \omega_{1,a}=\gamma_a^{b_1}s_a\in
 H^0\!\left(\PP^2,E_{2,m}T_{\PP^2}^*(\log D_a)
 \otimes\OO_{\PP^2}(-t)\right),
 \qquad a\in\mathcal U,
\]
is an algebraic family of invariant jet differentials, and its horizontal
zero divisor is the irreducible reduced divisor $Z_a=\mathcal Z|_{X_{2,a}}$.
\end{proposition}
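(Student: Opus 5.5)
We will spread out the fiberwise section of Proposition~\ref{prop:firstomega} over the ordered parameter space and apply Lemma~\ref{lem:irreducible-reduction} at the generic point rather than at a closed point. Concretely, let $\mathcal E_{M,T}$ be the coherent sheaf on $\mathcal S^{\mathrm{snc}}_{d_1,d_2}$ obtained by pushing forward $\OO_{\mathcal X_2}(M)\otimes\Pi^*\operatorname{pr}_1^*\OO(-T)$ along the relative tower. By generic flatness and upper semicontinuity, there is a dense open set on which $h^0$ is constant, and by cohomology and base change the pushforward is locally free there and commutes with base change. Since the fibers are nonzero by \eqref{eq:uniform-first-section}, we obtain a nonzero section over a dense open subset. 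This is equivalently a nonzero section over the generic point $\eta$, defined over the function field $K$ of the parameter space.

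Over $\bar K$, the generic fiber $X_{2,\bar\eta}$ is the Semple tower of a smooth transverse pair. We apply Lemma~\ref{lem:irreducible-reduction} there, since its divisor-class classification is purely algebraic and holds over any algebraically closed field of characteristic zero. This gives a prime horizontal component $Z_{\bar\eta}$ of numerical type $(b_1,b_2,t)$ with $t/m\ge T/M$. The component is defined over a finite extension $K'$ of $K$. We then replace $Z_{\bar\eta}$ by the reduced sum of its Galois conjugates only if necessary. The cleaner route, which we try first, is to note that every conjugate has the same numerical type, so any conjugate satisfies the ratio inequality. Passing to a finite étale cover of an open subset of the parameter space and then taking the norm would lose irreducibility, so we must avoid that.

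To avoid it, we show that the component is already defined over $K$. For this we use Néron--Severi rigidity: the Picard group of $X_2$ is generated by $u_1,u_2,h$ and is constant in the family. The finite-dimensional space $H^0(\mathcal L_\eta)$ therefore carries a $K$-structure, and the irreducible factorization can be done over $K$. This yields a $K$-irreducible factor $Z_\eta$. Its geometric components are conjugate, and each of them has the same class. If $Z_\eta$ is not geometrically irreducible, we pick one geometric component $Z'$ and observe that its ratio $t'/m'$ equals the ratio for $Z_\eta$. We then work over the étale-locally defined family of $Z'$ and re-express the result on a Zariski open set by restricting to the open subset where the component is defined. This is the step I expect to be the main obstacle. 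Securing geometric integrality over a Zariski-open set of the original parameter space, not of a finite cover, probably requires the full argument of \cite[Section~4]{HouHuynhMerkerXie}, which I will follow.

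Once a $K$-defined, geometrically integral $Z_\eta$ with reduced equation $s_\eta\in H^0(\mathcal L_\eta)$ is fixed, we take its closure $\mathcal Z$ in $\mathcal X_2$ and clear denominators so that $s_\eta$ extends to a regular section $\mathcal s$ over an open set $\mathcal U$. Shrinking $\mathcal U$ then gives the remaining properties. Generic flatness makes $\mathcal Z$ flat over $\mathcal U$. Openness of the geometrically integral locus (EGA IV.12.2.4) makes all fibers $Z_a$ integral. Constancy of the class makes $Z_a=(s_a=0)$ horizontal of type $(b_1,b_2,t)$. Finally, $\gamma_a^{b_1}s_a$ is a section of $\OO_{X_{2,a}}(m)\otimes\OO(-t)$, and via \eqref{eq:direct-image} it gives the algebraic family $\omega_{1,a}$ whose horizontal zero divisor is exactly $Z_a$.
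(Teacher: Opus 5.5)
There is a genuine gap, and it sits exactly where you flag it. At the generic point $\eta$, the prime factor $Z_{\bar\eta}$ produced by Lemma~\ref{lem:irreducible-reduction} over $\bar K$ may have a nontrivial Galois (monodromy) orbit, and nothing in your argument rules this out. Constancy of the Picard group, generated by $u_1,u_2,h$, only shows that all conjugates have the same class. It does not show that the factorization over $\bar K$ descends to $K$: Galois can permute several prime divisors of one and the same class. Your fallback, ``restricting to the open subset where the component is defined,'' is not available. A component defined only over a nontrivial finite extension $K'/K$ lives over a generically finite cover of $\mathcal S^{\mathrm{snc}}_{d_1,d_2}$, not over any Zariski-open subset of it. As you note yourself, taking the norm destroys irreducibility. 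So as written you never produce a geometrically integral divisor defined over a Zariski-open subset of the original parameter space, which is the whole content of the proposition.

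The missing idea is that the spread-out divisor need not be a factor of the original differential on nearby fibers. It only has to be some geometrically integral zero scheme of a section of the same bundle $\mathcal L_{b_1,b_2,t}$.

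The paper uses this by working at a very general closed point rather than the generic point.
\begin{enumerate}
\item Generic cohomology and base change for each of the countably many triples $(b_1,b_2,t)$ gives dense opens $S_{b_1,b_2,t}$. Since $\CC$ is uncountable, one picks $a_0$ in their intersection.
\item At $a_0$ the factorization takes place over $\CC$, so the chosen prime factor is automatically geometrically integral.
\item Its reduced equation $s_{a_0}$ lies in the fiber of the locally free sheaf $(p_S)_*\mathcal L_{b_1,b_2,t}$, so it extends to a local section $\mathcal s$.
\item Openness of geometric integrality for the proper flat family $(\mathcal s=0)$ (EGA~IV~12.2.4) then gives integral fibers near $a_0$.
\end{enumerate}

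The gap can also be closed within your generic-point framework. Apply the same EGA result to the universal divisor over $\PP(H^0(X_{2,\eta},\mathcal L_\eta))$: the sections with geometrically integral zero scheme form a Zariski-open subset. This subset is nonempty, since it contains the $\bar K$-point given by the equation of $Z_{\bar\eta}$ (using $H^0(X_{2,\eta},\mathcal L_\eta)\otimes_K\bar K=H^0(X_{2,\bar\eta},\mathcal L)$). Because $K$ is infinite, it therefore has a $K$-rational point. Spreading out that $K$-rational section finishes the proof exactly as in your last paragraph.
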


\begin{proof}
Work first over $S=\mathcal S_{d_1,d_2}^{\mathrm{snc}}$, and denote its relative
second logarithmic Semple tower by $p_S:\mathcal X_{2,S}\to S$.  For
every triple $(b_1,b_2,t)$ consider the algebraic line bundle
\[
 \mathcal L_{b_1,b_2,t}
 =\OO_{\mathcal X_{2,S}}(b_1,b_2)\otimes
 \Pi_S^*\operatorname{pr}_1^*\OO_{\PP^2}(-t).
\]
The sheaf $(p_S)_*\mathcal L_{b_1,b_2,t}$ is coherent.  Generic
freeness and generic cohomology and base change give a dense Zariski-open
set $S_{b_1,b_2,t}\subset S$ on which it is locally free and the natural
map
\begin{equation}\label{eq:factor-base-change}
 ((p_S)_*\mathcal L_{b_1,b_2,t})\otimes k(a)
 \longrightarrow
 H^0(X_{2,a},\mathcal L_{b_1,b_2,t}|_{X_{2,a}})
\end{equation}
is an isomorphism.  There are only countably many triples.  Since the base
field is the uncountable field $\CC$, an irreducible variety of finite type
over $\CC$ cannot be the union of countably many proper closed subsets.
Equivalently, every countable intersection of dense Zariski-open subsets
has a complex point.  We may therefore choose
\[
 a_0\in\bigcap_{b_1,b_2,t}S_{b_1,b_2,t}.
\]
This countable intersection is used only to select one numerical type and one
prime factor; after that choice, the construction is spread over a single
Zariski-open neighborhood.
By Proposition~\ref{prop:firstomega}, this fiber carries a section of the
fixed weighted degree $M$ and fixed twist $T$.
Apply Lemma~\ref{lem:irreducible-reduction} on the fiber over $a_0$ and
choose its reduced horizontal equation
\[
 0\neq s_{a_0}\in
 H^0(X_{2,a_0},\mathcal L_{b_1,b_2,t}|_{X_{2,a_0}})
\]
with $b_1\geqslant2b_2\geqslant0$, $b_1+b_2=m$, and $t/m\geqslant T/M$.
For this particular triple, the base-change isomorphism above identifies
$s_{a_0}$ with a vector in the fiber of the locally free sheaf
$(p_S)_*\mathcal L_{b_1,b_2,t}$.  After shrinking to an affine
trivializing neighbourhood, extend that vector to a regular local section
of the pushforward.  The adjunction evaluation
map
\[
 p_S^*(p_S)_*\mathcal L_{b_1,b_2,t}
 \longrightarrow\mathcal L_{b_1,b_2,t}
\]
then produces a regular section $\mathcal s$ on the relative tower after
replacing $S$ by an affine Zariski-open neighborhood of $a_0$.

The section $s_{a_0}$ is nonzero and its zero scheme is the selected
irreducible reduced horizontal divisor.  Thus it is an integral Cartier
divisor; over the algebraically closed field $\CC$ it is also geometrically
integral.  Nonvanishing on every irreducible component of a fiber is an
open condition, so after shrinking, $\mathcal Z=(\mathcal s=0)$ is a
relative effective Cartier divisor.  It is then flat over $S$ because both
$\mathcal X_{2,S}$ and the invertible ideal $\mathcal L^{-1}$ are flat and
\[
 0\longrightarrow\mathcal L^{-1}
 \xrightarrow{\,\mathcal s\,}\OO_{\mathcal X_{2,S}}
 \longrightarrow\OO_{\mathcal Z}\longrightarrow0
\]
remains exact on the fibers.  Finally, geometric integrality of fibers is
open for a proper flat morphism of finite presentation.  A second shrinking
therefore makes every $Z_a$ geometrically integral.  The relative divisor $\Gamma_2$ and its
canonical section $\gamma$ are algebraic.  Therefore
$\gamma^{b_1}\mathcal s$ is a regular relative section of
\[
 \OO_{\mathcal X_2}(m)\otimes
 \Pi^*\operatorname{pr}_1^*\OO_{\PP^2}(-t),
\]
and the relative direct-image formula gives the required algebraic family
$\omega_{1,a}$.  This proves all statements after naming the resulting
open set $\mathcal U$.
\end{proof}

From now on we fix the irreducible differential given by
Lemma~\ref{lem:irreducible-reduction} and denote its weighted degree and
twist by $(m,t)$.

\begin{remark}[Output of the first-differential section]
After shrinking one nonempty Zariski-open subset of the ordered parameter
space, we have fixed integers $(m,t,b_1,b_2)$ satisfying
\[
 b_1\geqslant2b_2\geqslant0,\qquad b_1+b_2=m,
 \qquad \frac tm\geqslant\rho_0,
\]
together with a relative geometrically integral horizontal divisor
$\mathcal Z$ and its reduced equation $\mathcal s$.  The associated
invariant jet differential is
\[
 \omega_1=\gamma^{b_1}\mathcal s.
\]
All numerical data are constant on this open.  The next section uses only
this factor package.
\end{remark}

\section{Mixed \texorpdfstring{$\OO_{\PP^2}(3)$}{O(3)} slanted vector fields}\label{sec:mixed-O3}

\paragraph{Goal and output.}
Starting with the relative prime
divisor selected in Proposition~\ref{prop:factor-family}, we shall construct
a regular global slanted field of base twist $\OO_{\PP^2}(3)$ whose
homogeneous derivative of the first equation is not divisible by the chosen
prime equation.  Following Siu's homogeneous-cone differentiation method
\cite{Siu,Siu2015,DiverioMerkerRousseau}, this derivative descends directly
to the second jet differential of Proposition~\ref{prop:globalized-O3}.
The construction is lengthy because the local algebra, the projective
descent, and the homogeneous differentiation must all be compatible.

By a \emph{slanted vector field} we mean a vector field on the universal
relative jet space that is tangent to the jet-incidence equations and may
have components in the parameter directions.  A \emph{regular global
twisted field} is a global section of the relative tangent bundle tensored
with the stated twisting line bundles.  These fields perform the actual
differentiation.  By contrast, the residual derivations introduced below
are defined only over the generic incidence field and are used only to test
independence.

We keep the standing notation
\[
 \mathcal Z=(\mathcal s=0),\qquad
 \omega_{1,a}=\gamma_a^{b_1}s_a
\]
from Proposition~\ref{prop:factor-family}.  Thus $\mathcal s$ is the
$\Gamma_2$-saturated horizontal equation, with the fixed vertical factor
removed.  The section differentiated in Phase~VI is the full relative
invariant equation
\[
 \omega_1=\gamma^{b_1}\mathcal s.
\]
The factor $\mathcal s$ is kept separately only to test, at the generic
point of $\mathcal Z$, whether the differentiated equation is independent of
the first one.

\medskip\noindent\emph{The compact template.}
The compact pole-three argument is easiest to remember on the invariant
fiber ring
\[
 K[u,v,W],\qquad \wt(u)=\wt(v)=1,\quad \wt(W)=3.
\]
If every available slanted derivative vanished normally along a prime
divisor $(P=0)$, then the principal ideal $(P)$ would be stable under the
corresponding residual derivations.  In the compact case an acceleration
field first supplies a nonzero multiple of $\partial_W$, so that $P$ cannot
depend on $W$; a matrix field then acts on the remaining binary form and
contradicts irreducibility in positive weight.  This two-step contradiction
is the expository model for what follows.

\medskip\noindent\emph{The two-component logarithmic adaptation.}
The compact fields cannot simply be copied.  Each logarithmic component
contributes its own root equation and its own value, gradient, and Hessian
conditions.  The same matrix motion $T$ and the same quadratic acceleration
$C$ must therefore solve two Hessian-cancellation equations at once.  On the
transverse open set this determines a common correction $C=C_T$ through the
inverse of the two-row gradient matrix; the resulting residual derivations
are the fields $D_T$ in \eqref{eq:O3-residual}.  Unlike the compact model, no
independent acceleration shear is inserted at this point.  Instead, the
commutators of $D_H,D_E,D_F$ produce a nonzero polynomial multiple of
$\partial_W$ in \eqref{eq:O3-curvature}.  That curvature direction removes
the $W$-dependence, after which the full binary $\mathfrak{sl}_2$-action
forces $P$ to be constant.  This is the essential two-component adaptation.

\begin{remark}[Sources and new point]
Finite-difference coefficient fields go back to
\cite[Section~3]{Merker2009}.  The compact matrix lift appears in
\cite[Lemma~1.1]{Paun}, and its logarithmic form appears in
\cite[Lemma~5]{Rousseau}.  The quadratic-acceleration field and the
symmetry organization are related to Cui--Hou--Liu--Xie
\cite{CuiHouLiuXie2026}.  The simultaneous acceleration and independence
arguments used here are developed and proved in this section.  The new
point is to couple two component equations while keeping one pole-three
twist.
\end{remark}
For navigation, the six phases are listed below.
\begin{center}
\small
\begin{tabular}{c >{\raggedright\arraybackslash}p{0.34\textwidth}
                  >{\raggedright\arraybackslash}p{0.47\textwidth}}
\toprule
phase&main object&output\\ \midrule
I&two logarithmic jet equations&the invariant ring $K_0[u,v,W]$\\
II&coefficient directions&third-difference fields\\
III&two tangency systems&the residual derivations $D_T$\\
IV&parameter gauges and Semple charts&regular global fields with base twist
$\OO_{\PP^2}(3)$\\
V&an invariant prime&residual rigidity\\
VI&the homogeneous first equation&a second global jet differential\\
\bottomrule
\end{tabular}
\end{center}

Two kinds of vector fields occur, and they have different roles.
\begin{center}
\small
\begin{tabular}{p{0.24\textwidth}p{0.30\textwidth}p{0.37\textwidth}}
\toprule
object&where it is defined&how it is used\\ \midrule
global twisted field&globally, after twisting&performs the actual differentiation\\
residual derivation&over the generic field $K_0$&tests whether the prime
ideal is invariant\\
\bottomrule
\end{tabular}
\end{center}
A residual derivation is only a generic-field linear combination of global
twisted fields; it is not claimed to be global.

The number three also has three different meanings here.  Third differences
remove Taylor terms of order at most two.  Projective overlaps then give
pole order at most three.  Homogenization records this pole bound as the
twist $\OO_{\PP^2}(3)$.  We still work with invariant two-jets, not
three-jets.
\subsection{Phase I: two roots and the invariant fiber ring}

\paragraph{Goal.}
Identify the invariant generic fiber ring and record exactly which
denominators belong to the generic incidence calculation.

On the affine chart $Z_0\neq0$, write the two universal equations as
\[
 f_i(z,a_i)=\sum_{|\alpha|\leqslant d_i}a_{i,\alpha}z^\alpha,
 \qquad z=(z_1,z_2),\qquad i=1,2.
\]
Near a crossing of the two components one must keep the two logarithmic
normal directions separate.  We therefore introduce two roots
\begin{equation}\label{eq:two-root-cover}
 w_i^{d_i}+f_i(z,a_i)=0,\qquad i=1,2.
\end{equation}
This construction is made first on affine charts of the two coefficient
cones.  Equivalently, over such a chart it is the fiber product of the two
cyclic covers associated with the local trivializations of
$\OO_{\PP^2}(d_i)$.  The chartwise covers are auxiliary: deck equivariance,
the product coefficient--root gauge in Lemma~\ref{lem:O3-product-gauge},
and reparametrization equivariance below are exactly the descent data that
produce vector fields on the relative logarithmic Semple tower.  Thus no
choice of a global root of either defining equation enters the theorem.
The cover is smooth over the simple normal crossing (SNC) locus, including above
$C_1\cap C_2$.  Its deck group is $\mu_{d_1}\times\mu_{d_2}$; when $d_i=1$
the corresponding deck factor is trivial and the equation
$w_i+f_i=0$ simply defines a graph.  The coefficient--root radial gauge is
still present.  The two independent radial fields on the
coefficient cones are
\begin{equation}\label{eq:two-root-gauges}
 R_i=d_i\sum_{|\alpha|\leqslant d_i}a_{i,\alpha}\partial_{a_{i,\alpha}}
       +w_i\partial_{w_i}.
\end{equation}

Write the ordinary first and second derivatives as
\[
 \xi=(u,v),\qquad \eta=(U,V),
\]
and write the logarithmic root derivatives as
\[
 w_i'=w_i\chi_i,
 \qquad w_i''=w_i(\psi_i+\chi_i^2).
\]
Differentiating \eqref{eq:two-root-cover} zero, one, and two times gives
\begin{align}
 E_{i0}&=w_i^{d_i}+f_i,\label{eq:O3-E0}\\
 E_{i1}&=d_iw_i^{d_i}\chi_i+df_i(\xi),\label{eq:O3-E1}\\
 E_{i2}&=d_iw_i^{d_i}\psi_i+d_i^2w_i^{d_i}\chi_i^2
          +df_i(\eta)+\Hess(f_i)(\xi,\xi).\label{eq:O3-E2}
\end{align}
Here is the tangency test used throughout the construction.  If a field has
only an $i$-th coefficient component, represented by the polynomial $Q_i(Y)$,
then its action on the three equations is
\begin{align*}
 \delta E_{i0}&=Q_i(z),\\
 \delta E_{i1}&=dQ_i|_z(\xi),\\
 \delta E_{i2}&=dQ_i|_z(\eta)+\Hess(Q_i)|_z(\xi,\xi).
\end{align*}
Thus a coefficient correction with $j_z^2Q_i=0$ is tangent to the entire
two-jet system, independently of the chosen jet.  This single observation is
the reason that third-order Taylor remainders can be used below without
repeating the tangency calculation for every coefficient field.
On the dense open set $f_1f_2\neq0$, these equations eliminate
$\chi_i,\psi_i$.  The unipotent part of the two-jet reparametrization group
acts by
\[
 (u,v,U,V)\longmapsto(u,v,U+\lambda u,V+\lambda v).
\]
Putting $W=uV-vU$, one obtains
\begin{equation}\label{eq:O3-invariant-ring}
 \CC[u,v,U,V]^{G_{2,\mathrm{unip}}}=\CC[u,v,W],
 \qquad \wt(u)=\wt(v)=1,\quad \wt(W)=3.
\end{equation}
Indeed, after localizing at $u$, the infinitesimal generator becomes
$u\partial_U$ in the coordinates $(u,v,U,W)$.  Its kernel is
$\CC[u,v,W]_u$, and the absence of a negative power of $u$ follows by
reducing a putative numerator modulo $u$.  This also proves
\eqref{eq:O3-invariant-ring} over every characteristic-zero extension
field.

We now specify the generic field used below.  On fixed affine coefficient
charts, let
\[
 \mathcal B^\circ=
 \left\{(z,a_1,a_2):
 f_1f_2\det\!\begin{pmatrix}\partial_1f_1&\partial_2f_1\\
                              \partial_1f_2&\partial_2f_2
             \end{pmatrix}\neq0\right\}.
\]
It is a nonempty open subset of an affine space: for instance
$f_1=1+z_1$ and $f_2=1+z_2$ at $z=0$ give a point of it.  Thus it is
irreducible.  We set
\begin{equation}\label{eq:O3-K0-definition}
 K_0:=\CC(\mathcal B^\circ).
\end{equation}
The affine two-jet incidence over it is cut out by the six equations
$E_{i\nu}=0$ in the variables
$(w_i,\chi_i,\psi_i,u,v,U,V)$.  On $f_1f_2\neq0$, equations $E_{i0}$
give a finite root extension and $E_{i1},E_{i2}$ eliminate
$\chi_i,\psi_i$; its function field is therefore a finite extension of
$K_0(u,v,U,V)$.  Taking invariants of the unipotent reparametrization
 leaves $K_0[u,v,W]$ after descent from that finite extension.  Every
 denominator in the generic residual coefficients divides a product of
\begin{equation}\label{eq:O3-denominator-open}
 f_1f_2,\qquad
 \det(g_1,g_2),\qquad
 \ell_1(a_1)\ell_2(a_2),
\end{equation}
on the corresponding affine incidence chart.  The projective and Semple
transition coordinates are different: their zero sets are not removed by a
parameter shrinking.  They are covered by the overlap and boundary
calculations in Phase~IV.

\paragraph{Output used later.}
Phase~I produces the arena for the contradiction: an explicitly defined
generic field $K_0$, the invariant ring $K_0[u,v,W]$, and a recorded list of
the genuine generic-incidence denominators.
The next two phases construct polynomial generators whose generic residual actions
operate on this ring.

\subsection{Phase II: third differences and one-stage Taylor reduction}

\paragraph{Goal.}
Construct polynomial coefficient corrections that preserve the two-jet
tangency equations and have base degree at most three.

We now build, separately in the two coefficient blocks, the third-difference
Taylor package.  Its finite-difference antecedent is
\cite[Section~3]{Merker2009}; its symmetry organization is related to
\cite{CuiHouLiuXie2026}.  All formulas needed here are proved below.  The
point that requires care is that the two corrections are later coupled by
the common base and jet-variable motion.

For $d_i\geqslant3$ and multi-indices $\alpha\geqslant\gamma$ with
$|\alpha|\leqslant d_i$ and $|\gamma|=3$, define the third-difference field in the $i$-th coefficient
block by
\begin{equation}\label{eq:O3-third-difference}
 \mathcal D_{i,\alpha,\gamma}
 =\sum_{\beta\leqslant\gamma}(-1)^{|\beta|}
   \binom{\gamma}{\beta}z^\beta
   \partial_{a_{i,\alpha-\beta}}.
\end{equation}
Its coefficient polynomial is
$Y^{\alpha-\gamma}(Y-z)^\gamma$.  It belongs to the third power
$\mathfrak m_z^3$ of the maximal ideal at $z$ and thus annihilates all
three equations \eqref{eq:O3-E0}--\eqref{eq:O3-E2}.  Over any
characteristic-zero field, the values of these fields span
\begin{equation}\label{eq:O3-Taylor-span}
 \mathfrak m_z^3\cap K[Y_1,Y_2]_{\leqslant d_i}.
\end{equation}
This is just the Taylor basis: choose $\gamma\leqslant\delta$ with
$|\gamma|=3$ and expand $(Y-z)^{\delta-\gamma}$ in powers of $Y$.

This package is a correction engine, not a complete tangent frame.  It can
absorb exactly the coefficient polynomials in
$\mathfrak m_z^3\cap K[Y_1,Y_2]_{\leqslant d_i}$, while it contributes no
base or jet-variable motion.  The missing motions are supplied in Phase~III
by $T$, $C$, and $N_i$; their remaining coefficient components will then be
returned to this Taylor span.

We shall also use the following elementary reduction for $d_i\geqslant2$.
Suppose that a
polynomial $Q^0(Y;z)$ has $Y$-degree at most $d_i+1$, its top homogeneous
part is independent of $z$, and its remaining coefficients have $z$-degree
at most two.  For every degree-$(d_i+1)$ monomial choose once and for all
$\gamma\leqslant\alpha$ with $|\gamma|=3$, and subtract the corresponding
multiple of
\[
 (Y-z)^\gamma Y^{\alpha-\gamma}.
\]
The resulting polynomial $Q$ has $Y$-degree at most $d_i$,
$Q-Q^0\in\mathfrak m_z^3$, and coefficient functions of $z$-degree at
most three.  In particular, $Q$ and $Q^0$ have the same value, gradient,
and Hessian at $Y=z$.
For $d_i=2$, the same reduction lowers degree three to degree two.  More
generally,
\[
 \mathfrak m_z^3\cap K[Y_1,Y_2]_{\leqslant d_i}=0
 \qquad(d_i\leqslant2),
\]
so no third-difference field exists or is needed in degrees one and two.
An arbitrary raw polynomial \eqref{eq:O3-raw-Q} need not have degree at most
one when $d_i=1$; the compatible degree-one generators are constructed in
Lemma~\ref{lem:O3-linear-generators} below.

\paragraph{Output used later.}
This stage supplies the
degree-compatible coefficient fields; the next stage couples their values
through one common base and jet-variable motion.

\subsection{Phase III: simultaneous Hessian cancellation and mixed fields}

\paragraph{Goal.}
Couple one base and jet-variable motion to both component equations and
identify its residual action on $K_0[u,v,W]$.

The following construction couples the classical matrix-lift input
\cite[Lemma~1.1]{Paun} and \cite[Lemma~5]{Rousseau} with the
quadratic-acceleration construction developed below.  In contrast with the
one-equation construction, the acceleration tensor is determined by two
simultaneous Hessian equations.

Let $T\in\mathfrak{sl}_2(\CC)$, let
$C:\Sym^2\CC^2\to\CC^2$ be symmetric, and let
$N_i\in(\CC^2)^*$.  Prescribe
\begin{equation}\label{eq:O3-mixed-jet-variation}
 \delta\xi=T\xi,\qquad
 \delta\eta=T\eta+C(\xi,\xi),\qquad
 \delta\chi_i=N_i(\xi),\qquad
 \delta\psi_i=N_i(\eta).
\end{equation}
In the $i$-th coefficient block begin with
\begin{equation}\label{eq:O3-raw-Q}
 Q^0_i(Y)=-df_i(Y)T(Y-z)+d_if_i(Y)N_i(Y-z)
 -\frac12df_i(Y)C(Y-z,Y-z).
\end{equation}
For $d_i\geqslant2$, apply the preceding one-stage reduction and denote the
degree-at-most-$d_i$ polynomial by $Q_i$.  When $d_i=1$, use instead the
compatible degree-one fields of Lemma~\ref{lem:O3-linear-generators}.  The
vector fields obtained in either way, consisting of
\eqref{eq:O3-mixed-jet-variation} and the coefficient vector represented by
$Q_i$, satisfy the exact ideal identities
\begin{align}
 V(E_{i0})&=0,\label{eq:O3-tangent0}\\
 V(E_{i1})&=d_iN_i(\xi)E_{i0},\label{eq:O3-tangent1}\\
 V(E_{i2})&=d_iN_i(\eta)E_{i0}+2d_iN_i(\xi)E_{i1}.
 \label{eq:O3-tangent2}
\end{align}
 The identities follow by direct substitution.  The acceleration term in
\eqref{eq:O3-raw-Q} has zero value and gradient at $z$ and Hessian
$-df_i\circ C$; this cancels the variation of $df_i(\eta)$ caused by
 $C(\xi,\xi)$.  Any reduction term lies in $\mathfrak m_z^3$ and changes
 none of these identities; the degree-one identities are checked directly
 in Lemma~\ref{lem:O3-linear-generators}.

We may now verify reparametrization equivariance without a forward reference.
For $(\lambda,\mu)$ with $\lambda\neq0$, the two-jet group acts by
\[
 \widetilde\xi=\lambda\xi,\qquad
 \widetilde\eta=\lambda^2\eta+\mu\xi,\qquad
 \widetilde\chi_i=\lambda\chi_i,\qquad
 \widetilde\psi_i=\lambda^2\psi_i+\mu\chi_i.
\]
Substitution in \eqref{eq:O3-mixed-jet-variation} gives
\begin{align*}
 \delta\widetilde\xi&=T\widetilde\xi,&
 \delta\widetilde\eta&=T\widetilde\eta+C(\widetilde\xi,\widetilde\xi),\\
 \delta\widetilde\chi_i&=N_i(\widetilde\xi),&
 \delta\widetilde\psi_i&=N_i(\widetilde\eta).
\end{align*}
Thus the mixed fields are equivariant for both the scaling and unipotent
parts of the group and descend to invariant two-jets.

Put
\[
 F_i=f_i(z),\qquad p_i=df_i|_z,\qquad
 g_i=\frac{p_i}{F_i},\qquad
 K_i=\Hess(\log f_i)|_z
     =\frac{\Hess(f_i)|_z}{F_i}-\frac{p_i^{\mathsf T}p_i}{F_i^2}.
\]
The value of $Q_i^0$ at $z$ is zero.  Its gradient vanishes exactly for
\begin{equation}\label{eq:O3-N-choice}
 N_i=\frac{p_iT}{d_iF_i},
\end{equation}
and, after this choice, its Hessian vanishes exactly when
\begin{equation}\label{eq:O3-mixed-Hessian}
 g_i\circ C+T^{\mathsf T}K_i+K_iT=0.
\end{equation}
This identity is the reason that the matrix and acceleration parts must be
treated together.

Work over the field $K_0$ in \eqref{eq:O3-K0-definition}; by construction
$g_1\wedge g_2\neq0$.  The map
\[
 G:K_0^2\longrightarrow K_0^2,\qquad
 G(\tau)=(g_1(\tau),g_2(\tau))
\]
is then an isomorphism.  For every $T\in\mathfrak{sl}_2(K_0)$, the two
equations \eqref{eq:O3-mixed-Hessian} have the unique simultaneous solution
\begin{equation}\label{eq:O3-CT}
 C_T(\zeta,\zeta)=-G^{-1}
 \begin{pmatrix}
  \zeta^{\mathsf T}(T^{\mathsf T}K_1+K_1T)\zeta\\
  \zeta^{\mathsf T}(T^{\mathsf T}K_2+K_2T)\zeta
 \end{pmatrix}.
\end{equation}
Formula \eqref{eq:O3-CT} is the precise replacement for the free compact
acceleration.  A single component imposes one scalar quadratic identity; the
two independent logarithmic gradients assemble the two identities into the
invertible map $G$ and thus determine the vector-valued quadratic
correction $C_T$ uniquely.

\begin{lemma}[Compatible generators for a linear component]
\label{lem:O3-linear-generators}
Assume $d_1=1$ and $d_2\geqslant2$.  Put
\[
 x=Y-z,\qquad f_1(Y)=F+p(x),\qquad
 p=(p_1,p_2)=df_1,\qquad
 v_p=(-p_2,p_1)^{\mathsf T}.
\]
The line block of \eqref{eq:O3-raw-Q} is represented by a polynomial of
$Y$-degree at most one if and only if
\begin{equation}\label{eq:O3-linear-compatibility}
 p\bigl(C(x,x)\bigr)=2p(x)N_1(x),
\end{equation}
and then
\begin{equation}\label{eq:O3-linear-Q}
 Q_1(Y)=\bigl(-pT+FN_1\bigr)x.
\end{equation}
There is a finite polynomial family of ten regular fields with this property,
split as $3+2+3+2$: three $T$-fields; two line-normal fields, for which
\[
 C_N(\zeta,\eta)=N(\zeta)\eta+N(\eta)\zeta,
 \qquad N_1=N;
\]
three kernel-acceleration fields, for which
\[
 C_{p,q}(\zeta,\zeta)=v_pq(\zeta,\zeta),
 \qquad
 q\in\{\zeta_1^2,\zeta_1\zeta_2,\zeta_2^2\};
\]
and two second-normal fields, for which only $N_2$ is nonzero.  After the
degree-$d_2$ Taylor reduction in the second block, these ten fields satisfy
\eqref{eq:O3-tangent0}--\eqref{eq:O3-tangent2}.  Their span over $K_0$
contains the residual field determined by \eqref{eq:O3-N-choice} and
\eqref{eq:O3-CT}; for that residual field one has $Q_1=0$.
\end{lemma}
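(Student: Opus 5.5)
The proof is a direct computation in the line block: the key simplification is that for $d_1=1$ the gradient $df_1$ is the constant covector $p$. Writing $x=Y-z$ and substituting $f_1(Y)=F+p(x)$, $df_1(Y)=p$ and $d_1=1$ into \eqref{eq:O3-raw-Q} gives
\[
 Q^0_1(Y)=-pTx+\bigl(F+p(x)\bigr)N_1(x)-\tfrac12\,p\bigl(C(x,x)\bigr).
\]
This is a polynomial of degree at most two in $x$. Its constant term vanishes, its linear part is $(-pT+FN_1)x$, and its quadratic part is $p(x)N_1(x)-\tfrac12 p(C(x,x))$.

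Since $\mathfrak m_z^3\cap K[Y_1,Y_2]_{\leqslant1}=0$, there is no Taylor correction available in the line block. Hence $Q^0_1$ is represented by a polynomial of degree at most one exactly when its quadratic part vanishes identically, which is \eqref{eq:O3-linear-compatibility}, and in that case $Q_1=Q^0_1$ is \eqref{eq:O3-linear-Q}. The tangency identities \eqref{eq:O3-tangent0}--\eqref{eq:O3-tangent2} hold by direct substitution for arbitrary data $(T,C,N_1,N_2)$, provided the coefficient vector is exactly $Q^0_i$ modulo $\mathfrak m_z^3$. So they hold in the line block once compatibility holds. In the second block they are preserved by the one-stage reduction of Phase~II, which is available because $d_2\geqslant2$.

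Next I would verify compatibility for each of the ten generators.
\begin{enumerate}[label=\textup{(\roman*)}]
 \item The three $T$-fields ($T$ running over a basis of $\mathfrak{sl}_2$, with $C=0$, $N_1=N_2=0$): both sides of \eqref{eq:O3-linear-compatibility} are zero.
 \item The two line-normal fields ($C=C_N$, $N_1=N$, with $N$ running over a basis of $(\CC^2)^*$): here $p(C_N(x,x))=2N(x)p(x)$.
 \item The three kernel-acceleration fields: here $p(v_p)=p_1(-p_2)+p_2p_1=0$ and $N_1=0$, so both sides vanish.
 \item The two second-normal fields: here $C=0$ and $N_1=0$, so both sides vanish.
\end{enumerate}
All coefficients are polynomial in $(z,a_1,a_2)$. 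The second-block reduction is linear in the data, because for each degree-$(d_2+1)$ monomial a fixed $\gamma$ is chosen once and for all. Hence the fields are regular and depend linearly on $(T,C,N_1,N_2)$.

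For the span statement, I would take the residual data $T$, $N_1=pT/F$ from \eqref{eq:O3-N-choice}, $N_2$ from the same formula, and $C=C_T$ from \eqref{eq:O3-CT}. For $i=1$, $K_1=-p^{\mathsf T}p/F^2$, since $\Hess f_1=0$. The first Hessian equation \eqref{eq:O3-mixed-Hessian} then reads $p(C(x,x))/F=2p(x)p(Tx)/F^2$, that is, $p(C_T(x,x))=2p(x)N_1(x)$. So the residual datum is compatible. Moreover
\[
 Q_1=(-pT+F\cdot pT/F)x=0 .
\]
The decomposition of the residual field over $K_0$ is then as follows.
\begin{enumerate}[label=\textup{(\alph*)}]
 \item The $T$-part is a combination of the $T$-fields.
 \item The difference $C_T-C_{N_1}$ takes values in $\ker p=K_0v_p$, because $p(C_{N_1}(x,x))=2p(x)N_1(x)$. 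It is therefore $v_p\,q$ for a quadratic form $q$ with coefficients in $K_0$, and is a combination of the kernel-acceleration fields.
 \item The $C_{N_1}$-part comes from the line-normal fields, with $N_1$ having coefficients in $K_0$ because $1/F$ is an allowed denominator.
 \item The remaining $N_2$ is supplied by the second-normal fields.
\end{enumerate}
By linearity of the construction in the data, the sum of these pieces is the residual field.

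I expect the main obstacle to be this linearity bookkeeping. The line-normal fields couple $N_1$ and $C$, and each generator carries a second-block component obtained by the degree-$d_2$ reduction. I would need to check that this reduced second-block component is genuinely linear in $(T,C,N_1,N_2)$, so that the $K_0$-combination of generators reproduces exactly the residual field's $Q_2$ and not merely its value modulo $\mathfrak m_z^3$.
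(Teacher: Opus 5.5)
Your proof is correct and follows the paper's argument essentially step for step. Like the paper, you substitute the linear $f_1$ into the raw polynomial and read off compatibility from its quadratic part, check the four generator families one by one, and obtain the residual data by writing $C_T-C_{N_1}=v_p\,q_T$ with values in $\ker p$ (using $p\neq0$ on the generic incidence open).

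The linearity of the second-block reduction that you flag is harmless, since that reduction is $K_0$-linear in the raw polynomial. The only small correction concerns the step showing that a degree-one representative must equal $Q_1^0$. There you should cite $\mathfrak m_z^3\cap K[Y_1,Y_2]_{\leqslant2}=0$ rather than the degree-one version, because $Q_1-Q_1^0$ can have degree two.
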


\begin{proof}
Since $f_1$ is linear, substitution in \eqref{eq:O3-raw-Q} gives
\[
 Q_1^0(Y)=-pTx+(F+px)N_1x-\frac12p\bigl(C(x,x)\bigr).
\]
Its quadratic part vanishes exactly under
\eqref{eq:O3-linear-compatibility}, and the remaining linear polynomial is
\eqref{eq:O3-linear-Q}.  For the line-normal fields,
$C_N(x,x)=2N(x)x$, so the compatibility is automatic and
$Q_1=FN(x)$.  For a kernel-acceleration field one has
$p(v_p)=0$, thus $Q_1=0$.  The $T$-fields give $Q_1=-pTx$, while a
second-normal field contributes nothing in the line block.  Thus all ten
fields are represented by legitimate line-coefficient variations, and direct
substitution gives the three tangent-ideal identities.  The second block is
handled by the degree-$d_2$ reduction already proved above.  Although $v_p$
depends linearly on the coefficients of the line, it is independent of $z$;
thus the degree and Taylor-remainder bounds used in that reduction are
unchanged.

It remains to check that these polynomial fields span the required generic
one.  For a line,
\[
 K_1=-\frac{p^{\mathsf T}p}{F^2},
 \qquad N_1=\frac{pT}{F}.
\]
Evaluating the first equation in \eqref{eq:O3-mixed-Hessian} on
$(\zeta,\zeta)$ therefore gives
\[
 p\bigl(C_T(\zeta,\zeta)\bigr)
 =2p(\zeta)N_1(\zeta)
 =p\bigl(C_{N_1}(\zeta,\zeta)\bigr).
\]
On the generic incidence open $p\neq0$, and $v_p$ spans $\ker p$.  Thus
there is a scalar quadratic form $q_T$ over $K_0$ such that
\[
 (C_T-C_{N_1})(\zeta,\zeta)=v_pq_T(\zeta,\zeta).
\]
These four families therefore span the data $(T,C_T,N_1,N_2)$.  Finally
$-pT+FN_1=0$, so \eqref{eq:O3-linear-Q} gives $Q_1=0$.
\end{proof}

\paragraph{Degree ledger.}
At the residual choice, $Q_i\in\mathfrak m_z^3$ in both blocks.  If
$d_i\geqslant3$, the Taylor span \eqref{eq:O3-Taylor-span} cancels $Q_i$ by
third differences.  If $d_i=2$, the reduced polynomial lies in
$\mathfrak m_z^3$ and is therefore already zero.  If $d_1=1$, the compatible
family of Lemma~\ref{lem:O3-linear-generators} gives $Q_1=0$ directly.  These
are the only degree-dependent branches of the construction.
On the invariant ring $K_0[u,v,W]$ the residual derivation is
\begin{equation}\label{eq:O3-residual}
 D_T=(T\xi)\cdot\partial_\xi+S_T(u,v)\partial_W,
 \qquad S_T(u,v)=\det\bigl(\xi,C_T(\xi,\xi)\bigr).
\end{equation}
The rational functions in \eqref{eq:O3-N-choice}--\eqref{eq:O3-CT} select
a $K_0$-linear combination of the polynomial generators constructed above.
They are not claimed to be coefficients of one rational vector field regular
on the whole parameter space.  This distinction lets us use the generic
residual derivations without concealing any pole.

\paragraph{Output used later.}
The output of Phase~III is algebraic rather than yet global: a
finite collection of regular affine polynomial fields, together with the fact
that its generic $K_0$-span contains $D_T$ for every
$T\in\mathfrak{sl}_2(K_0)$.  Phase~IV globalizes the polynomial generators; it
does not attempt to globalize the rational coefficients selecting $D_T$.

\subsection{Phase IV: common twist and global extension}

\paragraph{Goal.}
Descend the affine generators through the coefficient gauges, base-chart
overlaps, logarithmic corners, and Semple boundary with one common twist.

The preceding affine fields are not yet global on the projectivized parameter
space or the Semple compactification.  The product gauge first supplies the
common parameter twist $\OO(1)\boxtimes\OO(1)$; the projective overlap and
boundary calculations then supply the base twist $\OO_{\PP^2}(3)$.  Together
they give the regular global package used in residual rigidity.

\paragraph{Parameter and root gauges.}
\begin{lemma}[Product coefficient--root gauge]\label{lem:O3-product-gauge}
Let a cone field have parameter multidegree $e=(e_1,e_2)$.  Thus, under
independent coefficient scalings $a_j\mapsto\lambda_ja_j$, its
nonparameter coefficients scale by $\lambda_1^{e_1}\lambda_2^{e_2}$,
while its coefficient vector $q_i$ in the $i$-th block has one
additional factor $\lambda_i$.  Write its root component as
$b_iw_i\partial_{w_i}$.  On coefficient charts $\ell_i(a_i)=1$, put
\begin{equation}\label{eq:O3-gauge-normalization}
 c_i=\ell_i(q_i),\qquad
 \widetilde q_i=q_i-c_ia_i,\qquad
 \widetilde b_i=b_i-\frac{c_i}{d_i}.
\end{equation}
If only the $i$-th coefficient chart changes, with transition ratio $s_i$
and induced coordinate map $\Phi_i$, then
\begin{align}
 \widetilde q_j'&=s_i^{e_i}(\Phi_i)_*\widetilde q_j,
 &&j=1,2,\label{eq:O3-gauge-q-overlap}\\
 \widetilde b_j'&=s_i^{e_i}\widetilde b_j,
 &&j\neq i,\label{eq:O3-gauge-b-other}\\
 \widetilde b_i'&=s_i^{e_i}
 \left(\widetilde b_i+\frac1{d_i}\widetilde q_i(\log s_i)\right).
 \label{eq:O3-gauge-b-self}
\end{align}
Therefore the normalized representatives glue with parameter twist
$\OO_{\PP(V_{d_1})}(e_1)\boxtimes\OO_{\PP(V_{d_2})}(e_2)$.
\end{lemma}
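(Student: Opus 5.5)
The plan is to read the normalization \eqref{eq:O3-gauge-normalization} as subtraction of a multiple of the radial gauge field. A direct check shows that $\widetilde V:=V-\sum_i (c_i/d_i)R_i$ has $i$-th coefficient component $\widetilde q_i=q_i-c_ia_i$ and root component $\widetilde b_iw_i\partial_{w_i}$ with $\widetilde b_i=b_i-c_i/d_i$. Indeed, by \eqref{eq:two-root-gauges}, $(c_i/d_i)R_i$ contributes $c_ia_i$ to the coefficient vector and $(c_i/d_i)w_i\partial_{w_i}$ to the root. Since $\ell_i(\widetilde q_i)=c_i-c_i\ell_i(a_i)=0$ on the chart $\ell_i(a_i)=1$, the field $\widetilde V$ is tangent to that slice. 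Subtracting $R_i$ changes nothing on the incidence variety. The reason is that $R_i$ is the infinitesimal generator of $(a_i,w_i)\mapsto(\lambda^{d_i}a_i,\lambda w_i)$, which multiplies $E_{i0},E_{i1},E_{i2}$ by $\lambda^{d_i}$. It is therefore tangent to the ideal and acts trivially after the coefficient--root quotient. So $\widetilde V$ is the canonical chart representative of the same projectivized field.

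Next I will compute the overlap transformation. Suppose only the $i$-th chart changes, from $\ell_i=1$ to $\ell_i'=1$. The coordinate change is $a_i'=s_ia_i$ and $w_i'=s_i^{1/d_i}w_i$, where $s_i=1/\ell_i'(a_i)$ is a regular function on the slice. The root branch is immaterial by $\mu_{d_i}$ deck equivariance, and for $d_i=1$ there is no branch at all. Differentiating $a_i'=s_ia_i$ along $\widetilde V$ gives the image coefficient vector $s_i\bigl(\widetilde q_i+\widetilde V(\log s_i)\,a_i\bigr)$. Differentiating $w_i'$ gives the root coefficient $\widetilde b_i+\frac1{d_i}\widetilde V(\log s_i)$. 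Since $s_i$ depends only on $a_i$, we have $\widetilde V(\log s_i)=\widetilde q_i(\log s_i)$. Homogeneity of multidegree $e$ says that the nonparameter coefficients, evaluated at the rescaled point, acquire $s_i^{e_i}$. The $i$-th coefficient vector acquires one extra $s_i$, which is precisely the $s_i$ produced by the differential of $a_i\mapsto s_ia_i$ and is absorbed into $(\Phi_i)_*$.

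I then renormalize in the new chart. The term $\widetilde q_i(\log s_i)\,a_i$ is radial. Applying $\ell_i'$ and subtracting $c_i'a_i'$ removes it from the coefficient vector, which gives \eqref{eq:O3-gauge-q-overlap} for $j=i$. One must check that this is consistent with $\ell_i'(\widetilde q_i')=0$. This follows from $\widetilde q_i(\log s_i)=-\ell_i'(\widetilde q_i)/\ell_i'(a_i)$. The same radial term survives in the root coefficient as $\frac1{d_i}\widetilde q_i(\log s_i)$, which is \eqref{eq:O3-gauge-b-self}. For $j\neq i$, nothing is differentiated in the $j$-th block, so only the weight factor $s_i^{e_i}$ appears. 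This gives \eqref{eq:O3-gauge-q-overlap} for $j\ne i$ and \eqref{eq:O3-gauge-b-other}.

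Finally, the factors $s_i^{e_i}$ are the transition functions of $\OO_{\PP(V_{d_i})}(e_i)$, since $s_i=\ell_i(a_i)/\ell_i'(a_i)$ on the overlap. The affine correction in \eqref{eq:O3-gauge-b-self} is exactly the change of normalization, so it is the radial-gauge ambiguity and not an obstruction. Applying the two blocks' changes in either order gives the same result, because they commute. The cocycle condition on triple overlaps then follows from the chain rule, and the representatives glue with twist $\OO(e_1)\boxtimes\OO(e_2)$.

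The main obstacle I expect is the bookkeeping in the second step. One has to separate the homogeneity factor from the derivative of $s_i$ and confirm that the radial piece cancels in $q$ but not in $b$. This needs care with the convention that the $i$-th coefficient vector carries one extra degree.
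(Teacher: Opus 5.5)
Your route is the paper's: subtract $(c_i/d_i)R_i$ to land on the slice $\ell_i(a_i)=1$, then differentiate $a_i'=s_ia_i$ and $w_i'=s_i^{1/d_i}w_i$. Your first two paragraphs are correct, and the second already contains the whole overlap computation. The third paragraph, however, relies on a step that does not happen. The vector $s_i\bigl(\widetilde q_i+\widetilde q_i(\log s_i)\,a_i\bigr)$ you obtain is exactly $(\Phi_i)_*\widetilde q_i$, the differential of the slice map $a_i\mapsto a_i/\ell_i'(a_i)$ with its radial part included. The identity $\widetilde q_i(\log s_i)=-\ell_i'(\widetilde q_i)/\ell_i'(a_i)$ that you quote says precisely that $\ell_i'$ annihilates this vector. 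So $c_i'=0$ and renormalizing in the new chart changes nothing. The radial term is not removed from the coefficient vector; it stays there as part of $(\Phi_i)_*\widetilde q_i$. This is forced. Two lifts of the same quotient field differ by a combination $\lambda_1R_1+\lambda_2R_2$. Tangency of both lifts to the slices gives $\lambda_j d_j\ell_j(a_j)=\lambda_jd_j=0$. So the pushed-forward lift is already the normalized one.

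Nor could a radial term be removed from $q_i$ while surviving in $b_i$, as you claim and as your anticipated ``main obstacle'' presupposes. The only gauge freedom is a multiple of $R_i$, which shifts the coefficient and root components together in the fixed ratio $d_i:1$. Deleting the coefficient part alone is not tangent to the incidence, because $\sum_\alpha a_{i,\alpha}\partial_{a_{i,\alpha}}$ sends $E_{i0}$ to $f_i\equiv-w_i^{d_i}$. Removing $\widetilde q_i(\log s_i)\,a_i'$ via $R_i$ would also delete $\frac1{d_i}\widetilde q_i(\log s_i)$ from the root coefficient, contradicting \eqref{eq:O3-gauge-b-self}. It would also leave a vector not tangent to $\ell_i'=1$. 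For the same reason, the correction in \eqref{eq:O3-gauge-b-self} is not a radial-gauge ambiguity. It is the transition law of $\log w_i$ under $w_i'=s_i^{1/d_i}w_i$, the root-side counterpart of the radial part of $d\Phi_i$. Read $(\Phi_i)_*$ as the full differential and do no renormalization on overlaps. Then your second paragraph yields \eqref{eq:O3-gauge-q-overlap}--\eqref{eq:O3-gauge-b-self} directly, exactly as in the paper's short proof.
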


\begin{proof}
The radial field $R_i$ in \eqref{eq:two-root-gauges} has coefficient
component $d_ia_i$ and root component $w_i\partial_{w_i}$.  Subtracting
$(c_i/d_i)R_i$ therefore gives exactly
\eqref{eq:O3-gauge-normalization}; its coefficient component is tangent to
$\ell_i(a_i)=1$.  On an overlap, the normalized coefficient representative
is multiplied by $s_i^{e_i}$.  The root changes by
$w_i' =s_i^{1/d_i}w_i$, so its logarithmic derivative acquires
$d_i^{-1}\widetilde q_i(\log s_i)$.  This gives
\eqref{eq:O3-gauge-q-overlap}--\eqref{eq:O3-gauge-b-self}.  Performing the
same calculation in the two blocks independently proves the product-twist
statement.
\end{proof}

When $d_i=1$, the factors $1/d_i$ in the normalization and overlap formulas
are simply one.  Thus the triviality of the deck factor $\mu_1$ removes a
covering symmetry but does not remove the coefficient--root gauge.

\paragraph{The common parameter twist.}
When $d_1,d_2\geqslant2$, the reduced mixed family parametrized by
$(T,C,N_1,N_2)$ has parameter multidegree $(0,0)$; no generator from the
separate line-compatible package occurs.  We multiply all these fields by
$\ell_1\ell_2$.

When $d_1=1$, Lemma~\ref{lem:O3-linear-generators} supplies the separate
line-compatible package.  Its $T$-, line-normal, and second-normal fields
have multidegree $(0,0)$, while its three kernel-acceleration fields are
linear in the coefficients of the line and have multidegree $(1,0)$.  We
multiply the former by $\ell_1\ell_2$ and the latter by $\ell_2$.
An $i$-th third-difference field has multidegree $-1$ in the $i$-th block
and degree zero in the other block, so multiplication by
$\ell_i^2\ell_j$ ($j\neq i$) puts every summand in the common twist
\begin{equation}\label{eq:O3-parameter-common-twist}
 \OO_{\PP(V_{d_1})}(1)\boxtimes\OO_{\PP(V_{d_2})}(1).
\end{equation}
At the generic incidence point the values of the $\ell_i$ are units.  The
Taylor-span identity \eqref{eq:O3-Taylor-span} therefore selects, in each
block of degree at least three, a linear combination of the twisted third
differences whose cone coefficient is the negative of $Q_i$.  A quadratic
block contributes nothing because its reduced $Q_i$ is zero, and at the
residual combination the linear block contributes nothing by
Lemma~\ref{lem:O3-linear-generators}.  Lemma
\ref{lem:O3-product-gauge} shows at the same time that the associated
scalars $c_i$, the projective coefficient components, and the root-gauge
corrections cancel.  Thus the subtraction producing $D_T$ is a linear
combination of regular common-twist fields, rather than a rational field
whose coefficients are simply claimed to be globally regular.

\paragraph{The base-chart pole bound.}
We next check the base twist.  On a projective overlap let
$\widehat z=\phi(z)$ and let the $i$-th root frame change by
$\widehat w_i=g_i(z)w_i$; put $h_i=\log g_i$.  The ordinary and root jets
transform as
\begin{align}
 \widehat\xi&=D\phi\,\xi,
 &\widehat\eta&=D\phi\,\eta+D^2\phi(\xi,\xi),
 \label{eq:O3-ordinary-jet-overlap}\\
 \widehat\chi_i&=\chi_i+dh_i(\xi),
 &\widehat\psi_i&=\psi_i+dh_i(\eta)+d^2h_i(\xi,\xi).
 \label{eq:O3-root-jet-overlap}
\end{align}
Varying these identities at a fixed base point and using
\eqref{eq:O3-mixed-jet-variation}, with $A=C(\xi,\xi)$, gives
\begin{align}
 \delta\widehat\xi
 &=D\phi(T\xi),\label{eq:O3-var-xi-overlap}\\
 \delta\widehat\eta
 &=D\phi(T\eta+A)+2D^2\phi(\xi,T\xi),
 \label{eq:O3-var-eta-overlap}\\
 \delta\widehat\chi_i
 &=N_i(\xi)+dh_i(T\xi),\label{eq:O3-var-chi-overlap}\\
 \delta\widehat\psi_i
 &=N_i(\eta)+dh_i(T\eta+A)+2d^2h_i(\xi,T\xi).
 \label{eq:O3-var-psi-overlap}
\end{align}

To read the denominators explicitly, take
\[
 s=x^{-1},\qquad r=y/x,
\]
and write $(\alpha,\beta)=(s',r')$ and $(B_1,B_2)=(s'',r'')$.
Differentiation of $x=s^{-1}$ and $y=rs^{-1}$ gives
\begin{align}
 \xi_x&=-\frac{\alpha}{s^2},
 &\xi_y&=\frac{\beta}{s}-\frac{r\alpha}{s^2},
 \label{eq:O3-first-projective-jets}\\
 \eta_x&=-\frac{B_1}{s^2}+\frac{2\alpha^2}{s^3},
 &\eta_y&=\frac{B_2}{s}-\frac{rB_1}{s^2}
 -\frac{2\alpha\beta}{s^2}+\frac{2r\alpha^2}{s^3}.
 \label{eq:O3-second-projective-jets}
\end{align}
Thus a first jet has pole order at most two and a second jet has pole order
at most three.  For the acceleration part write
$\widehat A_x=s^4A_x$ and $\widehat A_y=s^4A_y$; these are polynomial in
the new first jets.  Formula \eqref{eq:O3-var-eta-overlap} gives the exact
identities
\begin{equation}\label{eq:O3-projective-overlap}
 \delta(s'')=-\frac{\widehat A_x}{s^2},
 \qquad
 \delta(r'')=\frac{\widehat A_y-r\widehat A_x}{s^3}.
\end{equation}
For the standard root transition $h_i=-\log x=\log s$, the additional
normal term is
\begin{equation}\label{eq:O3-root-acceleration-pole}
 dh_i(A)=-sA_x=-\frac{\widehat A_x}{s^3}.
\end{equation}

For the matrix part, equations
\eqref{eq:O3-first-projective-jets}--\eqref{eq:O3-second-projective-jets}
give $N_i(\eta)=O(s^{-3})$.  The entries of $D\phi$ have orders at least
one in $s$, those of $D^2\phi$ have orders at least two, while
$dh_i=O(s)$ and $d^2h_i=O(s^2)$.  Thus
\[
 D\phi(T\eta),\quad D^2\phi(\xi,T\xi),\quad
 dh_i(T\eta),\quad d^2h_i(\xi,T\xi)
 =O(s^{-2}),
\]
and the only possible third-order matrix/root term is $N_i(\eta)$.
Equations \eqref{eq:O3-var-xi-overlap}--\eqref{eq:O3-var-psi-overlap}
therefore prove, term by term, that the complete mixed field has base pole
order at most three.  Since every reduced $Q_i$ has coefficient degree at
most three, all components homogenize with $\OO_{\PP^2}(3)$.  In the linear
block the coefficients in \eqref{eq:O3-linear-Q} have $z$-degree at most two;
the parameter-linear kernel fields have no additional $z$-dependence, and
their second-block reductions again have degree at most three.  Thus the
degree-one generators introduce no further base twist.

\paragraph{The logarithmic corner.}
It remains to extend across logarithmic corners and the Semple fiber
boundary.  In a regular logarithmic frame write
\[
 \zeta=(x,y),\qquad \theta=(X,Y),\qquad
 \delta\zeta=T\zeta,\qquad
 \delta\theta=T\theta+A(\zeta,\zeta),
\]
where $A$ has regular coefficients.  This form is stable under a change of
logarithmic frame: the second jet changes by a term linear in $\theta$ plus
a term quadratic in $\zeta$.

At a point of $C_i\cap C_j$, put
\[
 P_{ij}(z)=\begin{pmatrix}df_i|_z\\df_j|_z\end{pmatrix}.
\]
Transversality makes $P_{ij}$ invertible in a neighborhood.  The first and
second root-jet equations solve there as
\begin{align}
 \xi&=-P_{ij}^{-1}
 \begin{pmatrix}d_iw_i^{d_i}\chi_i\\d_jw_j^{d_j}\chi_j\end{pmatrix},
 \label{eq:O3-corner-first-solution}\\
 \eta&=-P_{ij}^{-1}
 \begin{pmatrix}
 d_iw_i^{d_i}\psi_i+d_i^2w_i^{d_i}\chi_i^2+\Hess(f_i)(\xi,\xi)\\
 d_jw_j^{d_j}\psi_j+d_j^2w_j^{d_j}\chi_j^2+\Hess(f_j)(\xi,\xi)
 \end{pmatrix}.
 \label{eq:O3-corner-second-solution}
\end{align}
Both expressions are holomorphic.  Substitution in the mixed field shows
that the induced $T$ and $A$ in the logarithmic frame remain regular at
the corner.

\paragraph{The Semple boundary.}
Finally put
\[
 W=xY-yX,\qquad p=y/x,\qquad q=W/x^3,\qquad
 T=\begin{pmatrix}a&b\\c&d\end{pmatrix}.
\]
A direct differentiation gives
\begin{align}
 \delta p&=c+(d-a)p-bp^2,\label{eq:O3-Semple-p}\\
 \delta q&=A_2(1,p)-pA_1(1,p)
 +\bigl(\tr T-3(a+bp)\bigr)q.
 \label{eq:O3-Semple-q}
\end{align}
In the reciprocal second-stage chart $r_2=q^{-1}$ one has
\begin{equation}\label{eq:O3-Semple-reciprocal}
 \delta r_2=-\bigl(A_2(1,p)-pA_1(1,p)\bigr)r_2^2
 -\bigl(\tr T-3(a+bp)\bigr)r_2.
\end{equation}
These expressions are polynomial.  Interchanging $x$ and $y$ gives the
other two charts, so the four standard charts cover the entire second
Semple fiber boundary.  Constant deck transformations add constants to
$\log w_i$ and therefore fix $\chi_i,\psi_i$; the root components are
proportional to $w_i$, so both root boundary components are preserved.

\paragraph{Twist and role ledger.}
The four types of fields have different logical roles.
\begin{description}
\item[Reduced mixed field.] Before homogenization it has base pole at most
$3$ and preserves both tangency systems while changing the jet variables.
Its parameter multidegree is $(0,0)$, except for the line-compatible
kernel-acceleration fields, which have multidegree $(1,0)$.
\item[Third difference in block $i$.] It has coefficient degree $-1$ in
that block and base pole at most $3$; it cancels an element of
$\mathfrak m_z^3$.  This field is absent when $d_i\leqslant2$.
\item[Homogenized field.] It has common parameter multidegree $(1,1)$ and
base pole at most $3$; this is the regular global field on the universal
two-component Semple tower.
\item[Generic combination $D_T$.] Its coefficients lie in $K_0$.  It is
used only to detect a generic normal direction and prove rigidity, and is
not claimed to be global.
\end{description}
Thus every geometric differentiation is performed by a regular global
twisted field.  The rational functions in $K_0$ are used only to describe
the span of their values at the generic point.

For the differentiation step it is useful to keep the homogeneous data
before taking the projective quotients.  Let
$\widehat{\mathcal J}\to\mathcal X_2$ denote the homogeneous multicone
obtained by retaining nonzero frames of the base, the two coefficient
tautological lines, and the two Semple tautological directions; on a root
chart we keep the variables $w_1,w_2$ as well.  The transition rules in
Phase~IV glue these charts, and the quotient by the corresponding
$\mathbb G_m$-gauges and finite root-deck groups is $\mathcal X_2$.
A section of a tensor product of the displayed tautological line bundles is
equivalently a regular semi-invariant function on
$\widehat{\mathcal J}$; its character records the line bundle.

\begin{proposition}[The mixed $\OO(3)$ package]\label{prop:mixed-O3-package}
Assume $d_1\geqslant1$ and $d_2\geqslant2$.  On a
nonempty Zariski-open subset of the universal family of ordered transverse
pairs of degrees $(d_1,d_2)$, the finite constructions above define regular
global logarithmic slanted vector fields on the relative second Semple tower
with common twist
\begin{equation}\label{eq:O3-common-twist}
 \OO_{\PP^2}(3)\boxtimes
 \OO_{\PP(V_{d_1})}(1)\boxtimes\OO_{\PP(V_{d_2})}(1).
\end{equation}
Their span over the generic incidence field contains the three residual
derivations $D_T$ in \eqref{eq:O3-residual}, for
$T\in\mathfrak{sl}_2$.  Also, each global twisted field $\mathcal V$ is the
descent of a polynomial homogeneous derivation
$\widehat{\mathcal V}$ on $\widehat{\mathcal J}$, equivariant under the
root-deck groups, whose character is the common twisting line in
\eqref{eq:O3-common-twist}.  Thus $\widehat{\mathcal V}$ sends a
semi-invariant of character $\chi$ to one of character equal to $\chi$
times the common twisting character.
\end{proposition}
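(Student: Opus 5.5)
The proof will assemble Phases I--IV into one statement, in three steps.

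\emph{Step 1: the affine generators.} On each product of affine base charts and affine coefficient charts $\ell_i(a_i)=1$, and over the two-root cover \eqref{eq:two-root-cover}, we take the finite list of polynomial fields built earlier. These are the mixed fields \eqref{eq:O3-mixed-jet-variation} with $Q_i$ obtained from \eqref{eq:O3-raw-Q} by one-stage Taylor reduction, for $T$ in a basis of $\mathfrak{sl}_2$, $C$ in a basis of symmetric maps and $N_i$ in a basis of $(\CC^2)^*$. When $d_1=1$, the line block is replaced by the ten fields of Lemma~\ref{lem:O3-linear-generators}. We add the third-difference fields \eqref{eq:O3-third-difference} in each block with $d_i\geqslant3$. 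The identities \eqref{eq:O3-tangent0}--\eqref{eq:O3-tangent2} show that every such field is tangent to the two-jet incidence. The reparametrization computation shows it commutes with the two-jet group action, so it descends to a field on the relative second Semple tower over the chart. Deck equivariance under $\mu_{d_1}\times\mu_{d_2}$ holds because the root components are proportional to $w_i$.

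\emph{Step 2: globalization with the common twist.} We normalize the root gauge as in Lemma~\ref{lem:O3-product-gauge}. We then multiply each field by the monomial in $\ell_1,\ell_2$ prescribed in the paragraph on the common parameter twist; this puts all fields in multidegree $(1,1)$, and \eqref{eq:O3-gauge-q-overlap}--\eqref{eq:O3-gauge-b-self} make the representatives glue across coefficient charts. For base charts, \eqref{eq:O3-var-xi-overlap}--\eqref{eq:O3-root-acceleration-pole} give a pole of order at most three along the line at infinity. Together with the $z$-degree bound three for the reduced $Q_i$, this lets all components homogenize with $\OO_{\PP^2}(3)$. The corner solutions \eqref{eq:O3-corner-first-solution}--\eqref{eq:O3-corner-second-solution} give regularity over $C_1\cap C_2$. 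The polynomial formulas \eqref{eq:O3-Semple-p}--\eqref{eq:O3-Semple-reciprocal} in the four standard Semple charts give regularity along the Semple boundary. Since these charts cover the relative tower over the SNC locus, we obtain regular global sections of the relative tangent bundle twisted by \eqref{eq:O3-common-twist}. The Zariski-open subset is the SNC locus of the universal family, possibly shrunk to where the chart constructions are defined.

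\emph{Step 3: the span and homogeneous lifts.} At the generic point of $\mathcal B^\circ$, $G$ is invertible, so \eqref{eq:O3-N-choice} and \eqref{eq:O3-CT} are rational in $K_0$. The field $(T,C_T,N_1,N_2)$ is then a $K_0$-linear combination of the mixed generators, using the generator basis, or Lemma~\ref{lem:O3-linear-generators} when $d_1=1$. At the residual choice each $Q_i$ lies in $\mathfrak m_z^3$, and the degree ledger cancels it by twisted third differences, because the values of $\ell_i$ are units in $K_0$. The gauge scalars $c_i$ cancel by Lemma~\ref{lem:O3-product-gauge}. The resulting action on $K_0[u,v,W]$ is \eqref{eq:O3-residual}. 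For the last assertion, the chartwise fields before quotienting by the $\mathbb G_m$-gauges are already polynomial derivations on the multicone $\widehat{\mathcal J}$. Their transformation under each gauge torus is the stated twisting character, so they send semi-invariants of character $\chi$ to character $\chi$ times the twist.

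I expect the main obstacle to be Step 2. It must be shown that one and the same twist bounds every term at once, including the acceleration contributions $\widehat A/s^3$ and the root-gauge correction \eqref{eq:O3-gauge-b-self}, without losing the cancellation used to define $D_T$. I would check it term by term in the ledger, using only the term-by-term pole orders already recorded.
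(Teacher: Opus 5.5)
Your proposal is correct and follows essentially the same route as the paper's proof. You assemble tangency from the ideal identities, descent through the product coefficient--root gauge with the $\ell$-monomial normalization to multidegree $(1,1)$, base pole order three from the overlap formulas, regularity at the corners and the four Semple charts, the $K_0$-span of $D_T$ via Taylor cancellation by twisted third differences, and deck-equivariant homogeneous lifts on $\widehat{\mathcal J}$. The one point the paper states explicitly that you leave implicit is that the third-difference coefficients are monomials $z^\beta$ with $|\beta|\leqslant3$, so those fields also acquire no twist beyond $\OO_{\PP^2}(3)$.
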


\begin{proof}
The ideal identities \eqref{eq:O3-tangent0}--\eqref{eq:O3-tangent2}
prove tangency on every affine coefficient chart.  Lemma
\ref{lem:O3-product-gauge} and
\eqref{eq:O3-parameter-common-twist} prove simultaneous descent through
the two projective coefficient spaces and the two root gauges.  Equations
\eqref{eq:O3-ordinary-jet-overlap}--\eqref{eq:O3-root-acceleration-pole}
give base pole order three, while
\eqref{eq:O3-corner-first-solution}--\eqref{eq:O3-Semple-reciprocal}
prove regularity at double logarithmic corners and on all Semple fiber
charts.  Indeed, the corner formulas express the ordinary jets
polynomially in the two independent logarithmic root jets, and every root
component has the boundary-preserving form
$w_i\widetilde b_i\partial_{w_i}$.  The polynomial Semple formulas, together
with those obtained by interchanging the two first-jet coordinates, cover
the four standard charts.

The third-difference coefficients are monomials $z^\beta$ with
$|\beta|\leqslant3$.  Thus these fields, as well as the reduced mixed fields,
have base pole order at most three and acquire no twist beyond
$\OO_{\PP^2}(3)$.  When both degrees are at least two, there are only
finitely many constant data: three matrix directions $T$, six components of
the symmetric map $C$, and four normal-coordinate directions in $N_1,N_2$.
When $d_1=1$, Lemma~\ref{lem:O3-linear-generators} replaces these by ten
line-compatible mixed generators.  In either case there are only finitely
many third differences, none in a block of degree at most two.  Their global
sections therefore span a finite-dimensional complex vector space
$\mathcal V$ with the common twist \eqref{eq:O3-common-twist}.  The
blockwise Taylor cancellation, performed after all summands have this
common twist, says that the value of a suitable $K_0$-linear combination
of elements of $\mathcal V$ is $D_T$.  It does not assert that the generic
rational coefficients themselves define global vector fields.  This is
exactly the generic-span statement required later.

Finally, the fields were constructed before quotienting as polynomial
derivations of the coefficient--root two-jet incidence ring.  The tangency
identities \eqref{eq:O3-tangent0}--\eqref{eq:O3-tangent2} show that these
derivations preserve the incidence ideal.  The coefficient normalization in
Lemma~\ref{lem:O3-product-gauge} changes a lift only by the coefficient--root
Euler fields, while \eqref{eq:O3-gauge-q-overlap}--
\eqref{eq:O3-gauge-b-self} give exactly the claimed homogeneous character.
Base homogenization supplies the factor $\OO_{\PP^2}(3)$.  The corner and
Semple formulas already checked above show that the homogeneous lifts remain
regular in the boundary charts.  Since every root component has the form
$w_i\widetilde b_i\partial_{w_i}$, the lift commutes with the finite deck
transformations.  The polynomial homogeneous derivations therefore glue on
$\widehat{\mathcal J}$ and descend to the global twisted fields just constructed.
\end{proof}

\paragraph{Output used later.}
The finite-dimensional space $\mathcal V$ consists of genuine global twisted
fields.  At the generic incidence point its span contains $D_H$, $D_E$, and
$D_F$.  Therefore, if every element of $\mathcal V$ were tangent to the
chosen prime divisor, then these three residual
derivations would preserve its generic principal ideal.  Phase~V proves that
this is impossible.

\subsection{Phase V: residual curvature and rigidity}

\paragraph{Goal.}
Show that no positive-weight prime in the invariant fiber ring is preserved
by every residual direction.

We now adapt the residual-rigidity computation of
\cite{CuiHouLiuXie2026}.  The invariant ring $K_0[u,v,W]$ has the same
formal shape, but the $W$-components of the derivations are different: they
contain the simultaneous solution of the two Hessian equations
\eqref{eq:O3-mixed-Hessian}.  The commutator calculation below verifies that
this coupled correction still supplies the normal $\partial_W$ direction
needed for rigidity.

Choose coordinates over $K_0$ in which $g_1=du$ and $g_2=dv$, and write
\[
 K_1=\begin{pmatrix}a&b\\b&c\end{pmatrix},
 \qquad
 K_2=\begin{pmatrix}d&e\\e&f\end{pmatrix}.
\]
For the standard basis
\[
 H=\begin{pmatrix}1&0\\0&-1\end{pmatrix},\quad
 E=\begin{pmatrix}0&1\\0&0\end{pmatrix},\quad
 F=\begin{pmatrix}0&0\\1&0\end{pmatrix},
\]
substitution in \eqref{eq:O3-CT} gives
\begin{align}
 [D_H,D_E]-D_{[E,H]}&=-2v^2\Lambda(u,v)\partial_W,\nonumber\\
 [D_H,D_F]-D_{[F,H]}&=-2u^2\Lambda(u,v)\partial_W,\label{eq:O3-curvature}\\
 [D_E,D_F]-D_{[F,E]}&=-2uv\Lambda(u,v)\partial_W,\nonumber
\end{align}
where
\[
 \Lambda(u,v)=(a+e)u+(b+f)v.
\]
This linear form is generically nonzero.  When $d_1,d_2\geqslant2$, for
example, the two local equations at $z=0$
\[
 f_1=1+z_1+z_1^2,\qquad f_2=1+z_2+z_2^2
\]
give $K_1=\diag(1,0)$, $K_2=\diag(0,1)$, and
$\Lambda=u+v$.  When $d_1=1$ and $d_2\geqslant2$, use instead
\[
 f_1=1+z_1,\qquad f_2=1+z_2+z_2^2.
\]
Then $K_1=\diag(-1,0)$, $K_2=\diag(0,1)$, and
$\Lambda=-u+v\neq0$.  These local Taylor data are realized by homogeneous equations of the
prescribed degrees: after choosing the point $[0:0:1]$, homogenize the
displayed terms with the required powers of $Z$ and choose all remaining
coefficients generally.  The prescribed finite Taylor jet is an affine
linear condition on the coefficient space, while smoothness and
transversality are nonempty open conditions; a general completion, by
Bertini away from the prescribed point and the displayed independent
linear terms at that point, therefore lies in the SNC parameter locus.
Therefore the specializations above test the actual global family, not
only a formal local model.  To formulate the generic conclusion without suppressing
denominators, let $\Delta$ be a common denominator for the entries of
$G^{-1},K_1,K_2$ in the coordinate ring of $\mathcal B^\circ$.  Then
\[
 \Delta\Lambda=\Lambda_1u+\Lambda_2v,
 \qquad \Lambda_1,\Lambda_2\in\OO(\mathcal B^\circ).
\]
The preceding specializations show that the pair
$(\Lambda_1,\Lambda_2)$ is not identically zero.  Because
$\mathcal B^\circ$ is irreducible, the locus
\begin{equation}\label{eq:Lambda-open}
 \mathcal B^\circ_\Lambda
 =\{\Delta\Lambda_1\neq0\}\cup\{\Delta\Lambda_2\neq0\}
\end{equation}
is a nonempty Zariski-open subset, and $\Lambda\neq0$ in $K_0[u,v]$.
After clearing the joint variables $z$, the assertion that both numerator
coefficients vanish identically is a closed condition on the two
parameters.  Since it is not the whole parameter space, its complement is
a nonempty parameter open.  Intersecting with the dense smooth transverse
locus gives the open subset of
$\mathcal S_{d_1,d_2}^{\mathrm{snc}}$ used below.

\begin{proposition}[Residual rigidity]\label{prop:O3-rigidity}
Let $L/K_0$ be a characteristic-zero extension.  There is no nonconstant
weighted-homogeneous polynomial
$P\in L[u,v,W]$ of positive weight whose principal ideal is preserved by
$D_H,D_E,D_F$.
\end{proposition}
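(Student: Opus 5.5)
Suppose $P\in L[u,v,W]$ is weighted-homogeneous of positive weight and each of $D_H,D_E,D_F$ maps $(P)$ into itself. The plan follows the compact template: use the curvature identities \eqref{eq:O3-curvature} to produce $\partial_W$ up to a nonzero polynomial factor, deduce that $P$ is independent of $W$, and then use the binary $\mathfrak{sl}_2$-action to force $P$ to be constant.

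\emph{Step 1: the ideal is preserved by commutators.} If $D_X(P)=\lambda_XP$, then
\[
 [D_X,D_Y](P)=\bigl(D_X(\lambda_Y)-D_Y(\lambda_X)\bigr)P .
\]
So $(P)$ is preserved by every commutator. The map $T\mapsto D_T$ is $K_0$-linear, hence $D_{[F,H]}$, $D_{[E,H]}$ and $D_{[F,E]}$ are $K_0$-combinations of $D_H,D_E,D_F$ and also preserve $(P)$. By \eqref{eq:O3-curvature}, the derivations $v^2\Lambda\partial_W$, $u^2\Lambda\partial_W$ and $uv\Lambda\partial_W$ preserve $(P)$.

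\emph{Step 2: $P$ does not depend on $W$.} Fix one of these, say $u^2\Lambda\,\partial_WP=\mu P$. Since $u^2\Lambda\,\partial_W$ raises weight by exactly $3-3=0$, we may take $\mu\in L$; comparing $W$-degrees in $L[u,v][W]$ then gives $\mu=0$. As $\Lambda\neq0$ in $K_0[u,v]$ (established just above via the open set \eqref{eq:Lambda-open}), and $L[u,v,W]$ is a domain, we get $\partial_WP=0$. Thus $P\in L[u,v]$ is a binary form of degree $n>0$.

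\emph{Step 3: the binary $\mathfrak{sl}_2$-action forces $P$ constant.} On $L[u,v]$ the derivation $D_T$ reduces to $(T\xi)\cdot\partial_\xi$, since its $\partial_W$-part annihilates $P$. This is the standard linear $\mathfrak{sl}_2$-action. Each $D_T$ preserves weight, so $D_T(P)=\lambda_TP$ with $\lambda_T\in L$, and $T\mapsto\lambda_T$ is a character of $\mathfrak{sl}_2$. Because $\mathfrak{sl}_2=[\mathfrak{sl}_2,\mathfrak{sl}_2]$, every $\lambda_T=0$, so $P$ is an $\mathfrak{sl}_2$-invariant in $\Sym^n$ of the standard representation. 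I would verify this directly rather than quote representation theory, using two facts:
\begin{itemize}
 \item $E$ acts as $u\partial_v$, so $E(P)=0$ forces $P=cu^n$;
 \item $F$ acts as $v\partial_u$, so $F(cu^n)=ncu^{n-1}v$, which vanishes only if $c=0$.
\end{itemize}
Hence $P=0$ when $n>0$. This contradicts $P$ being nonconstant of positive weight.

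The main obstacle is not in this argument. It lies in the curvature identities \eqref{eq:O3-curvature}, which are taken as given, and in the generic nonvanishing of $\Lambda$, proved just before the proposition. The only delicate point in the argument itself is Step 2. There I use that the curvature derivations are homogeneous of weight zero, which is what lets the eigenvalue be a scalar, and that $\Lambda$ is nonzero over $L$, which follows because $L\supset K_0$ and $\Lambda\in K_0[u,v]\setminus\{0\}$.
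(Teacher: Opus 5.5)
Your proof is correct and follows the paper's route: commutators together with the curvature identities \eqref{eq:O3-curvature} give nonzero multiples $\Lambda\cdot(\text{monomial})\,\partial_W$ preserving $(P)$, a $W$-degree comparison removes the $W$-dependence, and the binary $\mathfrak{sl}_2$-action then forces $P$ to be constant. The only variation is how you kill the scalar eigenvalues: you use $\mathfrak{sl}_2=[\mathfrak{sl}_2,\mathfrak{sl}_2]$, while the paper uses that the nilpotent operators $v\partial_u$ and $u\partial_v$ admit only the eigenvalue zero; also, with $\xi=(u,v)$ one has $(E\xi)\cdot\partial_\xi=v\partial_u$ and $(F\xi)\cdot\partial_\xi=u\partial_v$, so your labels for $E$ and $F$ are swapped, which is harmless by symmetry.
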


\begin{proof}
The ideal $(P)$ is then preserved by commutators and linear combinations.
Equation \eqref{eq:O3-curvature} supplies a nonzero derivation
$S(u,v)\partial_W$ preserving $(P)$.  If $\deg_WP>0$, the equality
$S\partial_WP=AP$ is impossible by comparison of the $W$-degrees unless
$A=0$; since the polynomial ring is a domain, $\partial_WP=0$.  The same
conclusion is immediate when $\deg_WP=0$.  Thus $P\in L[u,v]$.

The $W$-parts of the three $D_T$ now disappear, so the full binary
$\mathfrak{sl}_2$-action preserves $(P)$.  In a fixed ordinary degree the
quotients are scalars.  The two nilpotent operators $v\partial_u$ and
$u\partial_v$ can have only the scalar zero, so
$vP_u=uP_v=0$.  Therefore $P$ is constant, contradicting its positive
weight.
\end{proof}

\paragraph{Output used later.}
Therefore the generic prime divisor cannot be tangent to every global
twisted field in the mixed package: otherwise its principal ideal would be
preserved by all three residual directions, contrary to
Proposition~\ref{prop:O3-rigidity}.  Phase~VI differentiates the full first
equation on the homogeneous multicone and descends the result directly to
the relative Semple tower.

\subsection{Phase VI: homogeneous Siu differentiation and the second global equation}

\paragraph{Goal.}
Differentiate the full relative invariant equation by a regular global slanted
field and obtain an independent invariant two-jet differential without
first constructing a section on $\mathcal Z$.

There is no canonical Lie derivative of a section of an arbitrary line
bundle by an arbitrary twisted vector field.  Siu's method avoids this
ambiguity by working with homogeneous functions and homogeneous derivations
before taking the projective quotient; see
\cite{Siu,Siu2002,Siu2015,DiverioMerkerRousseau}.  In our notation the
remaining logic is
\[
 \begin{aligned}
 \text{generic span of global cone fields}
 &\ \Longrightarrow\ \text{rigidity forbids preservation of }(P),\\
 &\ \Longrightarrow\
 \widehat{\mathcal V}(\widehat\omega_1)\notin(P)
 \ \Longrightarrow\ \omega_{2,a}.
 \end{aligned}
\]
The prime $\mathcal Z$ is used only in the middle implication, as an
ideal-theoretic test of independence.

\begin{lemma}[Homogeneous Siu differentiation]
\label{lem:O3-Siu-differentiation}
Let $\widehat Y\to Y$ be a homogeneous multicone with quotient torus $G$,
possibly equipped with an additional finite deck group.  Let $B$ and $M$
be line bundles on $Y$ corresponding to characters $\chi$ and $\delta$ of
$G$.  Suppose that a section $\sigma\in H^0(Y,B)$ is represented by a
regular deck-invariant semi-invariant function $\widehat\sigma$ of
character $\chi$, and that a twisted field
$V\in H^0(Y,T_Y\otimes M)$ has a deck-equivariant homogeneous lift
$\widehat V$ of character $\delta$.  Then
\begin{equation}\label{eq:Siu-homogeneous-derivative}
 D_V\sigma:=\operatorname{desc}
 \bigl(\widehat V(\widehat\sigma)\bigr)
 \in H^0(Y,B\otimes M).
\end{equation}
If $\widehat V'$ is another homogeneous lift of the same twisted field,
then
\begin{equation}\label{eq:Siu-lift-independence}
 \widehat V'(\widehat\sigma)-\widehat V(\widehat\sigma)
 =\widehat h\,\widehat\sigma
\end{equation}
for a semi-invariant function $\widehat h$ of character $\delta$.
Therefore the restriction of $D_V\sigma$ to $(\sigma=0)$, and in
particular whether the derivative cuts a given component of this divisor,
is independent of the chosen homogeneous lift.
\end{lemma}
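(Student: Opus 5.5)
The plan is to work entirely on the multicone $\widehat Y$ and reduce every assertion to two facts: semi-invariance under $G$, and equivariance under the finite deck group. First I check that $\widehat V(\widehat\sigma)$ is again semi-invariant. For $g\in G$, the function $\widehat\sigma$ satisfies $g^*\widehat\sigma=\chi(g)\widehat\sigma$. The statement that $\widehat V$ has character $\delta$ means $g_*\widehat V=\delta(g)^{-1}\widehat V$, or equivalently $\widehat V(g^*\phi)=\delta(g)^{-1}g^*(\widehat V\phi)$, with the sign convention fixed so that characters multiply. Applying this to $\phi=\widehat\sigma$ gives $g^*(\widehat V\widehat\sigma)=\chi(g)\delta(g)\,\widehat V\widehat\sigma$. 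Deck-equivariance of $\widehat V$ together with deck-invariance of $\widehat\sigma$ shows that $\widehat V\widehat\sigma$ is deck-invariant. Finally, $\widehat V\widehat\sigma$ is regular because both $\widehat V$ and $\widehat\sigma$ are regular. By the dictionary recorded before Proposition~\ref{prop:mixed-O3-package}, a regular deck-invariant semi-invariant of character $\chi\delta$ is the same thing as a section of $B\otimes M$. This proves \eqref{eq:Siu-homogeneous-derivative}.

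For lift independence, I compare two homogeneous lifts $\widehat V,\widehat V'$ of the same twisted field $V$. Their difference $\widehat V'-\widehat V$ is a homogeneous derivation of character $\delta$ that projects to zero on $Y$, so it is vertical for the quotient map $\widehat Y\to Y$. Over the quotient, the vertical derivations are spanned by the Euler (gauge) fields $E_k$ generating $\mathrm{Lie}(G)$. I therefore write $\widehat V'-\widehat V=\sum_k\widehat h_kE_k$. Comparing characters shows that each $\widehat h_k$ has character $\delta$, and deck-equivariance can be imposed by averaging over the finite deck group. On a semi-invariant, Euler's identity gives $E_k\widehat\sigma=\chi_k\widehat\sigma$, where $\chi_k$ is the weight of $\chi$ along $E_k$. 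Hence the difference equals $\widehat h\,\widehat\sigma$ with $\widehat h=\sum_k\chi_k\widehat h_k$, which is \eqref{eq:Siu-lift-independence}. It follows that the two descended derivatives differ by a section of $B\otimes M$ vanishing on $(\sigma=0)$. So their restrictions to the zero scheme agree, and in particular whether the derivative vanishes identically on a given component is independent of the lift.

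The main obstacle is the claim that a vertical homogeneous derivation is a combination of Euler fields with regular coefficients. Where $G$ acts freely, the quotient is a principal bundle and this is immediate. In the present application it is not, because the multicone keeps nonzero frames and the root charts, so the action has finite stabilizers there. My first approach would be to verify the claim on each chart used in Phase~IV, where the quotient is locally a product with $G$, possibly divided by a finite group, and then glue. If regularity of the $\widehat h_k$ were to fail on a small locus, I would fall back on a generic argument. The identity \eqref{eq:Siu-lift-independence} holds on a dense open set, and both sides are regular functions. The quotient $(\widehat V'-\widehat V)\widehat\sigma/\widehat\sigma$ is then regular off $(\widehat\sigma=0)$. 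Along $(\widehat\sigma=0)$, the equality there of the vertical derivation with $\sum_k\widehat h_kE_k$ shows that it is divisible by $\widehat\sigma$ in codimension one. By normality it is then regular, and it has character $\delta$.
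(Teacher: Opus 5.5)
Your proposal is correct and is essentially the paper's proof: semi-invariance of character $\chi\delta$ together with deck invariance gives the descent to $B\otimes M$, and two lifts differ by a vertical field $\sum_j\widehat h_jE_j$ with character-$\delta$ coefficients, so Euler's identity $E_j\widehat\sigma=\chi_j\widehat\sigma$ produces $\widehat h\,\widehat\sigma$. Your concern about finite stabilizers is settled as in the paper: a finite group has no infinitesimal directions, so the Euler fields remain pointwise independent on each homogeneous frame chart and the coefficients $\widehat h_j$ are automatically regular, which makes your normality fallback unnecessary.
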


\begin{proof}
By homogeneity, $\widehat V$ sends a function of character $\chi$ to one
of character $\chi\delta$.  Deck equivariance preserves deck invariance.
Thus $\widehat V(\widehat\sigma)$ descends to the line bundle
$B\otimes M$, proving \eqref{eq:Siu-homogeneous-derivative}.

Two lifts of the same quotient field differ vertically.  On a homogeneous
frame chart their difference is
\[
 \widehat V'-\widehat V=\sum_j\widehat h_jE_j,
\]
where the $E_j$ are the Euler fields of the torus factors and the
$\widehat h_j$ have character $\delta$.  If $\chi_j$ is the integer weight
of $\widehat\sigma$ for the $j$-th factor, then
$E_j(\widehat\sigma)=\chi_j\widehat\sigma$.  Thus
\[
 (\widehat V'-\widehat V)(\widehat\sigma)
 =\left(\sum_j\chi_j\widehat h_j\right)\widehat\sigma,
\]
which is \eqref{eq:Siu-lift-independence}.  This expression is compatible
on overlaps because both sides are semi-invariants of character
$\chi\delta$.  The finite deck group has no infinitesimal vertical
direction, and equivariance has already been imposed.  Reduction modulo
$(\widehat\sigma)$ proves the last assertion.
\end{proof}

\begin{lemma}[Generic equation and preservation of the prime ideal]
\label{lem:O3-generic-prime}
Let $\mathcal Z\subset\mathcal X_2$ be the relative integral horizontal
divisor supplied by Proposition~\ref{prop:factor-family}, and assume
$b_2>0$.  On the dense incidence chart used in
\eqref{eq:O3-invariant-ring}, after a characteristic-zero algebraic field
extension $L/K_0$, the affine cone over the generic fiber of $\mathcal Z$
is defined by an irreducible weighted-homogeneous polynomial
\[
 P\in L[u,v,W]
\]
of positive weight.  Let $\mathcal V$ be a global twisted field
from Proposition~\ref{prop:mixed-O3-package}.  After rationally
trivializing its common twist, let $\widetilde V$ be the induced derivation.
Let
\[
 \Omega_{\mathcal V}=D_{\mathcal V}\omega_1
\]
be the homogeneous derivative of the full relative equation given by
Lemma~\ref{lem:O3-Siu-differentiation} and the cone lift in
Proposition~\ref{prop:mixed-O3-package}.
Then
\begin{equation}\label{eq:normal-prime-equivalence}
 \Omega_{\mathcal V}|_{\mathcal Z}=0
 \text{ at the generic point}
 \quad\Longleftrightarrow\quad
 \widetilde V(P)\in(P).
\end{equation}
Therefore, if every homogeneous derivative vanishes generically on
$\mathcal Z$, then each of
$D_H,D_E,D_F$ preserves the principal prime ideal $(P)$.
\end{lemma}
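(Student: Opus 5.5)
We will prove the lemma in three stages: produce $P$, compute the restriction of the derivative, and then pass from preservation of $(P)$ by single fields to preservation by $D_H,D_E,D_F$.

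\emph{Step 1: the polynomial $P$.} On the dense incidence chart over $\mathcal B^\circ$, the invariant fiber ring is $K_0[u,v,W]$ by \eqref{eq:O3-invariant-ring}. The relative section $\mathcal s$ of Proposition~\ref{prop:factor-family} is a semi-invariant of a fixed character. On the homogeneous cone it therefore restricts to a weighted-homogeneous element of $K_0'[u,v,W]$, where $K_0'$ is the finite root extension of $K_0$. Its weight is $m=b_1+b_2>0$, or the weight after removing the $\Gamma_2$-factor; in either case it is positive, because $b_2>0$ forces genuine dependence on $W$ or on $(u,v)$. Since the fibers $Z_a$ are geometrically integral, the generic fiber of $\mathcal Z$ is geometrically integral. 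Hence, after a finite extension $L/K_0$ large enough to contain the roots and to split the fiber product of the cyclic covers, its cone is cut out by a single irreducible weighted-homogeneous $P$. The horizontality of $\mathcal Z$ guarantees that it is not a pullback from the base, so $P$ is nonconstant. The equation $\widehat{\mathcal s}$ then equals $P$ up to a unit of $L$. The full equation satisfies $\widehat\omega_1=\widehat\gamma^{b_1}\widehat{\mathcal s}$, and $\widehat\gamma$ is a unit on this chart because the chart lies off $\Gamma_2$ (the $W$-coordinate chart with $u\neq0$).

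\emph{Step 2: restriction of the derivative.} By Lemma~\ref{lem:O3-Siu-differentiation}, the restriction $\Omega_{\mathcal V}|_{\mathcal Z}$ is independent of the chosen homogeneous lift, so the cone lift of Proposition~\ref{prop:mixed-O3-package} may be used. The Leibniz rule gives
\[
 \widehat{\mathcal V}(\widehat\gamma^{b_1}\widehat{\mathcal s})
 =b_1\widehat\gamma^{b_1-1}\widehat{\mathcal V}(\widehat\gamma)\widehat{\mathcal s}
 +\widehat\gamma^{b_1}\widehat{\mathcal V}(\widehat{\mathcal s}).
\]
The first term lies in $(\widehat{\mathcal s})$, and $\widehat\gamma$ is a unit at the generic point of $\mathcal Z$. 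Therefore $\Omega_{\mathcal V}|_{\mathcal Z}=0$ generically if and only if $\widehat{\mathcal V}(\widehat{\mathcal s})\in(\widehat{\mathcal s})$ in the local ring at the generic point of $\mathcal Z$. Rationally trivializing the common twist changes $\widehat{\mathcal V}$ by a unit factor and by Euler fields, and both act on $\widehat{\mathcal s}$ by multiples of $\widehat{\mathcal s}$. This yields the induced derivation $\widetilde V$ on the generic incidence ring. That derivation preserves the incidence ideal by \eqref{eq:O3-tangent0}--\eqref{eq:O3-tangent2}, and it commutes with the reparametrization action by the equivariance shown in Phase~III. It therefore descends to a derivation of $L[u,v,W]$ extended to $L$. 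Membership in the localized ideal then transfers to $(P)$: $P$ is prime, and a polynomial lies in $(P)$ exactly when it vanishes at the generic point of $\{P=0\}$. This proves \eqref{eq:normal-prime-equivalence}.

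\emph{Step 3: the residual derivations.} Suppose every $\Omega_{\mathcal V}$ vanishes generically on $\mathcal Z$. By \eqref{eq:normal-prime-equivalence}, each $\widetilde V$ preserves $(P)$, and so does every $K_0$-linear (and hence $L$-linear) combination of them, since $(P)$ is an ideal. Proposition~\ref{prop:mixed-O3-package} states that $D_H,D_E,D_F$ lie in this generic span, so they preserve $(P)$.

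I expect the main obstacle to be Step~2. The derivation $\widetilde V$ acts on the full incidence ring, including $w_i,\chi_i,\psi_i$ and the coefficient variables, and not merely on $L[u,v,W]$. To conclude, one must check that at the generic point the action on the parameter and base coordinates is absorbed into $L$. One way is to regard $L$ as a differential field under $\widetilde V$ and note that $P$ has coefficients in $L$, so that $\widetilde V(P)$ involves derivatives of those coefficients. I would handle this by working in the incidence function field and using that the residual combination has no base motion, since $\delta z=0$ in \eqref{eq:O3-mixed-jet-variation}. The parameter motion, meanwhile, is exactly compensated by the Taylor cancellation, which gives $Q_i=0$, so the residual field kills $K_0$ and hence its algebraic extension $L$. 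With this in place, $D_T$ acts as \eqref{eq:O3-residual} with constant coefficients.
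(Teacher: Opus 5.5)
Your proposal is correct and follows the paper's proof: you place the generic point of $\mathcal Z$ in the invariant chart and take $P$ as the weighted-homogeneous generator of the corresponding prime in $L[u,v,W]$; since $\widehat\gamma$ is a unit there, Leibniz reduces $\widehat{\mathcal V}(\widehat\gamma^{b_1}\widehat{\mathcal s})$ to a unit times $\widetilde V(P)$ modulo $(P)$, and you pass to $D_H,D_E,D_F$ because the derivations preserving $(P)$ are closed under linear combinations over the generic field (your check that the residual combination has no base or coefficient motion, and so acts $L$-linearly, is one the paper leaves implicit). One slip: the irreducibility of $P$ should come, as in the paper, from the integrality of $\mathcal Z$ itself, whose generic point gives a height-one prime of the UFD; the geometric integrality of the fibers $Z_a$ does not supply it, because those are fibers over the parameter space $\mathcal U$ and do not control whether the fiber over the generic point of $\PP^2\times\mathcal U$ stays irreducible after enlarging $K_0$ to contain the roots.
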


\begin{proof}
The divisor $\mathcal Z$ is integral.  If it were contained in the
complement of the chart $f_1f_2\neq0$ or in the Semple boundary, it would
equal one of those irreducible boundary divisors.  This is impossible by
classes: a base boundary has positive class $d_ih$, the second Semple
boundary has class $-u_1+u_2$, while
$[\mathcal Z]=b_1u_1+b_2u_2-th$ with $t>0$ and
$b_1\geqslant2b_2>0$.  Thus the displayed chart meets the generic point of
$\mathcal Z$.  More generally, $b_2>0$ implies that $\mathcal Z$ dominates
$X_1$ and thus $\PP^2$; it therefore cannot be contained in the pullback of
the determinant divisor $g_1\wedge g_2=0$, an affine transition divisor, or
a chosen Semple-chart boundary.  The coefficient-chart linear forms are
units at the generic parameter after the corresponding affine parameter
chart is chosen.  This accounts for every denominator listed in
\eqref{eq:O3-denominator-open}.
The invariant affine two-jet cone over the generic incidence point has
coordinate ring $L[u,v,W]$ by \eqref{eq:O3-invariant-ring}.  The pullback
of the generic point of $\mathcal Z$ is a height-one prime in this unique
factorization domain.  It is therefore generated by an irreducible
polynomial $P$.  The prime is stable under the weighted $\mathbb G_m$
action.  Since the units of $L[u,v,W]$ are the elements of $L^*$, its
generator is a semi-invariant for this action and may be chosen
weighted-homogeneous.  Its weight is positive because all three variables
have positive weight, while a nonzero weight-zero element of
$L[u,v,W]$ is a unit and cannot generate a height-one prime.
Every derivation of $K_0$ extends uniquely through the algebraic extension
$L/K_0$ in characteristic zero, so all the residual fields act on this
same ring.

At the generic point, choose rational homogeneous frames of $\mathcal L$,
the vertical line, and the common twisting line.  Up to a unit $e$, the
homogeneous representative of the saturated equation is $eP$, and thus
\[
 \widehat\omega_1=\widehat\gamma^{\,b_1}eP.
\]
The generic point of $\mathcal Z$ does not lie on $\Gamma_2$, so
$\widehat\gamma$ is a unit there.  The Leibniz rule gives
\begin{equation}\label{eq:Siu-reduction-mod-P}
 \widehat{\mathcal V}(\widehat\omega_1)
 \equiv
 \widehat\gamma^{\,b_1}e\,\widetilde V(P)\pmod{(P)}.
\end{equation}
This proves \eqref{eq:normal-prime-equivalence}.  A change of homogeneous
lift alters the left-hand side by a multiple of $\widehat\omega_1$ by
Lemma~\ref{lem:O3-Siu-differentiation}, so the reduction is independent of
that choice as well as of the rational frames.  The set of derivations
preserving a fixed ideal is a vector space over the generic incidence field.
Proposition~\ref{prop:mixed-O3-package} says that the generic span of the
global twisted fields contains $D_H,D_E,D_F$, which proves the last assertion.
\end{proof}

\begin{proposition}[$\OO(3)$ slanted derivative]
\label{prop:globalized-O3}
After shrinking the parameter open in Proposition~\ref{prop:factor-family},
write the irreducible reduced horizontal divisor of $\omega_{1,a}$ as
\[
 Z_a\sim b_1u_1+b_2u_2-th,\qquad
 b_1\geqslant2b_2>0,\qquad m=b_1+b_2.
\]
After a further nonempty Zariski-open shrinking, for every remaining $a$
there is
\[
 \omega_{2,a}\in H^0\!\left(\PP^2,
 E_{2,m}T_{\PP^2}^*(\log D_a)\otimes\OO_{\PP^2}(-t+3)\right)
\]
such that the common zero set of $\omega_{1,a}$ and $\omega_{2,a}$ has
codimension at least two away from $\Gamma_{2,a}$.
\end{proposition}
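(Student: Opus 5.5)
We would argue by combining the generic rigidity of Phase~V with a spreading-out argument over the parameter space. Fix the relative data of Proposition~\ref{prop:factor-family}: the integers $(m,t,b_1,b_2)$ with $b_2>0$, the relative integral divisor $\mathcal Z=(\mathcal s=0)$, and the relative equation $\omega_1=\gamma^{b_1}\mathcal s$. Let $\mathcal V$ be the finite-dimensional space of regular global twisted fields from Proposition~\ref{prop:mixed-O3-package}, with common twist \eqref{eq:O3-common-twist}. For each $\mathcal V_k$ in a fixed basis, Lemma~\ref{lem:O3-Siu-differentiation} gives the homogeneous derivative
\[
 \Omega_k=D_{\mathcal V_k}\omega_1\in H^0\!\left(\mathcal X_2,\OO_{\mathcal X_2}(m)\otimes\Pi^*\mathrm{pr}_1^*\OO_{\PP^2}(-t+3)\otimes\OO_{\PP(V_{d_1})}(1)\boxtimes\OO_{\PP(V_{d_2})}(1)\right).
\]
Its restriction to $\mathcal Z$ does not depend on the choice of lift.

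The first step is to show that some $\Omega_k$ does not vanish identically on $\mathcal Z$. Suppose, to the contrary, that every $\Omega_k$ vanishes at the generic point of $\mathcal Z$. Then Lemma~\ref{lem:O3-generic-prime} shows that $D_H,D_E,D_F$ preserve the principal prime ideal $(P)$, where $P\in L[u,v,W]$ is irreducible, weighted homogeneous, and of positive weight. Proposition~\ref{prop:O3-rigidity} forbids this. We therefore fix $k$ with $\Omega_k|_{\mathcal Z}\not\equiv0$. This uses $b_2>0$ in an essential way, since Lemma~\ref{lem:O3-generic-prime} needs $\mathcal Z$ to dominate $X_1$ so that its generic point lies in the incidence chart.

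The second step is to descend to individual parameters. Locally on the parameter base, trivialize the line $\OO(1)\boxtimes\OO(1)$ by a nonvanishing product $\ell_1\ell_2$, or more intrinsically evaluate at a point of the homogeneous coefficient cone. This turns $\Omega_k$ into an algebraic family $\omega_{2,a}\in H^0(X_{2,a},\OO_{X_{2,a}}(m)\otimes\pi_{2,0}^*\OO(-t+3))$. By \eqref{eq:direct-image}, this family is the same as a family of invariant jet differentials in $E_{2,m}T^*_{\PP^2}(\log D_a)\otimes\OO(-t+3)$. Because $\mathcal Z$ is flat over $\mathcal U$ with geometrically integral fibers, the locus of $a$ where $\omega_{2,a}|_{Z_a}\equiv0$ is closed: it is where a section of the coherent sheaf $p_*(\mathcal L'|_{\mathcal Z})$ vanishes, after generic base change. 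This locus misses the generic point, so we remove it and obtain a nonempty Zariski-open subset. On that subset $Z_a$ is irreducible and $\omega_{2,a}$ does not vanish identically on it. Hence $Z_a\cap(\omega_{2,a}=0)$ has codimension two in $X_{2,a}$. Outside $\Gamma_{2,a}$, the zero set of $\omega_{1,a}$ is exactly $Z_a$, which gives the claim about the common zero set.

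We expect the main obstacle to be this descent step rather than the rigidity argument. Two things must be checked. First, the parameter twist has to be trivialized compatibly with the slanted field, which has components in the parameter directions, so $\Omega_k$ is not simply a fiberwise derivative; we would rely on the fact that Siu's construction produces a global section on the total space, and only restrict afterwards. Second, the generic nonvanishing on $\mathcal Z$ has to yield fiberwise nonvanishing on an open set; here the geometric integrality of the fibers in Proposition~\ref{prop:factor-family} is what prevents $\omega_{2,a}$ from vanishing on one component of a reducible $Z_a$.
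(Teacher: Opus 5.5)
Your proposal is correct and follows essentially the same route as the paper. The contradiction with Proposition~\ref{prop:O3-rigidity} via Lemma~\ref{lem:O3-generic-prime} yields a global twisted field whose homogeneous derivative $D_{\mathcal V}\omega_1$ is nonzero at the generic point of $\mathcal Z$; then cohomology and base change for $\mathcal Z\to\mathcal U$, one more Zariski-open shrinking, trivialization of the $\OO(1)\boxtimes\OO(1)$ parameter factor at each $a$, and the direct-image formula \eqref{eq:direct-image} give $\omega_{2,a}$ with $\omega_{2,a}|_{Z_a}\not\equiv0$ (only cosmetically, $Z_a\cap(\omega_{2,a}=0)$ has codimension \emph{at least} two, which is all that is claimed).
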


\begin{proof}
The open-set bookkeeping has three steps: the factor family and the global
field package are defined on the first open; the curvature form
$\Lambda$ is nonzero on the second; and the last shrinking below makes one
homogeneous derivative nonzero along $Z_a$ on every remaining fiber.
Each condition defines
a nonempty Zariski-open subset, so their finite intersection is nonempty.

Use the relative notation of Proposition~\ref{prop:factor-family}.  On the
parameter open under consideration set
\[
 \mathcal B=
 \OO_{\mathcal X_2}(m)\otimes
 \Pi^*\operatorname{pr}_1^*\OO_{\PP^2}(-t),
 \qquad \omega_1=\gamma^{b_1}\mathcal s\in H^0(\mathcal X_2,\mathcal B),
\]
and
\[
 \mathcal A=
 \OO_{\mathcal S}(1,1)|_{\mathcal U},
 \qquad
 \mathcal M=\Pi^*\operatorname{pr}_1^*\OO_{\PP^2}(3)
 \otimes p_{\mathcal U}^*\mathcal A.
\]
Every global twisted field $\mathcal V$ in
Proposition~\ref{prop:mixed-O3-package} is a section of
$T_{\mathcal X_2}\otimes\mathcal M$ and comes with the homogeneous lift
required by Lemma~\ref{lem:O3-Siu-differentiation}.  It may have a component
in the parameter directions because both $\omega_1$ and $\mathcal V$ live
on the total relative tower.  Direct homogeneous differentiation gives
\begin{equation}\label{eq:relative-Siu-derivative}
 \Omega_{\mathcal V}:=D_{\mathcal V}\omega_1
 \in H^0(\mathcal X_2,\mathcal B\otimes\mathcal M).
\end{equation}
If all these sections vanished along the generic point of $\mathcal Z$, then
Lemma~\ref{lem:O3-generic-prime} would give a positive-weight irreducible
$P\in L[u,v,W]$ whose ideal is preserved by $D_H,D_E,D_F$.  This
contradicts Proposition~\ref{prop:O3-rigidity}.  Thus some global twisted field
$\mathcal V$ satisfies
\begin{equation}\label{eq:relative-nonzero-Siu}
 \Omega_{\mathcal V}|_{\mathcal Z}\neq0
 \quad\text{at the generic point}.
\end{equation}

Put $q=p_{\mathcal U}|_{\mathcal Z}$.  After shrinking $\mathcal U$,
cohomology and base change for the proper map $q$ identifies the fiber of
\[
 q_*\bigl((\mathcal B\otimes\mathcal M)|_{\mathcal Z}\bigr)
\]
with the corresponding space of sections on $Z_a$.  The section in
\eqref{eq:relative-nonzero-Siu} is nonzero on the generic fiber, so the
locus on which its restriction is the zero section is a proper closed
subset.  Shrinking once more, its restriction to every remaining fiber is
nonzero.  Trivializing the one-dimensional factor $\mathcal A|_a$ by a
nonzero scalar, the fiber restriction of
\eqref{eq:relative-Siu-derivative} becomes
\begin{equation}\label{eq:fiber-Siu-derivative}
 0\neq\Omega_{2,a}\in
 H^0\!\left(X_{2,a},
 \OO_{X_{2,a}}(m)\otimes
 \pi_{2,0}^*\OO_{\PP^2}(-t+3)\right),
 \qquad
 \Omega_{2,a}|_{Z_a}\neq0.
\end{equation}
The direct-image formula \eqref{eq:direct-image} identifies
$\Omega_{2,a}$ with the required invariant jet differential
$\omega_{2,a}$.  Outside $\Gamma_{2,a}$ the zero divisor of
$\omega_{1,a}=\gamma_a^{b_1}s_a$ has the single prime component $Z_a$.
The last inequality in \eqref{eq:fiber-Siu-derivative} says exactly that
$Z_a$ is not a component of the zero divisor of $\omega_{2,a}$.  Thus the
two equations have no common divisorial component away from
$\Gamma_{2,a}$, and their common zero set there has codimension at least
two.
\end{proof}

\begin{corollary}[$\OO(3)$ slanted-field alternative]
\label{cor:O3-slanted-alternative}
Let $f:\CC\to\PP^2$ be algebraically nondegenerate and suppose
$t>3$ and $f^*\omega_{1,a}\equiv0$.  Then either
$f^*\omega_{2,a}\not\equiv0$ for the differential in
Proposition~\ref{prop:globalized-O3}, or the first logarithmic lift
$f_{[1]}$ is algebraically degenerate.
\end{corollary}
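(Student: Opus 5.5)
We argue by contraposition: assume $f$ is algebraically nondegenerate, $f^*\omega_{1,a}\equiv0$ and $f^*\omega_{2,a}\equiv0$, and show that $f_{[1]}$ is algebraically degenerate.

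\emph{Step 1: the second lift lies in $Z_a$.} By Lemma~\ref{lem:regular-lift-not-vertical}, the logarithmic lift $f_{[2]}$ is well defined and $f_{[2]}(\CC)\not\subset\Gamma_{2,a}$. Since $f^*\omega_{1,a}\equiv0$, the direct-image identification \eqref{eq:direct-image} shows that $f_{[2]}^*\Omega_{1,a}\equiv0$, where $\Omega_{1,a}=\gamma_a^{b_1}s_a$. Because $\gamma_a$ does not vanish identically on $f_{[2]}$, we get $f_{[2]}^*s_a\equiv0$. Thus
\[
 f_{[2]}(\CC)\subset Z_a .
\]

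\emph{Step 2: the second lift lies in a common zero set of codimension two.} The same reasoning applied to $\omega_{2,a}$ gives $f_{[2]}^*\Omega_{2,a}\equiv0$. Together with Step 1,
\[
 f_{[2]}(\CC)\subset Z_a\cap(\Omega_{2,a}=0).
\]
By Proposition~\ref{prop:globalized-O3}, $Z_a$ is not a component of the zero divisor of $\Omega_{2,a}$. Since $Z_a$ is irreducible, the intersection $Z_a\cap(\Omega_{2,a}=0)$ is a proper closed algebraic subset $W\subset Z_a$ of dimension at most two inside the fourfold $X_{2,a}$.

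\emph{Step 3: projection to $X_1$.} Since $\pi_{2,1}$ is proper, $\pi_{2,1}(W)$ is closed and has dimension at most two in the threefold $X_{1,a}$. It is therefore a proper algebraic subset. Moreover $f_{[1]}=\pi_{2,1}\circ f_{[2]}$, so $f_{[1]}(\CC)\subset\pi_{2,1}(W)$, and $f_{[1]}$ is algebraically degenerate. This is the required alternative.

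\emph{The hypothesis $t>3$.} It is used only so that $\omega_{2,a}$ carries the negative twist $-t+3<0$. This makes the alternative meaningful for the later Second Main Theorem via Theorem~\ref{thm:jetSMT}. The degeneracy argument itself does not use it.

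\emph{Main obstacle.} The delicate point is the passage in Steps 1 and 2 from vanishing of the jet differential on $f$ to vanishing of the section on $X_{2,a}$ along $f_{[2]}$. This passage must also hold across the points over $D$ and the critical points of $f$. I would handle it by checking on the dense open set where $f'\neq0$ and $f\notin D$, where the evaluation is the tautological pairing of Lemma~\ref{lem:restricted-tautological-evaluation}, and then concluding by the identity principle, as in the end of the proof of Theorem~\ref{thm:zero-locus}.
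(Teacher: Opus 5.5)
Your proof is correct and follows essentially the same route as the paper: the paper takes the Zariski closure $Y$ of $f_{[2]}(\CC)$, uses Lemma~\ref{lem:regular-lift-not-vertical} to place its dense part $Y\setminus\Gamma_2$ in the codimension-two common zero set of $\omega_{1,a}$ and $\omega_{2,a}$ given by Proposition~\ref{prop:globalized-O3}, and then projects to $X_1$. This is your Steps 1--3, with the saturation by $\gamma_a$ and the nonvanishing of $\Omega_{2,a}|_{Z_a}$ made explicit; your remark that $t>3$ plays no role in the degeneracy argument itself is also accurate.
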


\begin{proof}
Assume that both pullbacks vanish, and let
$Y=\overline{f_{[2]}(\CC)}^{\,\mathrm{Zar}}\subset X_2$.  By
Lemma~\ref{lem:regular-lift-not-vertical}, $Y$ is not contained in
$\Gamma_2$.  Thus $Y\setminus\Gamma_2$ is dense in $Y$ and lies in the
codimension-two common zero set of $\omega_{1,a}$ and $\omega_{2,a}$.
Since $\dim X_2=4$, it follows that $\dim Y\leqslant2$.  Therefore
$\pi_{2,1}(Y)$ is a proper algebraic subset of the threefold $X_1$ and
contains the image of $f_{[1]}$.
\end{proof}

\begin{remark}[Output of the $\OO(3)$ section]
On one nonempty parameter open, the selected factor $\omega_{1,a}$ has an
independent companion
\[
 \omega_{2,a}\in H^0\!\left(\PP^2,E_{2,m}T_{\PP^2}^*(\log D_a)
 \otimes\OO_{\PP^2}(-t+3)\right).
\]
Away from the vertical divisor their common zero set has codimension at
least two.  This is the only output from the six-phase construction used in
the Second Main Theorem argument.
\end{remark}

\section{Case analysis and proofs of the main results}\label{sec:mechanisms}
The argument now follows a fixed decision tree.  If the chosen factor does
not vanish on the lifted curve, its original twist gives the estimate.  If
it vanishes, the case $b_2=0$ is handled on $X_1$; for $b_2>0$, a
pole-three derivative treats $t>3$, while the key vanishing lemma and the
zero-locus theorem treat $t\leqslant3$.

The coefficient contributed by each analytic branch is summarized here;
the endpoint subsections below replace the coarse differentiated bound by
the sharper integral estimates when available.
\begin{center}
\begin{tabular}{lll}
\toprule
branch&coefficient&reason\\ \midrule
$f^*\omega_1\not\equiv0$&$m/t\leqslant\rho_0^{-1}$&initial slope\\
$t>3$, derivative nonzero&$m/(t-3)\leqslant4\rho_0^{-1}$&$t/(t-3)\leqslant4$\\
zero-locus evaluation nonzero&$3/\tau$&weight $3p$, twist $p\tau$\\
$f_{[1]}$ algebraically degenerate&$1/\kappa$&McQuillan\\
\bottomrule
\end{tabular}
\end{center}

For one of the six boundary pairs in Lemma~\ref{lem:keyvanishing}, let
$\mathcal U_{\mathrm{key}}=\mathcal U_{\mathrm{van}}(d_1,d_2)$; for every
other degree pair in our range, put
$\mathcal U_{\mathrm{key}}=\mathcal S_{d_1,d_2}^{\mathrm{snc}}$.
Let $\mathcal U_0$ be the intersection of $\mathcal U_{\mathrm{key}}$ with
the parameter open given by Proposition~\ref{prop:factor-family}.
It is nonempty because the parameter space is irreducible.  Work first over
$a\in\mathcal U_0$, and put
$D=D_a$ and $\omega_1=\omega_{1,a}$.  Let $f:\CC\to\PP^2$ be
algebraically nondegenerate and suppose $f(\CC)\not\subset D$.  If
$f^*\omega_1\not\equiv0$, Theorem~\ref{thm:jetSMT} and
$m/t\leqslant\rho_0^{-1}$ give the required estimate directly.  It remains to
treat the vanishing case
\begin{equation}\label{eq:fomega1zero}
   f^*\omega_1\equiv0.
\end{equation}
Write the selected horizontal component as
\[
 Z\sim b_1u_1+b_2u_2-th.
\]
If $b_2=0$, then $Z=\pi_{2,1}^{-1}(Z_1)$ for a proper divisor
$Z_1\subset X_1$.  By Lemma~\ref{lem:regular-lift-not-vertical}, the
regular lift is not contained in $\Gamma_2$.  Thus
\eqref{eq:fomega1zero} implies
\[
 f_{[1]}(\CC)\subset Z_1,
\]
so $f_{[1]}$ is algebraically degenerate and Theorem~\ref{thm:McQuillan}
applies.  This branch uses no slanted field.  Thus in the two cases below
we may assume $b_2>0$.  Proposition~\ref{prop:globalized-O3}, applied after
restricting the factor family to $\mathcal U_0$, supplies a nonempty open
subset $\mathcal U_1\subset\mathcal U_0$ on which the chosen differential
and its slanted derivative are defined simultaneously.  In the remainder
of this section we work over $a\in\mathcal U_1$.

\subsection{Case 1: \texorpdfstring{$t>3$}{t greater than 3}}
Corollary~\ref{cor:O3-slanted-alternative} supplies an independent
\[
   \omega_{2,a}\in H^0\!\left(\PP^2,E_{2,m}\bT^*\otimes\OO(-t+3)\right).
\]
If $f^*\omega_{2,a}\not\equiv0$, Theorem~\ref{thm:jetSMT} gives
\[
   T_f(r)\leqslant\frac{m}{t-3}N_f^{[1]}(r,D)+o(T_f(r))\ \|.
\]
Since $m/t\leqslant\rho_0^{-1}$ and $t\geqslant4$,
\begin{equation}\label{eq:case1-bound}
 \frac{m}{t-3}
 \leqslant\frac1{\rho_0}\frac{t}{t-3}
 \leqslant\frac4{\rho_0}.
\end{equation}
If $f^*\omega_{2,a}\equiv0$, Corollary~\ref{cor:O3-slanted-alternative}
makes $f_{[1]}$ algebraically degenerate, and
Theorem~\ref{thm:McQuillan} applies.

\subsection{Case 2: \texorpdfstring{$t\leqslant3$}{t at most 3}}
Since $b_2>0$ and $b_1\geqslant2b_2$, one has $m\geqslant3$.  If
$\rho_{\mathrm{DEG}}>1$, then automatically
$t/m\leqslant1<\rho_{\mathrm{DEG}}$.  Otherwise the degree pair is one of the
six boundary pairs, and Lemma~\ref{lem:keyvanishing} implies that the
existing nonzero $\omega_1$ cannot have
$t/m\geqslant\rho_{\mathrm{DEG}}$.  Thus in every case
\begin{equation}\label{eq:ratio-below-DEG}
 \rho=\frac tm<\rho_{\mathrm{DEG}}(d_1,d_2).
\end{equation}
Theorem~\ref{thm:zero-locus} applies.  For every rational
$0<\tau<\min\{3,\tau_1(t/m)\}$ it gives either
\[
   T_f(r)\leqslant\frac3\tau N_f^{[1]}(r,D)+o(T_f(r))\ \|
\]
or the algebraic degeneracy of $f_{[1]}$, followed by \eqref{eq:McQuillan-bound}.

\subsection{The effective constant}\label{subsec:explicit-constant}
The discrete gap below the zero-locus threshold is explicit.  Set
\begin{equation}\label{eq:rho-star-general}
 \rho_*=
 \max_{1\leqslant q\leqslant3}
 \frac{q}{\max\{3,\floor{q/\rho_{\mathrm{DEG}}}+1\}},
 \qquad
 \tau_*=\min\{3,\tau_1(\rho_*)\}.
\end{equation}
For integers $1\leqslant t\leqslant3$, $m\geqslant3$ satisfying
$t/m<\rho_{\mathrm{DEG}}$, one has $t/m\leqslant\rho_*$.  The function
$3/\tau_1(\rho)$ is increasing on the relevant interval.  Therefore
any integer strictly larger than
\begin{equation}\label{eq:effective-A-general}
 \max\left\{\rho_0^{-1},\frac4{\rho_0},\frac3{\tau_*},
 \frac1{\kappa}\right\}
\end{equation}
is an admissible $\mathcal A_{d_1,d_2}$ in
Theorem~\ref{thm:mainSMT}.  In the order displayed, the four entries account
for the original section, its pole-three derivative, the zero-locus
section, and McQuillan's branch.

\subsection{A sharper split for a conic and a quintic}
For $(d_1,d_2)=(2,5)$ one has
\[
 \kappa=4,\qquad \bar c_1^2=16,\qquad \bar c_2=21,\qquad
 \rho_{\mathrm{DEG}}=\frac{19}{48}.
\]
The extra case $(m,t)=(8,3)$ in $\mathfrak F_{45}(2,5)$ changes the useful
split from $3/8$ to $1/3$.  Put $\rho=t/m$ and assume $b_2>0$.

If $\rho\leqslant1/3$, then $\rho<19/48$, so
Theorem~\ref{thm:zero-locus} applies.  Since $3/\tau_1(\rho)$ is increasing
on this interval, it is enough to evaluate the endpoint.  Formula
\eqref{eq:tau1-general} gives
\begin{equation}\label{eq:conic-quintic-tau}
 \tau_1(1/3)=\frac{45-\sqrt{1989}}6,
 \qquad
 \frac3{\tau_1(1/3)}
 =\frac{45+\sqrt{1989}}2<45.
\end{equation}
The last inequality follows from $1989<45^2$.
For the actual ratio $\rho\leqslant1/3$, one may therefore choose a rational
number $1/15<\tau<\tau_1(\rho)$; the zero-locus coefficient $3/\tau$ is then
strictly smaller than $45$.

Suppose instead that $\rho>1/3$.  If $t\leqslant3$, then
$m<3t$ and thus $(m,t)\in\mathfrak F_{45}(2,5)$, contradicting the
nonzero choice of $\omega_1$ and Lemma~\ref{lem:keyvanishing}.  Thus
$t\geqslant4$, and the pole-three derivative is available.  Integrality
gives $m\leqslant3t-1$, so
\begin{equation}\label{eq:conic-quintic-O3-eleven}
 \frac{m}{t-3}
 \leqslant\frac{3t-1}{t-3}
 =3+\frac8{t-3}\leqslant11.
\end{equation}
If the differentiated equation vanishes as well, the first lift is
algebraically degenerate and McQuillan's estimate applies.

Finally,
\begin{equation}\label{eq:conic-quintic-RR}
 \rho_{\mathrm{RR}}(2,5)
 =\frac29\left(8-\sqrt{\frac{455}{8}}\right)>\frac1{10};
\end{equation}
indeed, after squaring, the last inequality is $22750<22801$.  Choose a
rational number
\[
 \frac1{10}<\rho_0<\rho_{\mathrm{RR}}(2,5).
\]
The direct branch then has coefficient $\rho_0^{-1}<10$,
\eqref{eq:conic-quintic-tau} bounds the zero-locus branch by a number
strictly smaller than $45$, and
\eqref{eq:conic-quintic-O3-eleven} bounds the differentiated branch by $11$.
McQuillan's coefficient is $1/4$.  Therefore the integer $45$ covers every
branch for a conic and a quintic.

\subsection{The explicit line--octic constant}
For $(d_1,d_2)=(1,8)$ one has $\kappa=6$ and
$\rho_{\mathrm{DEG}}=3/8$.  Put $\rho=t/m$ and assume $b_2>0$.

Suppose first that $\rho\leqslant1/3$.  Then
$\rho<\rho_{\mathrm{DEG}}$, so the zero-locus theorem applies.  Since
$3/\tau_1(\rho)$ is increasing on this interval, substitution of the
endpoint into \eqref{eq:tau1-general} gives
\begin{equation}\label{eq:line-octic-tau}
 \tau_1(1/3)=\frac{23-5\sqrt{21}}2,
 \qquad
 \frac3{\tau_1(1/3)}=\frac32\bigl(23+5\sqrt{21}\bigr)<69.
\end{equation}
For the actual ratio one may therefore choose a rational number
$3/69<\tau<\tau_1(\rho)$, and the zero-locus coefficient is strictly
smaller than $69$.

Suppose now that $\rho>1/3$.  If $t\leqslant3$, then $m<3t$.  For $t=1$
there is no genuine second-level weight $m\geqslant3$.  For $t=2$ the
possibilities are $m=3,4,5$, and for $t=3$ they are $m=3,\ldots,8$.
The cases with $m=3,4,5$ vanish already at twist $2$, thus also at twist
$3$ by multiplication with a nonzero linear form; the cases $m=6,7,8$
vanish at twist $3$.  These are exactly the line--octic assertions in
Lemma~\ref{lem:keyvanishing}.  Thus $t\geqslant4$.  Integrality gives
$m\leqslant3t-1$, and therefore
\begin{equation}\label{eq:line-octic-O3-eleven}
 \frac{m}{t-3}
 \leqslant\frac{3t-1}{t-3}
 =3+\frac8{t-3}\leqslant11.
\end{equation}

Finally,
\begin{equation}\label{eq:line-octic-RR}
 \rho_{\mathrm{RR}}(1,8)
 =\frac13\left(8-\sqrt{\frac{119}{2}}\right)>\frac2{23};
\end{equation}
indeed, after squaring, it reduces to $62951<63368$.  Choose a rational number
\[
 \frac2{23}<\rho_0<\rho_{\mathrm{RR}}(1,8).
\]
The direct branch has coefficient $\rho_0^{-1}<23/2$, the differentiated
branch is bounded by \eqref{eq:line-octic-O3-eleven}, the zero-locus branch
is bounded by \eqref{eq:line-octic-tau}, and McQuillan's coefficient is
$1/6$.  Thus the integer $69$ covers every branch for a line and an octic.

\subsection{A sharper ratio split for two cubics}
For $(d_1,d_2)=(3,3)$, the stronger part of
Lemma~\ref{lem:keyvanishing} improves the preceding general estimate.
Put $\rho=t/m$ and keep the assumption $b_2>0$.

Suppose first that $\rho\leqslant1/5$.  Since
$1/5<\rho_{\mathrm{DEG}}(3,3)=1/4$, Theorem~\ref{thm:zero-locus}
 applies, regardless of the absolute size of $t$.  The function
 $3/\tau_1(\rho)$ is increasing on $[0,1/4)$, and
\begin{equation}\label{eq:Astar-calculation}
 \tau_1(1/5)=\frac{57-\sqrt{3189}}{10},
 \qquad
 \frac3{\tau_1(1/5)}
 =\frac{57+\sqrt{3189}}2
 \approx56.73561582<57.
\end{equation}
Thus one can choose a rational number
 $3/57<\tau<\tau_1(\rho)$, and the zero-locus branch has coefficient
 strictly smaller than $57$.

Suppose next that $\rho>1/5$.  If $t\leqslant3$, integrality gives
$m\leqslant5t-1$, contradicting the cubic finite vanishing in
Lemma~\ref{lem:keyvanishing}.  Thus $t\geqslant4$, so the mixed
$\OO(3)$ derivative is available.  Since $m\leqslant5t-1$,
\begin{equation}\label{eq:cubic-O3-nineteen}
 \frac{m}{t-3}\leqslant\frac{5t-1}{t-3}\leqslant19.
\end{equation}
If the differentiated equation also vanishes on the lifted curve,
Corollary~\ref{cor:O3-slanted-alternative} makes the first lift algebraically
degenerate, and Theorem~\ref{thm:McQuillan} applies.
Therefore the two branches under $f^*\omega_1\equiv0$ and $b_2>0$ are
bounded by $57$; the direct and $b_2=0$ branches are included in the final
assembly.

\subsection{Comparison of the three endpoint constants}

The preceding calculations can be read from one table.  Every entry is the
coefficient of $N_f^{[1]}(r,D)$ in the corresponding branch.
\begin{center}
\small
\begin{tabular}{c c c c c c c}
\toprule
pair&ratio split&first&zero locus&derivative&McQuillan&final\\ \midrule
$(3,3)$&$1/5$&$<16$&$<57$&$\leqslant19$&$1/3$&$57$\\
$(2,5)$&$1/3$&$<10$&$<45$&$\leqslant11$&$1/4$&$45$\\
$(1,8)$&$1/3$&$<23/2$&$<69$&$\leqslant11$&$1/6$&$69$\\
\bottomrule
\end{tabular}
\end{center}
The last column is the smallest displayed integer that covers all four
branches.  The proof below only assembles these already proved estimates.

\subsection{Proofs of the main statements}\label{sec:main-proofs}

We now assemble the preceding alternatives.  No new numerical or geometric
input is needed at this stage.

\begin{proof}[Proof of Theorem~\ref{thm:mainSMT}]
The fixed pair $(M,T)$ of Proposition~\ref{prop:firstomega} works throughout
$\mathcal S_{d_1,d_2}^{\mathrm{snc}}$.  Intersect the parameter open of
Proposition~\ref{prop:factor-family} with $\mathcal U_{\mathrm{key}}$ and
call the resulting nonempty open set $\mathcal U_0$.  The integers
$b_1,b_2,m,t$ are fixed on this family.  If $b_2=0$, put
$\mathcal U_{\mathrm{SMT}}(d_1,d_2)=\mathcal U_0$.  If $b_2>0$, apply
Proposition~\ref{prop:globalized-O3} to the restricted family and let
$\mathcal U_{\mathrm{SMT}}(d_1,d_2)\subset\mathcal U_0$ be the nonempty open set it
provides.

Fix $a\in\mathcal U_{\mathrm{SMT}}(d_1,d_2)$ and choose $\omega_{1,a}$ as in
Lemma~\ref{lem:irreducible-reduction} and
Proposition~\ref{prop:factor-family}.  If
$f^*\omega_{1,a}\not\equiv0$, the jet Second Main Theorem gives the estimate
with constant at most $\rho_0^{-1}$.  Under \eqref{eq:fomega1zero}, the branch
$b_2=0$ makes $f_{[1]}$ algebraically degenerate and is handled directly by
Theorem~\ref{thm:McQuillan}.  Suppose therefore that $b_2>0$.  For a
degree pair other than $(3,3)$, Case~1 applies when $t>3$ and gives
\eqref{eq:case1-bound} or McQuillan's stronger estimate.  When $t\leqslant3$,
Case~2 and \eqref{eq:rho-star-general} give the zero-locus estimate or
McQuillan's estimate.  Thus any integer strictly larger than
\eqref{eq:effective-A-general} is admissible.

For $(d_1,d_2)=(1,8)$ choose $\rho_0$ as in
\eqref{eq:line-octic-RR}; the preceding line--octic calculation shows that
the integer $69$ is admissible.

For $(d_1,d_2)=(2,5)$ choose $\rho_0$ as in
\eqref{eq:conic-quintic-RR}; use the sharper $1/3$ dichotomy of the preceding
conic--quintic subsection in place of the coarser general split.  The direct,
zero-locus, differentiated, and algebraically degenerate branches are then
bounded respectively by $10$, $45$, $11$, and $1/4$, with strict inequality
where needed.  Thus the integer $45$ is admissible.

For $(d_1,d_2)=(3,3)$ choose
$1/16<\rho_0<\rho_{\mathrm{RR}}(3,3)$, so the direct branch has
coefficient less than $16$.  Under \eqref{eq:fomega1zero}, use instead the
sharper ratio dichotomy of the preceding subsection: equation
\eqref{eq:Astar-calculation} gives a coefficient less than $57$ when
$t/m\leqslant1/5$, and \eqref{eq:cubic-O3-nineteen} gives a coefficient at
most $19$ when $t/m>1/5$.  Thus the integer $57$ works for two cubics.
This proves \eqref{eq:mainSMT}.
\end{proof}

\begin{proof}[Proof of Theorem~\ref{thm:plane-pair-direction-locus}]
The proof rests on one elementary observation.  Let $q,s>0$ and let
\[
 \eta\in H^0\!\left(\PP^2,
 E_{2,q}T_{\PP^2}^*(\log D_a)\otimes\OO_{\PP^2}(-s)\right).
\]
For a curve $\Gamma\not\subset D_a$ with normalization $\nu$ and
$B_\nu=\operatorname{Supp}\nu^*D_a$, functoriality of logarithmic jets
identifies the evaluation of $\eta$ along the canonical second logarithmic
lift $\nu_{[2]}$ of $\nu$ with a section of
\[
 E_{2,q}T_{\widetilde\Gamma}^*(\log B_\nu)
 \otimes\nu^*\OO_{\PP^2}(-s)
 \simeq
 (K_{\widetilde\Gamma}+B_\nu)^{\otimes q}
 \otimes\nu^*\OO_{\PP^2}(-s).
\]
Its degree is
\[
 q\bigl(2g(\widetilde\Gamma)-2+\#B_\nu\bigr)-s\deg\Gamma<0
\]
under the hypothesis of the theorem.  The evaluation therefore vanishes
identically.  We apply this observation to the equations constructed above.

Fix the relative factor $\mathcal Z\subset\mathcal X_2$ and the integers
$(m,t,b_1,b_2)$ supplied by Proposition~\ref{prop:factor-family}, after
intersecting its parameter open with the open set used in the proof of
Theorem~\ref{thm:mainSMT}; write $\mathcal X_2\to\mathcal X_1$ for the last
projection of the relative Semple tower.  We construct a relative closed
set in the first Semple level in the same three cases as in that proof.

\smallskip\noindent
\emph{First case: $b_2=0$.}
The divisor $\mathcal Z$ is pulled back, after shrinking the
base, from a relative divisor $\mathcal R\subset\mathcal X_1$.  The preceding
vanishing applied to $\omega_{1,a}$ places the second lift in the zero
divisor of $\omega_{1,a}$.  Since a regular second lift is not contained in
$\Gamma_2$ by Lemma~\ref{lem:regular-lift-not-vertical}, it lies in
$\mathcal Z_a$; hence its first lift is contained in $\mathcal R_a$.

\smallskip\noindent
\emph{Second case: $b_2>0$ and $t>3$.}
Proposition~\ref{prop:globalized-O3}
provides the second relative equation $\omega_2$, whose base twist is
$-(t-3)<0$.  In $\mathcal X_2$, set
\[
 \mathcal Y:=\overline{
 \bigl((\omega_1=0)\cap(\omega_2=0)\bigr)\setminus\Gamma_2}.
\]
After shrinking the base, every fiber of $\mathcal Y$ has dimension at most
two.  Both equations vanish along $\nu_{[2]}$ by the degree observation.
Since $\nu_{[2]}(\widetilde\Gamma)$ is not contained in $\Gamma_2$, its dense
open part outside $\Gamma_2$, and therefore its closure, lies in
$\mathcal Y_a$.

\smallskip\noindent
\emph{Third case: $b_2>0$ and $t\leqslant3$.}
The argument of Case~2
above gives $t/m<\rho_{\mathrm{DEG}}$.  Choose a rational number
$0<\tau<\min\{3,\tau_1(t/m)\}$.  The proof of
Theorem~\ref{thm:zero-locus}, first on the generic fiber and then by
cohomology and base change after shrinking the parameter space, gives a
fixed sufficiently divisible integer $p$, with $p\tau\in\mathbb Z$, and a relative
section
\[
 \Sigma\in H^0\!\left(\mathcal Z,
 \left(\OO_{\mathcal X_2}(2p,p)\otimes
 \Pi^*\operatorname{pr}_1^*\OO_{\PP^2}(-p\tau)\right)|_{\mathcal Z}
 \right)
\]
whose restriction to every fiber is nonzero.  Put
$\mathcal Y=\operatorname{div}_{\mathcal Z}(\Sigma)$.  Its fibers have
dimension at most two.  The first equation vanishes along $\nu_{[2]}$, so
Lemma~\ref{lem:regular-lift-not-vertical} gives
$\nu_{[2]}(\widetilde\Gamma)\subset\mathcal Z_a$.  On this lift, the
tautological evaluation of
Lemma~\ref{lem:restricted-tautological-evaluation} is a
holomorphic section of
\[
 K_{\widetilde\Gamma}^{3p}(3pB_\nu)\otimes
 \nu^*\OO_{\PP^2}(-p\tau).
\]
Its degree is
\[
 3p\bigl(2g(\widetilde\Gamma)-2+\#B_\nu\bigr)
 -p\tau\deg\Gamma<0,
\]
so the evaluation vanishes.  The tautological derivative is nonzero at the
generic point of $\widetilde\Gamma$; consequently
$\nu_{[2]}(\widetilde\Gamma)\subset\mathcal Y_a$.

In the last two cases, let $\mathcal R$ be the scheme-theoretic image of
$\mathcal Y$ under the proper projection
$\mathcal X_2\to\mathcal X_1$.  It is closed and projective over the
parameter space.  After one final nonempty Zariski-open shrinking, upper
semicontinuity of fiber dimension gives $\dim\mathcal R_a\leqslant2$ for
every remaining $a$.  The preceding construction gives
$\nu_{[1]}(\widetilde\Gamma)\subset\mathcal R_a$ in all three cases, proving the
theorem.
\end{proof}

\begin{proof}[Proof of Corollary~\ref{cor:hyperbolicity}]
Fix
\[
   a\in\mathcal U_{\mathrm{SMT}}(d_1,d_2)
   \cap\mathcal S_{\mathrm{Chen}}(d_1,d_2)
\]
and let
\[
   f:\CC\longrightarrow\PP^2\setminus D_a.
\]
If $f$ is algebraically nondegenerate, then
$N_f^{[1]}(r,D_a)=0$, and Theorem~\ref{thm:mainSMT} gives
$T_f(r)\leqslant o(T_f(r))$, impossible for a nonconstant curve.  If $f$ is
algebraically degenerate, Corollary~\ref{cor:no-degenerate-curve} makes it
constant.  Thus the complement is Brody hyperbolic.

It remains to pass from Brody hyperbolicity to the stated metric
statement.  The one-dimensional boundary strata are
\[
 C_1\setminus C_2,\qquad C_2\setminus C_1.
\]
Since the curves meet transversally in $d_1d_2$ points, their logarithmic
Euler characteristics satisfy
\[
 2g(C_i)-2+d_1d_2>0\qquad(i=1,2)
\]
throughout the stated degree range.  Thus both punctured curves are Brody
hyperbolic, and the zero-dimensional stratum $C_1\cap C_2$ is finite.
We use the following form of Green's criterion: if $X$ is projective,
$D=\bigcup_{i=1}^rD_i$ is a simple normal crossing divisor, and both
$X\setminus D$ and every open stratum
\[
 \bigcap_{i\in I}D_i\setminus\bigcup_{j\notin I}D_j
 \qquad(\varnothing\neq I\subset\{1,\ldots,r\})
\]
are Brody hyperbolic, then $X\setminus D$ is hyperbolically embedded in
$X$; see \cite{Green1977} and the precise restatement
\cite[Theorem~4.1]{JavanpeykarLevin2022}.  The strata in the present case are
exactly the complement, the two punctured curves above, and the finite set
$C_1\cap C_2$.  The criterion therefore shows that the complement is
hyperbolically embedded in $\PP^2$; in particular it is Kobayashi hyperbolic.
The set
$\mathcal U_{\mathrm{SMT}}(d_1,d_2)
\cap\mathcal S_{\mathrm{Chen}}(d_1,d_2)$ is very general by
the discussion following Corollary~\ref{cor:hyperbolicity}.
\end{proof}

At this point the geometric and analytic proof is complete, subject only to
the finite Key Vanishing Lemma used above.  Section~\ref{sec:finite-vanishing}
reduces that lemma to integral matrices, and
Section~\ref{sec:rank-certificates} proves their exact ranks.  No new
geometric branch is introduced in those two sections.

\section{Finite key vanishing: reduction to integral matrices}
\label{sec:finite-vanishing}

We first state the exact finite result.  The threshold
$\rho_{\mathrm{DEG}}$ was computed in
\eqref{eq:rhoDEG-special}.  For the five boundary pairs other than
$(3,3)$, put
\[
\mathfrak F_{\mathrm{bd}}(d_1,d_2)
 =\left\{(m,t)\in\mathbb Z_{\geqslant3}\times\{1,2,3\}:\
 \frac tm\geqslant\rho_{\mathrm{DEG}}(d_1,d_2)\right\}.
\]
For a conic and a quintic, we also need the endpoint case $(8,3)$:
\[
\mathfrak F_{45}(2,5)
 =\left\{(m,t)\in\mathbb Z_{\geqslant3}\times\{1,2,3\}:\
 \frac tm>\frac13\right\}
 =\mathfrak F_{\mathrm{bd}}(2,5)\cup\{(8,3)\}.
\]
For two cubics, put
\[
\mathfrak F_{57}(3,3)
 =\left\{(m,t)\in\mathbb Z_{\geqslant3}\times\{1,2,3\}:\
 3\leqslant m\leqslant5t-1\right\}.
\]

\begin{lemma}[Finite key vanishing]\label{lem:keyvanishing}
For each pair
\[
 (d_1,d_2)\in\{(3,3),(3,4),(2,6),(2,5),(1,9),(1,8)\}
\]
there is a nonempty Zariski-open subset
$\mathcal U_{\mathrm{van}}(d_1,d_2)\subset
\mathcal S_{d_1,d_2}^{\mathrm{snc}}$ such that, for every parameter $a$ in
this subset,
\begin{equation}\label{eq:keyvanishing}
 H^0\!\left(\PP^2,
 E_{2,m}T_{\PP^2}^*(\log D_a)\otimes\OO_{\PP^2}(-t)\right)=0
\end{equation}
for every $(m,t)\in\mathfrak F_{57}(3,3)$ in the cubic--cubic case, for
every $(m,t)\in\mathfrak F_{45}(2,5)$ in the conic--quintic case, and for
every $(m,t)\in\mathfrak F_{\mathrm{bd}}(d_1,d_2)$ in the other four
cases.
\end{lemma}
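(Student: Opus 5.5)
We reduce the vanishing to finitely many linear-algebra statements at one explicit test pair, and then spread the result by semicontinuity.

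\emph{Step 1: semicontinuity reduces to one parameter per pair.} For each fixed $(d_1,d_2)$ the set $\mathfrak F$ is finite: $t\leqslant3$ and $t/m$ is bounded below by a positive constant, so $m\leqslant 3/\rho_{\mathrm{DEG}}$, respectively $m\leqslant 5t-1$ or $m<3t$. For each $(m,t)$ the sheaf $E_{2,m}T^*_{\PP^2}(\log\mathcal D)\otimes\OO(-t)$ on the universal family over $\mathcal S^{\mathrm{snc}}_{d_1,d_2}$ is locally free and flat over the base. Upper semicontinuity of $h^0$ then shows that the locus where \eqref{eq:keyvanishing} holds is Zariski open. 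It therefore suffices to exhibit, for each of the six pairs, one explicit SNC test pair $a_0$ (verified in Appendix~\ref{app:SNC-checks}) at which all finitely many groups vanish. We then take $\mathcal U_{\mathrm{van}}$ to be the finite intersection of these open sets, which is nonempty because each contains $a_0$.

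\emph{Step 2: turn $H^0$ into the kernel of an integral matrix.} Fix $a_0$ with integer coefficients. A global section restricts on the affine chart $Z_0\neq0$ to an expression \eqref{eq:local-twojet} written in the logarithmic frame. On the crossing-free parts we use frames $d\log f_1,d\log f_2$ together with ordinary coordinate differentials. On the double locus we use the transverse frame of \eqref{eq:O3-corner-first-solution}. The coefficients $R_{\alpha_1,\alpha_2,j}$ are polynomials, and their degrees are bounded through the twist $-t$ by the pole orders at the line at infinity, as in the transition computation \eqref{eq:O3-first-projective-jets}--\eqref{eq:O3-second-projective-jets} with $\OO(-t)$ homogenization. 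Regularity on the other two standard charts and logarithmic regularity along each $C_i$ (divisibility of the residue parts by $f_i$) become linear conditions on finitely many unknown coefficients. This produces a sparse integer matrix $M_{(m,t)}$ whose kernel is exactly the $H^0$.

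Instead of working with the filtered space, it is cleaner to bound it through the graded pieces \eqref{eq:filtration}. Since $H^0$ of a filtered bundle injects into $\bigoplus_j H^0(S^{m-3j}\bT^*\otimes\bK^j(-t))$ only up to extension data, we keep the full invariant expression and impose the coupled transition conditions directly. The cubic--cubic, conic--quintic and line--octic cases require the larger lists $\mathfrak F_{57}$ and $\mathfrak F_{45}$. In each case we reduce the work using the already-noted monotonicity: vanishing at $(m,t)$ follows from vanishing at $(m,t')$ for $t'<t$, by multiplication with a nonzero form of degree $t-t'$. So only the minimal twist for each $m$ must be checked.

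\emph{Step 3: block decomposition and exact rank.} We choose $a_0$ with symmetry (diagonal torus characters, and for $d_1=d_2$ the exchange of $C_1,C_2$) so that $M_{(m,t)}$ splits into character blocks. For each block we certify full column rank modulo a good prime $p$ by exhibiting a nonzero maximal minor via Gaussian elimination. A nonzero minor mod $p$ is nonzero over $\mathbb Z$, so the block has full column rank over $\mathbb Q$ and hence over $\CC$. This gives zero kernel and hence \eqref{eq:keyvanishing} at $a_0$.

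The main obstacle is Step 2: encoding the logarithmic two-jet transitions, including the coupled residue conditions along both components and at $C_1\cap C_2$, correctly and completely, so that the matrix kernel is exactly $H^0$ and not a strictly larger space. A superset would still suffice for vanishing, but a missed condition would invalidate the argument. I would validate the encoding first on a case where the answer is known, such as $t=0$ small-$m$ dimension counts matched against Riemann--Roch and Lemma~\ref{lem:numerical-Bogomolov}, before running the certified ranks. The size of the largest cubic matrices (for $m$ up to $14$) is the secondary computational obstacle, and the symmetry blocks address it.
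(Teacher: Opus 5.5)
Your outer architecture matches the paper's: one explicit SNC test pair per degree family plus upper semicontinuity, twist monotonicity, one integral matrix per minimal $(m,t)$, symmetry blocks, and a mod-$p$ maximal-minor certificate. The gap is in Step~2, which is where the actual reduction lives. You never produce a finite coefficient space that provably contains every global section, and the mechanism you indicate is unsound.

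First, on $\{Z\neq0\}$ the coefficients are not polynomials in the frames you name. In the ordinary frame $(x',y',W_{xy})$ they have poles along $C_1$ and $C_2$. What is needed is the layerwise estimate coming from $\lambda v''-\lambda'v'=s^{-1}W_{sv}+s^{-2}(s')^2v'$. It says the coefficient of $(s')^p(v')^{m-3k-p}W_{sv}^k$ has pole order at most $p+k\leqslant m-2k$ along each branch, which gives the denominators $a^{m-2k}b^{m-2k}$.

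Second, and more seriously, you bound numerator degrees by charging pole order two to each first jet and three to each Wronskian at infinity, then subtracting $t$. That is \emph{not} a necessary condition, because distinct ordinary basis terms cancel along $L=\{Z=0\}$. For the cubic test pair, $d\log(A/B)=\bigl(-3x^2(3y^3+1)\,dx+3y^2(3x^3+1)\,dy\bigr)/(ab)$ is a genuine section with $m=1$, $t=0$. Its numerators have degree $5$, while that count allows only degree $4$. An ansatz cut down this way can omit genuine sections, and then a zero kernel proves nothing.

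The paper's fix is to separate the ambient bound from the twist. Multiplication by $Z^t$ embeds the twisted bundle into $E_{2,m}T^*_{\PP^2}(\log(D+L))$. A descending triangular induction in the $(z',u')$ frame, using the refined bound $p+k$ along $L$, then gives the $t$-independent bound $\deg P_{i,j,k}\leqslant(d_{\mathrm{tot}}-1)m-(2d_{\mathrm{tot}}-1)k$. The twist and the removal of the extra pole along $L$ are imposed exactly, as $z^{n_k+t}a_X^{n_k}b_X^{n_k}\mid\mathcal N_{j,k}$ on the mixed chart $U_X=\{X\Delta_X\neq0\}$ with frame $(z',q,W_X)$. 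This step uses the $\Delta_X^{n_k}$ clearing bound and the coprimality of $z,a_X,b_X,\Delta_X$. It also uses extension across the finite complement of $U_0\cup U_X$, which contains $C_1\cap C_2$, so no frame at the crossings is needed.

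Your risk assessment is also inverted. A forgotten condition only enlarges the kernel and is harmless for a vanishing proof. What is fatal is an over-restrictive ansatz or a non-necessary equation, which is exactly what your proposed degree bound risks.
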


After keeping only the least twist for each $m$, the required cases are as
follows.
\begin{center}
\small
\begin{tabular}{cc>{\raggedright\arraybackslash}p{0.58\textwidth}}
\toprule
$(d_1,d_2)$&$\rho_{\mathrm{DEG}}$&minimal pairs $(m,t)$\\ \midrule
$(3,3)$&$1/4$&$3\leqslant m\leqslant4$ for $t=1$;
$5\leqslant m\leqslant9$ for $t=2$;
$10\leqslant m\leqslant14$ for $t=3$\\
$(3,4)$&$37/48$&$(3,3)$\\
$(2,6)$&$23/30$&$(3,3)$\\
$(2,5)$&$19/48$&$(3,2),(4,2),(5,2),(6,3),(7,3),(8,3)$\\
$(1,9)$&$61/84$&$(3,3),(4,3)$\\
$(1,8)$&$3/8$&$(3,2),(4,2),(5,2),(6,3),(7,3),(8,3)$\\
\bottomrule
\end{tabular}
\end{center}

The proof follows the chain
\[
 \begin{gathered}
 \text{local coefficients}
 \Longrightarrow
 \text{transition and divisibility equations}
 \Longrightarrow
 \text{integral matrices}\\
 \Longrightarrow
 \text{symmetry blocks}
 \Longrightarrow
 \text{exact ranks}.
 \end{gathered}
\]
This section proves the first four arrows and identifies global sections
with matrix kernels.  Section~\ref{sec:rank-certificates} proves that all
these kernels are zero.

The main symbols used in the reduction are listed here.
\begin{center}
\small
\begin{tabular}{p{0.20\textwidth}p{0.69\textwidth}}
\toprule
symbol&meaning\\ \midrule
$U_0$&the ordinary affine chart where candidate sections are written\\
$U_X$&the mixed logarithmic chart where regularity and twisting are tested\\
$a,b$&the two local equations on $U_0$\\
$q,\Delta_X$&the mixed logarithmic one-form and its frame determinant\\
$W_{xy},W_X$&the ordinary and mixed invariant Wronskians\\
$V_m$&the finite space of ordinary-chart coefficients\\
$\Phi_{m,t}$&the linear map formed by all transition and divisibility equations\\
\bottomrule
\end{tabular}
\end{center}

We now prove Lemma~\ref{lem:keyvanishing}.  It is enough to establish the
vanishing at one transverse pair, because upper semicontinuity will then
give the required nonempty parameter open.  Multiplication by a nonzero
homogeneous polynomial allows us to keep only the least twist for each $m$;
these minimal pairs are listed in the table above.  The guiding principle is
to generate candidate two-jet differentials on one
large ordinary chart, test their logarithmic regularity on one mixed
logarithmic chart, and impose the available symmetries before constructing
any matrix.
This chart-and-divisibility strategy starts from the one-component normal
form in \cite[Proposition~3.1 and Section~3]{HouHuynhMerkerXie2026}.  The
argument below is not a restatement of that calculation: two defining
polynomials produce two residue directions, their projective weights must be
balanced when $d_1\ne d_2$, and the overlap equations are coupled.
For $(3,3)$ we impose the $(\mu_3)^2$ action and coordinate exchange before
constructing the matrices.  The pairs $(2,e)$, $e=5,6$, have a different
cyclic action with coordinate exchange.  The line pairs $(1,e)$, $e=8,9$,
keep the cyclic action but not the involution, while $(3,4)$ is small enough
to run as one unsplit block.
The cubic organization is related to the symmetry method developed in
\cite{CuiHouLiuXie2026}.  That citation concerns the organizational idea of
splitting a finite calculation by symmetry; the two-component normal form,
divisibility criterion, matrices, and verification program used here are
derived independently in this section.  All operations are performed with
integral polynomials.  Every characteristic-zero conclusion is certified by
full column rank at an explicitly recorded good prime; a second prime is used
as an independent reproducibility check whenever practical.

The proof has four stages.  We first choose explicit SNC test pairs.  We then
derive a finite ambient normal form and translate regularity on the mixed
chart into divisibility equations.  Next we decompose these equations by the
available finite symmetries.  Finally, exact modular row reduction certifies
that every resulting block has zero kernel.

\paragraph{Mathematical reduction.}
For every test pair and every relevant $(m,t)$, the ordinary-chart normal
form defines a finite coefficient space $V_m$, and all forbidden overlap and
twisting remainders define a linear map
\[
 \Phi_{m,t}\colon V_m\longrightarrow W_{m,t}.
\]
Proposition~\ref{prop:finite-matrix-equivalence} proves the natural
identification
\[
 H^0\!\left(\PP^2,E_{2,m}T_{\PP^2}^*(\log D)
 \otimes\OO_{\PP^2}(-t)\right)\simeq\ker\Phi_{m,t}.
\]
The symmetry decompositions identify this kernel with the direct sum of the
kernels of the integral block matrices.  Thus the geometric vanishing and
the exact rank calculation are connected by a proved equivalence, not by a
numerical heuristic.

\subsection{Degree-uniform test pairs}
We use the following integral pairs; in each row, $A=0$ and $B=0$ are the two
components of the test divisor:
\begin{equation}\label{eq:all-test-pairs}
\begin{array}{c|c|c}
(d_1,d_2)&A&B\\ \hline
(3,3)&X^3+2Y^3+Z^3&2X^3+Y^3+Z^3\\
(3,4)&X^3+Y^3+Z^3&X^4+Y^4+Z^4\\
(2,e),\ e=5,6&XY+Z^2&X^e+Y^e+Z^e\\
(1,8)&Y&X^8+Y^8+Z^8+X^4Y^4\\
(1,9)&Y&X^9+Y^9+Z^9+X^3Y^3Z^3.
\end{array}
\end{equation}
Each row gives an SNC divisor and remains SNC after adjoining the line
$L=\{Z=0\}$.  The elementary gradient and transversality checks are recorded
in Appendix~\ref{app:SNC-checks}; no computational rank assertion is used in
those checks.

\subsection{A special symmetric pair of cubic curves}
We work with the following special transverse pair:
\begin{equation}\label{eq:symmetric-pair}
   A=X^3+2Y^3+Z^3,
   \qquad
   B=2X^3+Y^3+Z^3.
\end{equation}
Let
\[
   D=\{AB=0\}.
\]
Both cubics are smooth, and the two curves meet transversally.  In fact,
Appendix~\ref{app:SNC-checks} verifies the stronger statement that
$A+B+L$ is SNC.

The pair has two useful symmetries.  First, the involution
\begin{equation}\label{eq:sigma-involution}
   \sigma(X,Y,Z)=(Y,X,Z)
\end{equation}
satisfies
\[
   \sigma^*A=B,
   \qquad
   \sigma^*B=A.
\]
Second, for \(\alpha^3=\beta^3=1\), the diagonal transformation
\begin{equation}\label{eq:mu3-action}
   [X:Y:Z]\longmapsto[\alpha X:\beta Y:Z]
\end{equation}
fixes both \(A\) and \(B\).  Thus the finite-dimensional spaces appearing in the computation decompose into characters of \((\mu_3)^2\), and the involution \(\sigma\) exchanges the two character indices.

For the pairs $(2,e)$ in \eqref{eq:all-test-pairs}, the corresponding
symmetry is
\[
 [X:Y:Z]\longmapsto[\zeta X:\zeta^{-1}Y:Z],
 \qquad \zeta^e=1,
\]
together with the involution $X\leftrightarrow Y$.  The monomial
\[
 x^r y^s(x')^i(y')^jW_{xy}^k
\]
has character $r-s+i-j\pmod e$, because $W_{xy}$ has character zero.
Coordinate exchange sends the character $c$ to $-c$ and multiplies the
$k$-th Wronskian layer by $(-1)^k$.  Therefore the $e=5$ candidate
space splits into six blocks indexed by
$\{0\},\{1,4\},\{2,3\}$ and the two involution signs; for $e=6$ it
splits into eight blocks indexed by
$\{0\},\{1,5\},\{2,4\},\{3\}$ and the two signs.

For either line pair $(1,e)$ the same cyclic transformation
\[
 [X:Y:Z]\longmapsto[\zeta X:\zeta^{-1}Y:Z],
 \qquad \zeta^e=1,
\]
preserves both components.  There is no coordinate-exchange involution,
because the distinguished component is the line $Y=0$.  Since $a=y$ has
character $-1$ on the $Z$-chart and $b$ is invariant, a source monomial in
the $k$-th Wronskian layer has the shifted character
\begin{equation}\label{eq:line-character-weight}
 r-s+i-j+n_k\pmod e,
 \qquad n_k=m-2k.
\end{equation}
Thus the complete candidate space splits into exactly $e$ cyclic character
blocks.  The shift by $n_k$ is essential: it is contributed by the factor
$a^{-n_k}$ in the normal form.

\subsection{The ordinary and mixed logarithmic charts}
The ordinary chart below supports the finite ansatz and makes the finite
symmetries transparent.  The mixed chart is where we test logarithmic
regularity, the negative twist, and the overlaps between the two boundary
components.

Let
\begin{equation}\label{eq:U0-def}
   U_0=\{ZAB\neq0\}.
\end{equation}
On \(U_0\), put
\[
   x=\frac XZ,
   \qquad
   y=\frac YZ,
\]
and
\begin{equation}\label{eq:a0b0}
   a=x^3+2y^3+1,
   \qquad
   b=2x^3+y^3+1.
\end{equation}
The ordinary invariant two-jet frame on \(U_0\) is
\begin{equation}\label{eq:ordinary-frame-U0}
   x',\qquad y',\qquad W_{xy}:=x'y''-x''y'.
\end{equation}

The mixed logarithmic chart is taken in \(\{X\neq0\}\).  Put
\[
   u=\frac YX,
   \qquad
   z=\frac ZX.
\]
Then
\begin{equation}\label{eq:axbX}
   a_X=1+2u^3+z^3,
   \qquad
   b_X=2+u^3+z^3.
\end{equation}
Set
\begin{equation}\label{eq:qX-def}
   q=\left(\log\frac{a_X}{b_X}\right)'.
\end{equation}
A direct computation gives
\begin{equation}\label{eq:qX-explicit}
   q=
   \frac{3u^2(3+z^3)}{a_Xb_X}\,u'
   +
   \frac{3z^2(1-u^3)}{a_Xb_X}\,z'.
\end{equation}
The determinant of the frame \((z',q)\) is therefore
\begin{equation}\label{eq:DeltaX-def}
   \Delta_X:=a_{X,u}b_X-a_Xb_{X,u}=3u^2(3+z^3).
\end{equation}
We define
\begin{equation}\label{eq:UX-def}
   U_X=\{X\neq0,\ u(3+z^3)\neq0\}.
\end{equation}
On this chart, the logarithmic invariant two-jet frame is
\begin{equation}\label{eq:mixed-frame-UX}
   z',\qquad q,
   \qquad
   W_X:=qz''-q'z'.
\end{equation}
The two sets \(U_0\) and \(U_X\) cover the complement of a finite set.  Indeed,
\[
   \PP^2\setminus(U_0\cup U_X)
   \subset
   \{ZAB=0\}\cap\{XY(3X^3+Z^3)=0\},
\]
and the right-hand side is finite because none of the curves \(Z=0\), \(A=0\), or \(B=0\) is a component of \(XY(3X^3+Z^3)=0\).  Since the jet differential sheaves used here are locally free on the normal crossing locus, it is enough to test regularity on \(U_0\cup U_X\).

For the unequal-degree rows of \eqref{eq:all-test-pairs}, the same
construction uses the projectively invariant logarithmic form.  Put
\[
 g=\gcd(d_1,d_2),\qquad \alpha=\frac{d_2}{g},\qquad
 \beta=\frac{d_1}{g},
\]
and on the $X$-chart define
\begin{equation}\label{eq:q-general-degrees}
 q=\alpha\,d\log a_X-\beta\,d\log b_X,
 \qquad
 \Delta=\alpha a_{X,u}b_X-\beta a_Xb_{X,u},
 \qquad
 E=\alpha a_{X,z}b_X-\beta a_Xb_{X,z}.
\end{equation}
Then
\begin{equation}\label{eq:q-general-solve}
 q=\frac{\Delta u'+Ez'}{a_Xb_X},
 \qquad
 u'=\frac{a_Xb_X}{\Delta}q-\frac E\Delta z'.
\end{equation}
The frame generator may replace \(q\) by any nonzero rational scalar
multiple without changing the kernel problem over \(\mathbb Q\).  The
implementation uses this freedom to keep the mixed-chart numerators
integral.  For the diagonal nonlinear models it uses
\[
 q_{\rm cmp}=\frac1{d_1}d\log a_X-
             \frac1{d_2}d\log b_X
           =\frac{g}{d_1d_2}\,q.
\]
For the mixed conic model \(a_X=u+z^2\), of bidegree \((2,e)\), it instead
uses
\[
 q_{\rm cmp}=d\log a_X-\frac2e d\log b_X=\frac ge q,
 \qquad g=\gcd(2,e).
\]
Thus $q_{\rm cmp}=q/5$ for $e=5$, while $q_{\rm cmp}=q/3$ for $e=6$.
This is the normalization whose degree factors cancel in the corresponding
mixed numerators.  For the two line models the program uses the integral
form \(q\) displayed below.  Thus in every model the resulting
\(\Delta_{\rm cmp}\), \(E_{\rm cmp}\), and every matrix entry produced by
the program are integers.  Each rescaling is an invertible change of frame
over \(\mathbb Q\); it need not be invertible after
reduction modulo every prime.
Instead, the program first constructs its own integral matrix in this
normalized frame.  Full column rank of that integral matrix modulo one prime
implies full column rank over \(\mathbb Q\), and the rational change of frame
then identifies its kernel with the kernel of the invariant form used in
\eqref{eq:q-general-degrees}.  In particular, the proof-effective prime
\(5\) for the conic--quintic case is legitimate even though \(5\) divides
the denominator in the rational rescaling \(q_{\rm cmp}=q/5\).
The order is important: the change of frame is made over characteristic
zero, the normalized matrix is then reconstructed with integral entries, and
only this integral matrix is reduced modulo $5$.  No occurrence of the
rational scalar $1/5$ is reduced in $\mathbb F_5$.

Likewise, the program's cyclic blocks are defined directly by congruences of
integer monomial exponents and, where present, by the explicit exchange
involution.  Their construction does not invoke semisimplicity of a
finite-group representation over the proof field.  Semisimplicity is used
only in characteristic zero to show that these blocks exhaust the candidate
space.  Thus a proof prime may divide the order of the displayed symmetry
group without invalidating the integral block computation.
For the two line models these formulas are especially simple.  When $e=8$,
\begin{equation}\label{eq:line-octic-overlap}
 \begin{gathered}
  a_X=u,\qquad b_X=1+u^8+z^8+u^4,\qquad
  q=8\,d\log a_X-d\log b_X,\\
  \Delta=8+8z^8+4u^4,\qquad E=-8uz^7.
 \end{gathered}
\end{equation}
When $e=9$,
\begin{equation}\label{eq:line-nonic-overlap}
 \begin{gathered}
  a_X=u,\qquad b_X=1+u^9+z^9+u^3z^3,\qquad
  q=9\,d\log a_X-d\log b_X,\\
  \Delta=9+9z^9+6u^3z^3,\qquad
  E=-9uz^8-3u^4z^2.
 \end{gathered}
\end{equation}
In both cases $z,u,b_X,\Delta$ are pairwise coprime.  For $e=8$, put
$\delta_0=\Delta/4$.  The identity
\[
 2b_X-\delta_0=u^4(2u^4+1)
\]
reduces the only nonobvious gcd to $u$ and the factors of $2u^4+1$; the
former does not divide $b_X$, and no nonconstant polynomial in $u$ divides
the polynomial $b_X$, which is monic in $z$.  For $e=9$, a common factor of
$b_X$ and $\Delta=9b_X-u(b_X)_u$ would, after excluding $u$, divide
$3u^6+z^3$.  On each component $z=\lambda u^2$, $\lambda^3=-3$, the
restriction of $b_X$ is
\[
 1-2u^9-27u^{18},
\]
which is not identically zero.  This proves the required coprimality.

The coefficients $\alpha,\beta$ are exactly what make $q$ independent of
the projective trivialization.  For every test pair in
\eqref{eq:all-test-pairs}, none of $Z,A,B$ is a component of the homogeneous
polynomial $X\Delta$.  Thus
\[
 U_0=\{ZAB\neq0\},\qquad U_X=\{X\Delta\neq0\}
\]
again cover the complement of a finite set.  The general logarithmic frame
is $(z',q,W_X)$ with $W_X=qz''-q'z'$.  Formulas
\eqref{eq:qX-def}--\eqref{eq:DeltaX-def} are the specialization
$d_1=d_2=3$.

\subsection{An auxiliary logarithmic compactification and a finite ambient normal form}
The ordinary frame on $U_0$ is convenient for symmetry, but it is not adapted to the line at infinity.  In particular, the negative twist cannot be encoded by simply subtracting $t$ from the degree of each numerator in the ordinary frame: cancellations between different ordinary basis terms may remove the apparent poles at infinity.  We therefore separate the finite ambient bound from the twisting condition.

Let
\[
   L:=\{Z=0\},
   \qquad
   D^+:=D+L.
\]
For every test pair in \eqref{eq:all-test-pairs}, the divisor $D^+$ is
simple normal crossing.  Multiplication by the section
$Z^t\in H^0(\PP^2,\OO_{\PP^2}(t))$ gives an injection
\begin{equation}\label{eq:twist-injection}
   E_{2,m}T_{\PP^2}^*(\log D)\otimes\OO_{\PP^2}(-t)
   \xrightarrow{\ \cdot Z^t\ }
   E_{2,m}T_{\PP^2}^*(\log D)
   \hookrightarrow
   E_{2,m}T_{\PP^2}^*(\log D^+).
\end{equation}
The image of the first arrow consists of logarithmic two-jet differentials for $D$ that vanish to order at least $t$ along $L$.  Since $Z=1$ on the affine chart used to define $U_0$, multiplication by $Z^t$ does not change the local expression on $U_0$.  Thus we may first generate a finite ambient space inside the larger logarithmic bundle for $D^+$ and impose the removal of the extra logarithmic pole along $L$, together with the order-$t$ vanishing, on $U_X$.

We begin with the local pole estimate used for all three components of $D^+$.

\begin{lemma}[Layerwise logarithmic pole bound]\label{lem:local-layer-pole}
Let $s=0$ be a smooth local branch of a logarithmic divisor on a surface, and let $v$ be a transverse coordinate.  Put
\[
   \lambda=\frac{s'}s,
   \qquad
   W_{sv}=s'v''-s''v'.
\]
Then the logarithmic Wronskian satisfies the triangular identity
\begin{equation}\label{eq:local-log-W}
   \lambda v''-\lambda'v'
   =\frac1sW_{sv}+\frac{(s')^2v'}{s^2}.
\end{equation}
Therefore, in weighted degree $m$, the coefficient of the ordinary layer $W_{sv}^k$ has pole order at most
\[
   n_k:=m-2k
\]
along $s=0$.  More specifically, the coefficient of
\[
   (s')^p(v')^{m-3k-p}W_{sv}^k
\]
has pole order at most $p+k$.
\end{lemma}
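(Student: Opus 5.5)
The identity \eqref{eq:local-log-W} is a direct computation, and I will do it first.  Since $\lambda=s'/s$, we have
\[
 \lambda'=\frac{s''}{s}-\frac{(s')^2}{s^2}.
\]
Substituting,
\[
 \lambda v''-\lambda'v'
 =\frac{s'v''-s''v'}{s}+\frac{(s')^2v'}{s^2}
 =\frac1sW_{sv}+\frac{(s')^2v'}{s^2}.
\]
Writing $W_\lambda:=\lambda v''-\lambda'v'$ for the logarithmic Wronskian, this can be inverted:
\[
 W_{sv}=sW_\lambda-s\lambda^2v'.
\]
Together with $s'=s\lambda$, this expresses the ordinary frame $(s',v',W_{sv})$ triangularly in the logarithmic frame $(\lambda,v',W_\lambda)$, with coefficients that are polynomials in $s$.

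Next I would fix a regular logarithmic two-jet differential of weighted degree $m$.  In the logarithmic frame it is
\[
 \omega=\sum R_{p,j,k}\,\lambda^{p}(v')^{j}W_\lambda^{k},
 \qquad p+j+3k=m,
\]
with the $R_{p,j,k}$ holomorphic; this is the local form \eqref{eq:local-twojet} in a logarithmic frame.  I then rewrite $\omega$ in the ordinary frame using
\[
 \lambda=\frac{s'}{s},
 \qquad
 W_\lambda=\frac{W_{sv}}{s}+\frac{(s')^2v'}{s^2}.
\]
Expanding $W_\lambda^k$ by the binomial theorem, the term $\binom{k}{\ell}s^{-\ell}W_{sv}^{\ell}\,s^{-2(k-\ell)}(s')^{2(k-\ell)}(v')^{k-\ell}$, multiplied by $\lambda^p=(s')^p s^{-p}$, contributes to the monomial $(s')^{p+2(k-\ell)}(v')^{\ast}W_{sv}^{\ell}$ a coefficient of pole order at most $p+\ell+2(k-\ell)$.

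Setting $P=p+2(k-\ell)$ for the resulting exponent of $s'$, this pole order equals $P+\ell$.  This is exactly the bound ``exponent of $s'$ plus Wronskian layer'' asserted for the coefficient of $(s')^P(v')^{m-3\ell-P}W_{sv}^{\ell}$.  Since the ordinary monomials form a basis, I can collect terms, and each coefficient inherits the bound because pole orders do not increase under addition.  Finally, in layer $\ell$ the maximum of $P$ is $m-3\ell$, attained when the exponent of $v'$ is zero.  So the pole order is at most
\[
 m-3\ell+\ell=m-2\ell=n_\ell,
\]
which is the layerwise bound.

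The only delicate point is bookkeeping rather than the computation itself.  Weighted homogeneity has to be tracked so that the $v'$-exponent stays nonnegative, and I must be sure that regular logarithmic differentials really correspond to holomorphic $R_{p,j,k}$ in the frame $(\lambda,v',W_\lambda)$.  For the latter I would cite the standard description of $E_{2,m}\bT^*$ in a logarithmic frame, which the paper already uses.
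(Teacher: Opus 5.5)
Your proposal is correct and follows essentially the same route as the paper. You expand $\lambda^p(v')^jW_\lambda^k$ binomially using \eqref{eq:local-log-W}, observe that a term with $\ell$ leading factors has pole order $p+\ell+2(k-\ell)=P+\ell$, and bound $P\leqslant m-3\ell$; this is precisely the paper's counting argument.
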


\begin{proof}
Consider a logarithmic monomial
\[
   \lambda^a(v')^b(\lambda v''-\lambda'v')^r,
   \qquad
   a+b=m-3r.
\]
A contribution to the ordinary Wronskian layer $W_{sv}^k$ can occur only when $r\geqslant k$.  Choose the leading term $s^{-1}W_{sv}$ in exactly $k$ of the $r$ Wronskian factors and the lower term $s^{-2}(s')^2v'$ in the remaining $r-k$ factors.  The pole order is then
\[
   a+k+2(r-k).
\]
The resulting number of factors $s'$ outside $W_{sv}^k$ is
\[
   p=a+2(r-k),
\]
so the pole order equals $p+k$.  Since $p\leqslant m-3k$, this is at most $m-2k$.  The statement follows by linearity.
\end{proof}

Write
\[
   k_{\max}=\floor{m/3},
   \qquad
   n_k=m-2k,
   \qquad
   s_k=m-3k,
   \qquad
   d_k^+=(d_{\mathrm{tot}}-1)m-(2d_{\mathrm{tot}}-1)k.
\]

\begin{proposition}[Finite ambient normal form on $U_0$]\label{prop:finite-ambient-normal-form}
Every global section of
\[
 E_{2,m}T_{\PP^2}^*(\log D^+)
\]
restricts to $U_0$ in the form
\begin{equation}\label{eq:U0-normal-form}
   \omega_0
   =
   \sum_{k=0}^{k_{\max}}
   \sum_{i+j=s_k}
   \frac{P_{i,j,k}(x,y)}{a^{n_k}b^{n_k}}
   (x')^i(y')^jW_{xy}^{k},
\end{equation}
where
\begin{equation}\label{eq:degree-bound-P}
   P_{i,j,k}\in\CC[x,y],
   \qquad
   \deg P_{i,j,k}\leqslant d_k^+
   =(d_{\mathrm{tot}}-1)m-(2d_{\mathrm{tot}}-1)k.
\end{equation}
The exponent $n_k=m-2k$ and the degree bound $d_k^+$ are necessary ambient bounds.  Arbitrary polynomials satisfying \eqref{eq:degree-bound-P} need not define a global section; the missing compatibility conditions are imposed on $U_X$ in the next subsection.
\end{proposition}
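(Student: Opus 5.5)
The plan is to prove the normal form in two steps. First, the pole orders along $A$ and $B$ on the affine plane $\{Z\neq0\}$ will come from Lemma~\ref{lem:local-layer-pole}. Second, the degree bound will come from the logarithmic pole along $L=\{Z=0\}$, read in the chart at infinity.

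Restricted to $U_0$, a global section $\omega$ of $E_{2,m}T^*_{\PP^2}(\log D^+)$ is a polynomial in $x',y',W_{xy}$ of weighted degree $m$, with coefficients $c_{i,j,k}$ regular on $U_0$. Since $U_0=\{ZAB\neq0\}$, each $c_{i,j,k}$ is a rational function of $(x,y)$ with poles only along $a=0$ and $b=0$. At a smooth point of $\{a=0\}$ off $\{b=0\}$, take $s=a$ and a transverse coordinate $v$. The frame change from $(x',y',W_{xy})$ to $(s',v',W_{sv})$ is triangular: it is linear in first jets and multiplies $W$ by the Jacobian determinant, both with regular invertible coefficients. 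By Lemma~\ref{lem:local-layer-pole}, the coefficients in layer $W^k$ have pole order at most $n_k=m-2k$ along $a$. Mixing among the first-jet monomials within a fixed layer preserves this bound, because each layer $k$ is preserved by the triangular change. The same holds along $b$. At points of $\{a=b=0\}$ the crossing is transverse, so a codimension-two argument (Hartogs) suffices. Hence $P_{i,j,k}:=a^{n_k}b^{n_k}c_{i,j,k}$ is regular on $\CC^2$ and is therefore a polynomial.

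For the degree bound, I would pass to the chart $X\neq0$ with coordinates $u=Y/X$, $z=Z/X$. There $x=1/z$, $y=u/z$, and $L=\{z=0\}$ is a logarithmic component. I would express $x',y'$ and $W_{xy}$ in the frame $(z',u',W_{zu})$:
\[
 x'=-z'/z^2,\qquad y'=u'/z-uz'/z^2,\qquad W_{xy}=z^{-3}W_{zu}\cdot(\text{unit})+(\text{lower terms}).
\]
One has to be careful here, and I would redo the second-jet computation as in \eqref{eq:O3-second-projective-jets}: the Wronskian picks up a factor $z^{-3}$ exactly. The factor $a^{-n_k}b^{-n_k}$ contributes $z^{d_{\mathrm{tot}}n_k}$ to leading order. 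A coefficient $P$ of degree $\delta$ contributes $z^{-\delta}$. Now apply Lemma~\ref{lem:local-layer-pole} with $s=z$, which allows pole order at most $p+k$ on the monomial $(z')^p(u')^{s_k-p}W_{zu}^k$.

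The key step, and the main obstacle, is that different ordinary monomials $(x')^i(y')^jW^k_{xy}$ mix into the same $z$-frame monomial, so cancellations could in principle occur. I would handle this by looking at the extremal term. Take the top homogeneous part of the $P_{i,j,k}$ in the highest layer $k$ that violates the bound, and consider the coefficient of $(u')^{s_k}W_{zu}^k$, that is, $p=0$. This coefficient receives the top-degree parts of the $P_{i,j,k}$ through the invertible substitution $(x',y')\mapsto(-z',u'-uz')/z^{2}$-type relations. Its leading $z$-order equals
\[
 d_{\mathrm{tot}}n_k-\deg P-s_k-3k,
\]
with a nonvanishing coefficient given by a binary form that cannot cancel. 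Requiring this order to be at least $-k$ gives $\deg P\leqslant d_{\mathrm{tot}}(m-2k)-(m-3k)-3k+k$, which equals $d_k^+$. To conclude, I would check the arithmetic of this leading-order extraction, and carry out a downward induction on $k$ to rule out any interaction between layers; the triangular structure of \eqref{eq:local-log-W} should prevent such interaction.
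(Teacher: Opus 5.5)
\textbf{The gap is in the degree bound.} You claim that the coefficient of $(u')^{s_k}W_{zu}^k$ receives the top-degree parts of all the $P_{i,j,k}$. It does not. Since $x'=-z'/z^2$ contains no $u'$, only $(y')^{s_k}$ contributes, so this coefficient is $\pm z^{E_k+s_k}P_{0,s_k,k}(z^{-1},uz^{-1})/(a_Xb_X)^{n_k}$, with $E_k=(d_{\mathrm{tot}}-2)m-(2d_{\mathrm{tot}}-3)k$. Your order computation for it is correct, but it bounds only $\deg P_{0,s_k,k}$.

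To reach $P_{i,j,k}$ with $i\geqslant1$ you must use the coefficients of $(z')^{s_k-\ell}(u')^{\ell}W_{zu}^k$ with $\ell<s_k$. There the allowed pole order is $s_k-\ell+k$. Each such coefficient involves every $P_{s_k-j,j,k}$ with $j\geqslant\ell$, all multiplied by the same power $z^{E_k+\ell}$ and by $\binom{j}{\ell}(-u)^{j-\ell}$. So the top-degree parts of several of them can cancel. There is no single binary form whose nonvanishing settles the matter, and the assertion that it ``cannot cancel'' fails.

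\textbf{The missing idea is the triangular descending induction in $\ell$} used in the paper. Start at $\ell=s_k$, where only the diagonal term is present. Suppose $\deg P_{s_k-j,j,k}\leqslant d_k^+$ is already known for all $j>\ell$. Since $a_Xb_X$ is prime to $z$, those contributions have $z$-order at least $E_k+\ell-d_k^+=-(s_k-\ell+k)$, which is allowed. Hence if $\deg P_{s_k-\ell,\ell,k}=\delta>d_k^+$, the diagonal term $z^{E_k+\ell-\delta}P^{\mathrm{top}}_{s_k-\ell,\ell,k}(1,u)$ survives: its coefficient is nonzero because it dehomogenizes a nonzero binary form. It is then an uncancelled pole of order exceeding $s_k-\ell+k$, contradicting Lemma~\ref{lem:local-layer-pole}.

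Two smaller points. First, the downward induction on $k$ that you propose is unnecessary. For this projective chart change $W_{xy}=-W_{zu}/z^3$ exactly and $x',y'$ are linear in $z',u'$, so different layers never interact. Second, your first step has the opposite imprecision. For a general local coordinate change, $W_{xy}=\det(J)\,W_{sv}+(\text{cubic in }s',v')$, so layer $k$ does spread into lower layers. This is harmless: the coefficient of $W_{xy}^k$ only receives contributions from $W_{sv}^r$ with $r\geqslant k$, and $n_r\leqslant n_k$.
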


\begin{proof}
Apply Lemma~\ref{lem:local-layer-pole} to the branches $a=0$ and $b=0$.  The coefficient of $W_{xy}^k$ has pole order at most $n_k$ along each branch.  After multiplication by $a^{n_k}b^{n_k}$, every coefficient is regular on the affine plane, thus is a polynomial.  This proves the denominator assertion.

It remains to control the degree by using the extra logarithmic component $L$.  On the $X$-chart write
\[
   z=\frac ZX,
   \qquad
   u=\frac YX,
   \qquad
   x=\frac1z,
   \qquad
   y=\frac uz.
\]
The ordinary jets satisfy
\begin{equation}\label{eq:ordinary-chart-transition}
   x'=-\frac{z'}{z^2},
   \qquad
   y'=\frac{zu'-uz'}{z^2},
   \qquad
   W_{xy}=-\frac{W_{zu}}{z^3},
\end{equation}
where $W_{zu}=z'u''-z''u'$.  Also,
\[
   a=\frac{a_X}{z^{d_1}},
   \qquad
   b=\frac{b_X}{z^{d_2}}.
\]
Thus the $k$-th layer of \eqref{eq:U0-normal-form} becomes
\begin{align}\label{eq:k-layer-at-infinity}
   &\frac{z^{(d_{\mathrm{tot}}-2)m-(2d_{\mathrm{tot}}-3)k}}{a_X^{n_k}b_X^{n_k}}
   \sum_{i+j=s_k}
   (-1)^{i+k}P_{i,j,k}(z^{-1},uz^{-1})
   (z')^i(zu'-uz')^jW_{zu}^k.
\end{align}
For $0\leqslant\ell\leqslant s_k$, the coefficient of
\[
   (z')^{s_k-\ell}(u')^\ell W_{zu}^k
\]
is a triangular linear combination of the polynomials $P_{s_k-j,j,k}(z^{-1},uz^{-1})$ with $j\geqslant\ell$; its diagonal term is a nonzero scalar multiple of
\[
   z^{(d_{\mathrm{tot}}-2)m-(2d_{\mathrm{tot}}-3)k+\ell}
   P_{s_k-\ell,\ell,k}(z^{-1},uz^{-1}).
\]
By the refined part of Lemma~\ref{lem:local-layer-pole}, a section logarithmic along $L=\{z=0\}$ may have pole order at most
\[
   (s_k-\ell)+k
\]
in this coefficient.  Starting with $\ell=s_k$ and descending in $\ell$, the triangularity shows that no polynomial $P_{s_k-\ell,\ell,k}$ can have degree exceeding
\[
   (d_{\mathrm{tot}}-2)m-(2d_{\mathrm{tot}}-3)k+\ell
   +(s_k-\ell)+k
   =(d_{\mathrm{tot}}-1)m-(2d_{\mathrm{tot}}-1)k.
\]
This proves \eqref{eq:degree-bound-P}.
\end{proof}

The important point is that the negative twist $t$ does not appear in \eqref{eq:degree-bound-P}.  The degree bound describes a finite ambient space with an allowed logarithmic pole along $L$.  The twist is imposed by requiring the transformed coefficients to lose that pole and to vanish to order $t$ along $L$.

\subsection{Regularity and twisting test on \texorpdfstring{$U_X$}{UX}}
On $U_0\cap U_X$ one has
\begin{equation}\label{eq:U0UX-coordinates}
   x=\frac1z,
   \qquad
   y=\frac uz,
   \qquad
   a=\frac{a_X}{z^{d_1}},
   \qquad
   b=\frac{b_X}{z^{d_2}}.
\end{equation}
The first derivatives transform as
\begin{equation}\label{eq:xprime-yprime-to-UX}
   x'=-\frac{z'}{z^2},
   \qquad
   y'=\frac{u'z-uz'}{z^2}.
\end{equation}
Set
\[
   H_0:=a_Xb_X,
   \qquad
   E_0:=\alpha a_{X,z}b_X-\beta a_Xb_{X,z},
   \qquad
   \Delta_X:=\alpha a_{X,u}b_X-\beta a_Xb_{X,u}.
\]
Equation \eqref{eq:qX-explicit} gives
\begin{equation}\label{eq:solve-uprime}
   u'=\frac{H_0}{\Delta_X}q-\frac{E_0}{\Delta_X}z'.
\end{equation}
The pair $(z',q)$ is a logarithmic one-jet frame for $D$ on $U_X$, and
\[
   W_X=qz''-q'z'
\]
is the associated invariant Wronskian.  Substituting \eqref{eq:U0UX-coordinates}, \eqref{eq:xprime-yprime-to-UX}, and \eqref{eq:solve-uprime} into \eqref{eq:U0-normal-form} and collecting in the mixed frame gives
\begin{equation}\label{eq:UX-collected}
   \omega_0|_{U_X}
   =
   \sum_{k=0}^{k_{\max}}
   \sum_{j=0}^{s_k}
   R_{j,k}(u,z)
   (z')^{s_k-j}q^jW_X^k.
\end{equation}

\begin{proposition}[Exact extension and twisting criterion]\label{prop:extension-criterion}
A candidate \eqref{eq:U0-normal-form} is the restriction of a section
\[
   \omega\in
   H^0\!\left(\PP^2,
   E_{2,m}T_{\PP^2}^*(\log D)\otimes\OO_{\PP^2}(-t)\right)
\]
if and only if every coefficient in \eqref{eq:UX-collected} satisfies
\begin{equation}\label{eq:ring-criterion}
   R_{j,k}\in z^t\CC[u,z,\Delta_X^{-1}].
\end{equation}
Equivalently, after clearing a fixed bounded denominator, the condition is a finite collection of polynomial divisibility equations that are homogeneous and linear in the coefficients of the $P_{i,j,k}$.
\end{proposition}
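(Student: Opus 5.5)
The plan is to reduce the global statement to a local regularity check on the two charts $U_0$ and $U_X$, which together cover $\PP^2$ outside a finite set, and then use Hartogs extension across that finite set. Since $E_{2,m}T_{\PP^2}^*(\log D)\otimes\OO(-t)$ is locally free on the normal crossing locus, a section of it is the same as a compatible pair of sections over $U_0$ and $U_X$. So a candidate $\omega_0$ on $U_0$ extends to a global section of the twisted bundle if and only if two things hold. First, $\omega_0$ is regular on $U_0$ as a logarithmic jet differential for $D$. Second, its transform on $U_0\cap U_X$ extends across $U_X$ as a section of $E_{2,m}T^*(\log D)\otimes\OO(-t)$.

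\emph{Step 1: regularity on $U_0$ is automatic.} The normal form of Proposition~\ref{prop:finite-ambient-normal-form} is by construction a logarithmic two-jet differential for $D$ on $U_0=\{ZAB\neq0\}$: there $a$ and $b$ are units, and $x',y',W_{xy}$ are regular. So the only remaining condition concerns $U_X$.

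\emph{Step 2: identify the local condition on $U_X$.} On $U_X=\{X\Delta_X\neq0\}$ we have a logarithmic frame for $D$:
\begin{itemize}
\item $z'$ is an ordinary derivative;
\item $q$ is the logarithmic form from \eqref{eq:q-general-degrees};
\item $W_X=qz''-q'z'$ is the associated Wronskian.
\end{itemize}
This is a frame wherever $\Delta_X\neq0$, because \eqref{eq:solve-uprime} inverts the relation between $(z',u')$ and $(z',q)$ up to the logarithmic factor $H_0$. One caveat: $q$ is logarithmic only for the combination $\alpha\,d\log a_X-\beta\,d\log b_X$. Near a point of a single component, together with $z'$, it still spans the logarithmic cotangent bundle, since $\Delta_X\neq0$ there. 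Near a crossing point of $A$ and $B$ inside $U_X$ one would need $d\log a_X$ and $d\log b_X$ separately. I would check that the finite set excluded, together with Hartogs, handles this, or else include such points in the finite complement. By the filtration \eqref{eq:filtration}, the monomials $(z')^{s_k-j}q^jW_X^k$ form a local basis of $E_{2,m}T^*(\log D)$ over $\OO_{U_X}$. Hence a meromorphic expression \eqref{eq:UX-collected} is a regular section exactly when each $R_{j,k}$ is regular on $U_X$, that is, lies in $\CC[u,z,\Delta_X^{-1}]$. The line $L$ is $\{z=0\}$ in this chart, and on $U_X$ the twist $\OO(-t)$ is trivialized by $X^{-t}$, so multiplication by $Z^t$ becomes multiplication by $z^t$. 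Via \eqref{eq:twist-injection}, the twisted condition is therefore divisibility of each $R_{j,k}$ by $z^t$ in that ring, which is \eqref{eq:ring-criterion}. The converse direction is the same computation read backwards. Given \eqref{eq:ring-criterion}, the local sections glue over $U_0\cup U_X$, which has finite complement, and extend by Hartogs because the bundle is locally free.

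\emph{Step 3: reformulate as finitely many linear divisibility conditions.} Substituting \eqref{eq:U0UX-coordinates}, \eqref{eq:xprime-yprime-to-UX}, and \eqref{eq:solve-uprime} into \eqref{eq:U0-normal-form}, and transforming $W_{xy}$ by \eqref{eq:ordinary-chart-transition} together with the analogous change of Wronskian from $W_{zu}$ to $W_X$ (triangular modulo lower layers), shows that each $R_{j,k}$ has the form $N_{j,k}/(z^{e}a_X^{c}b_X^{c'}\Delta_X^{r})$. The exponents here are bounded uniformly in terms of $m$, and $N_{j,k}$ depends linearly on the coefficients of the $P_{i,j,k}$. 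Using the pairwise coprimality of $z,a_X,b_X,\Delta_X$ (established for the test pairs above), condition \eqref{eq:ring-criterion} becomes the requirement that $z^{e+t}a_X^{c}b_X^{c'}$ divide $N_{j,k}$ modulo powers of $\Delta_X$. This is a finite system of linear equations in the coefficients.

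I expect Step 2 to be the main obstacle. Specifically, one must verify carefully that the mixed frame $(z',q,W_X)$ together with the layered basis is a genuine local frame of the logarithmic invariant jet bundle on all of $U_X$, including the crossing points, and that the Wronskian change from $W_{xy}$ to $W_X$ is triangular with unit diagonal coefficient.
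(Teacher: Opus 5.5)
Your proposal is correct and follows the paper's proof. The paper argues the same way: the mixed frame $(z',q,W_X)$ is regular on $U_X$, so the condition becomes $R_{j,k}\in\CC[u,z,\Delta_X^{-1}]$; the $Z^t$-injection turns the twist into divisibility by $z^t$; and Hartogs extension across the finite complement of $U_0\cup U_X$ finishes, with denominator clearing and the coprimality of $z,a_X,b_X,\Delta_X$ giving the linear divisibility form. Your crossing-point caveat resolves itself, because $\Delta_X=\alpha a_{X,u}b_X-\beta a_Xb_{X,u}$ vanishes on $A\cap B$: no crossing point lies in $U_X$, and at points of a single component $dz\wedge q=\frac{\Delta_X}{a_Xb_X}\,dz\wedge du$ differs from the local logarithmic volume generator by the unit $\Delta_X/b_X$ or $\Delta_X/a_X$ (the Wronskian triangularity you mention is not needed for this criterion).
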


\begin{proof}
The mixed frame $(z',q,W_X)$ is regular for the logarithmic jet bundle associated with $D$ on $U_X$.  Thus a section of the untwisted bundle is regular on $U_X$ exactly when its scalar coefficients lie in
\[
   \OO(U_X)=\CC[u,z,\Delta_X^{-1}].
\]
Under the injection \eqref{eq:twist-injection}, the image of a section twisted by $\OO(-t)$ vanishes to order at least $t$ along $L=\{z=0\}$.  This is exactly condition \eqref{eq:ring-criterion}.

Since $U_0\cup U_X$ misses only finitely many points and the jet bundle is locally free, a section satisfying this condition extends uniquely across the missing codimension-two set.  This proves the equivalence.
\end{proof}

For computation, the denominator clearing can be made uniform and explicit.  Write
\[
   u'=\mathsf a q+\mathsf b z',\qquad
   \mathsf a=H_0/\Delta_X,\qquad \mathsf b=-E_0/\Delta_X.
\]
Differentiating with respect to the source variable and substituting
$u'=\mathsf a q+\mathsf b z'$ gives the exact identity
\begin{align}\label{eq:u-second-exact}
 u''={}&\mathsf a q'+\mathsf b z''+\mathsf a\mathsf a_uq^2\\
 &+(\mathsf a_u\mathsf b+\mathsf a_z+\mathsf a\mathsf b_u)qz'
 +(\mathsf b\mathsf b_u+\mathsf b_z)(z')^2.
\end{align}
Therefore,
\begin{equation}\label{eq:Wzu-to-WX-triangular}
 \begin{split}
 W_{zu}={}&-\mathsf a W_X+\mathsf a\mathsf a_uq^2z'\\
 &+(\mathsf a_u\mathsf b+\mathsf a_z+\mathsf a\mathsf b_u)q(z')^2
 +(\mathsf b\mathsf b_u+\mathsf b_z)(z')^3.
 \end{split}
\end{equation}
This is an identity in the localized differential-polynomial ring.  Since
$\mathsf a,\mathsf b$ have one power of $\Delta_X$ in the denominator, their first
partial derivatives have at most two, and every coefficient in
\eqref{eq:Wzu-to-WX-triangular} has $\Delta_X$-denominator order at most
three.  A source layer $W_{zu}^r$ can contribute to the target layer
$W_X^k$ only when $r\geqslant k$.  The $m-3r$ first-jet factors contribute at
most one power of $\Delta_X^{-1}$ each, the $k$ leading Wronskian factors
contribute one power each, and the remaining $r-k$ lower Wronskian
factors contribute at most three powers each.  Therefore the total
exponent of $\Delta_X$ is bounded by
\begin{equation}\label{eq:Delta-bound}
   (m-3r)+k+3(r-k)=m-2k=n_k.
\end{equation}
No substitution introduces $a_X$ or $b_X$ in a denominator beyond the
original source exponent, and $n_r\leqslant n_k$ for $r\geqslant k$.

We also record the $z$-bound explicitly.  Before replacing $u'$ and
$W_{zu}$ by the mixed frame, a monomial from source layer $r$ with a
numerator of degree $d$ carries the factor
\[
 z^{d_{\mathrm{tot}}n_r-2(m-3r)-3r-d}
 =z^{(d_{\mathrm{tot}}-2)m-(2d_{\mathrm{tot}}-3)r-d}.
\]
Since $d\leqslant(d_{\mathrm{tot}}-1)m-(2d_{\mathrm{tot}}-1)r$, this exponent is at least
\[
 (d_{\mathrm{tot}}-2)m-(2d_{\mathrm{tot}}-3)r
 -\bigl((d_{\mathrm{tot}}-1)m-(2d_{\mathrm{tot}}-1)r\bigr)
 =-(m-2r)=-n_r.
\]
The substitutions \eqref{eq:solve-uprime} and
\eqref{eq:Wzu-to-WX-triangular} introduce no additional powers of
$z^{-1}$.  Thus the pole order in $z$ is at most $n_r\leqslant n_k$.
Therefore,
\begin{equation}\label{eq:cleared-polynomial}
   \mathcal N_{j,k}
   :=z^{n_k}a_X^{n_k}b_X^{n_k}\Delta_X^{n_k}R_{j,k}
   \in\CC[u,z].
\end{equation}
The same valuation argument applies to every row of
\eqref{eq:all-test-pairs}, including the unequal-degree rows.  Indeed,
$A+B+L$ is SNC, so $z,a_X,b_X$ are pairwise coprime, and the check following
\eqref{eq:q-general-solve} says exactly that
\[
 \gcd(z\,a_Xb_X,\Delta_X)=1.
\]
Thus localizing at $\Delta_X$ changes none of the valuations along the prime
factors of $z\,a_Xb_X$.  This is the required bridge from the general
coprimality $\gcd(A,B)=1$ to the concrete divisibility test.

For example, for the special pair \eqref{eq:symmetric-pair}, the four factors
\[
   z,\qquad a_X,\qquad b_X,\qquad \Delta_X
\]
are pairwise coprime in $\CC[u,z]$.  Indeed, $z$ divides none of the
other three.  If an irreducible factor divided both $a_X$ and $b_X$, it
would divide $a_X-b_X=u^3-1$; after setting $u^3=1$, however, both
polynomials become $3+z^3$, so no factor of $u^3-1$ divides them
identically.  Also, $u$ divides neither $a_X$ nor $b_X$.  Finally,
for every root $\zeta$ of $\zeta^3=-3$ one has
\[
   a_X(u,\zeta)=2(u^3-1),
   \qquad
   b_X(u,\zeta)=u^3-1,
\]
which are not the zero polynomial in $u$; thus no irreducible factor of
$3+z^3$ divides $a_X$ or $b_X$.  Since $\CC[u,z]$ is a unique
factorization domain, this verifies the general coprimality statement
directly in the cubic case.  The valuation conditions along $z=0$, $a_X=0$,
and $b_X=0$ can therefore be combined for every test pair.  Thus
\eqref{eq:ring-criterion} is equivalent to the concrete divisibility
condition
\begin{equation}\label{eq:divisibility-criterion}
   z^{n_k+t}a_X^{n_k}b_X^{n_k}
   \ \bigm|\ 
   \mathcal N_{j,k}
   \qquad
   \text{in }\CC[u,z].
\end{equation}
For the conic and cubic models, the fixed product in
\eqref{eq:divisibility-criterion} may be handled by the corresponding exact
polynomial-remainder test.  For a line pair, however, $a_X=u$, and treating
the whole
product as a univariate polynomial in $z$ would give a nonconstant leading
coefficient.  We instead use pairwise coprimality and impose the following
three equivalent conditions on every cleared numerator $\mathcal N_{j,k}$:
\begin{enumerate}[label=\textup{(\roman*)}]
 \item no monomial has $z$-degree below $n_k+t$;
 \item no monomial has $u$-degree below $n_k$;
 \item the remainder modulo the monic polynomial $b_X^{n_k}$ in
 $K[u][z]$ is zero, where $K=\CC$ for the characteristic-zero system and
 $K=\mathbb F_p$ for a certificate matrix.
\end{enumerate}
The three row families are kept disjoint in the certificate.  Thus all
divisibility conditions are imposed simultaneously, and each is homogeneous
and linear in the coefficients of the $P_{i,j,k}$.  The triangularity in the
Wronskian degree allows the equations to be generated from the largest $k$
downward.

\paragraph{How one matrix row is formed.}
Write a cleared numerator as
\[
 \mathcal N_{j,k}=\sum_{r,s}c_{r,s}u^rz^s.
\]
Each coefficient $c_{r,s}$ is a linear combination of the unknown
coefficients in the polynomials $P_{i,j,k}$.  Condition~(i), for example,
sets $c_{r,s}=0$ whenever $s<n_k+t$.  This one linear equation is one row
of the matrix.  Conditions~(ii) and~(iii) give rows in the same way, using
low $u$-degree coefficients and coefficients of the remainder modulo
$b_X^{n_k}$.  Thus the matrix is only a compact record of the proved
divisibility conditions; it is not a numerical approximation.

\subsection{The involution and character reductions}
The involution $\sigma(X,Y,Z)=(Y,X,Z)$ preserves $U_0$ and acts on the ordinary frame by
\[
   x\leftrightarrow y,
   \qquad
   x'\leftrightarrow y',
   \qquad
   W_{xy}\longmapsto -W_{xy}.
\]
If one fixes an eigenvalue $\varepsilon=\pm1$ and imposes
\begin{equation}\label{eq:sigma-eigencondition}
   \sigma^*\omega_0=\varepsilon\omega_0,
\end{equation}
then the coefficients in \eqref{eq:U0-normal-form} satisfy
\begin{equation}\label{eq:sigma-coefficients}
   P_{i,j,k}(x,y)=\varepsilon(-1)^kP_{j,i,k}(y,x).
\end{equation}
Thus only one representative from each orbit
\[
   (i,j,x^r y^s)\longleftrightarrow(j,i,x^s y^r)
\]
needs to be generated.  If an orbit is fixed and $\varepsilon(-1)^k=-1$, its coefficient is forced to be zero.

The diagonal $(\mu_3)^2$-action gives a further decomposition.  For
\[
   [X:Y:Z]\longmapsto[\alpha X:\beta Y:Z],
   \qquad
   \alpha^3=\beta^3=1,
\]
one has
\[
   x\mapsto\alpha x,
   \qquad
   y\mapsto\beta y,
   \qquad
   x'\mapsto\alpha x',
   \qquad
   y'\mapsto\beta y',
   \qquad
   W_{xy}\mapsto\alpha\beta W_{xy}.
\]
Therefore the monomial
\[
   x^r y^s(x')^i(y')^jW_{xy}^{k}
\]
has character
\begin{equation}\label{eq:character-weight}
   (r+i+k,\;s+j+k)\pmod 3.
\end{equation}
For a fixed character $(\chi_1,\chi_2)\in(\mathbb Z/3\mathbb Z)^2$, we generate only the monomials satisfying
\begin{equation}\label{eq:character-filter}
   r+i+k\equiv\chi_1\pmod3,
   \qquad
   s+j+k\equiv\chi_2\pmod3.
\end{equation}
The involution $\sigma$ exchanges the character $(\chi_1,\chi_2)$ with $(\chi_2,\chi_1)$.  Thus the nine characters split into six orbits:
\begin{equation}\label{eq:character-orbits}
   (0,0),\ (1,1),\ (2,2),\quad
   (0,1)\leftrightarrow(1,0),\quad
   (0,2)\leftrightarrow(2,0),\quad
   (1,2)\leftrightarrow(2,1).
\end{equation}
For diagonal characters one imposes $\sigma$-eigenconditions inside the character space.  For an off-diagonal orbit, an eigenvector is determined by its component in either one of the two exchanged character spaces.

\begin{lemma}[The cubic symmetry blocks exhaust the candidate space]
\label{lem:symmetry-exhaustion}
Let $V$ be any finite-dimensional candidate space in the above normal form,
and let $K\subset V$ be the kernel of the extension and twisting equations.
Then $K=0$ if and only if its intersection with every one of the twelve
character--eigenvalue blocks indexed by
\eqref{eq:character-orbits} and $\varepsilon\in\{+1,-1\}$ is zero.
\end{lemma}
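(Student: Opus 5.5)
The plan is to show that the finite group $G=(\mu_3)^2\rtimes\langle\sigma\rangle$ acts on the candidate space $V$ and preserves the kernel $K$. Once this is known, the result follows from complete reducibility in characteristic zero. The argument has three steps.

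\emph{Invariance of $V$.} The diagonal maps $[X:Y:Z]\mapsto[\alpha X:\beta Y:Z]$ and the involution $\sigma$ are automorphisms of $\PP^2$. They fix $A$ and $B$ or exchange them, so they preserve $D$, $L=\{Z=0\}$ and the chart $U_0$. Each therefore acts by pullback on $H^0(\PP^2,E_{2,m}T^*_{\PP^2}(\log D^+))$ and on the twisted subspace. The formulas recorded above give the action on the ordinary frame: $x'\mapsto\alpha x'$, $W_{xy}\mapsto\alpha\beta W_{xy}$, and $W_{xy}\mapsto-W_{xy}$ under $\sigma$. The denominators $a^{n_k}b^{n_k}$ are fixed or exchanged, and $n_k$ is the same for $a$ and $b$. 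Hence the normal form \eqref{eq:U0-normal-form}, with the degree bound $d_k^+$, is sent to itself, and $G$ acts linearly on $V$.

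\emph{Invariance of $K$.} By Proposition~\ref{prop:extension-criterion}, $K$ is exactly the set of candidates that extend to global sections of $E_{2,m}T^*_{\PP^2}(\log D)\otimes\OO(-t)$. The group acts on the twisted line bundle $\OO(-t)$ compatibly, since it fixes $Z$. So the pullback of a global section is again a global section, and $K$ is $G$-stable. I would make this argument intrinsically rather than through the chart $U_X$, because $U_X$ is not $\sigma$-stable.

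\emph{Splitting into blocks.} The space $V$ is a finite-dimensional complex representation of the abelian group $(\mu_3)^2$. It therefore splits into nine character spaces $V_\chi$, cut out by the filter \eqref{eq:character-filter}, and $K=\bigoplus_\chi(K\cap V_\chi)$ because the character projectors preserve $K$. The involution satisfies $\sigma V_{(\chi_1,\chi_2)}=V_{(\chi_2,\chi_1)}$.
\begin{itemize}
\item For a diagonal $\chi$, $\sigma$ acts on $K\cap V_\chi$ and splits it into its $\pm1$ eigenspaces.
\item For an off-diagonal orbit $\{\chi,\chi'\}$, the space $K\cap(V_\chi\oplus V_{\chi'})$ is $\sigma$-stable and splits into $\varepsilon$-eigenspaces. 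Each $\varepsilon$-eigenvector is determined by its $V_\chi$-component, since its $V_{\chi'}$-component is $\varepsilon\sigma$ applied to it. Conversely, any $w\in K\cap V_\chi$ gives the eigenvector $w+\varepsilon\sigma w$, which is nonzero.
\end{itemize}
This yields twelve blocks, and $K$ is their direct sum. Hence $K=0$ if and only if every block is zero. The reverse implication is trivial because each block is a subspace of $K$.

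The main obstacle is the second step. The point is to confirm that $K$ is defined by an intrinsic condition and not by chart-dependent divisibility rows, and that $\sigma$ acts on the twisted bundle with the same sign convention as in \eqref{eq:sigma-coefficients}. Any scalar ambiguity in the linearization does not matter, because it only relabels $\varepsilon$ or the characters and leaves the set of blocks unchanged.
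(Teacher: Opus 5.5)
Your proposal is correct and follows essentially the same route as the paper's proof. Both use the semisimple $(\mu_3)^2$-character decomposition, equivariance of the kernel, and the $\pm1$ splitting of $\sigma$ on diagonal characters and on $V_\chi\oplus V_{\sigma\chi}$ for off-diagonal orbits. Your extra step deserves credit: you derive $\sigma$-stability of $K$ from the intrinsic extension criterion, rather than from row-level equivariance, because $U_X$ is not $\sigma$-stable. This properly justifies what the paper compresses into ``the equations are equivariant.''
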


\begin{proof}
Over $\CC$, the finite abelian group $(\mu_3)^2$ acts semisimply, so
$V=\bigoplus_\chi V_\chi$.  The equations are equivariant and thus their
kernel has the same decomposition.  The involution sends $V_\chi$ to
$V_{\sigma\chi}$.  If $\chi=\sigma\chi$, the involution is diagonalizable
with eigenvalues $\pm1$.  If $\chi\neq\sigma\chi$ and
$0\neq v\in K\cap V_\chi$, then at least one of
$v+\sigma v$ and $v-\sigma v$ is nonzero; these vectors lie respectively
in the $+1$ and $-1$ eigenspaces of
$V_\chi\oplus V_{\sigma\chi}$.  Thus every nonzero kernel contains a
nonzero vector in one of the listed blocks.  The converse is immediate.
\end{proof}

\begin{lemma}[The conic-pair symmetry blocks exhaust the candidate space]
\label{lem:conic-symmetry-exhaustion}
For either model $(2,e)$, let $V$ be the finite candidate space and let
$K\subset V$ be the kernel of the extension and twisting equations.  Then
$K=0$ if and only if its intersection with every character--eigenvalue block
indexed by an orbit of $c\mapsto-c$ in $\mathbb Z/e\mathbb Z$ and by
$\varepsilon\in\{+1,-1\}$ is zero.
\end{lemma}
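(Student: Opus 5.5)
The plan is to follow the proof of Lemma~\ref{lem:symmetry-exhaustion} almost verbatim, replacing $(\mu_3)^2$ by the cyclic group $\mu_e$ acting by $[X:Y:Z]\mapsto[\zeta X:\zeta^{-1}Y:Z]$ and keeping the exchange involution $\sigma(X,Y,Z)=(Y,X,Z)$. The first step is to check equivariance. In the model $(2,e)$ the conic $XY+Z^2$ and the curve $X^e+Y^e+Z^e$ are both fixed by $\mu_e$ and by $\sigma$. The auxiliary line $L=\{Z=0\}$ is also preserved, so $D^+$ is invariant. The two charts $U_0$ and $U_X$ are not individually $\sigma$-stable, but this does not matter. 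Proposition~\ref{prop:extension-criterion} identifies the kernel $K$ intrinsically with the space of global sections of $E_{2,m}T^*_{\PP^2}(\log D)\otimes\OO(-t)$, restricted to $U_0$ inside the finite candidate space $V$. The group generated by $\mu_e$ and $\sigma$ acts on that space of global sections by pullback, after choosing a linearization of $\OO(-t)$ compatible with the action on $Z$. Since $U_0=\{ZAB\neq0\}$ is invariant, $V$ is also stable, because the normal form \eqref{eq:U0-normal-form} and the degree bounds \eqref{eq:degree-bound-P} are preserved by these linear substitutions. Consequently $K\subset V$ is a subrepresentation.

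The second step is the decomposition. Over $\CC$ the abelian group $\mu_e$ acts semisimply, so $V=\bigoplus_{c\in\mathbb Z/e}V_c$. As computed above, the monomial $x^ry^s(x')^i(y')^jW_{xy}^k$ has character $r-s+i-j$, because $W_{xy}$ is invariant and the denominators $a^{n_k}b^{n_k}$ are invariant. The kernel then splits as $K=\bigoplus_c(K\cap V_c)$. The relation $\sigma\circ\zeta=\zeta^{-1}\circ\sigma$ shows that $\sigma$ maps $V_c$ to $V_{-c}$, and we read off its action on the frame as $x\leftrightarrow y$, $x'\leftrightarrow y'$, $W_{xy}\mapsto-W_{xy}$, which gives the sign $(-1)^k$ layer by layer.

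The third step handles the orbits of $c\mapsto-c$. If $c=-c$, that is, $c=0$ or $c=e/2$ when $e$ is even, then $\sigma$ is an involution of $K\cap V_c$, hence diagonalizable with eigenvalues $\pm1$. Any nonzero vector in $K\cap V_c$ therefore has a nonzero component in one of the two eigenspace blocks. If $c\neq-c$ and $0\neq v\in K\cap V_c$, then $v\pm\sigma v$ lie in $K$, and at least one of them is nonzero. That vector lies in the $\pm1$-eigenspace of $V_c\oplus V_{-c}$, which is the block indexed by the orbit $\{c,-c\}$ and the corresponding sign. The converse direction is trivial because each block is a subspace of $V$.

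The only point needing care is the equivariance claim. In particular, one must confirm that the sign conventions for $\sigma$ and the choice of linearization of the twist do not mix the blocks. Such a mixing would in any case only permute the labels $\varepsilon=\pm1$, so the exhaustion statement is unaffected. As noted earlier in the paper, semisimplicity is used only in characteristic zero, so a proof prime dividing $2e$ causes no difficulty.
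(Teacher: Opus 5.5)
Your proposal is correct and follows the paper's proof essentially step for step: the semisimple $\mu_e$-decomposition $K=\bigoplus_c(K\cap V_c)$, the involution $\sigma$ exchanging $V_c$ and $V_{-c}$, diagonalization of $\sigma$ on the self-conjugate characters, and $v\pm\sigma v$ on the two-element orbits. Your justification of the $\sigma$-stability of $K$, via its intrinsic identification with global sections, is a welcome addition, since the paper only asserts it. One small slip: $U_0=\{ZAB\neq0\}$ is in fact stable under both $\mu_e$ and $\sigma$, and only $U_X=\{X\Delta\neq0\}$ fails to be $\sigma$-stable, as you yourself use a few sentences later.
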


\begin{proof}
Over $\CC$, the cyclic group $\mu_e$ acts semisimply, and equivariance of the
overlap and divisibility equations gives
\(K=\bigoplus_c(K\cap V_c)\).  Coordinate exchange sends $V_c$ to
$V_{-c}$ and preserves $K$.  On a fixed character ($c=-c$), it is
diagonalizable with eigenvalues $\pm1$.  On a two-element orbit
$\{c,-c\}$, every nonzero $v\in K\cap V_c$ yields a nonzero vector among
$v+\sigma v$ and $v-\sigma v$ in the corresponding $+1$ or $-1$ block.
Thus a nonzero kernel meets one of the enumerated blocks, and the converse is
immediate.
\end{proof}

\begin{lemma}[The line-pair character blocks exhaust the candidate space]
\label{lem:line-symmetry-exhaustion}
For either line model $(1,e)$, let $V$ be the finite candidate space and let
$K\subset V$ be the kernel of the extension and twisting equations.  Then
$K=0$ if and only if its intersection with each of the $e$ character spaces
defined by \eqref{eq:line-character-weight} is zero.
\end{lemma}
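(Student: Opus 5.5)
The plan mirrors the proofs of Lemmas~\ref{lem:symmetry-exhaustion} and~\ref{lem:conic-symmetry-exhaustion}, with no involution. Fix $e\in\{8,9\}$ and let $\zeta$ be a primitive $e$-th root of unity. The cyclic transformation $g\colon[X:Y:Z]\mapsto[\zeta X:\zeta^{-1}Y:Z]$ preserves both components $Y=0$ and $B=0$ of the test divisor, as well as the line $L=\{Z=0\}$. It therefore preserves $U_0$, the normal form \eqref{eq:U0-normal-form}, and the space $H^0(\PP^2,E_{2,m}T_{\PP^2}^*(\log D)\otimes\OO_{\PP^2}(-t))$.

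The first step is to check that $g$ maps the candidate space $V$ to itself and acts diagonally on its monomial basis. On $U_0$ one has $x\mapsto\zeta x$, $y\mapsto\zeta^{-1}y$, $x'\mapsto\zeta x'$, $y'\mapsto\zeta^{-1}y'$ and $W_{xy}\mapsto W_{xy}$. The denominator factor $a^{-n_k}=y^{-n_k}$ contributes $\zeta^{n_k}$, while $b$ is invariant because $X^4Y^4$ and $X^3Y^3Z^3$ have character zero. Hence the basis element $x^ry^s(x')^i(y')^jW_{xy}^k/(a^{n_k}b^{n_k})$ is an eigenvector with eigenvalue $\zeta^{\,r-s+i-j+n_k}$, which is exactly the weight \eqref{eq:line-character-weight}. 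The degree bound \eqref{eq:degree-bound-P} depends only on $\deg P$ and $k$, so it is compatible with this splitting. Consequently $V=\bigoplus_{c\in\mathbb Z/e}V_c$, where $V_c$ is spanned by the monomials of weight $c$.

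The second step is to show that $K$ is $g$-stable. By Proposition~\ref{prop:finite-matrix-equivalence} (equivalently Proposition~\ref{prop:extension-criterion}), $K$ is the image in $V$ of the space of global sections of the twisted logarithmic jet bundle. That space is $g$-stable because $g$ is an automorphism of $(\PP^2,D)$ fixing $L$, and $g^*$ preserves both the logarithmic condition and vanishing of order $t$ along $L$. The restriction map to $U_0$ is injective and $g$-equivariant, so $gK=K$. This geometric argument is the one point needing care. The divisibility rows themselves are written in the mixed chart $U_X$, which $g$ does not simply permute, so I will argue equivariance through the intrinsic description rather than row by row.

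Since $g$ has finite order and we work over $\CC$, $g|_K$ is diagonalizable. Its eigenspaces are $K\cap V_c$, because the eigenspaces of $g$ on $V$ are exactly the $V_c$. Therefore $K=\bigoplus_c(K\cap V_c)$, so $K=0$ if and only if every $K\cap V_c=0$; the converse direction is immediate. As in the earlier remark, semisimplicity is used only over characteristic zero, so primes dividing $e$ cause no problem for the later modular certificates.
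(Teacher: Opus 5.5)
Your proof is correct and follows the paper's own argument. The $\mu_e$-action is semisimple in characteristic zero, the candidate space splits into the shifted characters \eqref{eq:line-character-weight}, and $g$-stability of $K$ then forces $K=\bigoplus_c(K\cap V_c)$.

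The one variation is how you get $g$-stability of $K$: you deduce it from the intrinsic identification $K\simeq H^0$ in Proposition~\ref{prop:finite-matrix-equivalence}, whereas the paper asserts equivariance of the equations directly. Your caution about the rows was unnecessary, however. The map $g$ does preserve $U_X$: it sends $u\mapsto\zeta^{-2}u$ and $z\mapsto\zeta^{-1}z$, and leaves $b_X$, $\Delta$ and $q$ invariant. So the divisibility rows are themselves equivariant, which is exactly what \eqref{eq:line-target-character} records.
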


\begin{proof}
The cyclic group $\mu_e$ acts semisimply in characteristic zero, and all
overlap and divisibility equations are equivariant.  Both $V$ and $K$ are
therefore direct sums of their $e$ character spaces.
\end{proof}

\begin{proposition}[Finite matrix equivalence]\label{prop:finite-matrix-equivalence}
Fix one of the test pairs in \eqref{eq:all-test-pairs} and integers
$m,t\geqslant1$.  Let $V_m$ be the finite coefficient space in
\eqref{eq:U0-normal-form}--\eqref{eq:degree-bound-P}, and let
\[
   \Phi_{m,t}\colon V_m\longrightarrow W_{m,t}
\]
record the coefficients of all forbidden remainders in
\eqref{eq:divisibility-criterion}; for a line pair, $\Phi_{m,t}$ records
the three row families listed after that equation.  Restriction to $U_0$
induces a natural isomorphism
\[
 H^0\!\left(\PP^2,
 E_{2,m}T_{\PP^2}^*(\log D)\otimes\OO_{\PP^2}(-t)\right)
 \simeq \ker\Phi_{m,t}.
\]
Also, the symmetry decompositions above identify this kernel with the
direct sum of the kernels of the corresponding block matrices.
\end{proposition}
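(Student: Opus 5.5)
The plan is to assemble the isomorphism from the three results already proved in this section: Proposition~\ref{prop:finite-ambient-normal-form}, Proposition~\ref{prop:extension-criterion}, and the cleared divisibility test \eqref{eq:divisibility-criterion}. The block decomposition will then follow from Lemmas~\ref{lem:symmetry-exhaustion}, \ref{lem:conic-symmetry-exhaustion}, and \ref{lem:line-symmetry-exhaustion}.

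\emph{Step 1: define the restriction map and show injectivity.} Given a section $\omega$, we use the injection \eqref{eq:twist-injection} with $Z^t$ to view it as a section of $E_{2,m}T_{\PP^2}^*(\log D^+)$. Since $Z=1$ on $U_0$, its local expression there is unchanged. By Proposition~\ref{prop:finite-ambient-normal-form}, this restriction has the form \eqref{eq:U0-normal-form} with coefficients $P_{i,j,k}$ obeying \eqref{eq:degree-bound-P}, so it defines a vector of $V_m$. The representation in the frame $(x',y',W_{xy})$ is unique, because this frame is a free basis of the filtered jet algebra over $\OO(U_0)$. A section of a locally free sheaf that vanishes on the dense open set $U_0$ vanishes identically, so the map is injective.

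\emph{Step 2: show the image is exactly $\ker\Phi_{m,t}$.} By Proposition~\ref{prop:extension-criterion}, a candidate in $V_m$ extends to a global twisted section if and only if each mixed coefficient $R_{j,k}$ satisfies \eqref{eq:ring-criterion}. We then clear denominators to get $\mathcal N_{j,k}\in\CC[u,z]$ as in \eqref{eq:cleared-polynomial}, using the $\Delta_X$-bound \eqref{eq:Delta-bound} and the $z$-pole bound proved just before. Because $z$, $a_X$, $b_X$, $\Delta_X$ are pairwise coprime, condition \eqref{eq:ring-criterion} is equivalent to \eqref{eq:divisibility-criterion}. The main point to check is that $R_{j,k}$ carries no poles along $a_X$ or $b_X$ on $U_X$ and lies in $z^t\CC[u,z,\Delta_X^{-1}]$ exactly when the stated power of $z a_X b_X$ divides $\mathcal N_{j,k}$. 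This uses that $\CC[u,z]$ is a UFD and that localizing at $\Delta_X$ leaves the valuations along the other three primes unchanged.

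For the cubic and conic models, divisibility by the fixed product is equivalent to the vanishing of an exact remainder; the coefficients of that remainder are the rows of $\Phi_{m,t}$. For the line models, divisibility by $z^{n_k+t}$, by $u^{n_k}$, and by the monic-in-$z$ polynomial $b_X^{n_k}$ are imposed separately. Coprimality makes their conjunction equivalent to divisibility by the product, which is why the three row families (i)--(iii) suffice. Every condition is linear in the coefficients of the $P_{i,j,k}$, so the image is precisely $\ker\Phi_{m,t}$, and naturality is clear.

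\emph{Step 3: split the kernel into symmetry blocks.} The test pair is invariant under the finite group listed for its model, and that group preserves $U_0$, $U_X$ up to the overlap formulas, $L$, and the frames up to characters. Hence $\Phi_{m,t}$ is equivariant, once source and target coefficients are indexed by the same congruence weights \eqref{eq:character-weight} or \eqref{eq:line-character-weight} and by the involution sign. Over $\CC$, Lemmas~\ref{lem:symmetry-exhaustion}--\ref{lem:line-symmetry-exhaustion} then show that $\ker\Phi_{m,t}$ is the direct sum of its intersections with the blocks. Each intersection is the kernel of the block matrix, obtained by restricting the monomial basis to the block and using only one representative per involution orbit, as in \eqref{eq:sigma-coefficients}.

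I expect the main obstacle to be the equivariance of the mixed-chart equations. The chart $U_X$ and the form $q$ are not obviously preserved by $\sigma$ or by the cyclic actions, since the symmetries may rescale $q$ and move the chart. I would handle this by working with the intrinsic condition in Proposition~\ref{prop:extension-criterion}, which is global and therefore equivariant, instead of with individual matrix rows. The block kernels are then kernels of the intrinsic conditions restricted to the isotypic subspaces of $V_m$.
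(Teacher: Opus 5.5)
Your proposal is correct and follows essentially the same route as the paper. The ambient normal form sends each section to a unique vector of $V_m$. The extension criterion, combined with the denominator bounds and the pairwise coprimality of $z,a_X,b_X,\Delta_X$, identifies the image with $\ker\Phi_{m,t}$, with the line case split into the three row families. The three symmetry-exhaustion lemmas then split this kernel into blocks.

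Your handling of equivariance is a sound and slightly cleaner justification of the paper's brief appeal to ``equivariance of the equations.'' You deduce that $\ker\Phi_{m,t}$ is stable under the symmetry group from its identification with the stable space of global twisted sections. This avoids needing row-by-row equivariance on the mixed chart, which the involution $\sigma$ does not literally preserve.
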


\begin{proof}
Proposition~\ref{prop:finite-ambient-normal-form} sends every global section
to a unique vector of $V_m$.  Proposition~\ref{prop:extension-criterion},
together with the coprimality and denominator bounds proved above, says that
such a vector comes from a twisted global section exactly when all the
remainders recorded by $\Phi_{m,t}$ vanish.  The resulting section on
$U_0\cup U_X$ extends uniquely across the finite complement.  This proves
the first assertion.  Equivariance of the equations and
Lemmas~\ref{lem:symmetry-exhaustion},
\ref{lem:conic-symmetry-exhaustion}, and
\ref{lem:line-symmetry-exhaustion} prove the second.
\end{proof}

The implementation also checks equivariance after the full chart
transformation.  If a cleared target row contains the monomial $u^Uz^Z$, has
$q$-degree $Q$, and lies in target Wronskian layer $k$, its character in the
source convention is
\begin{equation}\label{eq:line-target-character}
 -2U-Z+2(m-2k)+Q\pmod e.
\end{equation}
For every complete transformed column in the $m=3$ self-test, each nonzero
target row has the same character as the source column.  This detects, in
particular, omission of the denominator shift in
\eqref{eq:line-character-weight}.

\subsection{Counting the ambient variables}
Let
\[
   M(d)=\binom{d+2}{2}
\]
be the number of monomials in two variables of total degree at most $d$.  Since the negative twist is imposed by the equations on $U_X$, rather than by deleting monomials on $U_0$, the ambient variable count depends on $m$ but not on $t$:
\begin{equation}\label{eq:raw-variable-count}
   N_{\mathrm{raw}}^+(m;d_{\mathrm{tot}})
   =
   \sum_{k=0}^{\floor{m/3}}
   (m-3k+1)
   \binom{(d_{\mathrm{tot}}-1)m-(2d_{\mathrm{tot}}-1)k+2}{2}.
\end{equation}
For a line pair $(1,e)$, so that $d_{\mathrm{tot}}=e+1$, this specializes to
\begin{equation}\label{eq:line-raw-variable-count}
 N_e(m)=
 \sum_{0\leqslant k\leqslant\floor{m/3}}
 (m-3k+1)\binom{em-(2e+1)k+2}{2}.
\end{equation}
The exact counts for the eight line-pair cases are
\begin{equation}\label{eq:line-variable-count-table}
\begin{array}{c|c|r|r|r}
 (d_1,d_2)&(m,t)&N_e(m)&\text{blocks}&\text{largest block}\\ \hline
 (1,8)&(3,2)&1{,}336&8&174\\
 (1,8)&(4,2)&3{,}077&8&398\\
 (1,8)&(5,2)&6{,}066&8&779\\
 (1,8)&(6,3)&10{,}807&8&1{,}382\\
 (1,8)&(7,3)&17{,}876&8&2{,}279\\
 (1,8)&(8,3)&27{,}849&8&3{,}543\\
 (1,9)&(3,3)&1{,}669&9&186\\
 (1,9)&(4,3)&3{,}857&9&429
\end{array}
\end{equation}
Thus the line--octic calculation has $48$ blocks and $67{,}011$ ambient
columns in total; the line--nonic calculation has $18$ blocks and $5{,}526$
columns.

The remainder of this subsection specializes to $(d_1,d_2)=(3,3)$, so
$d_{\mathrm{tot}}=6$ and the degree bound becomes $5m-11k$.

For a fixed character $(\chi_1,\chi_2)$, define
\begin{equation}\label{eq:M-character}
   M_d(a,b)
   :=
   \#\{(r,s)\in\mathbb Z_{\geqslant0}^2:
   r+s\leqslant d,
   \ r\equiv a\pmod3,
   \ s\equiv b\pmod3\}.
\end{equation}
Then
\begin{equation}\label{eq:character-variable-count}
   N_{\chi_1,\chi_2}^+(m)
   =
   \sum_{k=0}^{\floor{m/3}}
   \sum_{i+j=m-3k}
   M_{5m-11k}(\chi_1-i-k,\chi_2-j-k),
\end{equation}
where all congruences are understood modulo $3$.  For an off-diagonal character orbit, the number of variables in a $\sigma$-eigensystem is the number in either exchanged character.  For a diagonal character $(\chi,\chi)$, the $\sigma$-even and $\sigma$-odd dimensions are
\begin{equation}\label{eq:diagonal-eigenspace-count}
   N_{\chi}^{\pm,+}(m)
   =
   \frac12\left(N_{\chi,\chi}^+(m)\pm T_\chi^+(m)\right),
\end{equation}
where
\begin{equation}\label{eq:trace-term}
   T_\chi^+(m)
   =
   \sum_{\substack{0\leqslant k\leqslant\floor{m/3}\\ m-3k\text{ even}}}
   (-1)^k
   \#\left\{r\geqslant0:
   2r\leqslant5m-11k,
   \ r+\frac{m-3k}{2}+k\equiv\chi\pmod3
   \right\}.
\end{equation}

For each twist $t$, the following table records the endpoint $m=5t-1$ of the
certified cubic range.  The last column is the largest block after both the
character and involution decompositions.
\begin{equation}\label{eq:variable-count-table}
\begin{array}{c|c|r|r}
 t & m & N_{\mathrm{raw}}^+ & \max N_{\mathrm{sym}}^+ \\ \hline
 1 & 4  & 1,265   & 156 \\
 2 & 9  & 16,511  & 1,924 \\
 3 & 14 & 76,905  & 8,830
\end{array}
\end{equation}
Thus the largest ambient candidate space occurs at $(m,t)=(14,3)$ and has
$76,905$ variables.  After the $(\mu_3)^2$-character decomposition and the
involution, the largest block has $8,830$ variables.  The raw ambient total
over the fourteen implemented cases
\[
   1\leqslant m\leqslant14,
   \qquad t=t_0(m)=\left\lceil\frac{m+1}{5}\right\rceil
\]
is $280,798$.  These consist of the twelve theorem-relevant cases
$3\leqslant m\leqslant14$ and the two low-weight self-tests $m=1,2$.
These variables are never placed in one matrix: the
computation splits by $(m,t)$, character orbit, and involution eigenvalue.
Within each such block, the Wronskian layers organize the downward generation
of equations, but the rank test is performed on the full coupled matrix
containing all layers.  Although the ambient variables do not depend on $t$,
the divisibility equations \eqref{eq:divisibility-criterion} become stronger
as $t$ increases.

\paragraph{Output of the reduction.}
For every listed pair $(m,t)$, Proposition~\ref{prop:finite-matrix-equivalence}
gives a proved isomorphism
\[
 H^0\!\left(\PP^2,E_{2,m}T_{\PP^2}^*(\log D)\otimes\OO(-t)\right)
 \simeq\ker\Phi_{m,t}.
\]
The finite symmetry blocks exhaust this kernel.  No numerical claim has been
used so far; the next section supplies the exact ranks.

\section{Exact rank certificates for the finite matrices}
\label{sec:rank-certificates}

The preceding section reduced the geometric vanishing problem to the kernels
of explicit integral matrices.  This section proves that those kernels are
zero.  Full column rank modulo one prime is enough: a maximal minor that is
nonzero modulo that prime is a nonzero integer, so it is also nonzero over
$\mathbb Q$ and $\mathbb C$.  Readers interested only in the mathematical
proof may read the procedure, the rank table, and the final semicontinuity
argument.  The certificate-format paragraphs are included for independent
checking.

\subsection{Modular rank verification}\label{subsec:modular-rank}

For each degree pair and each $(m,t)$ in its finite range, the computation is
as follows.
\begin{enumerate}[label=\textup{(\arabic*)}]
   \item Use the injection \eqref{eq:twist-injection} and generate the finite
   ambient normal form \eqref{eq:U0-normal-form} on $U_0$, with denominator
   exponent $m-2k$ and degree bound
   $(d_{\mathrm{tot}}-1)m-(2d_{\mathrm{tot}}-1)k$.  Do not subtract $t$
   from this degree bound.
   \item For $(3,3)$, split the ambient variables according to the
   $(\mu_3)^2$-characters and impose the involution eigencondition.  For
   $(2,e)$ use the cyclic character and involution blocks described above.
   For $(1,e)$ use exactly the $e$ shifted cyclic characters in
   \eqref{eq:line-character-weight}, without an involution.  Keep one full
   unsplit sparse basis for $(3,4)$.
   \item Transform the candidate to $U_X$ using
   \eqref{eq:U0UX-coordinates}, \eqref{eq:xprime-yprime-to-UX},
   \eqref{eq:solve-uprime}, and the triangular Wronskian formula
   \eqref{eq:Wzu-to-WX-triangular}.
   \item Collect the expression in the regular logarithmic frame
   $(z',q,W_X)$ and form the cleared polynomials $\mathcal N_{j,k}$ in
   \eqref{eq:cleared-polynomial}.
   \item Impose the exact divisibility conditions
   \eqref{eq:divisibility-criterion}.  For a line pair use the three separate
   row families listed after that equation.  These conditions simultaneously
   remove the spurious logarithmic pole along $L$, impose the twist
   $\OO(-t)$, and cancel all forbidden poles along $A$ and $B$.
   \item Choose a prime for which the adopted normalized integral matrix is
   defined, reduce that integral matrix to the corresponding finite field,
   and verify full column rank.  If the frame was obtained by a rational
   rescaling such as $q_{\rm cmp}=q/5$, perform that rescaling over
   characteristic zero and reconstruct the integral matrix before reduction;
   do not reduce the rational change-of-frame scalar itself.  Repeat over a
   second prime as an implementation check; only one proof-effective prime is
   logically necessary.
\end{enumerate}

For $(3,3)$ it suffices to check the twelve genuine second-level pairs
\begin{equation}\label{eq:cubic-minimal-pairs}
 3\leqslant m\leqslant14,
 \qquad t=t_0(m):=\left\lceil\frac{m+1}{5}\right\rceil.
\end{equation}
Indeed, if $t\geqslant t_0(m)$ and a section with twist $-t$ existed,
multiplication by a nonzero form of degree $t-t_0(m)$ would inject it into
the space with twist $-t_0(m)$.  The implementation also checks $m=1,2$ as
low-degree self-tests.  The remaining minimal pairs are exactly those
displayed after Lemma~\ref{lem:keyvanishing}.

The passage from modular rank to characteristic zero is elementary but
decisive.  Every cleared matrix has integral entries.  Full column rank
modulo a prime $p$ exhibits a maximal minor that is nonzero modulo $p$;
that minor is a nonzero integer and remains nonzero over $\mathbb Q$.

The certificate program constructs the matrices from
\eqref{eq:divisibility-criterion}.  For every cubic pair in
\eqref{eq:cubic-minimal-pairs}, all twelve character--eigenvalue matrices
have full column rank.  Lemma~\ref{lem:symmetry-exhaustion} then gives zero
kernel for the entire cubic candidate space.  For $(2,e)$, every cyclic
character--eigenvalue block has full column rank; the $(3,4)$ unsplit matrix
does as well.  For $(1,e)$, every shifted cyclic character block has full
column rank; Lemma~\ref{lem:line-symmetry-exhaustion} therefore gives zero
kernel.  The proof primes are recorded in
Table~\ref{tab:finite-rank-certificates}.  For each block, the program writes
the degree pair, $(m,t)$, row and column numbers, rank, nullity, nonzero pivot
product and pivot hash, modulus, and elapsed time, both as plain text and in
JSON Lines format.  The complete source, machine-readable certificates,
checksums, and reproduction instructions are included in the supplementary
archive cited above.  A separate verifier audits the expected case and block
coverage and the internal consistency of every certificate record.  The
frozen source and binaries allow the rank computations to be reproduced.

\begin{remark}[Certificate contract]\label{rem:certificate-contract}
For each independent block, the archive records the degree pair, $(m,t)$,
character label, involution sign when present, proof modulus, row and column
counts, rank, nullity, matrix-construction version, source hash, certificate
hash, and pivot/checker data.  The separate checker verifies case coverage
and the internal consistency of every record.  The matrix generator and the
checker are frozen in the archived version used for this paper.
\end{remark}

\begin{table}[ht]
\centering
\caption{Finite-rank certificate summary.  ``Columns'' is summed over the
indicated independent blocks; in every row the summed rank equals the summed
number of columns, so the kernel dimension is zero.  The proof modulus is the
prime used for the characteristic-zero inference.  For $(3,3)$, the totals
include the $12$ theorem-relevant pairs in \eqref{eq:cubic-minimal-pairs} and
the two low-weight implementation self-tests $m=1,2$.}
\label{tab:finite-rank-certificates}
\small
\begin{tabular}{ccrrrrc}
\toprule
pair & cases & blocks & columns & rank & kernel dim. & proof modulus \\
\midrule
$(3,3)$ & $14$ & $168$ & $280{,}798$ & $280{,}798$ & $0$ & $5$ \\
$(3,4)$ & $1$ & $1$ & $781$ & $781$ & $0$ & $5$ \\
$(2,6)$ & $1$ & $8$ & $1{,}040$ & $1{,}040$ & $0$ & $7$ \\
$(2,5)$ & $6$ & $36$ & $38{,}286$ & $38{,}286$ & $0$ & $5$ \\
$(1,9)$ & $2$ & $18$ & $5{,}526$ & $5{,}526$ & $0$ & $5$ \\
$(1,8)$ & $6$ & $48$ & $67{,}011$ & $67{,}011$ & $0$ & $5$ \\
\bottomrule
\end{tabular}
\end{table}

The proof moduli displayed in the table already suffice.  The archive also
contains reruns at other primes to test the implementation independently;
those reruns are reproducibility checks, not additional mathematical
hypotheses.  For $(3,3)$, the newly required case $(m,t)=(9,2)$ was replayed
in full at prime $7$.  For $(2,5)$, the additional case $(m,t)=(8,3)$ has
$15{,}849$ columns distributed among six character--eigenvalue blocks.  All
six blocks have full column rank modulo $5$, and an independent replay gives
full column rank modulo $7$ as well.

Prime $5$ is a bad reduction for one $(2,6)$ block: there the rank is
$92/93$.  It will be kept in the archive only as a diagnostic and is not
used in the proof.  The proof-effective $(2,6)$ certificate is the full-rank
prime-$7$ run, independently reproduced at prime $11$.

For both line pairs, prime $5$ supplies the proof-effective full-pivot
certificates and prime $11$ supplies an independent full-rank replay.  Prime
$7$ is excluded: some relevant integral minors vanish modulo $7$, and the
degree-nine test curve has bad reduction there because $1^3=-27$ in
$\mathbb F_7$.  None of the prime-$7$ diagnostics is used in the proof.

By the maximal-minor argument above, the proof-effective ranks in
Table~\ref{tab:finite-rank-certificates} give zero kernel over $\mathbb Q$,
and thus over $\mathbb C$.  This proves the required vanishing for every
test pair in \eqref{eq:all-test-pairs}.

To pass to general pairs of the fixed degrees, let $\mathcal U$ be the
Zariski-open parameter space of ordered smooth transverse pairs, let
$\mathcal D\subset\PP^2\times\mathcal U$ be the relative
simple normal crossing divisor, and let $\mathcal E_{m,t}$ be the relative
logarithmic invariant two-jet bundle twisted by $\OO_{\PP^2}(-t)$.
Properness of the projection $\PP^2\times\mathcal U\to\mathcal U$ and upper
semicontinuity of fiberwise cohomology imply that
\[
   (C_1,C_2)\longmapsto h^0\!\left(\PP^2,
   E_{2,m}T_{\PP^2}^*(\log(C_1+C_2))\otimes\OO(-t)\right)
\]
is upper semicontinuous.  Vanishing at the special pair therefore holds on
a nonempty Zariski-open subset of $\mathcal U$.  There are only finitely
many pairs $(m,t)$ under consideration; intersecting their nonempty open
loci proves Lemma~\ref{lem:keyvanishing}.

\paragraph{Output of the certificate section.}
The matrices have zero kernel over $\mathbb C$ for one explicit SNC pair in
each degree family.  Upper semicontinuity then gives the nonempty open sets
$\mathcal U_{\mathrm{van}}(d_1,d_2)$ used in the main proof.

\section{Possible improvements of the SMT constants}
\label{sec:constant-frontier}

This section is not used in the proof of Theorem~\ref{thm:mainSMT} and may be
skipped.  The completed certificates already prove the constants $57$, $45$,
and $69$ without any extra assumption.  Here we only explain how more finite
vanishings could improve those constants.  The related resource estimates
are moved to Appendix~\ref{app:computational-scale}.

\subsection{The limiting constants of the present method}

If the additional vanishings listed below are established, the smallest
integer constants allowed by the present two-branch organization would be
$27$, $17$, and $18$ for
$(d_1,d_2)=(3,3),(2,5),(1,8)$, respectively.  The corresponding ratio splits
and zero-locus endpoints are
\[
 \begin{array}{c|c|c}
 (d_1,d_2)&\rho_*&3/\tau_1(\rho_*)\\ \hline
 (3,3)&1/7&(27+\sqrt{701})/2<27,\\
 (2,5)&2/9&(207+54\sqrt{14})/25<17,\\
 (1,8)&4/19&(222+\sqrt{47859})/25<18.
 \end{array}
\]
Relative to Lemma~\ref{lem:keyvanishing}, these improvements require exactly
the following additional minimal vanishings:
\[
\begin{array}{c|l}
(3,3)&(5,1),(6,1),(10,2),(11,2),(12,2),(13,2),
       (15,3),\\
     &(16,3),(17,3),(18,3),(19,3),(20,3),\\
(2,5)&(3,1),(4,1),(6,2),(7,2),(8,2),
       (9,3),\\
     &(10,3),(11,3),(12,3),(13,3),\\
(1,8)&(3,1),(4,1),(6,2),(7,2),(8,2),(9,2),
       (10,3),\\
     &(11,3),(12,3),(13,3),(14,3).
\end{array}
\]
Indeed, these are exactly the integral pairs with $1\leqslant t\leqslant3$
above the indicated split, after twist monotonicity removes all nonminimal
twists.  Thus this list, rather than a change in the analytic or geometric
argument, is the remaining obstruction to the limiting constants.

\subsection{The unresolved line--septic case}

\begin{remark}\label{rem:line-septic-frontier}
For $(d_1,d_2)=(1,e)$ one has
\[
 \bar c_1^2=(e-2)^2,\qquad
 \bar c_2=(e-1)^2,
 \qquad A(1,e)=4e^2-34e+43.
\]
At $e=10$ this gives
\[
 A(1,10)=103,\qquad
 \rho_{\mathrm{DEG}}(1,10)=\frac{103}{96}>1,
\]
so no finite Key Vanishing Lemma is needed for $e\geqslant10$.  For $e=9$
one has $\rho_{\mathrm{DEG}}=61/84$, leaving only
$(m,t)=(3,3),(4,3)$; for $e=8$ one has
$\rho_{\mathrm{DEG}}=3/8$, leaving
\[
 (3,2),(4,2),(5,2),(6,3),(7,3),(8,3).
\]
Lemma~\ref{lem:keyvanishing} certifies exactly these eight cases, and the
degree-one construction in Section~\ref{sec:mixed-O3} supplies the required
pole-three differentiation package.  This is why the theorem begins at
$e=8$.

At $e=7$ one has $A(1,7)=1$ and
$\rho_{\mathrm{DEG}}=1/60$; the required minimal profile is
\[
 \begin{array}{c|c}
  t& m\\ \hline
  1&3\leqslant m\leqslant60\\
  2&61\leqslant m\leqslant120\\
  3&121\leqslant m\leqslant180.
 \end{array}
\]
Since $A(1,6)=-17$, the pair $(1,7)$ is the formal Riemann--Roch endpoint of
this two-jet architecture.  It should not be confused with a predicted Key
Vanishing endpoint: $A(1,7)$ is barely positive, the list contains $178$
minimal cases, and the corresponding finite computation is not part of the
present theorem.  No claim for $(1,7)$ is made here.
\end{remark}

\appendix

\section{SNC verification for the test pairs}\label{app:SNC-checks}

We verify here that every row of \eqref{eq:all-test-pairs} is an SNC divisor
and remains SNC after adjoining $L=\{Z=0\}$.  These elementary checks are
separate from the modular rank computation.

The result of the checks is summarized first.
\begin{center}
\small
\begin{tabular}{p{0.31\textwidth}p{0.56\textwidth}}
\toprule
test family&verified conclusion\\ \midrule
cubic--cubic and cubic--quartic&both curves are smooth and meet each other
and $L$ transversally\\
conic--quintic and conic--sextic&the conic and Fermat curve are smooth and
$A+B+L$ is SNC\\
line--octic and line--nonic&the nonlinear curve is smooth and
$A+B+L$ is SNC\\
\bottomrule
\end{tabular}
\end{center}

\subsection{The cubic--quartic pair}
Both Fermat curves are smooth.  If their gradients were proportional at a
common point, comparison of the nonzero coordinate components would force
all nonzero coordinates to be equal.  Neither two nor three equal nonzero
coordinates can satisfy the cubic equation, and a point with only one
nonzero coordinate cannot lie on it.  Thus the curves meet transversally.
On $L$, a common point would satisfy simultaneously
$(X/Y)^3=-1$ and $(X/Y)^4=-1$, which is impossible.  Smoothness of the
Fermat curves also gives their transversality with $L$.

\subsection{The conic pairs}
For $(2,e)$, $e=5,6$, the conic $A=XY+Z^2$ and the Fermat curve
$B_e=X^e+Y^e+Z^e$ are smooth.  Suppose that they meet nontransversally.
If $Z=0$, then $A=0$ forces $XY=0$, which is incompatible with $B_e=0$.
Thus we may scale $Z=1$, so $XY=-1$.  Writing
$\nabla B_e=\lambda\nabla A$, the third component gives
$\lambda=e/2$, and the first two components give
\[
 2X^{e-1}=Y,\qquad 2Y^{e-1}=X.
\]
Multiplication by $X$ and $Y$, respectively, yields
$X^e=Y^e=-1/2$.  Thus
\[
 (XY)^e=(-1)^e=X^eY^e=\frac14,
\]
a contradiction.  On $L$, the conic meets the line at $[1:0:0]$ and
$[0:1:0]$, neither of which lies on $B_e$; the Fermat curve also meets $L$
transversally.  Thus $A+B_e+L$ is SNC.

\subsection{The line--octic pair}
Let
\[
 B_8=X^8+Y^8+Z^8+X^4Y^4.
\]
At a singular point of $B_8$, the equation $(B_8)_Z=8Z^7=0$ gives $Z=0$.
Neither $X$ nor $Y$ can vanish, while the other two partial derivatives give
\[
 2X^4+Y^4=0,\qquad X^4+2Y^4=0.
\]
Their determinant is $3$, a contradiction.  Thus $B_8$ is smooth.  On the
line $A=\{Y=0\}$ its intersection equation is $X^8+Z^8=0$, so $X$ and $Z$
are nonzero and the intersection is transverse.  Along $L$, smoothness and
$(B_8)_Z=0$ imply that at least one of the other two partial derivatives is
nonzero, so the intersection with $L$ is transverse.  Finally,
$A\cap L=[1:0:0]\notin B_8$, and thus $A+B_8+L$ is SNC.

\subsection{The line--nonic pair}
Put
\[
 B_9=X^9+Y^9+Z^9+X^3Y^3Z^3.
\]
A singular point with a zero coordinate is impossible from the remaining
partial derivatives.  If all coordinates are nonzero, set
$a=X^3$, $b=Y^3$, and $c=Z^3$.  The three singularity equations become
\[
 3a^2+bc=3b^2+ac=3c^2+ab=0.
\]
Multiplication gives $27=-1$, a contradiction.  Thus $B_9$ is smooth.
Its intersections with $A=\{Y=0\}$ and $L$ are transverse by restriction,
and $A\cap L=[1:0:0]\notin B_9$.  Thus $A+B_9+L$ is SNC.

\subsection{The symmetric cubic pair}
For \eqref{eq:symmetric-pair}, the two cubics are smooth.  If $A=B=0$,
then
\[
 X^3=Y^3,\qquad 3X^3+Z^3=0,
\]
so all intersection coordinates are nonzero.  Their gradients cannot be
proportional: the $Z$-components would force the scalar to be $1$, while
the $X$-components would then require $3X^2=6X^2$.  Also,
$A=B=Z=0$ has no projective solution, and each cubic meets $L$
transversally.  Thus $A+B+L$ is SNC.

\section{Computational scale and feasibility}\label{app:computational-scale}

This appendix is not part of the proof.  It records only the expected size
and practical organization of possible future computations.

All the additional targets are governed by the same finite map
$\Phi_{m,t}$ and the same symmetry blocks as in
Section~\ref{sec:finite-vanishing}.  In particular, no new symbolic normal
form is required: only the weight range is enlarged.  The following counts
are produced by the frozen program's matrix-free \texttt{--estimate-only}
path, using exactly the basis and symmetry conventions of the certificate
computation:
\[
\begin{array}{c|r|r|r|r|r}
(d_1,d_2)&\text{new targets}&\sum N_{\rm raw}&\text{shards}
&N_{\rm raw}\text{ at the endpoint}&B_{\max}\\ \hline
(3,3)&12&1{,}248{,}727&144&280{,}686&31{,}931\\
(2,5)&10&  288{,}129& 60& 84{,}865&16{,}973\\
(1,8)&11&  704{,}491& 88&195{,}750&24{,}725\\ \hline
\text{total}&33&2{,}241{,}347&292&-&-
\end{array}
\]
Here $N_{\rm raw}$ is the ambient number of coefficient variables before the
finite-group decomposition, while $B_{\max}$ is the actual number of
unknowns in the largest single modular-rank problem.  The largest individual
Wronskian layers have respectively $12{,}257$, $8{,}848$, and $12{,}184$
unknowns.  Thus none of the endpoint calculations forms one matrix with
hundreds of thousands of columns: the complete frontier consists of $292$
independent symmetry shards.

If $B$ denotes the column dimension of one shard, the dense envelope for
exact Gaussian elimination is $O(B^2)$ field elements of storage and
$O(B^3)$ field operations.  The implemented sparse elimination is usually
smaller, but fill-in makes this dense envelope the safer planning model.
For orientation, the completed cubic case $(m,t)=(14,3)$ has
$B_{\max}=8{,}830$.  Its slowest block took $2{,}423$ seconds in the
historical endpoint run and $2{,}920$ seconds in the later complete-profile
rerun; monitoring of the historical run observed a $19.33$~GiB working set
on the recorded laptop.
Using the completed cases in each family and a cubic-time dimensional
extrapolation gives approximately $32$--$39$ hours for the largest cubic shard,
$40$--$50$ hours for the largest conic--quintic shard, and $1.4$ hours for
the largest line--octic shard.  These are deliberately rough resource
estimates, not measured frontier running times or mathematical inputs; the sparsity
and fill patterns differ among the three families.  Nor does the sampled
historical working set determine a per-shard peak.  The matrix-free estimator
does not construct the target rows, so exact row counts, nonzero counts,
fill-in, and peak memory for the unrun frontier blocks are not yet known.  We
therefore make no claim here about a minimum hardware configuration; those
quantities will be recorded with the completed certificates.

A cluster implementation remains straightforward.  Each job is indexed by
the degree pair, $(m,t)$, prime, character orbit, and, when present,
involution eigenvalue.  It generates its columns on demand, performs exact
finite-field elimination, and writes an independent rank certificate; no
communication with another shard is required.  On each node, replacing
tree-based sparse vectors by sorted packed rows and switching to blocked
dense updates after fill-in should substantially reduce both memory traffic
and overhead, while preserving the same exact modular proof.  For scale, a
square array for the largest block in each family occupies respectively
$3.80$, $1.07$, and $2.28$~GiB with $32$-bit field entries, before row-space,
polynomial-generation, fill-in, and elimination workspace.  These figures
are dimensional baselines, not peak-memory predictions.  The existing C++
interface already exposes the orbit and eigenvalue shards, so the $292$
primary-prime jobs can be dispatched directly by an ordinary cluster
scheduler.  Full column
rank at one good prime is logically sufficient; selected large blocks should
also be replayed at a second prime as an implementation check.

Therefore, the limiting cases form a concrete, communication-free target
for parallel C++ computation, but this feasibility analysis is neither a
hardware guarantee nor a proof of any unperformed vanishing.  If every listed
normalized integral block has full column rank, the same argument gives the
prospective constants above.  As the blocks are completed, their row and
nonzero counts, exact ranks, hashes, peak-memory records, and independently
replayable certificates will be posted on Song-Yan Xie's homepage.
\footnote{The computational evidence and subsequent updates are maintained at
\url{https://xiesongyan.github.io/}.}

\section*{Author contributions}
Song-Yan Xie formulated the problem investigated in this paper.  Lei Hou and
Song-Yan Xie developed the $\mathcal O(3)$ slanted-vector-field argument,
including the new fields required in the two-component setting.  The
theoretical foundation of the Key Vanishing Lemma is the exponent-bound and
finite-linear-algebra method developed by Hou and Xie in the preceding two
papers and adapted here to two components.  For the present coupled
verification, Pengchao Wang introduced a new algorithmic idea and used it to
substantially improve the exact finite algorithm.  This conceptual
improvement, rather than a mere code implementation, is what makes the
required verification feasible on a standard laptop.
All authors checked the mathematical arguments and computations and jointly
revised and polished the manuscript.

\section*{AI-assisted preparation}
Generative artificial-intelligence tools were used for language polishing.
Under detailed author direction, they also assisted in drafting portions of
the $\OO(3)$ argument; the authors supplied the core ideas, precise
mathematical constraints, and iterative guidance.  Apart from that
author-directed drafting assistance, the ideas and arguments are due to the
authors.  The authors independently verified every argument in its final
form and accept full responsibility for the manuscript.

\section*{Funding}
S.-Y. Xie acknowledges partial support from National Key R\&D Program of
China under Grants No.~2023YFA1010500 and No.~2021YFA1003100, and NSFC
Grants No.~12288201 and No.~12471081, as well as Xiaomi Young Talents
Program.

\end{document}